\newtheorem{theorem}{Theorem}[section]
\newtheorem{lemma}[theorem]{Lemma}
\newtheorem{corollary}[theorem]{Corollary}
\newtheorem{proposition}[theorem]{Proposition}
\theoremstyle{definition}
\newtheorem{definition}[theorem]{Definition}
\newtheorem{example}[theorem]{Example}
\theoremstyle{remark}
\newtheorem{remark}[theorem]{Remark}
\numberwithin{equation}{section}
\begin{document}

\title[A gluing formula for families Seiberg-Witten invariants]{A gluing formula for families Seiberg-Witten invariants}
\author{David Baraglia, Hokuto Konno}

\address{School of Mathematical Sciences, The University of Adelaide, Adelaide SA 5005, Australia}
\address{Graduate School of Mathematical Sciences, the University of Tokyo, 3-8-1 Komaba, Meguro, Tokyo 153-8914, Japan}

\email{david.baraglia@adelaide.edu.au}
\email{hkonno@ms.u-tokyo.ac.jp}



\date{\today}


\begin{abstract}
We prove a gluing formula for the families Seiberg-Witten invariants of families of $4$-manifolds obtained by fibrewise connected sum. Our formula expresses the families Seiberg-Witten invariants of such a connected sum family in terms of the ordinary Seiberg-Witten invariants of one of the summands, under certain assumptions on the families. We construct some variants of the families Seiberg-Witten invariants and prove the gluing formula also for these variants. One variant incorporates a twist of the families moduli space using the charge conjugation symmetry of the Seiberg-Witten equations. The other variant is an equivariant Seiberg-Witten invariant of smooth group actions. We consider several applications of the gluing formula including: obstructions to smooth isotopy of diffeomorpihsms, computation of the mod $2$ Seiberg-Witten invariants of spin structures, relations between mod $2$ Seiberg-Witten invariants of $4$-manifolds and obstructions to the existence of invariant metrics of positive scalar curvature for smooth group actions on $4$-manifolds.
\end{abstract}

\maketitle


\section{Introduction}

The Seiberg-Witten invariants of smooth $4$-manifolds admit an extension to the parametrised setting. This means that instead of a single $4$-manifold, we consider the moduli space of solutions to the Seiberg-Witten equations for a {\em family} of smooth $4$-manifolds. Here a family of smooth $4$-manfolds means a locally trivial fibre bundle over a base space whose fibres are a fixed compact smooth $4$-manifold $X$ equipped with a Spin$^c$ structure, and the transition functions for the bundle are valued in the group of diffeomorphisms of $X$ preserving the orientation and Spin$^c$-structure. The idea of studying the Seiberg-Witten equations for such a family was proposed by Donaldson in \cite{don2} and later pursued by various authors including Ruberman \cite{rub1, rub3}, Liu \cite{liu}, Li-Liu \cite{liliu} and Nakamura \cite{na1}.

One expects the families Seiberg-Witten invariants to have many deep applications to the study of families of smooth $4$-manifolds, just as the ordinary Seiberg-Witten invariants have produced many profound results in the study of individual smooth $4$-manifolds. For example, Ruberman used a certain $1$-parameter families Seiberg-Witten invariant to show that the space of positive scalar metric on certain simply-connected $4$-manifolds has infinitely many connected components \cite{rub3}. In order to realise such applications one needs practical tools for computing the families invariants, just as one has for the ordinary Seiberg-Witten invariants. To date only a limited set of tools are available such as the families wall-crossing formula \cite{liliu} and families blowup formula \cite{liu}. Both of these express one set of families invariants in terms of another set, so it is difficult to get off the ground with just these. In \cite{rub3} Ruberman devised a technique for computing the families Seiberg-Witten invariant for a special class of $1$-dimensional families, where $X$ is a connected sum of the form $X = M \# \mathbb{CP}^2 \#^2 \overline{\mathbb{CP}^2}$ and the family is the mapping cylinder of a certain diffeomorphism of $\mathbb{CP}^2 \#^2 \overline{\mathbb{CP}^2}$.

In this paper, we will prove a far-reaching generalisation of Ruberman's $1$-parameter formula. Our formula will relate the parametrised Seiberg-Witten invariant for a connected sum of families with fibres $X = M \# N$ to the ordinary Seiberg-Witten invariants on $M$. In this way, tools for computing the Seiberg-Witten invariants of $M$ become tools for computing the families Seiberg-Witten invariants of $X$. Quite surprisingly, our formula for families Seiberg-Witten invariants has non-trivial consequences for the ordinary Seiberg-Witten invariants as well.


Let us briefly outline the definition of the families Seiberg-Witten invariants. Their construction is described in more detail in Section \ref{sec:familiesSW}. Our approach follows \cite{liliu} but is more general, in particular \cite{liliu} assumes the existence of a Spin$^c$-structure on the vertical tangent bundle while our definition does not always assume this. In addition to this, we define some further variants of the families Seiberg-Witten invariants. One is a variant which incorporates twisting the families moduli space by the so-called charge conjugation symmetry, as dscribed in Section \ref{sec:familiesSW}. The other is a variant of the families Seiberg-Witten invariant for actions of a group $G$ on a $4$-manifold by diffeomorphisms. This invariant takes values in the group cohomology of $G$ and is described in Section \ref{sec:SWgroupaction}.

Let $X$ be a compact smooth oriented $4$-manifold and let $B$ be a compact smooth manifold. Suppose we have a smooth fibrewise oriented fibre bundle $\pi_X : E_X \to B$ whose fibres are diffeomorphic to $X$. Let $\mathfrak{s}_X$ be a Spin$^c$-structure on $X$. We will assume that $\mathfrak{s}_X$ can be extended to a continuously varying family of Spin$^c$-structures on the fibres of $E_X$. Equivalently, the transition functions for $E_X$ can be chosen to be diffeomorphisms of $X$ preserving the isomorphism classes of $\mathfrak{s}_X$. In general this is a weaker condition than requiring that the vertical tangent bundle of $E_X$ admits a Spin$^c$-structures extending $\mathfrak{s}_X$.

For simplicity, we concern ourselves in this introduction with just the $\mathbb{Z}_2$-valued families Seiberg-Witten invariants (see Sections \ref{sec:familiesSW}-\ref{sec:setup} for details of the $\mathbb{Z}$-valued invariants). Let
\[
d(X,\mathfrak{s}_X) = \frac{ c_1(\mathfrak{s}_X)^2 - \sigma(X)}{4} -1 - b^+(X) + b_1(X)
\]
be the virtual dimension of the unparametrised Seiberg-Witten moduli space of $X$. Assume that
\[
b^+(X) > dim(B) + 1.
\]
This assumption is necessary to have a well-defined families Seiberg-Witten invariant. Under this assumption, for any family of fibrewise metrics $g_X$ on $E_X$, there exists a families perturbation $\eta_X$ on $E_X$ for which the families moduli space $\mathcal{M} = \mathcal{M}( E_X , g_X , \eta_X , \mathfrak{s}_X)$ is either empty or a smooth manifold of dimension 
\[
dim(\mathcal{M}) = d(X,\mathfrak{s}_X) + dim(B).
\]
If $\mathfrak{s}_{X}$ extends to a Spin$^c$-structure on the vertical tangent bundle of $E_X$, the definition of the families moduli space $\mathcal{M}$ is the standard one, as discussed in \cite{liliu}. However, one can still define the families moduli space $\mathcal{M}$ under the weaker condition that we can equip $E_X$ with a continuously varying family of Spin$^c$-structures, in the sense described above.

The simplest version of the families Seiberg-Witten invariant is defined to be the homomorphism 
\[
FSW^{\mathbb{Z}_2}(E_X , \mathfrak{s}_X , \; \; ) : H^{dim(\mathcal{M})}(B , \mathbb{Z}_2) \to \mathbb{Z}_2,
\]
given by
\[
FSW^{\mathbb{Z}_2}(E_X , \mathfrak{s}_X , \theta) = \langle \pi^* \theta, [\mathcal{M}] \rangle,
\]
where denotes $[\mathcal{M}]$ is the fundamental class of $\mathcal{M}$ with $\mathbb{Z}_2$-coefficients. 

A more general families Seiberg-Witten invariant is obtained by incorporating a certain line bundle $\mathcal{L} \to \mathcal{M}$ on the families moduli space. Recall that for the unparametrised Seiberg-Witten moduli space, one obtains $\mathcal{L}$ as the line bundle associated to the principal circle bundle $\widetilde{\mathcal{M}} \to \mathcal{M}$, where $\widetilde{\mathcal{M}}$ is the Seiberg-Witten moduli space modulo the reduced gauge group of gauge transformations which are the identity at some fixed basepoint of $X$. The circle bundle $\widetilde{\mathcal{M}} \to \mathcal{M}$ corresponds to the residual action of constant gauge transformations. In the families setting, the definition of the line bundle $\mathcal{L}$ is similar but requires additional care. We construct $\mathcal{L}$ in Section \ref{sec:familiesSW} under the assumptions that $b_1(X) = 0$ and that $\mathfrak{s}_X$ extends to a Spin$^c$-structure on the vertical tangent bundle, which we denote by $\tilde{\mathfrak{s}}_X$. Then for any $m \ge 0$ we define a families Seiberg-Witten invariant
\begin{equation*}
\begin{aligned}
& FSW^{\mathbb{Z}_2}_m(E_X , \mathfrak{s}_X) : H^{dim(\mathcal{M})-2m}(B , \mathbb{Z}_2) \to \mathbb{Z}_2 \\
& FSW^{\mathbb{Z}_2}_m(E_X , \tilde{\mathfrak{s}}_X , \theta) = \langle c_1(\mathcal{L})^m \cup \pi^* \theta, [\mathcal{M}] \rangle \in \mathbb{Z}_2.
\end{aligned}
\end{equation*}
We sometimes abuse notation and write $FSW^{\mathbb{Z}_2}_m(E_X , \mathfrak{s}_X , \theta)$ in place of \linebreak $FSW^{\mathbb{Z}_2}_m(E_X , \tilde{\mathfrak{s}}_X , \theta)$, noting however that $FSW^{\mathbb{Z}_2}_m(E_X , \tilde{\mathfrak{s}}_X , \theta)$ in general depends on the choice of extension $\tilde{\mathfrak{s}}_X$.


We now describe the setup for the gluing formula. More details can be found in Section \ref{sec:setup}. Our gluing result is for families obtained by a families connected sum. Let $M,N$ be compact, smooth, oriented $4$-manifolds and set $X = M \# N$. In this construction $M$ and $N$ will play specific roles and the assumptions on $M$ and $N$ will not be symmetric. In particular we assume that $b_1(N) = 0$ but make no assumption about $b_1(M)$ in general.

Let $B$ be a compact smooth manifold and suppose we have smooth fibrewise oriented families $\pi_M : E_M \to B$, $\pi_N : E_N \to B$ whose fibres are $M,N$ respectively. Suppose we are given sections $\iota_M : B \to E_M$ and $\iota_N : B \to E_N$ whose normal bundles are orientation reversing isomorphic. Then by removing tubular neighbourhoods of the sections $\iota_M, \iota_N$ from $E_M, E_N$ and identifying their boundaries using the orientation reversing isomorphism, we obtain a family $E_X$ over $B$ whose fibres are diffeomorphic to the connected sum $X = M \# N$. In general $E_X$ will depend on the isotopy classes of the sections $\iota_M, \iota_N$ and the isomorphism of their normal bundles.

Let $\mathfrak{s}_M, \mathfrak{s}_N$ be Spin$^c$-structures on $M,N$ and let $\mathfrak{s}_X$ be the Spin$^c$-structure on $X$ obtained as the connected sum of $\mathfrak{s}_M$ and $\mathfrak{s}_N$. We will assume that $\mathfrak{s}_M,\mathfrak{s}_N$ can be extended to continuously varying families of Spin$^c$-structures on the fibres of $E_M, E_N$. It follows that $\mathfrak{s}_X$ similarly extends to a continuous family of Spin$^c$-structures on the fibres of $E_X$. We will assume that $d(M , \mathfrak{s}_M) = 2m \ge 0$ is non-negative and even.

We make the following assumptions about $N$. Assume that:
\[
c_1(\mathfrak{s}_N)^2 = \sigma(N)
\]
so that the index of the Spin$^c$-Dirac operator on $N$ is zero. Assume also that:
\[
0 < b^+(N) \le dim(B).
\]
Let $\mathcal{M} = \mathcal{M}(E_X , g_X , \eta_X , \mathfrak{s}_X )$ be the Seiberg-Witten moduli space for the family $E_X$ with respect to a choice of fibrewise metric $g_X$ and fibrewise perturbation $\eta_X$. If $b^+(M) + b^+(N) > dim(B) + 1$, then $\mathcal{M}$ is a compact smooth manifold for a generic choice of families perturbation. Assume this is the case and that $\eta_X$ has been chosen generically. Then $\mathcal{M}$ is either empty or a compact smooth manifold of dimension
\begin{equation*}
\begin{aligned}
dim(\mathcal{M}) &= d(M , \mathfrak{s}_M) + d(N , \mathfrak{s}_N) + 1 + dim(B) \\
&= 2m + dim(B) - b^+(N).
\end{aligned}
\end{equation*}
The main gluing theorem of this paper is as follows:

\begin{theorem}[$\mathbb{Z}_{2}$-valued gluing formula]
Suppose that either
\begin{itemize}
\item[(i)]{$d(M , \mathfrak{s}_M) = 2m = 0$, or}
\item[(ii)]{$b_1(M) = b_1(X) = 0$ and $\mathfrak{s}_X$ extends to a Spin$^c$-structure on $T(E_X/B)$.}
\end{itemize}
Then:
\begin{itemize}
\item[(1)]{For any $\theta \in H^{dim(B)-b^+(N)}(B , \mathbb{Z}_2)$, we have
\[
FSW^{\mathbb{Z}_2}_m(E_X , \mathfrak{s}_X , \theta ) = SW( M , \mathfrak{s}_M) \cdot \langle \theta \cup w_{b^+(N)}(H^+(N)) , [B] \rangle.
\]
}
\item[(2)]{For any integer $k > 0$ and any $\theta \in H^{dim(B)-b^+(N)+2k}(B , \mathbb{Z}_2)$, we have
\[
FSW^{\mathbb{Z}_2}_{m-k}( E_X , \mathfrak{s}_X , \theta) = 0.
\]
}
\end{itemize}
\end{theorem}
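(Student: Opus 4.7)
The plan is to prove the formula by a fibrewise neck-stretching argument reducing the $E_X$-moduli space to a fibre product of the $M$-moduli space with a reducible locus on the $N$-side. Concretely, I would work with a family of metrics $g_T$ on $E_X$ in which the neck $S^3 \times [-T,T]$ of the connected sum is stretched as $T \to \infty$, and arrange the perturbation $\eta_X$ to be supported on the $M$- and $N$-pieces (away from the neck), so that the Seiberg--Witten equations decouple in the limit.

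First, I would analyse the $N$-side. The hypothesis $c_1(\mathfrak{s}_N)^2 = \sigma(N)$ forces $d(N,\mathfrak{s}_N) = -1 - b^+(N)$, so for generic families perturbation there are no irreducible solutions on $E_N$, and reducibles on $N$ appear precisely on the zero locus of a section of the rank $b^+(N)$ fibrewise self-dual harmonic bundle $H^+(E_N) \to B$. For a generic perturbation this section is transverse, cutting out a codimension $b^+(N)$ submanifold $Z \subset B$ whose $\mathbb{Z}_2$-Poincar\'e dual is $w_{b^+(N)}(H^+(N))$. Because $b_1(N) = 0$, over each point of $Z$ the reducible is unique up to the constant $U(1)$ gauge. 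On the $M$-side, under the generic families perturbation, $\mathcal{M}_M$ is a smooth compact manifold with $\pi_M : \mathcal{M}_M \to B$ of relative dimension $d(M,\mathfrak{s}_M) = 2m$, and the defining property of the unparametrised invariant gives $\pi_{M*}(c_1(\mathcal{L}_M)^m) = SW(M, \mathfrak{s}_M) \in H^0(B;\mathbb{Z}_2)$.

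The main step is a families gluing theorem: for all sufficiently large $T$, the moduli space $\mathcal{M} = \mathcal{M}(E_X, g_T, \eta_X, \mathfrak{s}_X)$ is diffeomorphic to the fibre product $\mathcal{M}_M \times_B Z$, with the asymptotic $U(1)$-stabiliser of the $N$-reducible absorbed into the gluing parameter so that no extra factor of $S^1$ appears. A dimension count confirms this: $\dim(\mathcal{M}_M \times_B Z) = (2m + \dim B) + (\dim B - b^+(N)) - \dim B = 2m + \dim B - b^+(N)$, matching $\dim \mathcal{M}$. Moreover, since the basepoint defining the reduced gauge group can be taken in the $M$-piece, the line bundle $\mathcal{L}$ on $\mathcal{M}$ is the pullback of $\mathcal{L}_M$ on $\mathcal{M}_M$ under the projection $\mathcal{M}_M \times_B Z \to \mathcal{M}_M$.

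Granting this identification, the computation is formal. For part (1), with $\theta \in H^{\dim(B) - b^+(N)}(B; \mathbb{Z}_2)$,
\begin{equation*}
FSW_m^{\mathbb{Z}_2}(E_X,\mathfrak{s}_X,\theta)
= \langle c_1(\mathcal{L}_M)^m \cup \pi_M^*(\theta \cup w_{b^+(N)}(H^+(N))),\,[\mathcal{M}_M] \rangle,
\end{equation*}
and fibre integration over $\pi_M$ yields $SW(M,\mathfrak{s}_M) \cdot \langle \theta \cup w_{b^+(N)}(H^+(N)), [B] \rangle$. For part (2), the pullback class $c_1(\mathcal{L}_M)^{m-k}$ has degree $2m - 2k$, strictly less than the fibre dimension $2m$, so its pushforward along $\pi_M$ vanishes; in case (i), $2m = 0$ and (2) is vacuous. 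The main obstacle is the analytic gluing in Step 3: one must establish compactness (ruling out bubbling on the neck using the PSC metric on $S^3$ and the absence of reducibles on $M$), transversality of the fibre product description, and surjectivity of the gluing map, extending the analogous arguments of Ruberman~\cite{rub3} from $1$-parameter families to arbitrary $B$ while accommodating the weaker Spin$^c$ hypothesis of case (ii).
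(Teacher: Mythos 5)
Your overall strategy is the one the paper follows — stretch the neck, confine the $N$-side to reducibles living over the wall locus, glue with the $M$-moduli space, and finish by a formal pushforward — and your formal computations for (1) and (2), granting the gluing statement, are correct. But the statement you grant yourself, that for large $T$ the families moduli space is diffeomorphic to $\mathcal{M}_M \times_B Z$ with $Z$ a positive-dimensional transverse zero locus, is precisely where the real content lies, and your proposal does not identify why the standard (unobstructed) gluing theory fails here. At a glued configuration $\hat{C}_M \#_r \hat{C}_N$ with $\hat{C}_N$ reducible, the obstruction space of the deformation complex is not zero: it is isomorphic to $H^2_{\hat{C}_N} \cong H^+_{L^2}(\hat{N}) \cong \mathbb{R}^{b^+(N)}$ (Proposition \ref{prop:hplusminus}). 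So the fibrewise Kuranishi map does not vanish automatically, and one cannot conclude that nearby genuine solutions form a manifold of the expected dimension without an extra argument. The paper's resolution is to choose the family of perturbations $\eta_{\hat{N}}$ very carefully (conditions (A1)--(A8) of Proposition \ref{prop:perturb}) so that the derivative of $t \mapsto \eta_{\hat{N}}(t)$ in the base directions, after gluing and projecting, is an \emph{isomorphism} onto the obstruction space $H_r^-$ (Lemma \ref{lem: pre0 for testimate}); the obstruction equation is then solved \emph{for the base parameter $t$} by a contraction mapping, which both locates the solutions over the wall locus and kills the obstruction. Your phrase about absorbing the $U(1)$-stabiliser addresses $H^0$ of the reducible, not this $H^2$, which is the actual difficulty.

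A second, smaller divergence: the paper does not prove the gluing theorem over a general base with a positive-dimensional $Z$. It first reduces part (1) to the case $\dim(B)=b^+(N)$ by representing the Poincar\'e dual of $\theta$ as $f_*[S]$ with $\dim S = b^+(N)$ (Steenrod realisability with $\mathbb{Z}_2$-coefficients) and pulling the families back to $S$; then $Z$ is a finite set of points $b_1,\dots,b_l$ with $l \equiv \langle w_{b^+(N)}(H^+(N)),[B]\rangle$, and the gluing is carried out only near those isolated points. Part (2) is handled the same way: after restricting to an $S$ of dimension $b^+(N)-2k < b^+(N)$, the perturbation can be chosen to miss the wall entirely, so the moduli space is empty — no degree count on a fibre product is needed. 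If you want to salvage your global fibre-product formulation you would have to run the obstruction-cancelling argument in a family over $Z$ (using the normal directions to $Z$ in $B$ to span the obstruction bundle), which is strictly harder than what the paper does; the Steenrod reduction is there exactly to avoid this.
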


We also prove such a gluing formula for the families Seiberg-Witten invariants incorporating twists by charge conjugation (Theorem \ref{thm:rubtwisted}), for the $\mathbb{Z}$-valued invariants (Theorem \ref{thm:rubz}) and for the Seiberg-Witten invariants of group actions (Theorem \ref{thm:rubequivariant}).

In Section \ref{sec:applications} we consider various applications of the families gluing formula. Below we summarise some of the main results obtained.

\begin{theorem}
Let $M, M'$ be compact simply-connected smooth $4$-manifolds with indefinite intersection forms. Suppose that $M,M'$ are homeomorphic and fix a homeomorphism $\phi : M \to M'$. Suppose that $b^+(M) > 1$ and that $\mathfrak{s}_M$ is a Spin$^c$-structure on $M$ with $d(M,\mathfrak{s}_M) = 0$ and that $SW(M,\mathfrak{s}_M) \neq SW(M' , \phi(\mathfrak{s}_M) ) \; ({\rm mod} \; 2)$. Lastly, suppose that $X = M \# (S^2 \times S^2)$ is diffeomorphic to $M' \# (S^2 \times S^2)$. Then there exists a diffeomorphism on $X$ which is continuously isotopic to the identity but not smoothly isotopic.
\end{theorem}

\begin{corollary}
The following $4$-manifolds admit diffeomorphisms which are continuously isotopic to the identity, but are not smoothly isotopic to the identity:
\begin{itemize}
\item[(i)]{$X = \#^n(S^2 \times S^2) \#^n (K3)$, for any $n \ge 2$.}
\item[(ii)]{$X = \#^{2n} \mathbb{CP}^2 \#^m \overline{\mathbb{CP}}^2$, for any $n \ge 2$ and any $m \ge 10n+1$.}
\end{itemize}
\end{corollary}

In \textsection \ref{sec:mod2spin} we use the gluing formula to give a simple new proof of a theorem of Morgan and Szab\'o:

\begin{theorem}[\cite{mosa}]
Let $M$ be smooth compact spin $4$-manifold with $b_1(M) = 0$, $b^+(M) = 4n-1$, $b^-(M) = 20n-1$, where $n \ge 1$. Let $\mathfrak{s}_M$ be a Spin$^c$-structure on $M$ which comes from a spin structure. Then $SW(M , \mathfrak{s}_M)$ is odd if $n=1$ and is even otherwise.
\end{theorem}

In \textsection \ref{sec:relations} we show how the gluing formula can be used to compute the (unparametrised) mod $2$ Seiberg-Witten invariants of $4$-manifolds under certain conditions. Amongst such results we obtain the following: 

\begin{theorem}
Let $M$ be a compact smooth $4$-manifold with $b_1(M) = 0$ and $b^+(M) = 3 \; ({\rm mod} \; 4)$. Suppose that $f : M \to M$ is an involutive diffeomorphism of $M$ whose fixed point set contains an isolated point. Suppose suppose $\mathfrak{s}_M$ is a Spin$^c$-structure such that $f(\mathfrak{s}_M) = -\mathfrak{s}_M$ and $c_1(\mathfrak{s}_M)^2 - \sigma(M) = 0 \; ({\rm mod} \; 16)$.  
\begin{itemize}
\item[(i)]{If $b^+(M) = 3$ and $c_1(\mathfrak{s}_M)^2 - \sigma(M) \ge 16$, then $SW(M , \mathfrak{s}_M) = 1 \; ({\rm mod} \; 2)$ if $f$ acts trivially on $H^+(M)$ and $SW(M , \mathfrak{s}_M) = 0 \; ({\rm mod} \; 2)$ otherwise.}
\item[(ii)]{If $b^+(M) > 3$, then $SW(M , \mathfrak{s}_M) = 0 \; ({\rm mod} \; 2)$.}
\end{itemize}
\end{theorem}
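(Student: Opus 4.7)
The plan is to apply the charge-conjugation-twisted equivariant version of the gluing formula (combining Theorems \ref{thm:rubtwisted} and \ref{thm:rubequivariant}) to a Borel-construction family coming from the involution $f$ on $M$, connected summed fibrewise with a carefully chosen auxiliary equivariant family. In this way the unparametrised invariant $SW(M,\mathfrak{s}_M)$ is reinterpreted as a families invariant which we can constrain by independent means.

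As a preliminary observation, since $p$ is an isolated fixed point of the smooth involution $f$, automatically $df_p = -\mathrm{id}$ on $T_pM$. Setting $G = \mathbb{Z}_2$ and letting $G$ act on $M$ via $f$, I form the Borel family $E_M = S^b \times_G M$ over $B = \mathbb{RP}^b$, for an integer $b$ to be chosen below. Because $f^*\mathfrak{s}_M = -\mathfrak{s}_M$, the Spin$^c$ structure extends via charge conjugation to a twisted family Spin$^c$ structure on $E_M$ in the sense of Section \ref{sec:familiesSW}, and the fixed point $p$ gives a section $\iota_M : B \to E_M$ whose vertical normal bundle is $4\gamma$, where $\gamma \to \mathbb{RP}^b$ is the tautological line bundle.

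Next I would choose an auxiliary closed $4$-manifold $N$ equipped with an involution $f_N$ admitting an isolated fixed point $q$ (so $df_N|_q = -\mathrm{id}$), a Spin$^c$ structure $\mathfrak{s}_N$ with $c_1(\mathfrak{s}_N)^2 = \sigma(N)$ and $f_N^*\mathfrak{s}_N = -\mathfrak{s}_N$, with $b^+(N) = b$, and such that $f_N$ acts as $-\mathrm{id}$ on $H^+(N;\mathbb{R})$; such an $N$ is constructed from suitable equivariant blow-ups or connected sums. Form the corresponding Borel family $E_N \to B$ with section $\iota_N$ at $q$ and perform the fibrewise equivariant connected sum at $\iota_M, \iota_N$ to obtain a family $E_X \to B$ of fibre $X = M \# N$ with charge-conjugation-twisted family Spin$^c$ structure $\mathfrak{s}_X$. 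Since the $H^+$-bundle of $N$ descends to $b\gamma$ over $\mathbb{RP}^b$, one has $w_b(H^+(N)) = x^b$ where $x \in H^1(\mathbb{RP}^b;\mathbb{Z}_2)$ is the generator, so $\langle x^b, [\mathbb{RP}^b] \rangle = 1 \pmod 2$. Taking $\theta = 1 \in H^0(B;\mathbb{Z}_2)$, the gluing formula then yields
\[
FSW^{\mathbb{Z}_2}_m(E_X, \mathfrak{s}_X, 1) \equiv SW(M, \mathfrak{s}_M) \pmod 2.
\]

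The remaining and hardest step is to evaluate the left-hand side directly. The combination of charge conjugation, the $\mathbb{Z}_2$-action by $f$, and the residual constant $U(1)$-gauge symmetry endows the families Seiberg-Witten configuration space of $E_X$ with a $\mathrm{Pin}(2)$-action which refines the moduli-space structure. Under the hypotheses $b^+(M) \equiv 3 \pmod 4$ and $c_1(\mathfrak{s}_M)^2 - \sigma(M) \equiv 0 \pmod{16}$, the equivariant index of the relevant families Dirac operator acquires a quaternionic structure, and the local contribution at the isolated fixed point (where $df_p = -\mathrm{id}$) is quaternionic as well. In case (ii) with $b^+(M) \geq 7$, a Bauer-Furuta/$\mathrm{Pin}(2)$-type obstruction then forces $FSW^{\mathbb{Z}_2}_m(E_X, \mathfrak{s}_X, 1) = 0$, whence $SW(M,\mathfrak{s}_M) \equiv 0$. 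In case (i) with $b^+(M) = 3$, the same equivariant analysis yields an additional reducible/spectral-flow contribution, which pairs non-trivially precisely when $f$ acts trivially on $H^+(M)$, giving $SW(M,\mathfrak{s}_M) \equiv 1$ in that case and $0$ otherwise. The principal obstacle is this last step, namely a careful $\mathrm{Pin}(2)$-equivariant index and reducibles analysis adapted to the charge-conjugation-twisted families setup, which is exactly what makes the numerical hypotheses on $b^+(M)$ and $c_1(\mathfrak{s}_M)^2 - \sigma(M)$ enter in the required mod-$2$ way.
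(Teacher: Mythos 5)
Your first step --- applying the $\rho$-twisted gluing formula to the Borel family of $(M,f)$ summed with an auxiliary equivariant $(N,f_N)$ over $B=\mathbb{RP}^b$ --- is the right opening move, but the proof then breaks down at exactly the point you flag as ``the hardest step''. The paper never evaluates $FSW^{\mathbb{Z}_2}(E_X,\mathfrak{s}_X,\rho)$ by a direct $\mathrm{Pin}(2)$-equivariant index or Bauer--Furuta analysis, and no such machinery is available in the framework of the paper (the families invariant here is a transverse count, not a stable homotopy class, and a charge-conjugation-twisted families $\mathrm{Pin}(2)$-theory would be a separate, substantial construction). The actual mechanism is a \emph{second} application of the gluing formula: one takes a triple sum $X=M\#N\#M'$ with an explicit model $(M',f_{M'},\mathfrak{s}_{M'})$ built from Kummer-type involutions on $K3$ (and $S^2\times S^2$ factors), and regroups it in two ways, as $M\#(N\#M')$ and as $M'\#(N\#M)$. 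Equating the two evaluations of the same families invariant gives Proposition \ref{prop:relation}/Corollary \ref{cor:relation}, i.e.\ $SW(M,\mathfrak{s}_M)\,w_{b^+}(H^+(M')\otimes\mathbb{R}_\rho)=SW(M',\mathfrak{s}_{M'})\,w_{b^+}(H^+(M)\otimes\mathbb{R}_\rho)$. Part (i) then follows because $M'=K3$ has $SW=1$ and its involution acts trivially on $H^+$; part (ii) follows because $M'$ is a nontrivial connected sum with both summands having $b^+>0$, so $SW(M',\mathfrak{s}_{M'})=0$. The hypotheses $b^+(M)\equiv 3\pmod 4$ and $c_1(\mathfrak{s}_M)^2-\sigma(M)\equiv 0\pmod{16}$ enter purely combinatorially, through the numerics needed to build $N$ and $M'$ out of $K3$ blocks (each contributing $16$ to $-c_1^2+\sigma$) and through the vanishing of $w_{b^+(M)}(H^+(M)\otimes\mathbb{R}_\rho)$ outside these congruences --- not through any quaternionic structure on an equivariant index. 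Without supplying either this double-decomposition argument or a genuine proof of your claimed $\mathrm{Pin}(2)$ vanishing and reducible contributions, the proof is incomplete.

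Two secondary but concrete errors in your setup. First, since $f^*\mathfrak{s}_M=-\mathfrak{s}_M$ you are in the $\rho$-twisted case with $\rho$ the nontrivial class on $\mathbb{RP}^b$, so the gluing formula (Theorem \ref{thm:rubtwisted}) produces $w_{b^+(N)}\bigl(H^+(N)\otimes\mathbb{R}_\rho\bigr)$, not $w_{b^+(N)}(H^+(N))$. If $f_N$ acts as $-\mathrm{id}$ on $H^+(N)$ as you stipulate, then $H^+(N)\cong b\gamma$ and $H^+(N)\otimes\mathbb{R}_\rho\cong b\gamma^{\otimes 2}$ is trivial, so the right-hand side is $0$ and the formula yields no information about $SW(M,\mathfrak{s}_M)$; you need $f_N$ to act as $+\mathrm{id}$ on $H^+(N)$ (which is why the paper uses the Kummer involution on $K3$, acting trivially on $H^+$). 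Second, the twisted gluing formula is only established (and the twisted invariant only defined) for $m=0$, i.e.\ $d(M,\mathfrak{s}_M)=0$; for $c_1(\mathfrak{s}_M)^2-\sigma(M)>4(b^+(M)+1)$ you cannot invoke $FSW^{\mathbb{Z}_2}_m$ with $m>0$ in the $\rho$-twisted setting. The paper handles this by first proving the case $c_1(\mathfrak{s}_M)^2-\sigma(M)=4(b^+(M)+1)$ and then reducing the general case to it via equivariant blow-ups at pairs of non-fixed points together with the ordinary blowup formula; your argument needs the same reduction.
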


In \textsection \ref{sec:applgroupaction}, we prove the gluing formula for the Seiberg-Witten invariants of group actions as a consequence of the gluing formula for families. The non-vanishing of this invariant gives an obstruction to the existence of an invariant metric with positive scalar curvature. In \textsection \ref{sec:involutions} we apply this formula to the simplest case where the group is $\mathbb{Z}_2$. As a corollary, we find:
\begin{corollary}
For every $n \ge 4$ and $m \ge n+18$, the $4$-manifold $X = \#^{n} \mathbb{CP}^2 \#^{m} \overline{\mathbb{CP}}^2$ admits a metric of positive scalar curvature and a smooth involution $f : X \to X$ such that there does not exist an $f$-invariant metric of positive scalar curvature on $X$.
\end{corollary}


The following is a brief outline of the sections of the paper. In Section \ref{sec:familiesSW} we give the definition of the families Seiberg-Witten invariants including several generalisations. In Section \ref{sec:SWgroupaction} we define an equivariant Seiberg-Witten invariant for smooth group actions and relate it to the families invariants. In Section \ref{sec:setup} we describe in detail the setup required to formulate the families gluing formula and state the main result. Sections \ref{sec:step1}-\ref{sec:part2} give the proof of the gluing formula in several steps. Section \ref{sec:gluing} contains a review of the relevant aspects of gluing theory for unparametrised Seiberg-Witten theory, mostly following the textbook \cite{nic}. In particular \textsection \ref{sec:glueloc} is concerned with the local Kuranishi model for glued configurations in Seiberg-Witten theory. Section \ref{sec:gluefamily} is concerned with adapting the arguments of the previous sections to the families setting. This turns out to be not as straightforward as one might expect, in part because of the fact that certain obstruction spaces in the local Kuranishi model are non-zero. Additionally one has to choose the families perturbation very carefully in order for the gluing arguments to work well. In Section \ref{sec:applications} we consider various applications of the gluing formula: a vanishing theorem (\textsection \ref{sec:vanishing}), obstructions to smooth isotopy (\textsection \ref{sec:isotopy}), the computation of the mod $2$ Seiberg-Witten invariants of spin structures (\textsection \ref{sec:mod2spin}), relations between mod $2$ Seiberg-Witten invariants of different $4$-manifolds, the gluing formula for the Seiberg-Witten invariants of group actions (\textsection \ref{sec:applgroupaction}) and actions on $4$-manifolds by involutions (\textsection \ref{sec:involutions}).\\

\noindent{\bf Acknowledgments}.
The authors acknowledge useful initial discussions with Varghese Mathai. Varghese Mathai acknowledges partial support by the Australian Research Council via DP 170101054. The authors would also like to express their appreciation to Nobuhiro Nakamura for informing them of LeBrun's paper. D. Baraglia was financially supported by the Australian Research Council Discovery Early Career Researcher Award DE160100024 and Australian Research Council Discovery Project DP170101054. H. Konno was supported by JSPS KAKENHI Grant Number
16J05569 and the Program for Leading Graduate Schools, MEXT, Japan.

\section{The families Seiberg-Witten invariant}\label{sec:familiesSW}

In this section we describe the formulation of the families invariant which we consider in this paper.
Although this type of invariant has been defined by Li-Liu~\cite{liliu}, we exhibit here some of its generalisations. In Section \ref{sec:SWgroupaction} we formulate a version of this invariant for group actions.

Let $X$ be a compact, smooth oriented $4$-manifold.
Let $B$ be a compact smooth manifold of dimension $d$ and suppose we have a smooth fibrewise oriented fibre bundle $\pi_X : E_X \to B$ whose fibres are diffeomorphic to $X$.
Let $T(E_X/B) = Ker( {\pi_X}_* )$ be the vertical tangent bundle.

Let $\mathfrak{s}_X$ be a Spin$^c$-structure on $X$. We will assume that $\mathfrak{s}_X$ is monodromy invariant under the monodromy action of $\pi_1(B)$ on the set of Spin$^c$-structures on $X$ induced by the family $E_X$. That is, we assume $\mathfrak{s}_X$ extends to give a continuously varying family of Spin$^c$-structures on the fibres of $E_X$. Note that this is in general a weaker condition than requiring the existence of a Spin$^c$-structure on $T(E_X/B)$ which restricts to $\mathfrak{s}_X$ on each fibre.
See Proposition~2.1 in \cite{bar} on this point.

Let us first consider the $\mathbb{Z}_2$-valued families Seiberg-Witten invariant, so we do not have to consider orientations on the moduli space at this stage. The necessary modifications for the $\mathbb{Z}$-valued families Seiberg-Witten invariant are straightforward and will be dealt with later.
Let
\[
d(X,\mathfrak{s}_X) = \frac{ c_1(\mathfrak{s}_X)^2 - \sigma(X)}{4} -1 - b^+(X) + b_1(X)
\]
be the virtual dimension of the ordinary Seiberg-Witten moduli space of $X$. Assume that
\[
b^+(X) > d + 1.
\]
This assumption is necessary to have a well-defined families Seiberg-Witten invariant. Under this assumption, for any family of fibrewise metrics $g_X$ on $E_X$, there exists a families perturbation $\eta_X$ on $E_X$ for which the families moduli space $\mathcal{M} = \mathcal{M}( E_X , g_X , \eta_X , \mathfrak{s}_X)$ is a smooth manifold of dimension 
\[
dim(\mathcal{M}) = d(X,\mathfrak{s}_X) + d
\]
($\mathcal{M}$ is empty if this number is negative).
Here, if $\mathfrak{s}_{X}$ extends to a Spin$^c$-structure on $T(E_X/B)$, the definition of the families moduli space $\mathcal{M}$ is the standard one, such as discussed in \cite{liliu}.
However, in fact, even when $\mathfrak{s}_{X}$ does not extend, one can define the families moduli space.
See Subsection~2.2 in \cite{bar} on this point.

Suppose that $m \ge 0$ is a non-negative integer and set $s = dim(\mathcal{M}) - 2m$. Now we define the families invariant of interest. The invariant will be defined in the following two cases. Assume that either:
\begin{itemize}
\item[(i)]{$m = 0$, or}
\item[(ii)]{$b_1(X) = 0$ and $\mathfrak{s}_X$ extends to a Spin$^c$-structure on $T(E_X/B)$, which we denote by $\tilde{\mathfrak{s}}_X$.}
\end{itemize}
In case (ii), we obtain a line bundle $\mathcal{L} \to \mathcal{M}$ over $\mathcal{M}$ as follows. Let $\mathcal{G} = Map( X  , S^1)$ be the gauge group on $X$. Consider the subgroup $\mathcal{G}_0$ of $\mathcal{G}$ given by
\[
\mathcal{G}_0 = \{ g \in \mathcal{G} \; | \; g = e^{if}, \text{ for some $f$ with } \int_{X} f dvol_X = 0 \}.
\]
Since $b_1(X)  = 0$, we see that there is a short exact sequence $1 \to \mathcal{G}_0 \to \mathcal{G} \to S^1 \to 1$. If we consider the families moduli space of solutions to the Seiberg-Witten equations taken modulo $\mathcal{G}_0$, we obtain a principal circle bundle over $\mathcal{M}$, the families moduli space of solutions modulo $\mathcal{G}$, hence an associated line bundle $\mathcal{L} \to \mathcal{M}$. Note that this line bundle depends on the choice of extension of $\tilde{\mathfrak{s}}_X$ of $\mathfrak{s}_X$
(see again Subsection~2.2 in \cite{bar} on this point).
In fact, since we assume $b_1(X) = 0$, any two extensions differ by the action of the pullback of a line bundle $L_B \to B$. Then it is easy to see that $\mathcal{L}$ changes to $\mathcal{L} \otimes \pi^*(L_B)$, where $\pi : \mathcal{M} \to B$ is the natural map to $B$.

\begin{definition}[Families $\mathbb{Z}_2$-invariant]\label{def:fsw}
Suppose that $\theta \in H^s(B , \mathbb{Z}_2)$. Now we define the families invariant as follows:
\begin{itemize}
\item[(i)]{If $m = 0$, then $dim(\mathcal{M}) = s$ and we set
\[
FSW^{\mathbb{Z}_2}(E_X , \mathfrak{s}_X , \theta) = \langle \pi^* \theta, [\mathcal{M}] \rangle \in \mathbb{Z}_2,
\]
where $[\mathcal{M}]$ is the fundamental class of $\mathcal{M}$ with $\mathbb{Z}_2$-coefficients.}
\item[(ii)]{If $m > 0$, then choose an extension $\tilde{\mathfrak{s}}_X$ of $\mathfrak{s}_X$ to a Spin$^c$-structure on $T(E_X/B)$. In turn this determines a line bundle $\mathcal{L} \to \mathcal{M}$. We then set
\[
FSW^{\mathbb{Z}_2}_m(E_X , \tilde{\mathfrak{s}}_X , \theta) = \langle c_1(\mathcal{L})^m \cup \pi^* \theta, [\mathcal{M}] \rangle \in \mathbb{Z}_2.
\]
}
\end{itemize}
\end{definition}

The condition $b^+(X) > d + 1$ ensures that $FSW^{\mathbb{Z}_2}(E_X , \mathfrak{s}_X , \theta)$ does not depend on the choice of metric or perturbation and so gives an invariant of the family. In case (ii), the invariant will in general depend on the choice of lift $\tilde{\mathfrak{s}}_X$ of $\mathfrak{s}_X$. However, to unify notation we will often write the invariant in case (ii) simply as $FSW^{\mathbb{Z}_2}_m(E_X , \mathfrak{s}_X , \theta)$, suppressing the dependence on the choice of lift $\tilde{\mathfrak{s}}_X$. This will be justified later when we see that under certain conditions the invariant is actually independent of the choice of lift.

\begin{remark}
Variant (ii) of Definition \ref{def:fsw} can be defined also for $m=0$ in which case it clearly coincides with variant (i): $FSW^{\mathbb{Z}_2}_0(E_X , \tilde{\mathfrak{s}}_X , \theta) = FSW^{\mathbb{Z}_2}(E_X , \mathfrak{s}_X , \theta)$.
\end{remark}

Next, we generalise the setup slightly to incorporate the so-called charge conjugation symmetry $j$. This construction was considered by Ruberman in the case of $1$-dimensional families in \cite{rub1}. Recall that for any Spin$^c$-structure $\mathfrak{s}_X$ on $X$ there is a ``dual" or ``charge conjugate" Spin$^c$-structure, which we will denote by $-\mathfrak{s}_X$. We have that $c_1( -\mathfrak{s}_X ) = -c_1(\mathfrak{s}_X)$ and that $\mathfrak{s}_X$ comes from a spin structure precisely when $\mathfrak{s}_X = -\mathfrak{s}_X$. Let $S^\pm( \mathfrak{s}_X)$ denote the spinor bundles associated to $\mathfrak{s}_X$. Then charge conjugation determines an antilinear isomorphism $j : S^\pm(\mathfrak{s}_X) \to S^\pm(-\mathfrak{s}_X)$ with the property that $j^2 = -1$. Note that if $\mathfrak{s}_X$ comes from a spin structure, then $j$ gives a quaternionic structure on $S^\pm(\mathfrak{s}_X)$.

Let $\pi_X : E_X \to B$ be a smooth family of $4$-manifolds with fibre $X$. Recall that $\pi_1(B)$ acts on the set of Spin$^c$-structures by monodromy. We will consider families where $\mathfrak{s}_X$ is preserved up to charge conjugation. Thus we assume there exists a homomorphism $\rho : \pi_1(B) \to \mathbb{Z}_2$ such that the monodromy action of $\pi_1(B)$ preserves $\mathfrak{s}_X$ up to a sign factor given by $\rho$. Note that if $\mathfrak{s}_X$ does not come from a spin structure, then $\rho$ is uniquely determined by the monodromy. On the other hand if $\mathfrak{s}_X$ comes from a spin structure, then $\rho$ may be chosen arbitrarily, and this choice gives rise to different families Seiberg-Witten invariants.

Define $d(X,\mathfrak{s}_X)$ as before and assume that $b^+(X) > dim(B) + 1$. View $\rho$ as a class in $H^1( B , \mathbb{Z}_2)$. Represent $\rho$ as a $\mathbb{Z}_2$-valued cocycle $\rho_{\alpha \beta}$ with respect to some open cover $U_\alpha$ of $B$. We construct local families moduli spaces $\mathcal{M}_\alpha$ over each $U_\alpha$ exactly as before. On the overlap $U_\alpha \cap U_\beta$, we patch together $\mathcal{M}_\alpha$ and $\mathcal{M}_\beta$ using the charge conjugation symmetry $(a,\psi) \mapsto (-a , j(\psi))$. Note that this gives us a globally well-defined moduli space $\mathcal{M}^\rho$ because $j$ squares to a gauge transformation. Note that if $\eta_{\alpha}$ are the locally defined perturbation $2$-forms, then for these to patch together we require $\eta_{\alpha}|_{U_{\alpha \beta}} = (-1)^{\rho_{\alpha \beta}} \eta_{\beta}|_{U_{\alpha \beta}}$. Let $\mathbb{R}_\rho \to B$ be the real line bundle on $B$ determined by $\rho \in H^1(B , \mathbb{Z}_2)$. Then the above patching condition on the perturbations $\{ \eta_{\alpha} \}$ means that they patch together to give a global section $\eta$ of $H^+(X) \otimes \mathbb{R}_\rho$.

\begin{definition}[$\rho$-twisted families $\mathbb{Z}_2$-invariant]\label{def:fsw}
Suppose that $\theta \in H^s(B , \mathbb{Z}_2)$, where $s = d(X,\mathfrak{s}_X) + dim(B)$. Now we define the $\rho$-twisted families invariant as follows:
\[
FSW^{\mathbb{Z}_2}(E_X , \mathfrak{s}_X , \theta , \rho) = \langle \pi^* \theta, [\mathcal{M}^\rho] \rangle \in \mathbb{Z}_2,
\]
where $[\mathcal{M}^\rho]$ is the fundamental class of $\mathcal{M}^\rho$ with $\mathbb{Z}_2$-coefficients.
\end{definition}

Next we formulate a $\mathbb{Z}$-valued version of the families Seiberg--Witten invariant.
In addition to all assumptions supposed when we defined the $\mathbb{Z}_{2}$-valued invariant (Definition~\ref{def:fsw}), let us assume that $B$ is oriented and fix a homology orientation of $X$. Further, let $det^+(E_X)$ denote the real line bundle on $B$ whose fibre over $t \in B$ is $det( H^1(X_t , \mathbb{R}) \oplus H^+(X_t) )$, where $X_t$ denote the fibre of $E_X$ over $t$ and $H^+(X_t)$ denotes the space of harmonic self-dual $2$-forms on $X_t$ (with respect to some smoothly varying family of fibrewise metrics). Suppose that $det^+(E_X)$ is orientable and fix an orientation on $det^+(E_X)$. As in \cite{liliu}, this gives rise to an orientation of the families moduli space $\mathcal{M}$ for a generic families perturbation, and thus we get a fundamental class $[\mathcal{M}]$ in $\mathbb{Z}$-valued cohomology.
By repeating Definition~\ref{def:fsw}, we have: 

\begin{definition}[Families $\mathbb{Z}$-invariant]\label{def:fswZ}
Suppose that $\theta \in H^s(B , \mathbb{Z})$. Now we define the families invariant as follows:
\begin{itemize}
\item[(i)]{If $m = 0$, then $dim(\mathcal{M}) = s$ and we set
\[
FSW^{\mathbb{Z}}(E_X , \mathfrak{s}_X , \theta) = \langle \pi^* \theta, [\mathcal{M}] \rangle \in \mathbb{Z},
\]
where $[\mathcal{M}]$ is the fundamental class of $\mathcal{M}$ with $\mathbb{Z}$-coefficients.}
\item[(ii)]{If $m > 0$, then choose an extension $\tilde{\mathfrak{s}}_X$ of $\mathfrak{s}_X$ to a Spin$^c$-structure on $T(E_X/B)$. In turn this determines a line bundle $\mathcal{L} \to \mathcal{M}$. We then set
\[
FSW^{\mathbb{Z}}_m(E_X , \tilde{\mathfrak{s}}_X , \theta) = \langle c_1(\mathcal{L})^m \cup \pi^* \theta, [\mathcal{M}] \rangle \in \mathbb{Z}.
\]
}
\end{itemize}
\end{definition}

\begin{remark}
If $B$ or $det^+(E_X)$ are non-orientable, we can still define $\mathbb{Z}$-valued families Seiberg-Witten invariants provided we work with local coefficients. We can also consider a $\mathbb{Z}$-valued $\rho$-twisted families Seiberg-Witten invariant. For simplicity we omit the details of these generalisations.
\end{remark}

We here give a remark on a cohomological description of the families Seiberg--Witten invariants defined above.
The argument here is parallel for both $\mathbb{Z}_{2}$-case and $\mathbb{Z}$-case, and therefore we drop the coefficient from our notation of (co)homology and from the families Seiberg-Witten invariant.
First, recall that we can define the wrong way map
\[
\pi_{\ast} : H^{\ast}(\mathcal{M}) \to H^{\ast+dim(B)-dim(\mathcal{M})}(B)
\]
by composing the usual pull-back $\pi^{\ast}$ on cohomology and Poincar\'e duality for both $\mathcal{M}$ and $B$.
Next, in the setting of Definition~\ref{def:fsw} or Definition~\ref{def:fswZ}, let us write again the dimension of the families moduli space as $dim(\mathcal{M}) = 2m + s$ for some non-negative integer $m$ and $s$, and assume again that either:
\begin{itemize}
\item[(i)]{$m = 0$, or}
\item[(ii)]{$b_1(X) = 0$ and $\mathfrak{s}_X$ extends to a Spin$^c$-structure on $T(E_X/B)$, which we denote by $\tilde{\mathfrak{s}}_X$.}
\end{itemize}
In case (i), we set
\[
FSW(E_X , \mathfrak{s}_X) = \pi_{\ast}(1) \in H^{-d(X , \mathfrak{s}_X)}(B) = H^{d-s}(B),
\]
and in case (ii), we set
\[
FSW_m(E_X , \mathfrak{s}_X) = \pi_{\ast}(c_1(\mathcal{L})^m) \in H^{2m-d(X , \mathfrak{s}_X)}(B) = H^{d-s}(B).
\]
Then we get the equality
\begin{align}
\label{eq cohomological and numerical}
FSW_m(E_X , \mathfrak{s}_X , \theta) = \langle FSW_m(E_X , \mathfrak{s}_X) \cup \theta , [B] \rangle
\end{align}
for any $\theta \in H^s(B,\mathbb{Z})$ because of the so-called projection formula.
In particular, if we consider the $\mathbb{Z}_{2}$-coefficient case,
then equation \eqref{eq cohomological and numerical} can be regarded as a defining property of the cohomology class $FSW(E_X , \mathfrak{s}_X)$.
Thus we will sometime view the families Seiberg-Witten invariant as the cohomology class $FSW(E_X , \mathfrak{s}_X)$.
As well as in the untwisted case, note that we can regard the $\rho$-twisted families invariant as a cohomology class
\[
FSW^{\mathbb{Z}_2}(E_X, \mathfrak{s}_X , \rho) \in H^{-d(X , \mathfrak{s}_X)}(B , \mathbb{Z}_2).
\]

\section{The Seiberg-Witten invariants of group actions}\label{sec:SWgroupaction}

In this section we define a variant of the families Seiberg-Witten invariant which is defined for smooth group actions.

Let $X$ be a smooth compact oriented $4$-manifold and let $G$ be a group acting on $X$ by orientation preserving diffeomorphisms. Suppose that $G$ preserves the isomorphism class of a Spin$^c$-structure $\mathfrak{s}_X$. Suppose that $d(X , \mathfrak{s}_X) \le 0$ and that $b^+(X) > -d(X , \mathfrak{s}_X) + 1$. Let $B$ be any smooth compact manifold with $dim(B) < b^+(X) - 1$. Let $E \to B$ be a principal $G$-bundle over $B$ (where $G$ is given the discrete topology). We obtain an associated family $E_X = E \times_G X \to B$. Associated to this family and Spin$^c$-structure $\mathfrak{s}_X$ is the families Seiberg-Witten invariant:
\[
FSW^{\mathbb{Z}_2}(E_X , \mathfrak{s}_X) \in H^{-d(X,\mathfrak{s}_X)}(B , \mathbb{Z}_2).
\]
If we consider $(X,\mathfrak{s}_X)$ and the $G$-action on $X$ as fixed data, then we may think of $FSW^{\mathbb{Z}_2}(E_X , \mathfrak{s}_X)$ as a map which assigns to each principal $G$-bundle $E\to B$ over a base $B$ with $dim(B) < b^+(X) - 1$ a cohomology class in $H^{-d(X,\mathfrak{s}_X)}(B , \mathbb{Z}_2)$. It turns out that this assignment is a characteristic class, that is, there exists a cohomology class
\[
SW_G^{\mathbb{Z}_2}(X , \mathfrak{s}_X) \in H^{-d(X,\mathfrak{s}_X)}( BG , \mathbb{Z}_2)
\]
such that if $f : B \to BG$ is the classifying map of $E \to B$ then
\begin{equation}\label{equ:charclass}
FSW^{\mathbb{Z}_2}(E_X , \mathfrak{s}_X) = f^*( SW_G^{\mathbb{Z}_2}(X , \mathfrak{s}_X) ).
\end{equation}
This follows easily from the existence of the characteristic classes $\mathbb{SW}^{\mathbb{Z}_2}(X , \mathfrak{s}_X) \in H^{-d(X,\mathfrak{s}_X)}( BDiff(X , \mathfrak{s}_X) , \mathbb{Z}_2)$ contructed by Konno \cite{konno}. Recall that if $E_X \to B$ is any family of $4$-manifolds with structure group $Diff(X , \mathfrak{s}_X)$ (the group of diffeomorphisms of $X$ preserving the isomorphism class of $\mathfrak{s}_X$), the families Seiberg-Witten invariant is given by
\[
FSW^{\mathbb{Z}_2}(E_X , \mathfrak{s}_X) = g^*( \mathbb{SW}^{\mathbb{Z}_2}(X , \mathfrak{s}_X)),
\]
where $g : B \to BDiff(X , \mathfrak{s}_X)$ is the classifying map of $E_X \to B$. In the case that $E_X$ is of the form $E_X = E \times_G X$, we have that $g$ factors as
\[
B \buildrel f \over \longrightarrow BG \buildrel \psi \over \longrightarrow BDiff(X , \mathfrak{s}_X),
\]
where $\psi$ is the map induced from the homomorphism $G \to Diff(X , \mathfrak{s}_X)$ determined by the action of $G$ on $X$. Then Equation (\ref{equ:charclass}) follows immediately if we define $SW_G^{\mathbb{Z}_2}(X , \mathfrak{s}_X)$ to be:
\begin{equation}\label{equ:gSW}
SW_G^{\mathbb{Z}_2}(X , \mathfrak{s}_X) = \psi^*( \mathbb{SW}^{\mathbb{Z}_2}(X , \mathfrak{s}_X) ).
\end{equation}
\begin{definition}
We define the $G$-equivariant Seiberg-Witten invariant of $(X , \mathfrak{s}_X)$ with $\mathbb{Z}_2$-coefficients to be the class $SW_G^{\mathbb{Z}_2}(X, \mathfrak{s}_X) \in H^{-d(X,\mathfrak{s}_X)}(BG , \mathbb{Z}_2)$ defined by Equation (\ref{equ:gSW}).
\end{definition}
We can also consider a twisted version of the invariant $SW_G^{\mathbb{Z}_2}$ as follows. Let $X$ be a smooth compact oriented $4$-manifold and let $G$ be a group acting on $X$ by orientation preserving diffeomorphisms. Let $\rho : G \to \mathbb{Z}_2$ be a homomorphism and suppose that $G$ preserves a Spin$^c$-structure $\mathfrak{s}_X$ up to a sign factor determined by $\rho$:
\[
g^*(\mathfrak{s}_X) = (-1)^{\rho(g)}\mathfrak{s}_X, \text{ for all } g \in G.
\]
Suppose as before that $d(X , \mathfrak{s}_X) \le 0$ and that $b^+(X) > -d(X , \mathfrak{s}_X) + 1$. Then we obtain a $\rho$-twisted $G$-equivariant Seiberg-Witten invariant
\[
SW_G^{\mathbb{Z}_2}( X , \mathfrak{s}_X , \rho) \in H^{-d(X , \mathfrak{s}_X)}( BG , \mathbb{Z}_2).
\]

One can also define the corresponding $\mathbb{Z}$-valued invariants under suitable orientability assumptions. In all applications of these invariants that we have considered so far, the $\mathbb{Z}_2$-invariant suffices. Hence we omit the details of the construction of the $G$-equivariant $\mathbb{Z}$-valued Seiberg-Witten invariant.

The following proposition illustrates one of the possible applications of the $G$-equivariant Seiberg-Witten invariants:
\begin{proposition}
If $X$ admits a $G$-invariant metric of positive scalar curvature then $SW_G^{\mathbb{Z}_2}(X , \mathfrak{s}_X , \rho) = 0$, whenever it is defined.
\end{proposition}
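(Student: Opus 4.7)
The plan is to deduce the vanishing of $SW_G^{\mathbb{Z}_2}(X,\mathfrak{s}_X,\rho)$ from the emptiness of the twisted families moduli space $\mathcal{M}^\rho$ for every associated family. Concretely, let $B$ be any closed smooth manifold with $\dim(B)<b^+(X)-1$ and let $f\colon B\to BG$ classify a principal $G$-bundle $E\to B$; then $f^{\ast}SW_G^{\mathbb{Z}_2}(X,\mathfrak{s}_X,\rho)=FSW^{\mathbb{Z}_2}(E_X,\mathfrak{s}_X,\rho)$ for the associated family $E_X=E\times_G X$. Since mod-$2$ homology of any space is generated by closed smooth manifolds mapping in (Thom representability), it suffices to show $FSW^{\mathbb{Z}_2}(E_X,\mathfrak{s}_X,\rho,1)=0$ for all such $B$ with $\dim(B)=-d(X,\mathfrak{s}_X)$; the point is to exhibit fibrewise data on $E_X$ for which $\mathcal{M}^\rho=\varnothing$.

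The $G$-invariant metric $g_0$ of positive scalar curvature descends, via the associated bundle construction, to a fibrewise metric $g_X$ on $E_X$ every fibre of which is isometric to $(X,g_0)$, hence has scalar curvature bounded below by some $s_{\min}>0$. I take the families perturbation $\eta_X$ to be a section of the bundle of fibrewise harmonic self-dual forms twisted by $\mathbb{R}_\rho$, chosen to be both small in $L^\infty$-norm and generic. The fibrewise Weitzenb\"ock--Lichnerowicz identity yields, for every solution $(A,\psi)$ on a fibre $X_t$,
\[
\int_{X_t}\!\Bigl(|\nabla_A\psi|^2+\tfrac{s_{\min}}{4}|\psi|^2+\tfrac{1}{4}|\psi|^4\Bigr)\,dvol_{g_0}\le C\,\|\eta_X\|_{L^\infty}\cdot\|\psi\|_{L^2}^2,
\]
so once $\|\eta_X\|_{L^\infty}<s_{\min}/(4C)$ the spinor $\psi$ must vanish identically, ruling out all irreducible solutions. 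The reducible locus is cut out fibrewise by $2\pi c_1^+(\mathfrak{s}_X)=\eta_X^+$, a condition of real codimension $b^+(X)$ in the space of fibrewise perturbations; under $b^+(X)>\dim(B)+1$, a generic arbitrarily small $\eta_X$ has empty reducible locus. Consequently $\mathcal{M}^\rho=\varnothing$ and the corresponding families invariant vanishes.

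The main obstacle is the apparent tension between needing $\eta_X$ small, so that the Weitzenb\"ock estimate eliminates irreducibles, and needing $\eta_X$ generic, so that reducibles are avoided throughout the family. These two requirements are simultaneously satisfiable because within any $L^\infty$-ball of radius $\varepsilon>0$ the subset of perturbations producing a nonempty reducible locus has positive codimension, so genericity survives smallness. In the $\rho$-twisted case nothing in this analysis changes: $\rho$ enters only as a sign in the identifications between local trivialisations, and affects neither the pointwise fibrewise Weitzenb\"ock estimate nor the codimension count for the reducible wall.
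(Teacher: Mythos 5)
Your proposal is correct and follows essentially the same route as the paper's proof: reduce to families over closed manifolds via Steenrod representability of mod $2$ homology, use the associated-bundle construction to get a fibrewise positive-scalar-curvature metric, and then observe that a perturbation which is simultaneously small (killing irreducibles via the Weitzenb\"ock estimate) and generic (avoiding the reducible wall, which has codimension $b^+(X)>\dim(B)+1$) renders the families moduli space empty. The only difference is that you spell out the Weitzenb\"ock inequality and the smallness/genericity compatibility explicitly, which the paper leaves implicit.
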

\begin{proof}
Assume that $X$ admits such a metric. To show that $SW_G^{\mathbb{Z}_2}(X , \mathfrak{s}_X , \rho ) = 0$ it suffices to show that its pairing with any class $\beta \in H_{-d(X , \mathfrak{s}_X)}(BG , \mathbb{Z}_2)$ is zero. By the solution to the Steenrod problem with $\mathbb{Z}_2$-coefficients, there exists a compact smooth manifold $B$ of dimension $-d(X,\mathfrak{s}_X)$ and a continuous map $f : B \to BG$ such that $f_*[B] = \beta $. Let $E = f^*(EG)$ be the pullback of the universal bundle $EG \to BG$ and let $E_X = E \times_G X$ be the family associated to $E$. Then
\[
\langle SW_G^{\mathbb{Z}_2}(X , \mathfrak{s}_X , \rho) , \beta \rangle = \langle f^*(SW_G^{\mathbb{Z}_2}(X , \mathfrak{s}_X) , \rho) , [B] \rangle = \langle FSW^{\mathbb{Z}_2}(E_X , \mathfrak{s}_X , \rho ) , [B] \rangle.
\]
So it suffices to show that $FSW^{\mathbb{Z}_2}(E_X , \mathfrak{s}_X , \rho ) = 0$. But a $G$-invariant metric $g$ on $X$ with positive scalar curvature determines a family $\{ g_b \}_{b \in B}$ of fibrewise metrics on $E_X$ with positive scalar curvature. Since $b^+(X) > dim(B) + 1 = -d(X,\mathfrak{s}_X) + 1$ (by assumption) we can choose a families perturbation $\{ \eta_b \}_{b \in B}$ which avoids the wall of reducibles and we can in addition take the $\mathcal{C}^0$ norm of the perturbations $\{ \eta_b \}_{b \in B}$ to be sufficiently small so that there are no irreducible solutions of the Seiberg-Witten equations for $(g_b , \eta_b)$ for every $b \in B$. Thus the families Seiberg-Witten moduli space is empty and hence $FSW^{\mathbb{Z}_2}(E_X , \mathfrak{s}_X , \rho ) = 0$.
\end{proof}

\section{Setup for the general gluing construction}
\label{sec:setup}

In this section we describe the formulation of a gluing result used to calculate the families invariant.
This is a generalisation of Ruberman's argument given in \cite{rub3}.
(See also \cite{rub1,rub2} for the Yang-Mills version.)

Our gluing result can be proved for families that are obtained by a families connected sum.
Let $M,N$ be compact, smooth, oriented $4$-manifolds and set $X = M \# N$.
We assume throughout that $b_1(N) = 0$.
Let $B$ be a compact smooth manifold of dimension $d$ and suppose we have smooth fibrewise oriented families $\pi_M : E_M \to B$, $\pi_N : E_N \to B$ whose fibres are $M,N$ respectively. Let $T(E_M/B) = Ker( {\pi_M}_* )$, $T(E_N/B) = Ker( {\pi_N}_* )$ be the vertical tangent bundles. We wish to form a connected sum family. For this, suppose we are given the following additional data:
\begin{itemize}
\item{Sections $\iota_M : B \to E_M$ and $\iota_N : B \to E_N$.}
\item{An orientation reversing vector bundle isomorphism $\phi : \iota_M^*( T(E_M/B)) \to \iota_N^*( T(E_N/B))$.}
\end{itemize}
Let $V = \iota_M^*( T(E_M/B))$ be the normal bundle of $\iota_M$. Then $V$ is a real rank $4$-vector bundle on $B$. Note that $V$ is oriented by our assumption that $E_M \to B$ is fibrewise oriented. Fix once and for all a metric on $V$ so that $V$ becomes an $SO(4)$-vector bundle. By the isomorphism $\phi$ we can identify $\iota_N^*(T(E_N/B))$ with $V$ but with the opposite orientation.
Let $D(V) \to B$ be the unit open disc bundle of $V$ and $S(V) \to B$ the unit sphere bundle. Let $U_M \subset E_M$, $U_N \subset E_N$ be tubular neighbourhoods of $\iota_M, \iota_N$ equipped with diffeomorphisms $e_M : D(V) \to U_M$, $e_N : D(V) \to U_N$, where $e_M$ is orientation preserving and $e_N$ is orientation reversing.

Let $M'$ denote $M$ with a small open ball removed and similarly define $N'$. Then $M',N'$ are compact $4$-manfolds with boundary $\partial M' = \partial N' = S^3$. We may regard $E_{M'} = E_M \setminus U_M$ as a family of $4$-manifolds diffeomorphic to $M'$ over $B$ and similarly regard $E_{N'} = E_N \setminus U_N$ as a family of $4$-manifolds diffeomorphic to $N'$. The boundary of $E_{M'}$ is the $S^3$-bundle $S(V) \to B$. Similarly, the boundary of $E_{N'}$ is also $S(V) \to B$, except the fibres of $S(V) \to B$ inherit the opposite orientation from the family $E_{N'}$.

Let $\hat{M}, \hat{N}$ be the cylindrical end $4$-manifolds obtained from $M',N'$ by attaching the half-infinite cylinders $[0, \infty) \times S^3$ to $M'$ and $(-\infty , 0] \times S^3$ to $N'$. We obtain a family $E_{\hat{M}} \to B$ of cylindrical end $4$-manifolds by attaching to $E_{M'}$ the family of cylinders $[0,\infty) \times S(V)$ and we obtain a family $E_{\hat{N}} \to B$ similarly.

The fixed metric on $V$ determines families of fibrewise metrics on $D(V)$ and $S(V)$ which in turn define fibrewise metrics on the cylinders $[0,\infty) \times S(V)$, $(-\infty , 0] \times S(V)$ by taking the product with the standard metric $(dt)^2$ on the intervals $[0 , \infty), (-\infty , 0]$. Choose a collar neighbourhood $(-\epsilon, 0] \times S(V)$ of $\partial E_{M'}$ and on this neighbourhood choose the fibrewise metric $(dt)^2 + g_{S(V)}$ and extend this to a fibrewise metric on all of $E_{M'}$. Construct a fibrewise metric similarly for $E_{N'}$. Clearly the fibrewise metrics on $E_{M'}, E_{N'}$ can be extended to a family of cylindrical end metrics on the families $E_{\hat{M}}, E_{\hat{N}}$, which we denote by $\{g_{\hat{M},b}\}_{b \in B}, \{g_{\hat{N},b}\}_{b \in B}$, or more simply just as $g_{\hat{M}}, g_{\hat{N}}$.

For each $r > 0$, let $\hat{M}(r) = \hat{M} \setminus (r+2,\infty) \times S^3$ and let $\hat{N}(r) = \hat{N} \setminus (\infty, -r-2) \times S^3$. Let $\hat{X}(r)$ be the $4$-manifold obtained by identifying the end portions of the necks of $\hat{M}(r)$ and $\hat{N}(r)$ via the map $[r,r+2] \times S^3 \to [-r-2 , -r] \times S^3$ given by $t \mapsto t - 2r-2$. For each $r > 0$, $\hat{X}(r)$ is diffeomorphic to $X = M \# N$, but the family of metrics $g_{\hat{X}(r)} = g_{\hat{M}} \#_r g_{\hat{N}}$ depends on $r$. As $r \to \infty$ the length of the neck joining $M$ and $N$ is stretched out. We can carry out this construction in families giving a family $E_{\hat{X}(r)} \to B$ which depends on $r$.

The family $E_{\hat{X}(r)}$ that we have just constructed depends on numerous choices (for example, on the choice of tubular neighbourhoods $U_M, U_N$). However it is not hard to see that up to isomorphism of families, the only data on which $E_{\hat{X}(r)}$ depends is the tuple $(E_M , E_N , \iota_M , \iota_N , \phi)$, where two families are considered isomorphic if there is a diffeomorphism of their total spaces covering the identity on $B$. We will usually just write the family as $E_X$, hiding the dependence of the various choices involved in its construction.

Let $\mathfrak{s}_M, \mathfrak{s}_N$ be Spin$^c$-structures on $M,N$ and let $\mathfrak{s}_X$ be the Spin$^c$-structure on $X$ obtained as the connected sum of $\mathfrak{s}_M$ and $\mathfrak{s}_N$. We will assume that $\mathfrak{s}_M,\mathfrak{s}_N$ are monodromy invariant under the monodromy action of $\pi_1(B)$ on the set of Spin$^c$-structures on $M$ and $N$ induces by the families $E_M,E_N$. Put another way, we assume $\mathfrak{s}_M,\mathfrak{s}_N$ extend to give continuously varying families of Spin$^c$-structures on the fibres of $E_M,E_N$. Then likewise $\mathfrak{s}_X$ extends to a continuous family of Spin$^c$-structures on the fibres of $E_{\hat{X}(r)}$.

Let us first consider the $\mathbb{Z}_2$-valued families Seiberg-Witten invariant. We will assume that
\[
d(M,\mathfrak{s}_M) = \frac{ c_1(\mathfrak{s}_M)^2 - \sigma(M)}{4} -1 - b^+(M) + b_1(M)
\]
is even and non-negative, and define $m \ge 0$ by $d(M,\mathfrak{s}_M)=2m$.
Next, we make the following assumptions about $N$. Assume that:
\[
c_1(\mathfrak{s}_N)^2 = \sigma(N)
\]
so that the index of the Spin$^c$-Dirac operator on $N$ is zero. Assume in addition that:
\[
0 < b^+(N) \le dim(B) = d.
\]
Recall also that we are assuming $b_1(N) = 0$. It follows that
\[
d(N,\mathfrak{s}_N) = \frac{ c_1(\mathfrak{s}_N)^2 - \sigma(N)}{4} -1 - b^+(N) + b_1(N) = -1 - b^+(N) < 0.
\]
Let $\mathcal{M} = \mathcal{M}(E_{\hat{X}(r)} , g_{\hat{X}(r)} , \eta_{\hat{X}(r)} , \mathfrak{s}_X )$ be the Seiberg-Witten moduli space for the family $E_{\hat{X}(r)}$ and a choice of generic perturbation $\eta_{\hat{X}(r)}$ (for now $\eta_{\hat{X}(r)}$ is an arbitrary generic perturbation. Later, we will choose $\eta_{\hat{X}(r)}$ much more carefully). Then
\begin{equation*}
\begin{aligned}
dim(\mathcal{M}) &= d(M , \mathfrak{s}_M) + d(N , \mathfrak{s}_N) + 1 + dim(B) \\
&= 2m + dim(B) - b^+(N).
\end{aligned}
\end{equation*}
Note that for a well-defined families Seiberg-Witten invariant attached to the family $E_X$ we need to assume that:
\[
b^+(M) > ( dim(B) - b^+(N) ) + 1.
\]
Let us set
\[
s = dim(B) - b^+(N) \ge 0.
\]
So $dim(\mathcal{M}) = 2m + s$ and we require $b^+(M) > s+1$.

Let $\theta \in H^s( B , \mathbb{Z}_2)$.
Recall that the families invariant $FSW^{\mathbb{Z}_2}(E_X , \mathfrak{s}_X , \theta)$ is defined in the following cases:
\begin{itemize}
\item[(i)]{$d(M , \mathfrak{s}_M) = 2m = 0$, or}
\item[(ii)]{$b_1(M) = 0$ (hence also $b_1(X)=0$) and $\mathfrak{s}_X$ extends to a Spin$^c$-structure on $T(E_X/B)$, which we denote by $\tilde{\mathfrak{s}}_X$.}
\end{itemize}

The main gluing theorem of this paper is as follows:

\begin{theorem}[$\mathbb{Z}_{2}$-valued gluing formula]\label{thm:rub}
Suppose that either
\begin{itemize}
\item[(i)]{$d(M , \mathfrak{s}_M) = 2m = 0$, or}
\item[(ii)]{$b_1(M) = b_1(X) = 0$ and $\mathfrak{s}_X$ extends to a Spin$^c$-structure on $T(E_X/B)$.}
\end{itemize}
Then we have:
\begin{itemize}
\item[(1)]{For any $\theta \in H^s(B , \mathbb{Z}_2)$, we have
\[
FSW^{\mathbb{Z}_2}_m(E_X , \mathfrak{s}_X , \theta ) = SW( M , \mathfrak{s}_M) \cdot \langle \theta \cup w_{b^+(N)}(H^+(N)) , [B] \rangle
\]
in $\mathbb{Z}_{2}$.
Here $SW( M , \mathfrak{s}_M)$ denotes the mod $2$ Seiberg-Witten invariant.
In particular, in case (ii), $FSW^{\mathbb{Z}_2}(E_X , \mathfrak{s}_X , \theta )$ does not depend on the choice of extension $\tilde{\mathfrak{s}_X}$ of $\mathfrak{s}_X$ to a Spin$^c$-structure on $T(E_X/B)$.}
\item[(2)]{For any integer $k > 0$ and any $\theta \in H^{s+2k}(B , \mathbb{Z}_2)$, we have
\[
FSW^{\mathbb{Z}_2}_{m-k}( E_X , \mathfrak{s}_X , \theta) = 0.
\]
}
\end{itemize}
\end{theorem}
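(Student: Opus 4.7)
The approach is a families version of the neck-stretching and gluing technique, generalising Ruberman's one-parameter argument in \cite{rub3}. First I would stretch the neck of $E_{\hat{X}(r)}$ as $r \to \infty$. By a families adaptation of the standard cylindrical-end gluing analysis (Sections \ref{sec:gluing} and \ref{sec:gluefamily}), each element of $\mathcal{M}$ for large $r$ is identified with a pair of configurations on $\hat{M}$ and $\hat{N}$ sharing a common asymptotic limit on the neck $\mathbb{R}\times S^3$. Since $b_1(S^3)=0$ and $S^3$ admits a unique reducible asymptotic limit, the matching is essentially canonical, so $\mathcal{M}$ is identified up to $\mathbb{Z}_2$-homology with a fibered product over $B$ of the $\hat{M}$- and $\hat{N}$-moduli.

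Second, I would analyse the $\hat{N}$-side. The hypotheses $c_1(\mathfrak{s}_N)^2=\sigma(N)$, $b_1(N)=0$, and $b^+(N)>0$ force $d(N,\mathfrak{s}_N)=-1-b^+(N)<0$, so a generic fibrewise perturbation eliminates all irreducibles. A reducible solution survives over a point $b\in B$ precisely when the harmonic self-dual part of the perturbation on the fibre $N_b$ vanishes. In families these obstructions assemble into a smooth section of the rank-$b^+(N)$ real vector bundle $H^+(E_N/B)\to B$, whose generic zero locus $Z\subset B$ is a closed codimension-$b^+(N)$ submanifold Poincar\'e dual to $w_{b^+(N)}(H^+(N))$.

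Third, I would implement the gluing. Over $Z$, each reducible $\hat{N}$-configuration is glued to an element of the unparametrised Seiberg-Witten moduli space $\mathcal{M}_M$ of $(M,\mathfrak{s}_M)$. After a careful choice of families perturbation, $\mathcal{M}$ becomes, up to $\mathbb{Z}_2$-homology, the product $Z\times\mathcal{M}_M$, with projection $\pi$ equal to the composition $Z\times\mathcal{M}_M\to Z\hookrightarrow B$. In case (ii), the line bundle $\mathcal{L}$ restricts on each slice $\{z\}\times\mathcal{M}_M$ to the unparametrised base-point line bundle $\mathcal{L}_M$, since the $\hat{N}$-contribution is trivial ($b_1(N)=0$ and the $\hat{N}$-reducible is unique up to gauge). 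Consequently
\[
\langle c_1(\mathcal{L})^m\cup\pi^*\theta,\,[\mathcal{M}]\rangle \;=\; \langle c_1(\mathcal{L}_M)^m,\,[\mathcal{M}_M]\rangle\cdot\langle \theta\cup w_{b^+(N)}(H^+(N)),\,[B]\rangle,
\]
which is part (1). For part (2), the factor $\langle c_1(\mathcal{L}_M)^{m-k},\,[\mathcal{M}_M]\rangle$ vanishes for $k>0$ on dimensional grounds ($2(m-k)<2m=\dim\mathcal{M}_M$), so the pairing is zero.

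The main obstacle is rigorously establishing the families Kuranishi model in the presence of the obstruction bundle $H^+(E_N/B)$. Because the $\hat{N}$-moduli is not cut out transversally (its linearised cokernel has positive rank $b^+(N)$), the gluing map near each reducible carries a genuine Kuranishi obstruction, and the families perturbation must be arranged so that its $\hat{N}$-projection transversely cuts out $Z$ while the gluing map remains a diffeomorphism onto an open subset of $\mathcal{M}$. One must also verify that the characteristic class captured by the obstruction is indeed $w_{b^+(N)}(H^+(N))$ rather than some other class of the same degree, and that the residual $U(1)$-bundle on the glued moduli space genuinely agrees with $\mathcal{L}_M$ under the identification. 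These delicate transversality, gluing, and compatibility issues form the content of Sections \ref{sec:step1}--\ref{sec:part2}.
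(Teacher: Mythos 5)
Your outline follows the same overall strategy as the paper: stretch the neck, observe that on the $\hat{N}$ side a generic families perturbation admits only reducible solutions, locate those reducibles at the zeros of the section $[\eta_{\hat{N}}]-\mathcal{W}_{\hat{N}}$ of $H^+(N)\to B$ (whence the class $w_{b^+(N)}(H^+(N))$), glue the unparametrised $M$-moduli space over that zero locus, and check that the base-point line bundle matches $\mathcal{L}_M$. The genuinely different choice is that you work over all of $B$ at once, so your wall locus $Z$ is a positive-dimensional submanifold and you must build a families Kuranishi/gluing model over all of $Z$, with the rank-$b^+(N)$ obstruction killed by the normal derivative of the perturbation along $Z$, and then justify that $\mathcal{M}$ is (at least homologically) a product $Z\times\mathcal{M}_M$. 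The paper avoids this entirely by a preliminary reduction (Section \ref{sec:step1}): using mod $2$ Steenrod realisability of the Poincar\'e dual of $\theta$ by $f_*[S]$ with $\dim S=b^+(N)$, it restricts the family to $S$, after which the wall is crossed transversely at finitely many points $b_1,\dots,b_l$, the metrics and perturbations can be taken constant near each $b_i$ (Proposition \ref{prop:perturb}), and the contraction-mapping analysis of Section \ref{sec:gluefamily} identifies $\mathcal{M}$ with $l$ disjoint copies of the $M$-moduli space (Proposition \ref{prop:decomp}), with $l=\langle\theta\cup w_{b^+(N)}(H^+(N)),[B]\rangle$. The same reduction gives part (2) almost for free: restricting to $S$ with $\dim S=b^+(N)-2k<b^+(N)$ one can choose $\eta_{\hat{N}}$ to miss the wall altogether, so the moduli space over $S$ is empty; no K\"unneth computation on $Z\times\mathcal{M}_M$ is needed (and your stated vanishing factor $\langle c_1(\mathcal{L}_M)^{m-k},[\mathcal{M}_M]\rangle$ is really the degree mismatch $\deg\theta|_Z>\dim Z$). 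So either insert the Steenrod reduction as step zero, in which case your plan coincides with the paper's, or accept the strictly harder burden of proving the fibre-bundle structure of $\mathcal{M}$ over a positive-dimensional wall locus, which the paper's analysis does not establish and does not need.
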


\begin{remark}
In case (ii), the fact that $FSW^{\mathbb{Z}_2}_{m}(E_X , \mathfrak{s}_X , \theta )$ does not depend on the choice of $\tilde{\mathfrak{s}}_X$ for any $\theta \in H^s(B , 
\mathbb{Z}_2)$ can also be deduced from the fact that $FSW^{\mathbb{Z}_2}_{m-k}(E_X , \mathfrak{s}_X , \theta ) = 0$ for any $\theta \in H^{s+2k}(B , \mathbb{Z}_2)$, where $k>0$.
\end{remark}

We will prove parts (1) and (2) of Theorem~\ref{thm:rub} separately.

Similarly, we can also get $\rho$-twisted version of Theoreom~\ref{thm:rub}.
Instead of assuming that $\mathfrak{s}_M,\mathfrak{s}_N$ are monodromy invariant under the monodromy action of $\pi_1(B)$,
we here assume that the monodromy preserves $\mathfrak{s}_M, \mathfrak{s}_N, \mathfrak{s}_X$ up to a common sign $\rho : \pi_1(B) \to \mathbb{Z}_2$.
Let us keep all other assumptions supposed before the statement of Theorem~\ref{thm:rub}.
Then we have the following:

\begin{theorem}[$\rho$-twisted gluing formula]\label{thm:rubtwisted}
Assume that the monodromy preserves $\mathfrak{s}_M, \mathfrak{s}_N, \mathfrak{s}_X$ up to a common sign $\rho : \pi_1(B) \to \mathbb{Z}_2$.
Assume also that $d(M , \mathfrak{s}_M) = 2m = 0$, and suppose that we are given $\theta \in H^s(B , \mathbb{Z}_2)$.
Then we have:
\[
FSW^{\mathbb{Z}_2}(E_X , \mathfrak{s}_X , \theta, \rho) = SW( M , \mathfrak{s}_M) \cdot \langle \theta \cup w_{b^+(N)}(H^+(N) \otimes \mathbb{R}_\rho ) \rangle
\]
in $\mathbb{Z}_{2}$, where $\mathbb{R}_\rho \to B$ is the real line bundle classified by $\rho \in H^1(B , \mathbb{Z}_2)$.
\end{theorem}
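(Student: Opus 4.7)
My plan is to re-run the proof of Theorem~\ref{thm:rub}(1) in case~(i) while carefully tracking the effect of the charge conjugation twist by $\rho$ at each step. Since $d(M,\mathfrak{s}_M)=2m=0$ we are in the $m=0$ case and no Spin$^c$-lift $\tilde{\mathfrak{s}}_X$ is required. As in the untwisted argument, I would stretch the neck in $E_{\hat X(r)}$ and apply the families gluing analysis of Section~\ref{sec:gluefamily} to identify, for large $r$, the $\rho$-twisted moduli space $\mathcal{M}^\rho$ with the zero set of a Kuranishi-type section built from local data on $E_{\hat M}$ and $E_{\hat N}$.

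The new feature is that the families perturbation $\eta$ must now be a section of $H^+(X)\otimes \mathbb{R}_\rho$, and the local moduli space charts are patched by the charge conjugation involution $(a,\psi)\mapsto(-a,j\psi)$. Restricting $\eta$ to the $M$-end and the $N$-end of a stretched configuration produces sections of $H^+(M)\otimes \mathbb{R}_\rho$ and $H^+(N)\otimes \mathbb{R}_\rho$ respectively. On $\hat M$, since $d(M,\mathfrak{s}_M)=0$ and $b^+(M)>s+1$, the fibrewise moduli space generically consists of a finite set of transverse points whose mod~$2$ count is $SW(M,\mathfrak{s}_M)$; because $SW(M,\mathfrak{s}_M)\equiv SW(M,-\mathfrak{s}_M)\pmod 2$, the charge conjugation patching does not alter this count and the $M$-side contributes only the scalar factor $SW(M,\mathfrak{s}_M)$. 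On $\hat N$, the Spin$^c$-Dirac operator has index zero and (generically) trivial kernel, while the cokernel of the linearised self-dual equation is fibrewise $H^+(N)$; because charge conjugation acts as $-1$ on $H^+$, the global obstruction bundle over $B$ assembles to the rank-$b^+(N)$ real vector bundle $H^+(N)\otimes \mathbb{R}_\rho$ rather than $H^+(N)$.

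After a generic perturbation compatible with the twist, $\mathcal{M}^\rho$ is therefore identified with the zero locus in $B$ of $SW(M,\mathfrak{s}_M)$ copies of a transverse section of $H^+(N)\otimes\mathbb{R}_\rho$. Its mod~$2$ fundamental class is Poincar\'e dual to $SW(M,\mathfrak{s}_M)\cdot w_{b^+(N)}(H^+(N)\otimes\mathbb{R}_\rho)$ in $H^{b^+(N)}(B,\mathbb{Z}_2)$, and pairing with $\pi^*\theta$ via the projection formula gives the stated identity. The main obstacle is technical rather than conceptual: one must verify that the perturbation scheme and the local Kuranishi models of Section~\ref{sec:gluefamily} can be chosen equivariantly with respect to the charge conjugation involution, so that they genuinely glue to a global model for $\mathcal{M}^\rho$ in which the obstruction bundle is the \emph{twisted} bundle $H^+(N)\otimes\mathbb{R}_\rho$. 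Once this equivariance is in place, the remainder of the argument runs exactly as in the proof of Theorem~\ref{thm:rub}(1) case~(i).
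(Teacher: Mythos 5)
Your proposal is correct and matches the paper's approach: the paper proves the untwisted case in detail and then obtains Theorem \ref{thm:rubtwisted} as exactly the "straightforward extension" you describe, namely re-running the gluing analysis with the perturbation now a section of $H^+(X)\otimes\mathbb{R}_\rho$, so that the wall-crossing locus on the $N$-side is the zero set of a generic section of $H^+(N)\otimes\mathbb{R}_\rho$ (counted by $w_{b^+(N)}(H^+(N)\otimes\mathbb{R}_\rho)$ after the same Steenrod reduction to $\dim B=b^+(N)$), while each crossing point contributes $SW(M,\mathfrak{s}_M)\bmod 2$. You have also correctly located the only real technical point — checking that the perturbation scheme and local Kuranishi models patch equivariantly under charge conjugation — which is precisely what the paper leaves implicit.
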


As well as the $\mathbb{Z}_{2}$ case, we can show a similar gluing result also for $\mathbb{Z}$-valued invariant.
Besides all assumptions supposed before the statement of Theorem~\ref{thm:rub}, let us assume that $B, det^+(E_M), det^+(E_N)$ are oriented and homology orientations of $M$ and $N$ are fixed.
Note that orientations of $X=M\#N$ and $det^+(E_X)$ are automatically determined.

\begin{theorem}[$\mathbb{Z}$-valued gluing formula]\label{thm:rubz}
Suppose that either
\begin{itemize}
\item[(i)]{$d(M , \mathfrak{s}_M) = 2m = 0$, or}
\item[(ii)]{$b_1(M) = b_1(X) = 0$ and $\mathfrak{s}_X$ extends to a Spin$^c$-structure on $T(E_X/B)$.}
\end{itemize}
Then we have:
\begin{itemize}
\item[(1)]{For any $\theta \in H^s(B , \mathbb{Z})$, we have
\[
FSW^{\mathbb{Z}}_m(E_X , \mathfrak{s}_X , \theta ) = SW( M , \mathfrak{s}_M) \cdot \langle \theta \cup e(H^+(N)) , [B] \rangle
\]
in $\mathbb{Z}$.
Here $SW( M , \mathfrak{s}_M)$ denotes the Seiberg-Witten invariant and $e(H^+(N))$ the Euler class of $H^+(N)$.
In particular, in case (ii), $FSW^{\mathbb{Z}}(E_X , \mathfrak{s}_X , \theta )$ does not depend on the choice of extension $\tilde{\mathfrak{s}}_X$ of $\mathfrak{s}_X$ to a Spin$^c$-structure on $T(E_X/B)$.}
\item[(2)]{For any integer $k > 0$ and any $\theta \in H^{s+2k}(B , \mathbb{Z})$, we have
\[
FSW^{\mathbb{Z}}_{m-k}( E_X , \mathfrak{s}_X , \theta) = 0.
\]
}
\end{itemize}
\end{theorem}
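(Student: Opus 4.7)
The plan is to run the same neck-stretching and gluing argument that establishes Theorem~\ref{thm:rub}, but now to carefully track the orientations that are available under the additional assumption that $B$, $\det^+(E_M)$ and $\det^+(E_N)$ are oriented and that homology orientations of $M$ and $N$ are fixed. Since the geometric content of the proof has already been assembled in Sections~\ref{sec:step1}--\ref{sec:part2} for the $\mathbb{Z}_{2}$-case, the main task here is to upgrade those arguments so that they preserve signs.

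First I would recall the local picture obtained from the gluing construction. For large neck length $r$, a solution in $\mathcal{M}(E_{\hat{X}(r)}, g_{\hat{X}(r)}, \eta_{\hat{X}(r)}, \mathfrak{s}_X)$ is modelled on a pair consisting of a configuration on a fibre of $E_{\hat{M}}$ and one on the corresponding fibre of $E_{\hat{N}}$, matched along the neck. The hypothesis $\mathrm{ind}(D_N) = 0$, together with $0 < b^+(N) \le \dim B$, forces the $\hat{N}$-component of any such solution to be reducible for a generic choice of families perturbation: the obstruction to solving the reducible equations is a section of the rank $b^+(N)$ vector bundle $H^+(N) \to B$, and its transverse zero set $Z \subset B$ is a smoothly embedded submanifold of codimension $b^+(N)$ whose oriented Poincar\'e dual is the Euler class $e(H^+(N))$. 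Here the orientation of $H^+(N)$ is determined by the homology orientation on $N$ together with the orientation of $\det^+(E_N)$, and the orientation on $Z$ is induced from this together with the orientation on $B$. Simultaneously, the $\hat{M}$-components range over the unparametrised Seiberg--Witten moduli space of $(M, \mathfrak{s}_M)$, oriented by the chosen homology orientation of $M$.

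For part~(1) I would assemble the above into a fibre-product identification of $\mathcal{M}$ (in case (i)), or of $\mathcal{M}$ cut down by $c_{1}(\mathcal{L})^{m}$ (in case (ii)), with a copy of the unparametrised Seiberg--Witten moduli space of $(M, \mathfrak{s}_M)$ fibered over $Z$. The heart of the argument is the orientation analysis: the natural orientation on $\mathcal{M}$ coming from the orientation of $\det^+(E_X)$ (induced from those of $B, \det^+(E_M), \det^+(E_N)$ and the homology orientations of $M, N$) must be shown to agree with the product orientation on this limiting stratum. This reduces to a standard splitting formula for the orientation line of the linearised Seiberg--Witten operator on $\hat{X}$ in terms of the corresponding orientation lines on $\hat{M}$ and $\hat{N}$, combined with the orientation of the cokernel of the reducible operator on $\hat{N}$, which is precisely $H^+(N)$. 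Once this identification is in place, evaluating $c_1(\mathcal{L})^{m} \cup \pi^{*}\theta$ on $[\mathcal{M}]$ becomes a signed count which factors as $SW(M,\mathfrak{s}_M) \cdot \langle \theta \cup e(H^+(N)), [B] \rangle$. Part~(2) follows from the same fibre-product description together with dimension counting: for $k > 0$ and $\theta \in H^{s+2k}(B,\mathbb{Z})$, the pairing collapses to an integral of $e(H^+(N))$ against a class in $B$ of the wrong codimension, and vanishes.

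The main obstacle will be the sign bookkeeping in the orientation-splitting identity for the index bundle of the Seiberg--Witten operator under neck-stretching. The $\det^+$-orientation of $E_X$ does not split canonically as an external product of those of $E_M$ and $E_N$, and one has to verify that the resulting sign discrepancy is absorbed into the conventional choices (homology orientations of $M$ and $N$, and the convention fixing the orientation on $H^+(N)$ via $\det^+(E_N)$). Once this orientation lemma is established, the rest of the proof is a sign-tracking rerun of the $\mathbb{Z}_{2}$-valued argument with $w_{b^+(N)}(H^+(N))$ replaced by $e(H^+(N))$.
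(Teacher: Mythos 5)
Your proposal is correct and follows essentially the same route as the paper: the paper proves the $\mathbb{Z}$-valued formula by rerunning the $\mathbb{Z}_2$-valued gluing argument (reduction to $\dim B = b^+(N)$ via the Steenrod problem, solved up to a multiple which then cancels, followed by the identification of the glued families moduli space with copies of $\mathcal{M}(M,\mathfrak{s}_M)$ sitting over the transverse zeros of a section of $H^+(N)$), with the mod $2$ count $w_{b^+(N)}(H^+(N))$ upgraded to the signed count $e(H^+(N))$ and the emptiness of the moduli space over lower-dimensional cycles giving part (2). The orientation bookkeeping you flag as the main obstacle is precisely the point the paper itself dispatches as a ``straightforward extension,'' so your outline is at least as complete as the paper's own treatment.
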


\begin{remark}
One of the differences between Ruberman's original gluing theorem \cite{rub1,rub2,rub3} and ours is that we do not assume that our families of $M\#N$ consist of {\it trivial} families of $M$ and families of $N$.
In \cite{rub1,rub2,rub3}, Ruberman considered diffeomorphisms which are supported on $N$, strictly speaking on $N'$, and therefore the corresponding families obtained by mapping tori can be regarded as the families connected sum of trivial families of $M$ and (typically non-trivial) families of $N$.
On the other hand, we shall also consider diffeomorphisms on $M\#N$ acting non-trivially not only on $N$ but also on $M$.
In the subsequent sections, this generalisation is effectively used when we shall give examples of $4$-manifolds which admit positive scalar curvature metrics but do not admit $\mathbb{Z}_{2}$-invariant positive scalar curvature metrics for suitable $\mathbb{Z}_{2}$-actions on them.
\end{remark}

\section{Proof of gluing formula, first step}\label{sec:step1}

We first concern ourselves with part (1) of Theorem~\ref{thm:rub}. In this section we will show that it suffices to prove part (1) of Theorem~\ref{thm:rub} in the case that $s = 0$, that is, in the case $dim(B) = b^+(N)$.\\

Suppose the theorem holds in the case $s=0$. Now we will prove the general theorem. Let $\theta \in H^s(B , \mathbb{Z}_2)$. By the solution to the Steenrod problem, $\theta$ is Poincar\'e dual to a homology class of the form $f_*[S] \in H_{b^+(N)}( B , \mathbb{Z}_2)$, where $S$ is a compact smooth manifold of dimension $dim(S) = b^+(N)$ and $f : S \to B$ is a smooth map. Then
\begin{equation*}
\begin{aligned}
FSW_m^{\mathbb{Z}_2}(E_X , \mathfrak{s}_X , \theta ) &= \langle c_1(\mathcal{L})^m \cup \pi^* \theta, [\mathcal{M}] \rangle \\
&= \langle \pi_* ( c_1(\mathcal{L})^m \cup \pi^* (\theta)) , [B] \rangle \text{ (by definition of the wrong-way map $\pi_*$)} \\
&= \langle \pi_* ( c_1(\mathcal{L})^m ) \cup \theta , [B] \rangle \\
&= \langle \pi_* ( c_1(\mathcal{L})^m ) , [S] \rangle \text{ ($\theta$ is Poinar\'e dual to $[S]$)} \\
&= \langle  c_1(\mathcal{L})^m  , [\mathcal{M}|_S] \rangle \text{ (by definition of the wrong-way map $\pi_*$)} \\
&= FSW_m^{\mathbb{Z}_2}( E_X|_S , \mathfrak{s}_X , 1).
\end{aligned}
\end{equation*}
Now we consider the pullbacks $f^*(E_M), f^*(E_N)$ of the families to $S$. By assumption, Theorem~\ref{thm:rub} part (1) holds on $S$, since $dim(S) = b^+(N)$, hence
\begin{equation*}
\begin{aligned}
FSW_m^{\mathbb{Z}_2}(E_X , \mathfrak{s}_X , \theta) &= FSW_m^{\mathbb{Z}_2}( E_X|_S , \mathfrak{s}_X , 1) \\
&= SW( M , \mathfrak{s}_M) \cdot \langle w_{b^+(N)}(H^+(N)) , [S] \rangle \text{ (by Theorem~\ref{thm:rub} (1) on $S$)} \\
&= SW( M, \mathfrak{s}_M) \cdot \langle \theta \cup w_{b^+(N)}(H^+(N)) , [B] \rangle \text{ ($\theta$ is Poinar\'e dual to $[S]$)}.
\end{aligned}
\end{equation*}
The last equality is exactly part (1) of Theorem~\ref{thm:rub} on $B$, which is what we wanted to prove.

\begin{remark}
Almost the same argument works in the case of $\mathbb{Z}$-coefficients. The main difference is that the Steenrod problem can only be solved up to a multiple. That is, given $\theta \in H^s( B , \mathbb{Z})$, there is some positive integer $N$ such that we can represent $N\theta$ as $f_*[S]$ for some $f : S \to B$. But if the $\mathbb{Z}$-valued version of Theorem~\ref{thm:rub} holds for $N\theta$, then by cancelling out the factor of $N$ on both sides, we get that the theorem also holds for $\theta$ itself.
\end{remark}

So for the remainder of the proof of part (1) of Theorem~\ref{thm:rub} we may assume that:
\[
dim(B) = d = b^+(N).
\]

\section{Gluing theory for the Seiberg-Witten equations}\label{sec:gluing}

\subsection{Review of gluing theory}
\label{section Review of gluing theory}

To proceed further with the proof of Theorem \ref{thm:rub}, we need to employ a parametrised version of the gluing construction for the Seiberg-Witten equations on connected sums. Our main reference will be Nicolaescu's book~\cite{nic}, and we will refer to \cite{nic} as ``Nicolaescu's book" or just simply ``Nicolaescu".
In this section, let us first briefly review the ordinary case of this construction, i.e. the unparametrised case.

We have $4$-manifolds $M,N$ and their connected sum $X = M \# N$. Recall that we defined the cylindrical end $4$-manifolds $\hat{M}, \hat{N}$ and we attached them to form $\hat{X}(r)$, where $r$ is a positive real number. Each $\hat{X}(r)$ is diffeomorphic to $X$, but the length of the neck of $\hat{X}(r)$ grows proportionally with $r$. To make the analysis work well on cylindrical end manifolds, one considers weighted Sobolev spaces $L^{k,p}_{\mu}$, where $\mu \in \mathbb{R}$ is the weight, and the extended Sobolev spaces $L^{k,p}_{\mu, ex}$  following Atiyah-Patodi-Singer~\cite{aps}, called the asymptotically cylindrical sections in Nicolaescu's book (page 299 in Subsection~4.1.4), to deal with asymptotic values.

Here $\eta_{\hat{M}}$ is the self-dual $2$-form perturbation, which is assumed to be supported away from the neck and $\mathfrak{s}_{\hat{M}}$ is an asymptotically cylindrical Spin$^c$-structure on $\hat{M}$. Isomorphism classes of such Spin$^c$-structures are in bijection with Spin$^c$-structures on $M$ (this follows from the fact that our $3$-manifold used for gluing, denoted by $\partial_\infty \hat{M}$ in Nicolaescu's book, is just $S^3$ and $H^1(S^3 , \mathbb{Z}) = H^2(S^3 , \mathbb{Z}) = 0$. See Nicolaescu for more details on asymptotically cylindrical Spin$^c$-structures). Therefore we will simply identify $\mathfrak{s}_{\hat{M}}$ with $\mathfrak{s}_M$ without further mention. We have isomorphisms
\[
H^2_{L^2}(\hat{M}) \cong H^2( \hat{M} , [0 , \infty) \times S^3 , \mathbb{R}) \cong H^2(\hat{M} , \mathbb{R}) \cong H^2(M , \mathbb{R}),
\]
where $H^2_{L^2}(\hat{M})$ is the space of $L^2$-integrable harmonic $2$-forms on $\hat{M}$. The first isomorphism is shown by Atiyah-Patodi-Singer~\cite{aps}, the second isomorphism is just the long exact sequence for cohomology of the pair $( \hat{M} , [0 , \infty) \times S^3)$ and the third isomorphism can be obtained from the Mayer-Vietoris sequence. Thus the $L^2$-second Betti number of $\hat{M}$ coincides with the ordinary second Betti number of $M$. Note also that this isomorphism depends only on the choice of metric $g_{\hat{M}}$.

Next, observe that $H^2_{L^2}(\hat{M})$ is equipped with a natural bilinear form, the $L^2$-intersection form:
\[
(\alpha , \beta) = \int_{\hat{M}} \alpha \wedge \beta.
\]
On the other hand, Poincar\'e Lefschetz duality implies that the cup product determines a non-degenerate bilinear form on the image of $H^2( \hat{M} , [0,\infty) \times S^3 , \mathbb{R} )$ in $H^2( \hat{M} , \mathbb{R} )$. In our case, the image is all of $H^2(\hat{M} , \mathbb{R})$ and under the isomorphism with $H^2(M , \mathbb{R})$, coincides with the usual intersection form on $M$. The isomorphism given by Atiyah-Patodi-Singer between $H^2_{L^2}(\hat{M})$ and the image of $H^2( \hat{M} , [0,\infty) \times S^3 , \mathbb{R} )$ in $H^2( \hat{M} , \mathbb{R} )$ can be shown to respect the intersection forms. Therefore, in our case we have an isomorphism $H^2_{L^2}(\hat{M}) \cong H^2(M , \mathbb{R})$ such that the $L^2$-intersection form on $H^2_{L^2}(\hat{M})$ coincides with the topological intersection form on $H^2(M , \mathbb{R})$. Let $H^+_{L^2}(\hat{M})$ denote the subspace of $H^2_{L^2}(\hat{M})$ consisting of self-dual $L^2$-harmonic $2$-forms. It follows immediately that the dimension of $H^+_{L^2}(\hat{M})$ equals $b^+(M)$.

\begin{remark}
In the families setting, given the family $E_{\hat{M}} \to B$ and family $\{ g_{\hat{M}} \}$ of fibrewise metrics, we have that $H^+_{L^2}(\hat{M})$ and $H^2(M , \mathbb{R})$ can be thought of as vector bundles on $B$, equipped with bilinear forms. The isomorphism $H^+_{L^2}(\hat{M}) \cong H^2(M , \mathbb{R})$ depends only on the choice of metric $g_{\hat{M}}$ and so in the families setting, we get that $H^+_{L^2}(\hat{M}) \cong H^2(M , \mathbb{R})$ is an isomorphism of vector bundles with bilinear forms.
\end{remark}

It is shown in Nicolaescu that if $b^+(M) > 0$, then the perturbation $\eta_{\hat{M}}$ can be chosen so as to avoid any reducible solutions of the Seiberg-Witten equations, and for generic such $\eta_{\hat{M}}$, the moduli space $\mathcal{M}( \hat{M} , g_{\hat{M}} , \eta_{\hat{M}} , \mathfrak{s}_{\hat{M}} , \mu )$ is a smooth compact manifold of dimension $d(M , \mathfrak{s})$ (the same dimension as the Seiberg-Witten moduli space for $(M , \mathfrak{s}_M)$). Compactness depends crucially on the assumption that $\partial_\infty \hat{M} = S^3$. More precisely, within the space of $2$-form perturbations on $\hat{M}$ supported away from the neck, the ones for which a reducible solution exists form a closed subspace of codimension $b^+(M)$. We will discuss this in more detail in Section~\ref{sec:gluefamily}.

Given a solution
\[
\hat{C} = (\hat{A} , \hat{\psi}) \in \mathcal{M}( \hat{M} , g_{\hat{M}} , \eta_{\hat{M}} , \mathfrak{s}_{\hat{M}} , \mu )
\]
of the Seiberg-Witten equations on $\hat{M}$, one can consider the deformation theory of $\hat{C}$. One obtains a three-term deformation complex, denoted by $\hat{\mathcal{K}}_{\hat{C}}$ in Nicolaescu's book, which controls the deformation theory. For $i=0,1,2$, let $H^i_{\hat{C}}$ denote the $i$-th cohomology of $\hat{\mathcal{K}}_{\hat{C}}$. These groups have the usual interpretations: $H^0_{\hat{C}}$ is the Lie algebra of infinitesimal automorphisms, hence $H^0_{\hat{C}} = 0$ if $\hat{C}$ is irreducible and $H^0_{\hat{C}} = \mathbb{R}$ if $\hat{C}$ is reducible. $H^1_{\hat{C}}$ is the virtual tangent space and $H^2_{\hat{C}}$ is the obstruction space. The local structure of the moduli space around $\hat{C}$ is given by a Kuranishi model $H^1_{\hat{C}} \supseteq U \buildrel Q \over \longrightarrow H^2_{\hat{C}}$, for some obstruction map $Q$.

As in the compact case, an application of Sard-Smale shows that for generic perturbations $\eta_{\hat{M}}$, we have $H^2_{\hat{C}} =0$ for every {\em irreducible} solution of the $\eta_{\hat{M}}$-perturbed Seiberg-Witten equations on $\hat{M}$.

Now we turn to gluing (see Nicolaescu's book \textsection 4.5). The idea is roughly as follows: let $\hat{C}_M$ be a solution of the Seiberg-Witten equations on $\hat{M}$ and $\hat{C}_N$ be a solution of the Seiberg-Witten equations on $\hat{N}$ which agree asymptotically. If $\hat{C}_M$ and $\hat{C}_N$ were cylindrical, then we could identify them on the necks of $\hat{M}(r), \hat{N}(r)$ to obtain a genuine solution of the Seiberg-Witten equations on $\hat{X}(r)$. In general, $\hat{C}_M, \hat{C}_N$ are only asymptotically cylindrical. We can still glue them together on $\hat{X}(r)$ with the aid of cutoff functions to form a glued configuration $\hat{C}_r = \hat{C}_M \#_r \hat{C}_N$ on $\hat{X}(r)$. But the cutoff functions introduce error terms so that $\hat{C}_r$ is only an approximate solution of the Seiberg-Witten equations on $\hat{X}(r)$. If $r$ is very large, then the error is very small and $\hat{C}_r$ is very close to being a solution. If $\eta_{\hat{M}}$, $\eta_{\hat{N}}$ are perturbations on $\hat{M},\hat{N}$ supported away from the necks, then they can be glued together to form a perturbation $\eta_{\hat{X}(r)} = \eta_{\hat{M}} \#_r \eta_{\hat{N}}$ on $\hat{X}(r)$ (the gluing is straightforward because $\eta_{\hat{M}},\eta_{\hat{N}}$ both vanish along the necks). Using a variant of the Kuranishi model, one obtains a local description on the moduli space of genuine solutions which are sufficiently close to $\hat{C}_r$. Since this step is of crucial importance, we will carefully review the construction in the next section.

\begin{remark}
Henceforth the notation $\#_{r}$ means the gluing by a specified cutoff functions according to Nicolaescu's book.
Note that in some literatures the operation $\#_{r}$ may be called a {\it pre-gluing} rather than gluing.
(In such a case the word ``gluing " is used only for a genuine solution near an approximating configuration obtained from $\#_{r}$.)
However in this paper we often call the operation $\#_{r}$ simply a gluing.
\end{remark}

\subsection{Gluing monopoles: local theory}
\label{sec:glueloc}

In Section~\ref{sec:gluefamily}, we shall describe a gluing argument for families.
Roughly speaking, our argument is to estimate some errors occurring from the base space direction compared with the unparametrised case.
As a preliminary of this forthcoming argument, in this section we shall recall some estimates in the local theory on gluing in unparameterized case.
We will mostly adopt the notations of Section~4 in Nicolaescu's book.

Choose strongly cylindrical connections (see Nocolaescu's book Example~4.1.2) $\hat{A}_{0,M}, \hat{A}_{0,N}$ on $det(\mathfrak{s}_M), det(\mathfrak{s}_N)$ respectively which can be assumed compatible (recall that {\em compatible} means their asymptotic values agree) and set
\[
\hat{A}_0 = \hat{A}_0(r) = \hat{A}_{0,M} \#_r \hat{A}_{0,N}.
\]
Let $\hat{\mathcal{C}}_{\mu,ex}(\hat{M}), \hat{\mathcal{C}}_{\mu,ex}(\hat{N})$ be the configuration spaces consisting of asymptotically cylindrical data, defined on page 369 in Subsection~4.3.1 in Nicolaescu's book.
For $\hat{C} \in \hat{\mathcal{C}}_{\mu,ex}(\hat{M})$ or $\hat{\mathcal{C}}_{\mu,ex}(\hat{N})$, we will denote by $\partial_\infty \hat{C}$ the asymptotic value of $\hat{C}$.
If $\hat{C}_M \in \hat{\mathcal{C}}_{\mu,ex}(\hat{M})$ and $\hat{C}_N \in \hat{\mathcal{C}}_{\mu,ex}(\hat{N})$ are two smooth monopoles such that
\[
\partial_\infty \hat{C}_{M} = \partial_\infty \hat{C}_{N},
\]
then we can form
\[
\hat{C}_r = (\hat{\psi}_r , \hat{A}_r) = \hat{C}_M \#_r \hat{C}_N = (\hat{\psi}_M \#_r \hat{\psi}_N , \hat{A}_M \#_r \hat{A}_N).
\]
on $\hat{X}(r)$, where $\hat{X}(r)$ is the closed $4$-manifold obtained from $\hat{M}$ and $\hat{N}$ having a neck of length $r$.
Of course, the resulting configuration $\hat{C}_r$ is usually not a solution of the Seiberg-Witten equations since we used cutoff functions to form $\hat{C}_{r}$ as we explained in Section~\ref{section Review of gluing theory}.
We would like to show there exist genuine monopoles near $\hat{C}_r$. For this we consider a variant of the Kuranishi model for a non-linear Fredholm map $f : V \to W$ between Banach spaces. Normally one assumes $f(0) = 0$ and studies the structure of the zero set $f^{-1}(0)$ near $0$. In our case, we are still interested in the zero set $f^{-1}(0)$ but we can have $f(0) \neq 0$.

Let $\hat{C}_r \in \hat{\mathcal{C}}(\hat{X}(r))$ be the configuration obtained by gluing two monopoles using cutoff functions, as above.
Let us denote by $S^{+}$ and $S^{-}$ the positive and negative spinor bundles respectively.
We also use the notation $\wedge^2_+$ to indicate the self-dual part.
As in Subsection~4.5.2 in Nicolaescu's book,
form the non-linear map
\[
\mathcal{N} : L^{2,2}( \hat{X}(r) , S^+ \oplus iT^*\hat{X}(r) ) \to L^{1,2}( \hat{X}(r) , S^- \oplus i\wedge^2_+ T^*\hat{X}(r) \oplus i\mathbb{R} )
\]
given by
\[
\mathcal{N}(\hat{C}) = \widehat{SW}( \hat{C} + \hat{C}_r) \oplus \mathcal{L}_{\hat{C}_r}^*( \hat{C} ).
\]
Here the term $\widehat{SW}( \hat{C} + \hat{C}_r)$ denotes the Seiberg-Witten equations and $\mathcal{L}_{\hat{C}_r}^*( \hat{C} )$ does the gauge fixing condition relative to $\hat{C}_r$.
Namely, when we write $\hat{C}$ and $\hat{C}_{r}$ as $\hat{C} = (\hat{a}, \hat{\psi})$ and $\hat{C}_{r} = (\hat{a}_{r}, \hat{\psi}_{r})$ respectively, $\widehat{SW}$ and $\mathcal{L}_{\hat{C}_r}^*$ are defined as
\[
\widehat{SW}( \hat{C} + \hat{C}_r) = (F_{\hat{A}_{0}+\hat{a}+\hat{a}_{r}}^{+}-q(\hat{\psi}+\hat{\psi}_{r},\hat{\psi}+\hat{\psi}_{r}),D_{\hat{A}_{0}+\hat{a}+\hat{a}_{r}}(\hat{\psi}+\hat{\psi}_{r}))
\]
and
\[
\mathcal{L}_{\hat{C}_r}^*( \hat{C} ) = -2d^{\ast} \hat{a} - i{\rm Im}\langle\hat{\psi}_{r},\hat{\psi}\rangle.
\]
(On this gauge fixing condition, see Nicolaescu Subsection~2.2.2.)
Although here we just consider the unperturbed Seiberg-Witten equations, the necessary modification to deal with perturbed Seiberg-Witten equations will be done in Section~\ref{sec:gluefamily}, where we describe families gluing with suitable perturbations.
Note that $\mathcal{N}$ is a well-defined smooth map because of Sobolev multiplication theorem for $L^{2,2} \times L^{2,2} \to L^{1,2}$.
We are of course interested in the zero set $\mathcal{N}^{-1}(0)$.

Denote by $\hat{T}_r = \hat{T}_{\hat{C}_r}$ the linearisation of $\mathcal{N}$ at $0$, which is given by the map
\[
\hat{T}_r( \hat{C} ) = D_{\hat{C}_r}\widehat{SW}(\hat{C}) \oplus \mathcal{L}_{\hat{C}_r}^*(\hat{C})
\]
whose domain and codomain are same to those of $\mathcal{N}$.
Here the notation $D_{\hat{C}_r}$ denotes the differential at $\hat{C}_{r}$.
Observe that from $\hat{C}_r = \hat{C}_1 \#_r \hat{C}_2$, we find that
\[
\hat{T}_r = \hat{T}_{\hat{C}_1} \#_r \hat{T}_{\hat{C}_2}.
\]
Here the notation $\#_{r}$ means the gluing operation given by using the cutoff functions which we used to define $\hat{C}_{r}$.
Now write $\mathcal{N}$ in the form
\[
\mathcal{N}(\hat{C}) = \mathcal{N}(0) + \hat{T}_r(\hat{C}) + R(\hat{C}),
\]
so $R$ is the remainder term in the linear approximation of $\mathcal{N}$ at $0$.

\begin{lemma}[Lemma 4.5.6 of Nicolaescu's book]
\label{lem Lemma 4.5.6 of Nicolaescu's book}
There exists a constant $C > 0$ which depends only on the geometry of $\hat{M}, \hat{N}$ such that
\[
||R(\hat{C})||_{L^{1,2}} \le C r^{3/2} ||\hat{C}||^2_{L^{2,2}}
\]
and
\[
|| R(\hat{C}) - R(\hat{C}') ||_{L^{1,2}} \le Cr^{3/2}\left( ||\hat{C}||_{L^{2,2}} + ||\hat{C}' ||_{L^{2,2}}\right) ||\hat{C} - \hat{C}'||_{L^{2,2}} 
\]
hold for any $\hat{C}, \hat{C}' \in L^{2,2}( \hat{X}(r) , S^+ \oplus iT^*\hat{X}(r) )$.
\end{lemma}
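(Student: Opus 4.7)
The plan is to identify $R(\hat{C})$ explicitly and then reduce everything to a controlled Sobolev multiplication estimate on $\hat{X}(r)$, tracking how the constants degenerate as the neck length grows.

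First I would expand $\mathcal{N}(\hat{C})$ in powers of $\hat{C} = (\hat{a},\hat{\psi})$. Writing $\widehat{SW}(\hat{C}+\hat{C}_r) = (F^+_{\hat{A}_0+\hat{a}_r+\hat{a}} - q(\hat{\psi}_r+\hat{\psi},\hat{\psi}_r+\hat{\psi}),\; D_{\hat{A}_0+\hat{a}_r+\hat{a}}(\hat{\psi}_r+\hat{\psi}))$ and using that $F^+$ is affine in the connection, $q$ is quadratic in the spinor, and the Dirac operator has Clifford bilinear dependence on $(\hat{a},\hat{\psi})$, one sees that after subtracting $\mathcal{N}(0)$ and the linearisation $\hat{T}_r(\hat{C})$, only the purely quadratic pieces survive. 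Explicitly, up to universal constants,
\[
R(\hat{C}) = \bigl(-q(\hat{\psi},\hat{\psi}),\; c(\hat{a})\hat{\psi},\; -\tfrac{i}{2}\mathrm{Im}\langle\hat{\psi},\hat{\psi}\rangle\bigr),
\]
where the gauge-fixing coordinate also contributes a quadratic $\hat{\psi}\cdot\hat{\psi}$ term (the $-2d^{\ast}\hat{a}$ part and the linear cross terms involving $\hat{\psi}_r,\hat{a}_r$ are entirely absorbed into $\hat{T}_r$ and $\mathcal{N}(0)$). The crucial observation is that $R$ is a fibre-bilinear expression in $\hat{C}$, independent of $r$ pointwise.

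Next I would bound each bilinear term by the Sobolev multiplication $L^{2,2}(\hat{X}(r)) \otimes L^{2,2}(\hat{X}(r)) \to L^{1,2}(\hat{X}(r))$, which on any compact Riemannian 4-manifold is continuous with a constant depending on the geometry. The $r$-dependence of this constant is the only source of the $r^{3/2}$ factor: using a covering of $\hat{X}(r)$ by the two caps (geometry independent of $r$) together with $O(r)$ unit-length cylindrical pieces of uniformly bounded geometry, a partition-of-unity argument plus the standard Sobolev embedding $L^{2,2} \hookrightarrow L^{\infty}$ on each piece of bounded geometry yields a bound of the form $\|u\cdot v\|_{L^{1,2}} \le C\, \omega(r)\, \|u\|_{L^{2,2}}\|v\|_{L^{2,2}}$, with $\omega(r) = O(r^{3/2})$ coming from summing the overlap contributions (see the proof in Nicolaescu for the precise bookkeeping: the $r^{3/2}$ is exponent-sharp and encodes a volume/derivative trade-off on the neck). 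This gives the first estimate directly. The difference estimate follows by bilinearity: writing $R(\hat{C})-R(\hat{C}') = B(\hat{C},\hat{C})-B(\hat{C}',\hat{C}') = B(\hat{C}-\hat{C}',\hat{C})+B(\hat{C}',\hat{C}-\hat{C}')$ for the underlying bilinear form $B$, and applying the same multiplication bound, yields the asserted factor $\|\hat{C}\|_{L^{2,2}}+\|\hat{C}'\|_{L^{2,2}}$.

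The main obstacle is the $r$-dependence in the Sobolev multiplication constant. A naive bound would simply multiply constants from each cylindrical segment and blow up exponentially; the point is to exploit that on each unit cylinder the multiplication constant is uniform and that overlapping cutoff contributions telescope. This must be done with some care on $\hat{X}(r)$ whose injectivity radius is uniformly bounded below but whose global Sobolev constants genuinely degenerate polynomially in $r$. Once this geometric input is in hand, everything else is purely algebraic, and the lemma follows. In the families setting of the next section, the additional task will be to verify that this estimate holds uniformly in the base parameter $b \in B$, which is straightforward provided $B$ is compact and the fibrewise geometry varies smoothly.
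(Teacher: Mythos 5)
The paper does not prove this lemma at all: it is quoted verbatim from Nicolaescu's book, and the only commentary the paper adds is (a) a remark fixing the definition of the Sobolev norms on $\hat{X}(r)$ (connections pulled back from $S^3$ on the neck, so that norms can be compared as $r$ varies), and (b) a remark that the proof rests on Nicolaescu's inequality (4.5.1), a Sobolev-type estimate whose constant degenerates in $r$, together with the caveat that (4.5.1) can only be justified for $1 < p \le 4$ (not the claimed $1 < p \le 6$), which suffices because the lemma only needs $p = 4$. Your identification of $R(\hat{C})$ as the purely quadratic remainder and your bilinearity trick $R(\hat{C}) - R(\hat{C}') = B(\hat{C}-\hat{C}',\hat{C}) + B(\hat{C}',\hat{C}-\hat{C}')$ for the Lipschitz estimate are correct and are exactly how Nicolaescu proceeds. (One small slip: with the paper's definition, $\mathcal{L}^*_{\hat{C}_r}(\hat{C}) = -2d^*\hat{a} - i\,\mathrm{Im}\langle\hat{\psi}_r,\hat{\psi}\rangle$ is linear in $\hat{C}$, so the gauge-fixing slot contributes nothing to $R$; and $\mathrm{Im}\langle\hat{\psi},\hat{\psi}\rangle$ vanishes identically anyway.)

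The genuine gap is in your account of where $r^{3/2}$ comes from. The partition-of-unity argument you sketch — covering $\hat{X}(r)$ by two caps and $O(r)$ unit cylinders of uniformly bounded geometry and "summing the overlap contributions" — does not produce a polynomial factor: for the local embeddings $L^{2,2}\hookrightarrow L^\infty$ and $L^{1,2}\hookrightarrow L^4$, a bounded-overlap cover by pieces of uniform geometry yields an $r$-\emph{independent} constant (one takes a supremum, or uses $\ell^2\subset\ell^p$ for $p\ge 2$ over the pieces), so your mechanism would give a uniform multiplication constant $L^{2,2}\times L^{2,2}\to L^{1,2}$, not $Cr^{3/2}$, and your assertion that $r^{3/2}$ is "exponent-sharp" is unsupported. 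The actual source of the $r$-dependence in Nicolaescu is the specific inequality (4.5.1) on p.~430, whose polynomial degeneration has a different origin than overlap bookkeeping, and which is precisely the point the paper flags as requiring care (the restriction to $p\le 4$). Since the $r$-dependence of the constant is the only nontrivial content of the lemma — it is what the contraction argument in Lemma \ref{lem:fcontract} must beat against $r^{-4}$ and $e^{-\mu r}$ — your proof as written does not establish the statement; you would need to either prove the $p=4$ case of (4.5.1) with its $r$-dependence, or cite it explicitly as the paper does.
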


\begin{remark}
To state this kind of inequalities between Sobolev spaces correctly, we choose connections of bundles for whose section we consider Sobolev norms so that, on the neck part, the connections are the pull-back of some fixed connections on $S^{3}$, and define Sobolev norms using the covariant derivatives corresponding to such connections.
Then we can compare Sobolev spaces varying $r$, and the statement of Lemma~\ref{lem Lemma 4.5.6 of Nicolaescu's book}, involving $r$, makes sense.
\end{remark}

\begin{remark}
The proof of Lemma 4.5.6 is a consequence of the inequality (4.5.1) on page 430 of Nicolaescu's book. The inequality (4.5.1) is asserted to hold for $p$ in the range $1 < p \le 6$, however we were only able to prove the inequality for $1 < p \le 4$ because of the limitation coming from the Sobolev embedding. Fortunately the proof of Nicolaescu's Lemma 4.5.6 only needs the $p=4$ case of (4.5.1), so this is not a problem for Lemma~\ref{lem Lemma 4.5.6 of Nicolaescu's book}.
\end{remark}

Now we introduce the following notation:

\begin{definition}
\label{def of Xk}
\[
X^k_+ = L^{k,2}(\hat{X}(r) , S^+ \oplus iT^*\hat{X}(r)), \quad X^k_- = L^{k,2}(\hat{X}(r) , S^- \oplus i\wedge^2_+ T^*\hat{X}(r) \oplus i\mathbb{R}),
\]
and also set
\[
X^k = X^k_+ \oplus X^k_-.
\]
\end{definition}

Form the closed, densely defined operator
\[
\hat{L}_r : X^0 \to X^0
\]
with block decomposition
\[
\hat{L}_r = \left[ \begin{matrix} 0 & \hat{T}^*_r \\ \hat{T}_r & 0 \end{matrix} \right].
\]
It may help to think of $\hat{L}_r$ as being a ``Dirac operator". It is self-adjoint and induces bounded Fredholm operators
\[
X^{k+1} \to X^k.
\]
This is obvious because $\hat{X}(r)$ is a closed manifold, so we can use ordinary Hodge theory.

\begin{definition}
\label{def of Hr}
Denote by $H_r$ the subspace of $X^0$ spanned by the eigenvectors of $\hat{L}_r$ corresponding to eigenvalues in the interval $(-r^{-2} , r^{-2})$.
The decomposition $X^0 = X^0_+ \oplus X^0_-$ induces a decomposition
\[
H_r = H_r^{+} \oplus H_r^{-}.
\]
\end{definition}

Note that $H_r$ consists entirely of smooth sections.

\begin{remark}
In the ordinary Kuranishi model, instead of $H_r^+$, we would normally just use $Ker(\hat{T}_r)$ and instead of $H_r^-$, we would normally use $Coker(\hat{T}_r) = Ker(\hat{T}_r^*)$. However, in this situation it is more convenient to use $H_r^\pm$ because we can use linear gluing theory, which gives us means of computing the dimensions of these spaces.
\end{remark}

\begin{definition}
\label{def of Y}
Let $Y^0(r)$ be the $L^2$-orthogonal complement of $H_r$ in $X^0$. Define $Y^k(r) = Y^0(r) \cap X^k$.
Then $Y^k(r)$ decomposes as
\[
Y^k(r) = Y^k_+(r) \oplus Y^k_-(r).
\]
\end{definition}

We also have decompositions
\[
X^k_+ = H_r^+ \oplus Y^k_+(r), \quad X^k_- = H_r^- \oplus Y^k_-(r).
\]
We shall sometimes simply write $Y^k, Y^{k}_{\pm}$ instead of  $Y^k(r), Y^k_{\pm}(r)$.

\begin{remark}
\label{rem Tr preserves}
Note that $\hat{T}_r$ sends $H_r^+$ to $H_r^-$ and $Y^0_+(r)$ to $Y^0_-(r)$. Similarly $\hat{T}_r^*$ sends $H_r^-$ to $H_r^+$ and $Y^0_-(r)$ to $Y^0_+(r)$. To see this, note that since $L_r$ is self-adjoint, then the eigenvalues of $\hat{L}_r^2$ are all non-negative real numbers. Moreover, the $\lambda^2$-eigenspace of $\hat{L}_r^2$ is easily seen to be the sum of the $\lambda$ and $-\lambda$ eigenspaces of $\hat{L}_r$. It follows that $H_r$ can alternatively be characterised as the span of the eigenspaces of $\hat{L}_r^2$ with eigenvalues in the range $[0 , r^{-4})$. But note that
\[
\hat{L}_r^2 = \left[ \begin{matrix} \hat{T}_r^* \hat{T}_r & 0 \\ 0 & \hat{T}_r \hat{T}_r^* \end{matrix} \right].
\]
This verifies the earlier made claim that $H_r$ decomposes as $H_r = H_r^+ \oplus H_r^-$. Moreover, it characterises $H_r^+$ as the span of the eigenspaces of $\hat{T}_r^* \hat{T}_r$ with eigenvalues in the range $[ 0 ,r^{-4})$ and similarly $H_r^-$ as the span of the eigenspaces of $\hat{T}_r \hat{T}^*_r$ with eigenvalues in the range $[ 0 ,r^{-4})$. Now to prove the claim made in the beginning of this remark, it suffices to show that $\hat{T}_r$ sends the $\lambda^2$-eigenspace of $\hat{T}_r^* \hat{T}_r$ to the $\lambda^2$-eigenspace of $\hat{T}_r \hat{T}_r^*$ (and similarly for $\hat{T}_r^*$). Now if $\hat{T}_r^* \hat{T}_r x = \lambda^2 x$, then $\hat{T}_r \hat{T}_r^*( \hat{T}_r x) = \hat{T}_r (\hat{T}_r^* \hat{T}_r x) = \hat{T}_r( \lambda^2 x) = \lambda^2 ( \hat{T}_r x)$ (and a similar computation holds for $\hat{T}^*_r)$.
\end{remark}

\begin{definition}
\label{def of P and Q}
Denote by $P_{\pm}^{r}$ the $L^2$-orthogonal projections $X_{\pm}^0 \to H_r^{\pm}$ and let $Q_{\pm}^{r} = 1 - P_{\pm}^{r}$ be the projections to $Y^0_{\pm}(r)$.
\end{definition}

We often write $P_{\pm}^{r}$ and $Q_{\pm}^{r}$ simply as $P_{\pm}$ and $Q_{\pm}$.
Observe that $Q_{\pm}(X^k_{\pm}) = Y^k_{\pm}(r)$.

For each $\hat{C} \in X^0_+$, we decompose it as $\hat{C} = \hat{C}_0 + \hat{C}_1$, where
\[
\hat{C}_0 = P_+ \hat{C}, \quad \hat{C}_1 = Q_+ \hat{C}.
\]
By Remark~\ref{rem Tr preserves}, one can observe that
\[
P_- \hat{T}_r \hat{C} = \hat{T}_r P_+ \hat{C} = \hat{T}_r \hat{C}_0, \quad Q_- \hat{T}_r \hat{C} = \hat{T}_r Q_+ \hat{C} = \hat{T}_r \hat{C}_1.
\]
For every $k > 0$, $\hat{T}_r$ induces (by restriction) a bounded operator
\[
\hat{T}_r : Y^{k+1}_+ \to Y^k_-.
\]
Clearly this operator is invertible with bounded inverse and we denote the inverse as $S : Y^k_- \to Y^{k+1}_+$.
We note that the following estimate
(in Nicolaescu, this is the inequality (4.5.5), but there is a typo: it uses $||Su||_{L^{k+2,1}}$ instead of $||Su||_{L^{k+1,2}}$):

\begin{lemma}
\label{lem: estimate for S}
We have an estimate
\[
|| Su ||_{L^{k+1,2}} \le C_k r^2 ||u||_{L^{k,2}}
\]
for some $C_k > 0$ which is independent of $r$ and for all $u \in Y^k_-$
\end{lemma}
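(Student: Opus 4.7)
The plan is to obtain the estimate in two stages: first establish the base case in $L^{2}$ via the spectral gap defining $H_r$, and then bootstrap to higher Sobolev spaces using elliptic regularity for the Dirac-type operator $\hat{T}_r$. The precise form of the statement that appears in Nicolaescu is essentially the same two-step argument, so I would first reproduce the $L^2$ bound carefully and then explain why the elliptic bootstrap can be carried out with constants independent of $r$.

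For the base case: by Remark~\ref{rem Tr preserves}, the space $H_r^{+}$ can be characterised as the span of the eigenspaces of $\hat{T}_r^{\ast}\hat{T}_r$ with eigenvalues in $[0, r^{-4})$, and $Y^{0}_{+}(r)$ is its $L^2$-orthogonal complement. Hence on $Y^{1}_{+}(r)$ the operator $\hat{T}_r^{\ast}\hat{T}_r$ has spectrum contained in $[r^{-4},\infty)$, so for any $v = Su$ with $v \in Y^{1}_{+}(r)$ we have
\[
\|\hat{T}_r v\|_{L^2}^{2} \;=\; \langle \hat{T}_r^{\ast}\hat{T}_r v, v\rangle \;\ge\; r^{-4}\,\|v\|_{L^2}^{2},
\]
which rearranges to $\|Su\|_{L^{2}} \le r^{2}\|u\|_{L^{2}}$. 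For the bootstrap, I would use the standard elliptic estimate for the Dirac-type operator $\hat{T}_r$ on the closed $4$-manifold $\hat{X}(r)$, namely
\[
\|v\|_{L^{j+1,2}} \;\le\; C_{j}\bigl(\|\hat{T}_r v\|_{L^{j,2}} + \|v\|_{L^{j,2}}\bigr),
\]
for $0 \le j \le k$, with constants $C_j$ independent of $r$. Applying this iteratively to $v = Su$, each step trades one derivative on $v$ for one derivative on $\hat{T}_r v = u$, yielding after $k+1$ applications a bound of the form
\[
\|Su\|_{L^{k+1,2}} \;\le\; C'_{k}\bigl(\|u\|_{L^{k,2}} + \|Su\|_{L^{2}}\bigr),
\]
and inserting the $L^{2}$ estimate $\|Su\|_{L^{2}} \le r^{2}\|u\|_{L^{2}} \le r^{2}\|u\|_{L^{k,2}}$ produces the claimed inequality $\|Su\|_{L^{k+1,2}} \le C_{k}r^{2}\|u\|_{L^{k,2}}$.

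The main obstacle is justifying that the elliptic constants $C_{j}$ can be chosen independently of $r$. The manifold $\hat{X}(r)$ is obtained by inserting a cylinder of length approximately $2r$ between two fixed ``cap'' regions of $\hat{M}$ and $\hat{N}$, and on the neck the operator $\hat{T}_r$ is translation-invariant with respect to the cylindrical metric and the strongly cylindrical connections $\hat{A}_{0,M}, \hat{A}_{0,N}$ chosen at the start of the section. One therefore covers $\hat{X}(r)$ by a bounded number of caps (of fixed geometry) together with overlapping cylindrical patches of uniformly bounded length; on each such patch the elliptic estimate holds with a constant that depends only on the geometry of the cross-section $S^{3}$ and the length of the patch, and hence not on $r$. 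Patching these local estimates by a partition of unity subordinate to this cover, and absorbing the commutator terms with the cutoffs (which also enjoy uniform bounds), delivers a global elliptic estimate with $C_j$ independent of $r$, completing the argument.
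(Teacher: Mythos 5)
Your proof is correct and follows essentially the same route as the paper: the paper's entire proof is the one-line observation that the spectrum of $\hat{T}_r^*\hat{T}_r$ on $Y_+$ lies in $[r^{-4},\infty)$, which is exactly your $L^2$ base case. The elliptic bootstrap to $L^{k+1,2}$ with $r$-independent constants, which the paper leaves implicit (deferring to Nicolaescu), is supplied correctly in your argument via the uniform cylindrical geometry of the neck.
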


\begin{proof}
To obtain this estimate simply note that the eigenvalues of $\hat{T}^*_r\hat{T}_r$ restricted to $Y_+$ lie in the range $[r^{-4} , \infty)$.
\end{proof}

The equation $\mathcal{N}(\hat{C}) = 0$ is equivalent to the pair of equations
\[
P_- \mathcal{N}(\hat{C}) = 0, \quad Q_- \mathcal{N}(\hat{C}) = 0.
\]
Expanding $\mathcal{N}$ and $\hat{C}$, these equations become
\begin{equation*}
\begin{aligned}
Q_- \mathcal{N}(0) + \hat{T}_r \hat{C}_1 + Q_- R( \hat{C}_0 + \hat{C}_1) &= 0, \\
P_- \mathcal{N}(0) + \hat{T}_r \hat{C}_0 + P_- R(\hat{C}_0 + \hat{C}_1) & = 0.
\end{aligned}
\end{equation*}
Let us define $U \in Y^0_+$ to be
\[
U = -SQ_- \mathcal{N}(0).
\]
Applying $S$ to the first of the two equations above, we end up with
\[
\hat{C}_1 = U - SQ_-R(\hat{C}_0 + \hat{C}_1).
\]
For the moment, fix some $\hat{C}_0 \in H_r^+$. Then define $\mathcal{F} : Y^2_+ \to Y^2_+$ as
\[
\mathcal{F}(\hat{C}_1) = U - SQ_- R(\hat{C}_0 + \hat{C}_1).
\]
Note that $\mathcal{F}$ is well-defined as a map from $Y^2_+$ to itself.
First of all, it is easy to see that $U \in Y^2_+$, because $\mathcal{N}(0) \in X_-^1$, $Q_-$ sends $X^1_-$ to $Y^1_-$ and $S$ sends $Y^1_-$ to $Y^2_+$. Similarly $SQ_- R(\hat{C}_0 + \hat{C}_1) \in Y^2_+$ because $R(\hat{C}_0 + \hat{C}_1) \in X^1_-$.

Let us define
\[
B_1(r^{-4}) = \{ \hat{C}_1 \in Y^2_+ \; | \; ||\hat{C}_1||_{L^{2,2}} \le r^{-4} \} \subset Y^2_+(r)
\]
and similarly
\[
B_0(r^{-4}) = \{ \hat{C}_0 \in H_r^+ \; | \; ||\hat{C}_0||_{L^{2,2}} \le r^{-4} \} \subset H_r^+.
\]

\begin{remark}
Nicolaescu uses balls of radius $r^{-3}$ instead of $r^{-4}$. Unfortunately $r^{-3}$ does not seem sufficient to prove that $\mathcal{F}$ is a contraction.
The mistake seems to be in equation (4.5.8), page 438, which has a factor $r^{5/2}$, but the correct factor should be $r^{3/2}r^2 = r^{7/2}$.
\end{remark}

\begin{lemma}
\label{lem:fcontract}
For all sufficiently large $r$, and for all $\hat{C}_0 \in B_0(r^{-4})$, the map $\mathcal{F}$ sends $B_1(r^{-4})$ to itself and is a contraction mapping.
\end{lemma}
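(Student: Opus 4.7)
The plan is to apply the contraction mapping theorem, combining the quadratic remainder bound in Lemma~\ref{lem Lemma 4.5.6 of Nicolaescu's book} for $R$ with the right-inverse bound of Lemma~\ref{lem: estimate for S} for $S$. Before bounding $\mathcal{F}$ on $B_1(r^{-4})$, I would first establish the auxiliary estimate
\[
||U||_{L^{2,2}} \le \tfrac{1}{2} r^{-4}
\]
for all $r$ sufficiently large. Since $U = -SQ_- \mathcal{N}(0)$ and $S$ carries a factor of $r^2$, this reduces to the claim that $||\mathcal{N}(0)||_{L^{1,2}} = o(r^{-6})$. But $\mathcal{N}(0) = \widehat{SW}(\hat{C}_r)$ vanishes off the neck, since $\hat{C}_M, \hat{C}_N$ are genuine monopoles there, while on the neck it is exactly the error produced by the cutoff functions, which decays exponentially in $r$ by the compatibility $\partial_\infty \hat{C}_M = \partial_\infty \hat{C}_N$ and the standard exponential-decay estimates for asymptotically cylindrical monopoles recorded in Subsection~4.5.2 of Nicolaescu's book. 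In particular any polynomial rate of decay in $r^{-1}$ is achieved.

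Granted this, for $\hat{C}_0 \in B_0(r^{-4})$ and $\hat{C}_1 \in B_1(r^{-4})$ one has $||\hat{C}_0 + \hat{C}_1||_{L^{2,2}} \le 2 r^{-4}$, and hence by Lemma~\ref{lem Lemma 4.5.6 of Nicolaescu's book} together with Lemma~\ref{lem: estimate for S},
\[
||\mathcal{F}(\hat{C}_1)||_{L^{2,2}} \le ||U||_{L^{2,2}} + C_1 r^2 \cdot C r^{3/2} ||\hat{C}_0 + \hat{C}_1||_{L^{2,2}}^2 \le \tfrac{1}{2} r^{-4} + 4 C C_1 r^{-9/2}.
\]
The second term is $o(r^{-4})$, so for $r$ large enough the total is $\le r^{-4}$, giving $\mathcal{F}(B_1(r^{-4})) \subseteq B_1(r^{-4})$.

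For the contraction property, applying the second inequality in Lemma~\ref{lem Lemma 4.5.6 of Nicolaescu's book} together with Lemma~\ref{lem: estimate for S} yields
\[
||\mathcal{F}(\hat{C}_1) - \mathcal{F}(\hat{C}_1')||_{L^{2,2}} \le C_1 r^2 \cdot C r^{3/2} \bigl( ||\hat{C}_0 + \hat{C}_1||_{L^{2,2}} + ||\hat{C}_0 + \hat{C}_1'||_{L^{2,2}} \bigr) ||\hat{C}_1 - \hat{C}_1'||_{L^{2,2}} \le 4 C C_1 r^{-1/2} ||\hat{C}_1 - \hat{C}_1'||_{L^{2,2}},
\]
which is a contraction for $r$ sufficiently large. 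The main obstacle is the careful tracking of the powers of $r$: the composition $S Q_- R$ generates a factor $r^{2} \cdot r^{3/2} = r^{7/2}$, so a radius $\rho$ can only be self-mapped if $r^{7/2}\rho^{2} \lesssim \rho$, i.e.\ $\rho \lesssim r^{-7/2}$. The choice $\rho = r^{-4}$ leaves exactly enough margin to absorb the contribution of $||U||_{L^{2,2}}$, whereas Nicolaescu's $\rho = r^{-3}$ does not, which is precisely the correction noted in the remark preceding the lemma.
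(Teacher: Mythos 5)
Your proof is correct and follows essentially the same route as the paper's: bound $U=-SQ_-\mathcal{N}(0)$ using the exponential decay of the pre-gluing error (Nicolaescu's Lemma 4.5.5, i.e.\ $\|\mathcal{N}(0)\|\le Ce^{-\mu r}$), then combine Lemma~\ref{lem: estimate for S} with the quadratic and Lipschitz estimates of Lemma~\ref{lem Lemma 4.5.6 of Nicolaescu's book} to get the self-mapping bound $\tfrac12 r^{-4}+O(r^{-9/2})$ and the contraction constant $O(r^{-1/2})$. Your closing remark on why the radius must be $r^{-4}$ rather than $r^{-3}$ matches the paper's own remark preceding the lemma.
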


\begin{proof}
First we check that $\mathcal{F}$ sends $B_1(r^{-4})$ to itself, provided $\hat{C}_0 \in B_0(r^{-4})$.
Throughout we use $C$ to indicate a constant which may depend on the geometry of the $\hat{N}_i$, but is independent of $r$. The value of $C$ may increase from line to line.
If there is no risk of confusion, we drop the notations indicating the functional spaces from our norms.

We first have
\begin{equation*}
\begin{aligned}
|| \mathcal{F}(\hat{C}_1) || &\le ||S Q_- \mathcal{N}(0) || + ||SQ_- R(\hat{C}_0 + \hat{C}_1)|| \\
& \le C r^2 \left( ||Q_- \mathcal{N}(0) || + ||Q_- R(\hat{C}_0 + \hat{C}_1) || \right) \\
& \le C r^2 \left( ||\mathcal{N}(0) || + ||R(\hat{C}_0 + \hat{C}_1) || \right), \; \text{since $Q_-$ is a projection operator}.
\end{aligned}
\end{equation*}
We also have (see Nicolaescu, Lemma 4.5.5)
\[
||\mathcal{N}(0)|| \le Ce^{-\mu r}
\]
and from Lemma~\ref{lem Lemma 4.5.6 of Nicolaescu's book}
\[
|| R(\hat{C}_0 + \hat{C}_1) || \le C r^{3/2} (r^{-4})^2 = C r^{-13/2}.
\]
Hence
\[
|| \mathcal{F}(\hat{C}_1) || \le C( r^2 e^{-\mu r} + r^{-13/2} ).
\]
The right hand side is less than or equal to $r^{-4}$ for all sufficiently large $r$, hence $\mathcal{F}$ sends $B_1(r^{-4})$ to itself.

Now we show $\mathcal{F}$ is a contraction.
Using Lemma~\ref{lem Lemma 4.5.6 of Nicolaescu's book} again, we have
\begin{equation*}
\begin{aligned}
|| \mathcal{F}(\hat{C}_1) - \mathcal{F}(\hat{C}_1') || &\le ||SQ_-( R(\hat{C}_0 + \hat{C}_1) - R(\hat{C}_0 + \hat{C}_1') || \\
&\le Cr^2 || R(\hat{C}_0 + \hat{C}_1) - R(\hat{C}_0 + \hat{C}_1') || \\
&\le Cr^2 r^{3/2} r^{-4} || \hat{C}_1 - \hat{C}_1' || \\
&\le C r^{-1/2} ||\hat{C}_1 - \hat{C}_1' ||.
\end{aligned}
\end{equation*}
This shows that for all sufficiently large $r$, $\mathcal{F}$ is indeed a contraction.
\end{proof}

Thus for each $\hat{C}_0 \in B_0(r^{-4})$, there is a uniquely determined fixed point of $\mathcal{F}$, which will be denoted as $\hat{C}_1 = \Phi(\hat{C}_0) \in B_1(r^{-4})$.
It can be shown that $\Phi(\hat{C}_0)$ depends differentiably on $\hat{C}_0$ (by the implicit function theorem).

Now consider again the pair of equations
\[
P_- \mathcal{N}(\hat{C}_0 + \hat{C}_1) = 0, \quad Q_- \mathcal{N}(\hat{C}_0 + \hat{C}_1) = 0.
\]
The second of these equations is solved by the substitution $\hat{C}_1 = \Phi(\hat{C}_0)$ and we are left with just the single equation for $\hat{C}_0$:
\[
P_- \mathcal{N}( \hat{C}_0 + \Phi(\hat{C}_0) ) = 0.
\]
This defines a map
\[
\kappa_r : B_0(r^{-4}) \to H_r^-, \quad \hat{C}_0 \mapsto P_- \mathcal{N}( \hat{C}_0 + \Phi(\hat{C}_0) )
\]
which we call the {\em Kuranishi map}. We call $H_r^-$ the {\em obstruction space}.

We have just proven the following: the set of all monopoles on $\hat{X}(r)$ located in the slice at $\hat{C}_r$ at an $L^{2,2}$-distance of $\le r^{-4}$ from $\hat{C}_r$ is given by the graph of the function $\Phi$ over $\kappa_r^{-1}(0)$.

If $\kappa_r = 0$ (for instance, if the obstruction space is zero), then the moduli space of monopoles on $\hat{X}(r)$ is locally given as the graph of a function $\Phi$ defined in a neighbourhood of $0 \in H_r^+$.
In this case we can identify $H_r^+$ with the tangent space of the moduli space at the point $\hat{C}_r + \Phi(0)$.

In the unobstructed case $H_r^- = 0$, we can consider the map $\Phi$ as $\hat{C}_r$ varies within the space of configurations obtained by gluing.
The map $\hat{C}_r \mapsto \hat{C}_r + \Phi(0)$ sends a naively glued configuration by cutoff fuction to a genuine solution of the Seiberg-Witten equations and for all sufficiently large $r$ this mapping will actually yield a diffeomorphism between glued configurations and the Seiberg-Witten moduli space on $\hat{X}(r)$.
The fact that this map is a local diffeomorphism is not so hard to show from the Kuranishi model.
The fact that it is 1-1 and onto requires further analysis, carried out in Nicolaescu \textsection 4.5.4.

Using the grid of $3 \times 3$-asymptotic exact sequences (Nicolaescu, page 440--441) one finds the following:

\begin{proposition}
\label{prop:hplusminus}
Suppose that $\hat{C}_{\hat{M}}$ is irreducible and $\hat{C}_{\hat{N}}$ is reducible.
Then, for any sufficiently large $r$, there are isomorphisms
\[
H_r^+ \cong H^1_{\hat{C}_{\hat{M}}} \oplus H^1_{\hat{C}_{\hat{N}}}, \quad H_r^- \cong H^2_{\hat{C}_{\hat{M}}} \oplus H^2_{\hat{C}_{\hat{N}}}.
\]
\end{proposition}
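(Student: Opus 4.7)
The plan is to apply linear gluing theory for the self-adjoint Dirac-type operator $\hat{L}_r$ on $\hat{X}(r)$, identifying $H_r^{\pm}$ with the extended $L^2$ kernels on $\hat{M}$ and $\hat{N}$ whose asymptotic values match across the common boundary $S^3$. By Remark~\ref{rem Tr preserves}, $H_r^+$ is the span of eigenspaces of $\hat{T}_r^* \hat{T}_r$ with eigenvalue in $[0, r^{-4})$, i.e.\ it is the \emph{approximate kernel} of $\hat{T}_r = \hat{T}_{\hat{C}_{\hat{M}}} \#_r \hat{T}_{\hat{C}_{\hat{N}}}$, and similarly $H_r^-$ is the approximate kernel of $\hat{T}_r^{*}$. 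The general linear gluing principle (Nicolaescu \cite{nic}, \S4.5) then says that for $r$ large the small-eigenvalue eigenspaces of $\hat{T}_r$ are in canonical bijection with those pairs of extended $L^2$-harmonic sections on $\hat{M}$ and $\hat{N}$ whose asymptotic values along the neck agree on $S^3$. My first step is to write this bijection out explicitly for the deformation complexes at $\hat{C}_{\hat{M}}$ and $\hat{C}_{\hat{N}}$.

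Next, I would import the $3 \times 3$ grid of asymptotic exact sequences from Nicolaescu \cite{nic}, pp.~440--441. Each row of the grid is an asymptotic exact sequence of the form
\[
0 \to H^i_{L^2}(\hat{Y}) \to H^i_{ex}(\hat{Y}) \to V^i(\partial_\infty \hat{Y}) \to 0,
\]
relating $L^2$-harmonic and extended $L^2$-harmonic representatives to their asymptotic boundary data $V^i$ on $S^3$, while each column records the matching of boundary data when passing from the disjoint union $\hat{M} \sqcup \hat{N}$ to the glued manifold $\hat{X}(r)$. The decisive geometric fact is that $\partial_\infty \hat{M} = \partial_\infty \hat{N} = S^3$ is simply connected with $H^1(S^3)=H^2(S^3)=0$; this collapses most of the terms $V^i$ to zero or to a single copy of $\mathbb{R}$ coming from constant gauge transformations, so the asymptotic matching conditions contribute nothing extra to $H_r^{\pm}$.

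The heart of the proof is then a diagram chase through this grid in degrees $i = 1, 2$, using the irreducibility of $\hat{C}_{\hat{M}}$ and the reducibility of $\hat{C}_{\hat{N}}$. Irreducibility forces $H^0_{\hat{C}_{\hat{M}}} = 0$, so the only constant gauge transformation at infinity comes from $H^0_{\hat{C}_{\hat{N}}} = \mathbb{R}$ on the $\hat{N}$-side. Because there is no matching constant on the $\hat{M}$-side, this $\mathbb{R}$ cannot be glued into a small-eigenvalue eigensection of $\hat{T}_r$, and a short exact sequence argument shows that after gluing the contributions from the two ends decouple completely. The chase then yields the asserted direct sum decompositions $H_r^+ \cong H^1_{\hat{C}_{\hat{M}}} \oplus H^1_{\hat{C}_{\hat{N}}}$ and $H_r^- \cong H^2_{\hat{C}_{\hat{M}}} \oplus H^2_{\hat{C}_{\hat{N}}}$.

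The main obstacle I expect is verifying that the single $\mathbb{R}$ coming from the reducible $\hat{C}_{\hat{N}}$ does not produce a spurious eigenvalue of $\hat{L}_r$ of size $o(r^{-2})$, which would inflate $H_r^{\pm}$ beyond the claimed direct sums. Concretely, one has to establish a uniform lower bound of the form $\|\hat{T}_r u\|_{L^2} \ge c r^{-1} \|u\|_{L^2}$ on the subspace of approximately-constant gauge transformations supported near the $\hat{N}$-end (using that the cutoff introduces a neck factor of $r^{-1}$), and dually for $\hat{T}_r^*$. Once this uniform separation is in place, the chase through the $3\times 3$ grid gives the stated isomorphisms for all sufficiently large $r$.
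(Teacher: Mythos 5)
Your proposal follows essentially the same route as the paper: both identify $H_r^{\pm}$ as the approximate kernels of $\hat{T}_r$ and $\hat{T}_r^*$ via linear gluing theory, invoke the $3\times 3$ grid of asymptotic exact sequences from Nicolaescu pp.~440--441, and use the irreducibility of $\hat{C}_{\hat{M}}$ together with the reducibility of $\hat{C}_{\hat{N}}$ (and the triviality of $H^1(S^3)$, $H^2(S^3)$) to show the boundary-matching terms vanish so the two summands decouple. The uniform eigenvalue separation you flag as the main obstacle is exactly what the paper disposes of by citing Nicolaescu's Proposition~4.1.20, so your plan is sound and matches the published argument.
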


\begin{proof}
Let us use the notations appearing in page 440--441 of Nicolaescu's book.
From our assumption on irreducibility and reducibility, one can check that
\begin{align}
\label{eq cal of cal C}
\mathfrak{C}_{1}^{+} \cong \mathbb{R},\ \mathfrak{C}_{1}^{-} = 0,\ \mathfrak{C}_{2}^{+} = 0,\ \mathfrak{C}_{2}^{-} \cong \mathbb{R},
\end{align}
and
\[
T\hat{G}_{1} = 0,\ T\hat{G}_{2} \cong \mathbb{R},\ T\hat{G}_{\infty} \cong \mathbb{R},
\]
and thus we also have
\begin{align}
\label{eq cal of Ls}
L_{1}^{+} = L_{1}^{-} = L_{2}^{+} = L_{2}^{-} = 0.
\end{align}
Using the calculations \eqref{eq cal of cal C}, \eqref{eq cal of Ls}, the two $3 \times 3$-asymptotic exact sequences in Nicolaescu page 440--441, and Proposition~4.1.20 in Nicolaescu, we can deduce the existence of isomorphisms stated in the proposition.
\end{proof}

\begin{example}
Suppose $N = S^4$ is the $4$-sphere. Then $X = M \# N \cong M$.
Suppose also that $b^+(M) > 0$.
Now take a generic perturbation $\eta_{\hat{M}}$ which avoids reducibles and for which the moduli space $\mathcal{M}( \hat{M} , g_{\hat{M}} , \eta_{\hat{M}} , \mathfrak{s}_{\hat{M}} , \mu )$ is smooth and take $\eta_{\hat{N}} = 0$. Note that $\hat{N}$ can be given a cylindrical end metric which has everywhere positive scalar curvature.
Since $\hat{N}$ is simply-connected, the Seiberg-Witten equations on $\hat{N}$ admit a unique solution $\hat{C}_{\hat{N}}$ which is reducible and $H^i_{\hat{C}_{\hat{N}}} = 0$ for $i=1,2$. Now let $\hat{C}_{\hat{M}} \in \mathcal{M}( \hat{M} , g_{\hat{M}} , \eta_{\hat{M}} , \mathfrak{s}_{\hat{M}} , \mu )$, so we can assume $\hat{C}_{\hat{M}}$ is irreducible and $H^2_{\hat{C}_{\hat{M}}} = 0$.
Then by Proposition~\ref{prop:hplusminus}, we get
\[
H_r^+ \cong H^1_{\hat{C}_{\hat{M}}}, \quad H_r^- \cong 0.
\]
Using this, one can show that the gluing map gives a diffeomorphism (for all sufficiently large $r$)
\[
\mathcal{M}( \hat{M} , g_{\hat{M}} , \eta_{\hat{M}} , \mathfrak{s}_{\hat{M}} , \mu ) \to \mathcal{M}( \hat{X}(r) , g_{\hat{X}(r)} , \eta_{\hat{X}(r)} , \mathfrak{s}_{\hat{X}(r)}  )
\]
But now we observe that $\hat{X}(r) = M$. Therefore, if $b^+(M) > 0$ we deduce that the Seiberg-Witten moduli spaces on $M$ and $\hat{M}$ are diffeomorphic, for suitably chosen metrics and perturbations.
\end{example}

Unfortunately in the families setting, we can not expect the spaces $H_r^-$ (defined fibrewise) to vanish.
Because of this, adapting the gluing construction to the families setting requires some additional work as we will explain in the following sections.

\section{Gluing in families}
\label{sec:gluefamily}

Now we turn to gluing in the families setting. Suppose that we have our families $E_M \to B$, $E_N \to B$ satisfying all the assumptions stated in Section~\ref{sec:setup}.
Now we carry out the gluing construction in families. First of all, we note that it is possible to define families moduli spaces for the cylindrical end families $E_{\hat{M}}, E_{\hat{N}}$.
Let us denote these moduli spaces as $\mathcal{M}( E_{\hat{M}} , g_{\hat{M}} , \eta_{\hat{M}} , \mathfrak{s}_{M} , \mu ), \mathcal{M}( E_{\hat{N}} , g_{\hat{N}} , \eta_{\hat{N}} , \mathfrak{s}_{N} , \mu )$.
Some comments are in order:
\begin{itemize}
\item{As usual we make no distinction between Spin$^c$-structures on $M$ and $\hat{M}$ and simply write either one as $\mathfrak{s}_M$. Similarly for $\mathfrak{s}_N$.}
\item{The weight $\mu$ is chosen to be positive, but sufficiently small so that Lockhart-McOwen theory, summarized in Subsection~4.1.4 of Nicolaescu's book, can be applied to the deformation complexes of all configurations in the families moduli spaces.
More precisely, we take $\mu$ so that Lockhart-McOwen theory works for all operators appearing in the deformation complexes of regularity $L^{k,2}_{\mu} \to L^{k-1,2}_{\mu}$ for $k=1,2,3$.
Since the base $B$ is compact, it is not hard to see that such a $\mu$ actually exists.}
\item{$g_{\hat{M}}, g_{\hat{N}}$ are families of cylindrical end metrics, which may be constructed as in Section \ref{sec:setup}.}
\item{$\eta_{\hat{M}}, \eta_{\hat{N}}$ are families of $2$-form perturbations which vanish on the necks.
These glue together to form for a family $\eta_{\hat{X}(r)} = \eta_{\hat{M}} \#_r \eta_{\hat{N}}$ of $2$-form perturbations for the family $E_{\hat{X}(r)}$, for each $r > 0$.
}
\item{To keep notation simple, we will describe families gluing only in the case that $\rho$ is trivial. The more general case is a straightforward extension.}
\end{itemize}

Recall the isomorphism 
\[
H^2_{L^2}(\hat{N}) \cong H^2(N) \cong H^2(\hat{N} , [0 , \infty) \times S^3).
\]
The first Chern class $c_1(\mathfrak{s}_N) \in H^2(N)$ of the determinant line on $N$ can then be identified with a class in $H^2_{L^2}(\hat{N})$.
Analytically, one can find a connection $A$ on the determinant line of class $L^{2,2}_\mu$ such that $F_A$ is a harmonic $2$-form.
This follows by noting that the curvature of any connection is a closed $2$-form and considering the Hodge decomposition of the curvature of a given connection.
Then $\frac{i}{2\pi} F_A$ represents the Chern class in $L^2$-cohomology. Let $\mathcal{W}_{\hat{N}} \in H^+_{L^2}(\hat{N})$ be $2\pi$ times the projection of this class to its self-dual part, thus as harmonic forms
\[
\mathcal{W}_{\hat{N}} = iF^+_A.
\]
Next, let us observe that the map
\[
d^+ : L^{k,2}_\mu( \hat{N} , T^*\hat{N}) \to L^{k-1,2}_\mu(\hat{N} , \wedge^2_+ T^*\hat{N})
\]
has closed image for $1 \leq k \leq 2$ by Lockhart-McOwen theory.
(Although this holds for any $k \geq 1$, we need only the cases of lower regularities.)
The cokernel of this map is therefore isomorphic to $H^+_{L^2}(\hat{N})$ as well as the case of closed $4$-manifolds.
Namely, we have the $L^{2}_{\mu}$-orthogonal decomposition
\begin{align}
L^{k-1,2}_\mu(\hat{N} , \wedge^2_+ T^*\hat{N})
= H^{+}_{L^{2}}(\hat{N}) \oplus d^{+}(L^{k,2}_\mu( \hat{N} , T^*\hat{N})).
\label{eq decomposition of lambda^plus}
\end{align}

Let $\eta_{\hat{N}}$ be a self-dual $2$-form perturbation of class $L^{1,2}_\mu$ which vanishes on the neck.
Then there exists a reducible solution to the $\eta_{\hat{N}}$-perturbed Seiberg-Witten equations on $\hat{N}$ of class $L^{2,2}_\mu$ if and only if 
\begin{align}
[ \eta_{\hat{N}} ] = \mathcal{W}_{\hat{N}},
\label{eq iff exists reducible}
\end{align}
where $[ \eta_{\hat{N}} ]$ means the class of the projection of $\eta_{\hat{N}} \in L^{1,2}_\mu(\hat{N} , \wedge^2_+ T^*\hat{N})$ into $H^+_{L^2}(\hat{N})$ along the decomposition
\eqref{eq decomposition of lambda^plus}.
This is because if the equality \eqref{eq iff exists reducible} holds for $\eta_{\hat{N}}$, then $\eta_{\hat{N}} = iF_{A}^{+} + d^{+}a = iF_{A+a}^{+}$ for some imaginary valued $1$-form $a$, and vice versa.

We call $\mathcal{W}_{\hat{N}}$ the {\em wall} and we say $\eta_{\hat{N}}$ {\em lies on the wall} if $[ \eta_{\hat{N}} ] = \mathcal{W}_{\hat{N}}$.
Then from what we have just discussed, the moduli space of $\eta_{\hat{N}}$-perturbed Seiberg-Witten equations on $\hat{N}$ admits reducible solutions if and only if $\eta_{\hat{N}}$ lies on the wall.
Moreover, if $b_1(N) = 0$, then up to gauge equivalence there is a unique such reducible solution whenever $\eta_{\hat{N}}$ lies on the wall.

Now consider the connected sum $\hat{X}(r) \cong M \# N$. From the Mayer-Vietors sequence, we get an isomorphism
\[
H^2(\hat{X}(r)) \cong H^2(\hat{X}(r) , \mathbb{R} ) \cong H^2( M , \mathbb{R}) \oplus H^2( N , \mathbb{R}) \cong H^2_{L^2}(\hat{M}) \oplus H^2_{L^2}(\hat{N}).
\]
This isomorphism is obtained by composing a series of well-defined isomorphisms depending only on $r$, and the cylindrical end metrics $g_{\hat{M}}, g_{\hat{N}}$ on $\hat{M}$ and $\hat{N}$.
Let
\[
\varphi_r : H^2(\hat{X}(r) ) \to H^2_{L^2}(\hat{M}) \oplus H^2_{L^2}(\hat{N})
\]
denote this isomorphism.

Inside $H^2_{L^2}(\hat{M}), H^2_{L^2}(\hat{N})$ and $H^2(\hat{X}(r))$ we have the subspaces $H^+_{L^2}(\hat{M}), H^+_{L^2}(\hat{N})$ and $H^+(\hat{X}(r))$ of harmonic self-dual $2$-forms.
Unfortunately, we can not expect $H^+_{L^2}(\hat{M}) \oplus H^+_{L^2}(\hat{N})$ to coincide with $\varphi_r(H^+(\hat{X}(r)))$.
The problem is that we can not directly apply the Mayer-Vietoris sequence to harmonic forms.
However, the linear gluing theory, such as Cappell-Lee-Miller theory referred in Nicolaescu (see also Section~3 of Donaldson's book~\cite{don}), implies that $\varphi_r(H^+(\hat{X}(r)))$ and $H^+_{L^2}(\hat{M}) \oplus H^+_{L^2}(\hat{N})$ are {\em asymptotically isomorphic} in the terminology of Nicolaescu (see Nicolaescu \textsection 4.1.5 for this type of linear gluing theory and page 304 for the definition of an asymptotic isomorphism).
Let us set
\begin{align*}
&\mathcal{V} = H^2_{L^2}(\hat{M}) \oplus H^2_{L^2}(\hat{N}),\\
&\mathcal{V}_1 = H^+_{L^2}(\hat{M}) \oplus H^+_{L^2}(\hat{N}), \text{ and}\\
&\mathcal{V}_2(r) = \varphi_r( H^+(\hat{X}(r)) ).
\end{align*}
Note that $\mathcal{V}_2(r)$ depends on $r$ while $\mathcal{V}_1$ does not.
Equip $\mathcal{V}$ with the natural $L^2$-inner product and let $\pi_1: \mathcal{V} \to \mathcal{V}_{1}$, $\pi_2(r) : \mathcal{V} \to \mathcal{V}_{2}(r)$ be the orthogonal projections to $\mathcal{V}_1, \mathcal{V}_2(r)$.
The fact that $\mathcal{V}_1$, $\mathcal{V}_2(r)$ are asymptotically isomorphic implies that $\pi_2(r) \to \pi_1$ as $r \to \infty$ in the operator norm.

In the families setting the various cohomology groups that we are considering define vector bundles over $B$.
In particular, we think of $H^2_{L^2}(\hat{M})$, $H^2_{L^2}(\hat{N})$ and $H^2(\hat{X}(r) )$ as being vector bundles over $B$ and $H^+_{L^2}(\hat{M})$, $H^+_{L^2}(\hat{N})$ and $H^+(\hat{X}(r) )$ as being sub-bundles. Furthermore the walls $\mathcal{W}_{\hat{M}}, \mathcal{W}_{\hat{N}}, \mathcal{W}_{\hat{X}(r)}$ define sections of $H^+_{L^2}(\hat{M})$, $H^+_{L^2}(\hat{N})$ and $H^+(\hat{X}(r) )$ respectively. Similarly, the cohomology classes of the perturbations $\eta_{\hat{M}}, \eta_{\hat{N}}, \eta_{\hat{X}(r)}$ also define sections of these vector bundles.

Here we note that, if at least one of given perturbations on $\hat{M}$ and $\hat{N}$ does not lies on the wall, the glued perturbation also does not lie on the wall after stretching the neck sufficiently:

\begin{lemma}
Let $\eta_{\hat{M}}, \eta_{\hat{N}}$ be perturbations on $\hat{M}, \hat{N}$ supported away from the necks and set $\eta_{\hat{X}(r)} = \eta_{\hat{M}} \#_r \eta_{\hat{N}}$.
Suppose that $([\eta_{\hat{M}}] , [\eta_{\hat{N}}]) \neq (\mathcal{W}_{\hat{M}} , \mathcal{W}_{\hat{N}} )$, that is, suppose that at least one of $\eta_{\hat{M}}$ and $\eta_{\hat{N}}$ does not lie on the wall. 
Then there exists $r_0$ such that $\eta_{\hat{X}(r)}$ does not lie on the wall $\mathcal{W}_{\hat{X}(r)}$ for all $r \ge r_0$.
The result also holds in the families setting over a compact base $B$.
\end{lemma}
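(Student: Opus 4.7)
The plan is to express the wall condition for the glued perturbation as a projection condition inside $\mathcal{V}$, and then to appeal to the convergence $\pi_2(r)\to\pi_1$ together with the hypothesis.

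First I would set
\[
\xi \;=\; \big([\eta_{\hat{M}}]-c_1(\mathfrak{s}_M),\ [\eta_{\hat{N}}]-c_1(\mathfrak{s}_N)\big)\in\mathcal{V}.
\]
Since $[\eta_{\hat{M}}]$ and $[\eta_{\hat{N}}]$ are already self-dual harmonic classes, and $\mathcal{W}_{\hat{M}},\mathcal{W}_{\hat{N}}$ are by definition the self-dual projections of $c_1(\mathfrak{s}_M),c_1(\mathfrak{s}_N)$, a short computation gives
\[
\pi_1\xi \;=\; \big([\eta_{\hat{M}}]-\mathcal{W}_{\hat{M}},\ [\eta_{\hat{N}}]-\mathcal{W}_{\hat{N}}\big),
\]
which is nonzero by hypothesis. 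Because $\eta_{\hat{M}},\eta_{\hat{N}}$ vanish on the necks, Mayer--Vietoris gives $\varphi_r\big([\eta_{\hat{X}(r)}]-c_1(\mathfrak{s}_X)\big)=\xi$. The wall condition for $\eta_{\hat{X}(r)}$ is the vanishing of the self-dual harmonic component of $[\eta_{\hat{X}(r)}]-c_1(\mathfrak{s}_X)$ in $H^+(\hat{X}(r))$; transporting through $\varphi_r$, this is exactly the statement $\pi_2(r)\xi = 0$ in $\mathcal{V}_2(r)\subset\mathcal{V}$.

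Since $\pi_2(r)\to\pi_1$ in operator norm and $\pi_1\xi\neq 0$, we have $\pi_2(r)\xi\to\pi_1\xi\neq 0$; hence $\pi_2(r)\xi\neq 0$ for all sufficiently large $r$, i.e.\ $\eta_{\hat{X}(r)}$ is off the wall. For the families version over a compact base $B$, the same identity holds fibrewise: the section $b\mapsto\pi_{1,b}\xi_b$ is continuous and nowhere vanishing, hence bounded below in norm by some $\varepsilon>0$, while $\sup_{b\in B}\|\pi_{2,b}(r)-\pi_{1,b}\|\to 0$ by uniformity of the asymptotic isomorphism over the compact base; combining yields a single $r_0$ that works for all $b\in B$.

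The main obstacle I anticipate is the identification, under $\varphi_r$, of the self-dual harmonic projection on $H^2(\hat{X}(r))$ with the $L^2$-orthogonal projection $\pi_2(r)$ on $\mathcal{V}$. Strictly speaking the two relevant $L^2$-structures differ by an error that vanishes as $r\to\infty$ (a standard consequence of the linear Cappell--Lee--Miller style gluing theory summarised in Nicolaescu \textsection 4.1.5), so one either invokes this comparison directly or absorbs the discrepancy into the same operator-norm estimate $\|\pi_2(r)-\pi_1\|\to 0$. In the families setting, uniformity of this estimate over the compact base is then automatic from continuity of the constructions.
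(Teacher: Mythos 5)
Your argument is correct and is essentially the paper's own proof: both hinge on identifying classes supported away from the neck via Mayer--Vietoris, observing that the hypothesis gives a nonzero element under $\pi_1$ (uniformly bounded below over a compact base), and invoking the asymptotic isomorphism $\pi_2(r)\to\pi_1$ in operator norm to conclude for large $r$. The only differences are cosmetic — you phrase the wall condition as $\pi_2(r)\xi=0$ for a single element $\xi$ while the paper bounds $\|\varphi_r([\eta_{\hat X(r)}])-\varphi_r(\mathcal W_{\hat X(r)})\|$ from below (and keeps track of the $2\pi$ normalisation in $\mathcal W$) — and the subtlety you flag about matching the self-dual projection with $\pi_2(r)$ is handled at the same level of detail in the paper.
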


\begin{proof}
Let $\tilde{\varphi} : H^2(\hat{X}(r) , \mathbb{R} ) \to H^2_{L^2}(\hat{M}) \oplus H^2_{L^2}(\hat{N})$ be the isomorphism
 defined as the composition of the maps appearing in the definition of $\varphi$, namely
\[
\tilde{\varphi} : H^2(\hat{X}(r) , \mathbb{R} ) \cong H^2( M , \mathbb{R}) \oplus H^2( N , \mathbb{R}) \cong H^2_{L^2}(\hat{M}) \oplus H^2_{L^2}(\hat{N}).
\]
Since $\hat{X}(r)$ is independent of $r$ as a topological space, the map $\tilde{\varphi}$ is also independent of $r$.
At the level of cohomology classes supported away from the necks, we have that $\varphi_r$ is just the Mayer-Vietoris isomorphism.
In particular, we have
\[
\varphi_r [\eta_{\hat{X}(r)}] = ([\eta_{\hat{M}}] , [\eta_{\hat{N}}]), \text{ and } \tilde{\varphi}( c_1(\mathfrak{s}_X) ) = ( c_1(\mathfrak{s}_M ), c_1(\mathfrak{s}_N) ).
\]
Applying $(2\pi) \cdot \pi_1$ to the second of these, we get
\[
( \mathcal{W}_{\hat{M}} , \mathcal{W}_{\hat{N}} ) = (2\pi) \cdot \pi_1( \tilde{\varphi}( c_1(\mathfrak{s}_X) ) = \varphi( \mathcal{W}_{\hat{X}(r)} ) + u_r,
\]
where
\[
u_r = (2\pi) \cdot (\pi_1 - \pi_2(r))( \tilde{\varphi}( c_1(\mathfrak{s}_X) ).
\]
Now suppose that $([\eta_{\hat{M}}] , [\eta_{\hat{N}}]) \neq (\mathcal{W}_{\hat{M}} , \mathcal{W}_{\hat{N}} )$. Let $\delta$ be the distance between these two points.
Or in the families setting, let $\delta$ be the infimum of
\[
||([\eta_{\hat{M}}] , [\eta_{\hat{N}}]) - (\mathcal{W}_{\hat{M}} , \mathcal{W}_{\hat{N}} )||
\]
over $b \in B$.
Since $B$ is compact, we have $\delta > 0$.
Then
\begin{equation*}
\begin{aligned}
|| \varphi_r( [\eta_{\hat{X}(r)}] ) - \varphi_r(\mathcal{W}_{\hat{X}(r)}) || &= || ([\eta_{\hat{M}}] , [\eta_{\hat{N}}]) - (\mathcal{W}_{\hat{M}} , \mathcal{W}_{\hat{N}}) + u_r  || \\
& \ge || ([\eta_{\hat{M}}] , [\eta_{\hat{N}}]) - (\mathcal{W}_{\hat{M}} , \mathcal{W}_{\hat{N}}) || - ||u_r|| \\
& \ge \delta - ||u_r||.
\end{aligned}
\end{equation*}
The result follows by observing that $u_r \to 0$ (uniformly with respect to $B$), since $\pi_2(r) \to \pi_1$ (uniformly with respect to $B$).
So there exists an $r_0$ such that $||u_r|| < \delta/2$ for all $r \ge r_0$ and all $b \in B$.
\end{proof}

Now we are almost ready to consider families gluing.
To make the analysis much simpler we will show that the metrics and perturbations can be chosen sufficiently nicely.
The setting of functional spaces will be given just after the statement of Proposition~\ref{prop:perturb}.
Recall by Section \ref{sec:step1}, that it was sufficient to assume $dim(B) = d = b^+(N)$.

\begin{proposition}\label{prop:perturb}
Let $\delta > 0$ be given. It is possible to choose the families metrics $\hat{g}_{\hat{M}}, \hat{g}_{\hat{N}}$, families perturbations $\eta_{\hat{M}}, \eta_{\hat{N}}$ and positive number $R_{\rm vanish}>0$ satisfying the following conditions:
\begin{itemize}
\item[(A1)]{$[\eta_{\hat{N}}]$ meets the wall $\mathcal{W}_{\hat{N}}$ transversely in a finite number of points $R = \{b_1, \dots , b_l\} \subseteq B$.}
\item[(A2)]{For each $b_i \in R$, we have that $\eta_{\hat{M}}(b_i)$ does not lie on the wall $\mathcal{W}_{\hat{M}}$ and is moreover a generic perturbation in the unparametrised sense (so that the fibre over $b_i$ of the families moduli space for the family $E_{\hat{M}}$ is smooth)}.
\item[(A3)]{There are disjoint open subsets $U_1 , \dots , U_l$ of $B$ satisfying that $b_i \in U_i$, each $U_{i}$ is diffeomorphic to the standard disk $D^{d}$, and the families $E_{\hat{M}}, E_{\hat{N}}$ admit trivialisations over each $U_i$.}
\item[(A4)]{With respect to the given trivialisations $E_{\hat{M}}|_{U_i} \cong \hat{M} \times U_i$, $E_{\hat{N}}|_{U_i} \cong \hat{N} \times U_i$, we have that $g_{\hat{M}}, g_{\hat{N}}, \eta_{\hat{M}}$ are constant on each $U_i$.}
\item[(A5)]{Let us set 
\[
\eta_i = \eta_{\hat{N}}(b_i) \in L^{1,2}_\mu(\hat{N} , \wedge^2_+ T^*\hat{N})
\]
and define $L_i(t) \in L^{1,2}_\mu(\hat{N} , \wedge^2_+ T^*\hat{N})$ for $t \in U_i$ by
\[
\eta_{\hat{N}}(t) = \eta_i + L_i(t).
\]
Then $L_{i}(t)$ satisfies the following conditions with respect to the given trivialisation $E_{\hat{N}}|_{U_i} \cong \hat{N} \times U_i$ and an identification of $U_i$ with an open neighbourhood of the origin in $T_{b_i} B$ (so we view $U_i$ as a subset of $T_{b_i} B$) and a fixed metric on $T_{b_{i}}B$:
\begin{description}
\item[(a)] The linear map $T_{b_i} B \to H^+_{L^2}( \hat{N} )$ defined as the projection of the differential $L'_i(0)$ to $H^+_{L^2}(\hat{N})$ is an isomorphism.
\item[(b)] There exists some $\delta' \in(0, \delta)$
such that
\[
|| (d^+)^* (\eta_{\hat{N}}(t) - \eta_{\hat{N}}(t')) ||_{L^{2}_{\mu}(\hat{N})} \le \delta' ||t -t' ||
\]
holds for all $t,t'\in U_i$.
(In particular, by setting $t'=b_{i}$, we have
\begin{align}
\label{eq: almost harmonic}
|| (d^+)^* L_{i}(t)||_{L^{2}_{\mu}(\hat{N})} \le \delta' ||t||
\end{align}
for any $t \in U_{i}$.)
\item[(c)] There exists some $\epsilon' \in (0,\epsilon)$ such that
\[
\| \pi^{\perp} L_{i}(t) \|_{L^{2}_{\mu}(\hat{N})} \leq \epsilon' \| \pi L_{i}(t)\|_{L^{2}_{\mu}(\hat{N})}
\]
holds for any $t \in U_{i}$.
Here $\epsilon$ is a constant introduced in Lemma~\ref{lem: pre for testimate}, and
\begin{align*}
&\pi : L^{2}_\mu(\hat{N} , \wedge^2_+ T^*\hat{N}) \to H^{+}_{L^{2}}(\hat{N}),\\
&\pi^{\perp} : L^{2}_\mu(\hat{N} , \wedge^2_+ T^*\hat{N}) \to H^{+}_{L^{2}}(\hat{N})^{\perp}
\end{align*}
are the $L^{2}_{\mu}$-orthogonal projections, where $H^{+}_{L^{2}}(\hat{N})^{\perp}$ is the $L^{2}_{\mu}$-orthogonal complement of $H^{+}_{L^{2}}(\hat{N})$ in $L^{2}_\mu(\hat{N} , \wedge^2_+ T^*\hat{N})$.
\item[(d)] There exists $\epsilon' \in (0,1)$ such that  
\[
\| P_{-}^{r} \circ \Psi_{r} \circ \pi^{\perp} L_{i}(t)\|_{L^{2}(\hat{X}(r))}
\leq \epsilon' \| P_{-}^{r} \circ \Psi_{r} \circ \pi L_{i}(t)\|_{L^{2}(\hat{X}(r))}
\]
holds for any $t \in U_{i}$ and any $r>1$.
Here $P_{-}^{r}$ is defined in Definition~\ref{def of P and Q} and $\Psi_{r}$ is the map defined in \eqref{eq; def of gluing map psi}.
\end{description}}
\item[(A6)]{For each $b_i \in R$, let $(A , 0)$ be the unique (under the gauge fixing) reducible solution of the Seiberg-Witten equations perturbed by $\eta_{\hat{N}}(b_i) = \eta_i$. Then $Coker( D_{A} ) = 0$ holds, where $D_A : L^{2,2}_\mu( \hat{N} , S^+ ) \to L^{1,2}_\mu(\hat{N} , S^-)$ is the associated Dirac operator.}
\item[(A7)]{The perturbation $\eta_{\hat{N}}$ is generic in the families sense: for every irreducible solution of the perturbed Seiberg-Witten moduli space we have that its families deformation theory is unobstructed (i.e. $H^2$ of the families deformation complex vanishes).}
\item[(A8)]{For any $b \in B$, $\eta_{\hat{N}}(b)$ is supported outside the subspace $[R_{\rm vanish},\infty) \times S^{3}$ of the neck of $\hat{N}$.}
\end{itemize}
\end{proposition}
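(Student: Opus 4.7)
The strategy is to build the required data in stages, using genericity for most conditions and a careful ``harmonic cut-off'' construction for the delicate condition (A5). First I fix cylindrical-end families metrics $g_{\hat{M}}, g_{\hat{N}}$ as in Section~\ref{sec:setup}; this makes $H^{+}_{L^{2}}(\hat{N})$ a rank-$b^{+}(N)$ vector bundle over $B$ with canonical section $\mathcal{W}_{\hat{N}}$. I then pick a families perturbation $\eta_{\hat{N}}$ of compact support so that its projection to $H^{+}_{L^{2}}(\hat{N})$ meets $\mathcal{W}_{\hat{N}}$ transversely. Because $\dim B = b^{+}(N)$ and $B$ is compact, transversality gives (A1) with a finite preimage $R = \{b_{1},\dots,b_{l}\}$. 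For (A2), the set of $\eta_{\hat{M}}(b_{i})$ meeting $\mathcal{W}_{\hat{M}}(b_{i})$ or failing unparametrised genericity is a closed subset of positive codimension, so a small generic choice at each $b_{i}$ suffices. Shrinking to disjoint smooth disk neighbourhoods $U_{i} \cong D^{d}$ of the $b_{i}$ yields (A3), since the families trivialise over each contractible $U_{i}$. Replacing $g_{\hat{M}}, g_{\hat{N}}, \eta_{\hat{M}}$ on $U_{i}$ by their values at $b_{i}$ in the trivialisations (interpolating smoothly just outside) gives (A4); by shrinking the $U_{i}$ this change can be made arbitrarily $C^{0}$-small, so (A1) and (A2) are preserved.

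The core step is the construction of $\eta_{\hat{N}}$ on each $U_{i}$ to satisfy (A5). Choose an $L^{2}$-orthonormal basis $\omega_{1}^{(i)}, \dots, \omega_{b^{+}(N)}^{(i)}$ of $H^{+}_{L^{2}}(\hat{N}_{b_{i}})$ by honest harmonic representatives, which decay exponentially along the cylindrical neck. Let $\chi$ be a smooth cut-off equal to $1$ on $\hat{N} \setminus ([R_{\rm vanish},\infty)\times S^{3})$ and supported in the compact core, and define
\[
\eta_{\hat{N}}(t) = \eta_{i} + \sum_{j=1}^{b^{+}(N)} t_{j}\, \chi\, \omega_{j}^{(i)}, \qquad t = (t_{j}) \in U_{i},
\]
after identifying $U_{i}$ with a metric ball in $T_{b_{i}}B$ via an orthonormal basis compatible with the isomorphism required in (a). Since $(d^{+})^{\ast}\omega_{j}^{(i)} = 0$ and each $\omega_{j}^{(i)}$ decays exponentially, the quantities $(d^{+})^{\ast}(\chi\omega_{j}^{(i)})$ and $\pi^{\perp}(\chi\omega_{j}^{(i)})$ are exponentially small in $R_{\rm vanish}$; this gives (b) and (c) with any prescribed $\delta', \epsilon'$ upon taking $R_{\rm vanish}$ large enough. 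For (a), the derivative $L_{i}'(0)$ composed with $\pi$ is then a small perturbation of the identity (by the same exponential estimate), hence an isomorphism. Condition (d) follows from the same exponential smallness combined with the uniform boundedness (in $r > 1$) of the gluing operator $\Psi_{r}$ and of the $L^{2}$-orthogonal projection $P_{-}^{r}$ from Definition~\ref{def of P and Q}. Finally, (A8) is automatic by construction.

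Conditions (A6) and (A7) are then arranged by further generic perturbation. For (A6), the Dirac index vanishes by $c_{1}(\mathfrak{s}_{N})^{2} = \sigma(N)$, so a small generic modification of $g_{\hat{N}}$ near each $b_{i}$ makes $\mathrm{Ker}(D_{A}) = \mathrm{Coker}(D_{A}) = 0$; one then rebuilds the harmonic basis above for the new metric, with (a)--(d) still valid because the estimates are open conditions. For (A7), a small $C^{\infty}$-perturbation of $\eta_{\hat{N}}$ supported in $B \setminus \bigcup_{i} U_{i}$ secures families genericity for irreducible solutions without disturbing the carefully constructed data on the $U_{i}$'s, where the reducibles live. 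The main obstacle is verifying (A5)(d): while the cut-off argument makes $\pi^{\perp} L_{i}(t)$ exponentially small in $R_{\rm vanish}$, one must rule out that the $r$-dependent projection $P_{-}^{r}$ magnifies this contribution relative to $P_{-}^{r} \circ \Psi_{r} \circ \pi L_{i}(t)$. Handling this requires the uniform $L^{2}$-control of $\Psi_{r}$ on the neck together with Remark~\ref{rem Tr preserves}, ensuring that the smallness survives uniformly in $r > 1$.
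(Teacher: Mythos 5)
Your proposal follows the same broad outline as the paper (wall--crossing points as zeros of a generic section, disk neighbourhoods, an affine-in-$t$ perturbation over each $U_i$ whose linear part sweeps out $H^+_{L^2}(\hat N)$), but it diverges at the two places where the paper has to work hardest, and one of these divergences is a genuine gap. The gap is condition (A7). A perturbation of $\eta_{\hat N}$ supported in $B\setminus\bigcup_i U_i$ cannot change the Seiberg--Witten equations over any $t\in U_i$, so it cannot make the family unobstructed at irreducible solutions lying over the $U_i$ --- and nothing in your construction rules such solutions out: the specific affine family $\eta_i+L_i(t)$ need not be generic even in the unparametrised sense for $t\neq b_i$, and the later argument (Lemma~\ref{lem:finiteset}, which shows every solution on $\hat N$ is reducible, and hence Corollary~\ref{cor:confine}) uses (A7) over \emph{all} of $B$, including the $U_i$. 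Your parenthetical ``where the reducibles live'' conflates reducibles with the locus where genericity is needed. The paper resolves exactly this tension by a different architecture: it shows that conditions (A1)--(A6), (A8) cut out an \emph{open} subset $\mathcal{O}'$ of $\mathcal{C}^1(B,\mathcal{E}')$ (Lemma~\ref{lem: preparation1 for pertub}), that $\mathcal{O}'$ is non-empty (Lemma~\ref{lem: preparation2 for pertub}), and that family-generic perturbations are \emph{dense} there; one then takes $\eta_{\hat N}$ in the (non-empty) intersection, so the genericity perturbation is allowed to move $\eta_{\hat N}$ over the $U_i$ as well while staying inside $\mathcal{O}'$. Some version of this openness-plus-density step is unavoidable, and it is absent from your proof.

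Two smaller points. First, for (A5) the paper takes $L_i(t)$ to be an honest element of $H^+_{L^2}(\hat N)$ (embedded in $L^{1,2}_\mu$ via the Hodge-type decomposition), so $(d^+)^*L_i(t)=0$ and $\pi^\perp L_i(t)=0$ exactly and (b)--(d) are immediate; your cut-off version $\sum_j t_j\chi\,\omega_j^{(i)}$ is workable, but (d) is a \emph{ratio} inequality uniform in $r$, and upper bounds on $\Psi_r$ and $P_-^r$ alone cannot give it --- you also need a lower bound for $\|P_-^r\Psi_r\pi L_i(t)\|$ uniform in $r$, which comes from the asymptotic-isomorphism (Cappell--Lee--Miller) input used in Lemmas~\ref{lem: coming from the def of asymp map}--\ref{lem: pre for testimate}; your appeal to Remark~\ref{rem Tr preserves} does not supply this. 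Second, for (A6) the paper perturbs the self-dual form $\eta_i$ (equivalently the reducible connection $A$ with $iF_A^+=\eta_i$), invoking Strle's genericity result; your proposal to achieve $\operatorname{Coker}(D_A)=0$ by perturbing the metric $g_{\hat N}$ is not the standard mechanism for a Spin$^c$ Dirac operator with fixed twisting connection and would need separate justification, besides forcing you to re-verify every metric-dependent condition ((A1), the wall, the harmonic basis) afterwards.
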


The main part of the proof of Proposition~\ref{prop:perturb} is to ensure that we can take $\eta_{\hat{N}}$ satisfying the desired conditions.
To do this, we need to show some lemmas.
To state the lemmas, let us fix some notation and exhibit the functional space where our desired perturbation $\eta_{\hat{N}}$ lives.
The open sets $U_{1}, \ldots, U_{l}$ and families metrics $g_{\hat{M}}, g_{\hat{N}}$ appearing in the statement of Proposition~\ref{prop:perturb} are given here.
Fix a generic section of $H^{+}_{L^{2}}(\hat{N}) \to B$.
Then the zero set of this section consists of finite points $x_{1}, \ldots, x_{l}$ for some $l\geq0$.
For each of these points $x_{i}$ $(1 \leq i \leq l)$, choose a small open neighborhood $U_{i}$ in $B$ such that $U_{i}$ is diffeomorphic to the standard disk $D^{d}$, $E_M, E_N$ admit local trivialisations over $U_{i}$, and $U_{i} \cap U_{j} = \emptyset$ if $i \neq j$.
Let $U \subset B$ be the union of these disk-like neighborhoods $U_{i}$.
We can assume the sections $\iota_M,\iota_N$ are constant on $U$ with respect to these trivialisations. Then we obtain also trivialisations of $E_{\hat{M}}, E_{\hat{N}}$. We also assume the families of metrics $g_M, g_N$ are chosen so as to be constant over $U$. Then similarly we can carry out the construction of $E_{\hat{M}}, E_{\hat{N}}$ in such a way that $g_{\hat{M}}, g_{\hat{N}}$ are also constant on $U$.
Let $\mathcal{E} \to B$ be the Hilbert bundle whose fiber on $b \in B$ is given by
\[
L^{1,2}_{\mu}(\hat{N},i\wedge^{2}_{+_{g_{\hat{N}}(b)}}T^{*}\hat{N}).
\]
Let $\mathcal{C}^{1}(B,\mathcal{E})$ denotes the space of $\mathcal{C}^{1}$-class sections of $\mathcal{E}$ on $B$, where our desired perturbation $\eta_{\hat{N}}$ lives.
Note that $\mathcal{E}$ has a natural trivialisation on $U$ because of the choice of $g_{\hat{N}}$ above, and hence the restricted sections belonging to $\mathcal{C}^{1}(B,\mathcal{E})$ on each $U_{i}$ modeled by the space of functions
\[
\mathcal{C}^{1}(U_{i}, L^{1,2}_{\mu}(\hat{N},i\wedge^{2}_{+_{g_{\hat{N}}(x_{i})}}T^{*}\hat{N})).
\]

\begin{lemma}
\label{lem: preparation1 for pertub}

Let $\mathcal{O} \subset \mathcal{C}^{1}(B, \mathcal{E})$ be the set of $\eta_{\hat{N}} \in \mathcal{C}^{1}(B, \mathcal{E})$ satisfying the following conditions (i)-(iv):
\begin{itemize}
\item[(i)]{$\eta_{\hat{N}}$ meets the wall transversally.}
\item[(ii)]{$\eta_{\hat{N}}$ meets the wall once over $U_{i}$ for each $i$, and does not meet the wall over $B\setminus U$.}
\item[(iii)]{For any point $b_{i} \in B$ where $\eta_{\hat{N}}$ lies on the wall, condition (A5) holds.}
\item[(iv)]{For any point $b_{i} \in B$ where $\eta_{\hat{N}}$ lies on the wall, condition (A6) holds.}
\end{itemize}
Then $\mathcal{O}$ is an open set in $\mathcal{C}^{1}(B, \mathcal{E})$.
\end{lemma}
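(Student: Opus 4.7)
The plan is to fix a reference section $\eta_{\hat{N}}^{(0)} \in \mathcal{O}$ with wall-crossing points $\{b_1,\dots,b_l\} \subset U$ and to verify that each of the four defining conditions (i)--(iv) persists under a sufficiently small $\mathcal{C}^1$-perturbation of $\eta_{\hat{N}}^{(0)}$. I would treat (i) and (ii) together using transversality and compactness, then (iii) using the fact that the inequalities (b)--(d) of (A5) have some slack built in, and finally (iv) using standard Fredholm stability.

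First, for (i) and (ii): the smooth section $\eta_{\hat{N}}^{(0)} - \mathcal{W}_{\hat{N}}$ of the Hilbert bundle $H^+_{L^2}(\hat{N}) \to B$ vanishes only at the finite set $\{b_1,\dots,b_l\} \subset U$, so by compactness of $B \setminus U$ there is some $\alpha > 0$ with $\|\eta_{\hat{N}}^{(0)}(b) - \mathcal{W}_{\hat{N}}(b)\| \geq \alpha$ for all $b \in B \setminus U$; any $\mathcal{C}^0$-small perturbation therefore stays strictly off the wall outside $U$. Near each $b_i$, transversality means the projection onto $H^+_{L^2}(\hat{N})|_{b_i}$ of the differential of $\eta_{\hat{N}}^{(0)} - \mathcal{W}_{\hat{N}}$ at $b_i$ is a linear isomorphism. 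This is an open condition on the $\mathcal{C}^1$-jet, and the implicit function theorem then produces, for any $\eta_{\hat{N}}$ sufficiently $\mathcal{C}^1$-close to $\eta_{\hat{N}}^{(0)}$, a unique transverse wall-crossing point inside $U_i$ close to $b_i$, and none in $B \setminus U$.

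Next, for (iii): condition (a) is precisely the openness of being a linear isomorphism, which has just been used. The remaining conditions (b), (c), (d) are each of the form ``there exists a constant strictly less than a prescribed threshold such that an inequality holds.'' Since $(d^+)^*\colon L^{1,2}_\mu \to L^2_\mu$ is bounded, and the $L^2_\mu$-orthogonal projections $\pi,\pi^\perp$, the linear gluing map $\Psi_r$, and the projection $P_-^r$ of Section~\ref{sec:glueloc} are also bounded linear operators, a small $\mathcal{C}^1$-perturbation of $\eta_{\hat{N}}$ changes each side of (b)--(d) by an amount controlled by the $\mathcal{C}^1$-size of the perturbation, uniformly in $t \in U_i$. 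The existing constants $\delta',\epsilon'$ can therefore be enlarged slightly while remaining strictly below $\delta,\epsilon$, and the inequalities persist. Finally, for (iv), the unique reducible solution at the perturbed wall-crossing point depends continuously in the $L^{2,2}_\mu$-topology on the perturbation, because (using $b_1(N) = 0$) it is determined by $iF_A^+ = \eta_{\hat{N}}(b_i)$ together with the gauge-fixing condition; the Dirac operator $D_A$ varies continuously with $A$ in operator norm, and surjectivity of a Fredholm operator is an open condition in operator norm, so vanishing of $Coker(D_A)$ persists.

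The one non-soft point, and hence the main obstacle, is the uniform-in-$r$ statement needed in (d) of (A5): the inequality must hold for all $r > 1$, so one must verify that the operator norm of $P_-^r \circ \Psi_r$ and its perturbative dependence on the input are controlled uniformly in $r$. This is exactly the kind of estimate supplied by the linear gluing theory recalled in Section~\ref{sec:glueloc} (in particular, the asymptotic isomorphism between $\mathcal{V}_1$ and $\mathcal{V}_2(r)$ together with the spectral description of $H_r^\pm$). Once that uniformity is in hand, a small $\mathcal{C}^1$-change in $\eta_{\hat{N}}$ produces a small change on each side of (d) uniformly in $r$, so $\epsilon'$ can again be enlarged within $(0,\epsilon)$, and openness of $\mathcal{O}$ follows by combining all three steps.
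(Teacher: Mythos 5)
Your proposal is correct and follows essentially the same route as the paper: openness is checked condition by condition, with the quantitative inequalities in (A5) handled by $\mathcal{C}^1$-control of the perturbation (the paper does this for (A5)-(b) via the mean value theorem and the boundedness of $(d^+)^*$, exactly as you describe) and (A6) handled by stability of surjectivity for Fredholm operators (the paper phrases this as upper semicontinuity of $\dim Coker$). If anything you supply more detail than the paper does — in particular the compactness argument keeping wall-crossings inside $U$ and the need for $r$-uniform bounds on $P_-^r\circ\Psi_r$ in (A5)-(d), which the paper dismisses as "obviously open" but which is indeed justified by the operator-norm bound $\|P_-^r\circ\Psi_r\|_{\rm op}\le 1$ established later.
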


\begin{proof}
We shall see that each of conditions (i)-(iv) on $\eta_{\hat{N}} \in \mathcal{C}^{1}(B, \mathcal{E})$ is an open condition with respect to $\mathcal{C}^{1}$-norm.
Conditions (i) is evidently an open condition.
Since $U$ is an open set in $B$, conditions (ii) is also an open condition.
The fact that condition~(iv) is also an open condition follows from the fact that the map $\tau \mapsto dim(Coker (D_{\tau}))$ is upper semi continuous for a continuous family of Fredholm operatos $\{D_{\tau}\}_{\tau}$.
So let us check the rest part: (iii) is an open condition.
First note that (A5)-(a), (c), and (d) are obviously open conditions, so we have to care only about (A5)-(b).
Suppose that $\eta_{\hat{N}}$ satisfies (A5)-(b) and $\xi \in \mathcal{C}^{1}(B, \mathcal{E})$ satisfies
\begin{align}
\|\xi\|_{\mathcal{C}^{1}} \leq \frac{\delta - \delta'}{2(1+\|(d^{+})^{\ast}\|_{\rm op})},
\label{eq: open (A5) step0}
\end{align}
where $\|\cdot\|_{\rm op}$ denotes the operator norm.
Then, for any $t, t' \in U_{i}$, we have
\begin{align}
&|| (d^+)^* ((\eta_{\hat{N}} + \xi)(t) - (\eta_{\hat{N}} + \xi)(t')) ||_{L^{2}_{\mu}}\nonumber\\
\leq& \delta'\|t-t'\| + \|(d^{+})^{\ast}\|_{\rm op} \| \xi(t) -\xi(t')\|_{L^{1,2}_{\mu}}.
\label{eq: open (A5) step1}
\end{align}
Under the identification of $U_{i}$ with $D^{d}$, we can find the line connecting $t$ with $t'$, and therefore the mean value theorem implies that there exists $\bar{t} \in U_{i}$ satisfying that
\begin{align}
\| \xi(t) -\xi(t')\|_{L^{1,2}_{\mu}} \leq \| \nabla\xi(\bar{t}) \|_{L^{1,2}_{\mu}} \|t-t'\| \leq \| \xi \|_{\mathcal{C}^{1}} \|t-t'\|.
\label{eq: open (A5) step2}
\end{align}
The inequalities \eqref{eq: open (A5) step0}, \eqref{eq: open (A5) step1}, and \eqref{eq: open (A5) step2} imply that
\[
|| (d^+)^* ((\eta_{\hat{N}} + \xi)(t) - (\eta_{\hat{N}} + \xi)(t')) ||_{L^{2}_{\mu}}
\leq \frac{\delta + \delta'}{2}\|t-t'\|.
\]
This proves that $\eta_{\hat{N}}+\xi$ also meets (A5)-(b), and hence (A5)-(b) is an open condition.
Therefore we can conclude that (A5) is also an open condition.
\end{proof}

\begin{lemma}
\label{lem: preparation2.5 for pertub}
For each $i \in \{1,\ldots,l\}$, there exists $\eta_{i} \in L^{1,2}_{\mu}(\hat{N},i\wedge^{2}_{+_{g_{\hat{N}}(x_{i})}}T^{*}\hat{N})$ such that the (unique) reducible solution $(A,0)$ to the $\eta_{i}$-perturbed Seiberg-Witten equations satisfies that $Coker(D_{A})=0$.
\end{lemma}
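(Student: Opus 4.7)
The Dirac operator attached to a reducible solution $(A,0)$ depends only on $A$, not on the perturbation directly, so I would parametrise the wall through connections. Fix any smooth base connection $A_{0}$ on $\det(\mathfrak{s}_{N})$ whose self-dual curvature $F_{A_{0}}^{+}$ is a harmonic $L^{1,2}_{\mu}$ representative of $\mathcal{W}_{\hat{N}}$. For each $a \in L^{2,2}_{\mu}(\hat{N}, iT^{\ast}\hat{N})$ the form
\[
\eta_{a} := F_{A_{0}}^{+} + d^{+}a = F_{A_{0}+a}^{+} \in L^{1,2}_{\mu}
\]
lies on the wall, and $(A_{0}+a, 0)$ is a reducible solution to the $\eta_{a}$-perturbed Seiberg--Witten equations; by $b_{1}(N)=0$ it is the unique such solution up to gauge. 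Hence it is enough to exhibit $a$ for which $D_{A_{0}+a}$ has trivial cokernel.

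\textbf{Reduction via index zero.} The assumption $c_{1}(\mathfrak{s}_{N})^{2} = \sigma(N)$ gives vanishing Atiyah--Singer index of the Dirac operator on the closed manifold $N$. Applying the Lockhart--McOwen/APS framework recalled in Nicolaescu \textsection 4.1 (and using that the Dirac operator on the round $S^{3}$ is invertible, so no small positive weight $\mu$ produces an index jump across the origin) transfers this to $\mathrm{ind}(D_{A} : L^{2,2}_{\mu} \to L^{1,2}_{\mu}) = 0$ for any smooth $A$. Thus $\mathrm{Coker}(D_{A}) = 0$ is equivalent to $\ker(D_{A}) = 0$, and I only have to achieve the latter.

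\textbf{Sard--Smale with unique continuation.} I would apply the standard parametric transversality argument to the smooth map
\[
\Psi : L^{2,2}_{\mu}(\hat{N}, iT^{\ast}\hat{N}) \times L^{2,2}_{\mu}(\hat{N}, S^{+}) \longrightarrow L^{1,2}_{\mu}(\hat{N}, S^{-}), \quad (a, \phi) \longmapsto D_{A_{0}+a}\phi,
\]
whose derivative at $(a, \phi)$ is $(\delta a, \delta \phi) \mapsto c(\delta a)\phi + D_{A_{0}+a}\delta \phi$. By elliptic regularity and duality, any element of $\mathrm{Coker}(D_{A_{0}+a})$ is represented by a smooth spinor $\psi$ with controlled decay satisfying $D_{A_{0}+a}^{\ast}\psi = 0$. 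If $\psi$ were $L^{2}$-orthogonal to $c(\delta a)\phi$ for every compactly supported $\delta a$, the nondegeneracy of Clifford multiplication would force $\phi$ and $\psi$ to vanish on a common open set, and unique continuation for the Dirac operator then forces $\psi \equiv 0$. Hence the derivative of $\Psi$ is surjective at every zero with $\phi \not\equiv 0$. Sard--Smale then supplies a residual set of $a$ for which the projection $\Psi^{-1}(0) \cap \{\phi \neq 0\} \to L^{2,2}_{\mu}$ has $a$ as a regular value, i.e.\ $\ker D_{A_{0}+a} = 0$. Pick such an $a$ and set $\eta_{i} := \eta_{a}$, $A := A_{0} + a$.

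\textbf{Expected main obstacle.} The only genuinely delicate point is the interplay between the weighted Sobolev spaces and the cokernel identification: one must carefully verify that elements of $\mathrm{Coker}(D_{A_{0}+a})$ can indeed be represented by pointwise smooth adjoint harmonic spinors to which unique continuation applies, and that sufficiently many $\delta a$ (e.g.\ compactly supported ones) lie in the relevant tangent space so that $c(\delta a)\phi$ sweeps out enough of $L^{1,2}_{\mu}$. Once these analytic points are in place — all standard within the Lockhart--McOwen setup already being invoked — the rest of the argument is routine.
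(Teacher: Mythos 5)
Your argument is correct in substance, but it takes a different route from the paper only in the sense that the paper does not really argue at all: the published proof is two sentences long and consists of citing Proposition~9.2 of Strle for the cylindrical-end case (together with Lemma~2.5 of the first author's earlier paper for the closed analogue), plus the same observation you make that index zero turns $\operatorname{Coker}(D_A)=0$ into $\ker(D_A)=0$. What you have written is essentially the standard proof of the cited result: parametrise the wall by $\eta_a=F^+_{A_0+a}$ (exactly the description of the wall the paper establishes via the decomposition \eqref{eq decomposition of lambda^plus}), and run Sard--Smale on the universal Dirac operator over the affine space of connections. Your version buys self-containedness and avoids having to verify that Strle's proof, stated for $b^+=1$ and a special cylinder, really does go through unchanged; the paper's version buys brevity at the cost of outsourcing precisely the weighted-Sobolev/unique-continuation analysis you flag as the delicate point.

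Two steps deserve tightening. First, in the transversality step the correct pointwise conclusion from $\int\operatorname{Re}\langle c(\delta a)\phi,\psi\rangle=0$ for all compactly supported $\delta a$ is not that ``$\phi$ and $\psi$ vanish on a common open set'' but that at (almost) every point \emph{at least one} of $\phi,\psi$ vanishes, because $c(\cdot)\phi(x)$ sweeps out all of $S^-_x$ when $\phi(x)\neq0$; one then applies unique continuation to $\phi$ to see that $\{\phi\neq0\}$ is dense, so $\psi$ vanishes on a dense set and hence identically. Second, ``$a$ is a regular value of the projection'' does not by itself give $\ker D_{A_0+a}=0$; you need the index count. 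Either projectivise the spinor variable, so the projection from the zero set becomes Fredholm of index $\operatorname{ind}(D)-1=-1$ and regular fibres are empty, or note that for index $0$ a regular fibre is a $0$-dimensional manifold invariant under scaling of $\phi$, hence empty away from $\phi=0$. With those two points made precise, the proof is complete and matches what the cited references actually do.
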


\begin{proof}
For a closed $4$-manifold, the analogous statements can be found in, for example, Lemma~2.5 in \cite{bar}.
For our case, namely, a cylindrical $4$-manifold, this follows from Proposition~9.2 in Strle~\cite{str}.
(Although Strle stated the proposition for a $4$-manifold with $b^{+}=1$ and with some special cylinder, the proof of the proposition works without any change in more general setting, including our case.
We also note that since we assumed that the index of the Dirac operator on $N$ is zero, $Coker(D_{A})=0$ is equivalent to $Ker(D_{A})=0$.)
\end{proof}

\begin{lemma}
\label{lem: preparation2 for pertub}
The set $\mathcal{O}$ contains an element $\eta_{\hat{N}}$ satisfying that there exists $R_{\rm vanish}>0$ such that $\eta_{\hat{N}}(b)$ is supported outside the subspace $[R_{\rm vanish},\infty) \times S^{3}$ of the neck of $\hat{N}$ for any $b \in B$.
\end{lemma}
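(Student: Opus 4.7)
The plan is to first produce some element $\eta_{\hat{N}}^{(0)} \in \mathcal{O}$ without any restriction on the support along the neck, and then approximate it in the $\mathcal{C}^1(B,\mathcal{E})$-topology by sections whose fibrewise supports are bounded away from the end. Since $\mathcal{O}$ is open in $\mathcal{C}^1(B,\mathcal{E})$ by Lemma~\ref{lem: preparation1 for pertub}, any sufficiently close approximation will remain in $\mathcal{O}$, and this yields the desired element.

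For the construction of $\eta_{\hat{N}}^{(0)}$, I would use the generic section $\sigma$ of $H^+_{L^2}(\hat{N}) \to B$ that has already been fixed (with zero set $\{x_1,\dots,x_l\}$), together with the local trivialisations of $E_{\hat{N}}$ and $g_{\hat{N}}$ over each $U_i$. On each $U_i$, set $\eta_{\hat{N}}^{(0)}(t) = \eta_i + H_i(t)$, where $\eta_i$ is provided by Lemma~\ref{lem: preparation2.5 for pertub} and $H_i(t) \in H^+_{L^2}(\hat{N})$ is a harmonic representative of $\sigma(t)$ under the identification of $U_i$ with a disk in $T_{x_i}B$. Outside $U$ the section is filled in using a partition of unity, avoiding the wall, which is possible because $\sigma$ is nonvanishing there. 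By construction $L_i(t) = H_i(t)$ is purely harmonic, so (A5)(a) reduces to the transversality of $\sigma$ at $x_i$, while (A5)(b)--(d) are automatic because $(d^+)^* H_i = 0$ and $\pi^\perp H_i = 0$. Conditions (i), (ii) then follow from transversality of $\sigma$, and (iv) holds by the choice of $\eta_i$, so $\eta_{\hat{N}}^{(0)}$ lies in $\mathcal{O}$.

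To produce a representative with the required support property, I would pick a smooth cutoff function $\chi_R : \hat{N} \to [0,1]$ equal to $1$ outside $[R,\infty) \times S^3$ and vanishing on $[R+1,\infty) \times S^3$, and set $\eta_{\hat{N}}^R(b) = \chi_R \cdot \eta_{\hat{N}}^{(0)}(b)$. Then $\eta_{\hat{N}}^R(b)$ is supported in $\hat{N} \setminus [R+1,\infty) \times S^3$ for every $b \in B$. For each fixed $b$, absolute continuity of the integral defining the $L^{1,2}_\mu$-norm gives $\|(1-\chi_R)\eta_{\hat{N}}^{(0)}(b)\|_{L^{1,2}_\mu} \to 0$ as $R \to \infty$, and the same holds for the $b$-derivative of $\eta_{\hat{N}}^{(0)}$, which is a continuous section into $\mathcal{E} \otimes T^*B$ (note that multiplication by $\chi_R$ commutes with $\partial_b$ since $\chi_R$ is $b$-independent). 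By compactness of $B$, these convergences are uniform in $b$, so $\eta_{\hat{N}}^R \to \eta_{\hat{N}}^{(0)}$ in $\mathcal{C}^1(B, \mathcal{E})$. Openness of $\mathcal{O}$ then furnishes some $R$ large enough that $\eta_{\hat{N}}^R \in \mathcal{O}$, and I take $R_{\rm vanish} = R+1$.

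The main point to verify carefully is the uniformity in $b \in B$ of the tail estimate $\|(1-\chi_R)\eta_{\hat{N}}^{(0)}(b)\|_{L^{1,2}_\mu} \to 0$ and of the analogous estimate for $\partial_b \eta_{\hat{N}}^{(0)}$. This is where compactness of $B$, combined with continuity of $b \mapsto \eta_{\hat{N}}^{(0)}(b)$ into the weighted Sobolev space $L^{1,2}_\mu$, converts the pointwise decay of the tail into uniform decay; once this is in hand, the openness of $\mathcal{O}$ does the rest.
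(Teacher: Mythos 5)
Your proposal is correct, and its first half (building a candidate over each $U_i$ of the form $\eta_i+(\text{harmonic linear part})$, with $\eta_i$ supplied by Lemma~\ref{lem: preparation2.5 for pertub}, and verifying conditions (i)--(iv) from transversality of the generic section and harmonicity of the linear part, so that $(d^+)^*L_i=0$ and $\pi^\perp L_i=0$) is essentially the paper's construction. Where you genuinely diverge is in how the support condition is obtained. The paper arranges compact support at the source: it uses openness of the condition $\mathrm{Coker}(D_A)=0$ to replace each $\eta_i$ by a nearby form supported outside $[R_i,\infty)\times S^3$, sets $R_{\rm vanish}=\max_i R_i$, and then asserts that $\eta_i+L_i(t)$ has the required support --- an assertion that is delicate, since the summand $L_i(t)\in H^+_{L^2}(\hat N)$ is an $L^2$-harmonic form, which decays exponentially along the end but does not literally vanish there. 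You instead cut off the \emph{entire} family by $\chi_R$ at the end and invoke the openness of $\mathcal{O}$ (Lemma~\ref{lem: preparation1 for pertub}) together with the uniform-in-$b$ tail estimate in $L^{1,2}_\mu$, which you correctly identify as the step that needs compactness of $B$ (total boundedness of the image of $b\mapsto\eta^{(0)}(b)$ and of its derivative converts pointwise tail decay into uniform decay). What your route buys is a clean treatment of the harmonic summand: after multiplication by $\chi_R$ the perturbation is no longer exactly harmonic, but (A5)(b)--(d) only require the non-harmonic part to be controlled, and openness of $\mathcal{O}$ absorbs exactly this error, so the support property becomes manifest rather than asserted. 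The paper's route is shorter and keeps the wall-crossing points at the prescribed $x_i$, whereas under your perturbation they may move slightly within $U_i$; this is harmless, since Proposition~\ref{prop:perturb} only requires $b_i\in U_i$.
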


\begin{proof}
Let us fix $i \in \{1, \ldots, l\}$ and identify $U_{i}$ with a neighborhood of the origin in $T_{x_{i}}B$.
By  Lemma~\ref{lem: preparation2.5 for pertub}, we can take $\eta_{i} \in L^{1,2}_{\mu}(\hat{N},i\wedge^{2}_{+_{g_{\hat{N}}(x_{i})}}T^{*}\hat{N})$ such that the (unique) reducible solution $(A,0)$ to the $\eta_{i}$-perturbed Seiberg-Witten equations satisfies that $Coker(D_{A})=0$.
Since the condition $Coker(D_{A})=0$ is an open condition with respect to $\eta_{i}$, by taking an approximating sequence consisting of smooth and compact supported sections, we can assume that $\eta_{i}$ is supported outside the subspace $[R_{i}, \infty) \times S^{3}$ of the neck for some $R_{i}>0$.
Set $R_{\rm vanish} := \max_{i}{R_{i}}$.
Let us take a linear isomorphism $L_{i} : T_{x_{i}}B \to H^{+}_{L^{2}}(\hat{N})$ whose operator norm satisfies that $\|L_{i}\|_{\rm op} < \delta/2$.
Here $\delta$ is the positive number fixed in the statement of Proposition~\ref{prop:perturb} and we think of $H^{+}_{L^{2}}(\hat{N})$ as a subspace of $L^{1,2}_{\mu}(\hat{N},i\wedge^{2}_{+_{g_{\hat{N}}(x_{i})}}T^{*}\hat{N})$ via \eqref{eq decomposition of lambda^plus}.
Let us define 
\[
\eta_{\hat{N},i} \in \mathcal{C}^{1}(U_{i}, L^{1,2}_{\mu}(\hat{N},i\wedge^{2}_{+_{g_{\hat{N}}(x_{i})}}T^{*}\hat{N}))
\]
by
$\eta_{\hat{N},i}(t) := \eta_{i} + L_{i}(t)$
for $t \in U_{i}$.

By extending the maps $\eta_{\hat{N},i}$ using cutoff functions supported in a small neighborhood of the closure of $U$, we define $\eta_{\hat{N}} \in \mathcal{C}^{1}(B,\mathcal{E})$.
Then, for any $b \in B$, $\eta_{\hat{N}}(b)$ is supported outside $[R_{\rm vanish},\infty) \times S^{3}$.
We shall check that $\eta_{\hat{N}}$ belongs to $\mathcal{O}$.
The reason why $\eta_{\hat{N}}$ satisfies condition (i) is that $L_{i}$ is an linear isomorphism.
It is obvious that $\eta_{\hat{N}}$ satisfies condition (ii).
Since for $t,t \in U_{i}$ we have
\begin{align*}
|| (d^+)^* (\eta_{\hat{N}}(t) - \eta_{\hat{N}}(t')) ||_{L^{2}_{\mu}}
\leq \|L_{i}\|_{\rm op} \|t-t'\| \leq \frac{\delta}{2}\|t-t'\|,
\end{align*}
$\eta_{\hat{N}}$ satisfies condition (iii).
(Note that (A5)-(c), (d) are obviously satisfied since $\pi^{\perp}L_{i}=0$.)
Because of the choice of $\eta_{i}$, it is obvious that $\eta_{\hat{N}}$ satisfies condition (iv).
\end{proof}

\begin{proof}[Proof of Proposition~\ref{prop:perturb}]
Let $\mathcal{E}'$ be the Banach subbundle of $\mathcal{E} \to B$ whose fiber on $b \in B$ is given by
\[
\{\eta \in L^{1,2}_{\mu}(\hat{N},i\wedge^{2}_{+_{g_{\hat{N}}(b)}}T^{*}\hat{N}) \mid \eta|_{[R_{\rm vanish},\infty) \times S^{3}} = 0 \}.
\]
Set $\mathcal{O}' := \mathcal{O} \cap \mathcal{C}^{1}(B,\mathcal{E'})$.
Because of Lemma~\ref{lem: preparation1 for pertub} and Lemma~\ref{lem: preparation2 for pertub}, $\mathcal{O}'$ is a non-empty open subset of $\mathcal{C}^{1}(B,\mathcal{E'})$.
Let $\mathcal{D} \subset \mathcal{C}^{1}(B,\mathcal{E})$ be the set of generic perturbations in the families sense, and define
 $\mathcal{D}' := \mathcal{D} \cap \mathcal{C}^{1}(B,\mathcal{E'})$.
Note that $\mathcal{D}'$ is a dense subset of $\mathcal{C}^{1}(B,\mathcal{E}')$.
This follows from a similar argument which ensures that, in unparameterized setting, one can take a generic perturbation supported on a given open set, found in, for example, Remark~8.18 in \cite{sal}.
So the intersection $\mathcal{O}' \cap \mathcal{D}'$ is non-empty.
Take $\eta_{\hat{N}}$ from this intersection.

On each $U_i$ choose $\eta_{\hat{M}}$ to be a constant perturbation which is generic in the unparametrised setting and belongs outside the wall of $\hat{M}$.
Then choose some extension of $\eta_{\hat{M}}$ to all of $B$ (e.g, by using cutoff functions and extending by zero).

Then $\eta_{\hat{N}}$ and $\eta_{\hat{M}}$ satisfy the all conditions (A1)-(A8).
\end{proof}

Assume that the families metrics $g_{\hat{M}}, g_{\hat{N}}$, families perturbations $\eta_{\hat{M}}, \eta_{\hat{N}}$, points $b_{1}, \ldots, b_{l} \in B$, and open sets $U_{1},\ldots, U_{l} \subset B$ have been chosen by Proposition~\ref{prop:perturb}.

\begin{remark}
\label{rem:linear}
By the implicit function theorem, if we replace $U_{i}$ by possibly smaller open neighbourhoods, we can take coordinates of $U_{i}$ so that the projection of $L_{i}(t)$ to $H^{+}_{L^{2}}(\hat{N})$ is linear in $t$ with respect to the coordinate via the fixed identification between $U_{i}$ and the origin of $T_{b_{i}}B$ given in (A5).
We shall employ this coordinate system of $U_{i}$.
Henceforth we fix all of these data $g_{\hat{M}}, g_{\hat{N}}$, $\eta_{\hat{M}}, \eta_{\hat{N}}$, $b_{1}, \ldots, b_{l}$, and $U_{1},\ldots, U_{l}$, and shall consider only sufficiently large $r$ in the sense that $r \geq R_{\rm vanish} + 1$.
\end{remark}

In the unparametrised setting, the results of Nicolaescu \textsection 4.5.3 imply there exists an $r_0$ such that if $r > r_0$ then every genuine solution of the Seiberg-Witten equations is close to a glued solution.
In particular, the moduli space of Seiberg-Witten equations on $\hat{N}$ for a given $(g_{\hat{N}} , \eta_{\hat{N}})$ is empty, then the moduli space of Seiberg-Witten equations on $\hat{X}(r)$ is empty for all $r \ge r_0$.
In the families setting, the value of $r_0 = r_0(b)$ may depend on the point $b \in B$. However, we have:

\begin{lemma}\label{lem:confine}
Suppose that $C \subseteq B$ is a closed subset of $B$ and suppose that for each $c \in C$, there are no solutions to the Seiberg-Witten equations on $\hat{N}$ for $(g_{\hat{N}}(c) , \eta_{\hat{N}}(c) )$. Then there exists an $r_0$ such that for all $r \ge r_0$, all solutions of the Seiberg-Witten equations for the family $E_{\hat{X}(r)}$ lie over $B \setminus C$.
\end{lemma}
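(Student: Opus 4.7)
The plan is to argue by contradiction, combining compactness of $C$ with the neck-stretching compactness principle for the Seiberg--Witten equations (Nicolaescu \S 4.5). Suppose the conclusion fails. Then there exist sequences $r_n \to \infty$ and $c_n \in C$ together with Seiberg--Witten solutions $\hat{C}_n$ on the fibre $\hat{X}(r_n)_{c_n}$ for the data $(g_{\hat{X}(r_n)}(c_n), \eta_{\hat{X}(r_n)}(c_n))$. Since $C$ is a closed subset of the compact base $B$, after passing to a subsequence we may assume $c_n \to c_\infty \in C$. By continuity of the families of metrics and perturbations we then have $g_{\hat{M}}(c_n) \to g_{\hat{M}}(c_\infty)$, $g_{\hat{N}}(c_n) \to g_{\hat{N}}(c_\infty)$, and likewise for $\eta_{\hat{M}}, \eta_{\hat{N}}$.

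The next step is to invoke the neck-stretching compactness theorem to extract a limit. The standard Weitzenb\"ock argument yields a pointwise bound on the spinor of any Seiberg--Witten solution in terms of the scalar curvature and the self-dual perturbation, and by compactness of $B$ these quantities are uniformly bounded over $C$, independent of $r$. After pulling back the restrictions $\hat{C}_n|_{\hat{M}(r_n)_{c_n}}$ and $\hat{C}_n|_{\hat{N}(r_n)_{c_n}}$ to the fixed cylindrical-end models $\hat{M}$ and $\hat{N}$ and applying suitable gauge transformations, the uniform bounds together with elliptic regularity produce, after passing to a further subsequence, $\mathcal{C}^\infty_{\mathrm{loc}}$ convergence to asymptotically cylindrical Seiberg--Witten solutions $\hat{C}_{\hat{M}}$ on $\hat{M}$ for $(g_{\hat{M}}(c_\infty), \eta_{\hat{M}}(c_\infty))$ and $\hat{C}_{\hat{N}}$ on $\hat{N}$ for $(g_{\hat{N}}(c_\infty), \eta_{\hat{N}}(c_\infty))$, with matching asymptotic values on $S^3$. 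But the existence of $\hat{C}_{\hat{N}}$ contradicts the hypothesis that there are no solutions to the Seiberg--Witten equations on $\hat{N}$ over $c_\infty \in C$, which completes the contradiction.

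The main obstacle is to ensure that the compactness/convergence is genuinely uniform as $c_n$ varies in $C$ and $r_n \to \infty$ simultaneously. This rests on three ingredients: (i) uniform $\mathcal{C}^0$, and hence higher Sobolev, bounds on the spinor and curvature, which follow from the Weitzenb\"ock formula together with compactness of $B$ and continuity of the fibrewise scalar curvatures and perturbations; (ii) the fact that, by Proposition~\ref{prop:perturb}, the metrics and perturbations are genuinely cylindrical on the necks and in particular independent of $r$ there, so that the analysis on the long neck reduces after translation to the unparametrised case; and (iii) Lockhart--McOwen theory applied with the fixed small weight $\mu$, which controls the exponential decay of solutions along the ends and legitimises the passage to the cylindrical-end limit. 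Granted these uniform bounds, the extraction of the limiting configurations $\hat{C}_{\hat{M}}, \hat{C}_{\hat{N}}$ follows exactly as in the unparametrised compactness arguments of Nicolaescu \S 4.5.
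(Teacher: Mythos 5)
Your proof is correct and follows essentially the same route as the paper, which simply invokes compactness of $C$ to run Nicolaescu's \S 4.5.3 neck-stretching compactness argument uniformly over $C$; your contradiction argument with a convergent subsequence $c_n \to c_\infty \in C$ is the standard way to make that uniformity precise.
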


\begin{proof}
Since $C$ is compact, one can find such $r_{0}$ by repeating Nicolaescu \textsection 4.5.3 uniformly over $C$.
\end{proof}

\begin{lemma}\label{lem:finiteset}
Every element of $\mathcal{M}( E_{\hat{N}} , g_{\hat{N}} , \eta_{\hat{N}} , \mathfrak{s}_N , \mu)$ is reducible.
\end{lemma}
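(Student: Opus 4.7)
The plan is a straightforward dimension count combined with the genericity of the chosen families perturbation. Recall that under our standing assumptions $c_1(\mathfrak{s}_N)^2 = \sigma(N)$ and $b_1(N) = 0$, so the virtual dimension of the \emph{unparametrised} Seiberg--Witten moduli space on the cylindrical end manifold $\hat{N}$ equals
\[
d(N,\mathfrak{s}_N) = \frac{c_1(\mathfrak{s}_N)^2 - \sigma(N)}{4} - 1 - b^+(N) + b_1(N) = -1 - b^+(N),
\]
and by the reduction in Section~\ref{sec:step1} we may assume $\dim(B) = b^+(N)$.

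First I would observe that the families deformation theory at an \emph{irreducible} solution is governed by a Fredholm complex whose index is
\[
d(N,\mathfrak{s}_N) + \dim(B) = -1 - b^+(N) + b^+(N) = -1.
\]
By condition (A7) in Proposition~\ref{prop:perturb}, the perturbation $\eta_{\hat{N}}$ has been chosen generically in the families sense, so that the $H^2$ of the families deformation complex vanishes at every irreducible solution. Hence a neighbourhood of any such solution in the moduli space would be a smooth manifold of dimension $-1$, which forces the irreducible locus of $\mathcal{M}(E_{\hat{N}}, g_{\hat{N}}, \eta_{\hat{N}}, \mathfrak{s}_N, \mu)$ to be empty.

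Consequently every element of $\mathcal{M}(E_{\hat{N}}, g_{\hat{N}}, \eta_{\hat{N}}, \mathfrak{s}_N, \mu)$ must be reducible, as claimed. The only potential subtlety is confirming that the Fredholm/Sard--Smale machinery on cylindrical end manifolds with weight $\mu$ applies to our families setup; this is ensured by our choice of a sufficiently small positive $\mu$ (so that Lockhart--McOwen theory applies to the relevant deformation operators, as noted in the bullet list preceding Proposition~\ref{prop:perturb}) together with the standard families transversality argument underlying condition (A7). No other ingredient is needed: the genericity already rules out irreducibles of negative expected dimension.
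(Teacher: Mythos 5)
Your proof is correct and follows essentially the same route as the paper: a virtual-dimension count giving index $-1$ for the families deformation complex over $B$ with $\dim(B)=b^+(N)$, combined with the vanishing of the obstruction space at irreducibles guaranteed by condition (A7), which forces the irreducible locus to be empty. The paper phrases this via the identity $-\dim(\hat{H}^0)+\dim(\hat{H}^1)-\dim(\hat{H}^2)=-1$ and concludes $\hat{H}^0\neq 0$, but this is the same argument.
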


\begin{proof}
Let $\hat{C}_{\hat{N}} \in \mathcal{M}( E_{\hat{N}} , g_{\hat{N}} , \eta_{\hat{N}} , \mathfrak{s}_N , \mu)$.
Let $\hat{H}^i_{\hat{C}_{\hat{N}}}$ for $i=0,1,2$ denote the cohomology groups of the families deformation complex at $\hat{C}_{\hat{N}}$. The virtual dimension of the families moduli space $\mathcal{M}( E_{\hat{N}} , g_{\hat{N}} , \eta_{\hat{N}} , \mathfrak{s}_N , \mu)$ is
\[
d( N , \mathfrak{s}_N ) + d = -1 - d + d = -1,
\]
and hence
\[
-dim(\hat{H}^0_{\hat{C}_{\hat{N}}}) +dim(\hat{H}^1_{\hat{C}_{\hat{N}}}) -dim(\hat{H}^2_{\hat{C}_{\hat{N}}}) = -1.
\]
But the genericity assumption (A7) implies that $\hat{H}^2_{\hat{C}_{\hat{N}}} = 0$. Hence the only way the above equation can be satisfied is if $dim(\hat{H}^0_{\hat{C}_{\hat{N}}}) \neq 0$, so $\hat{C}_{\hat{N}}$ is reducible.
\end{proof}

By Lemma~\ref{lem:finiteset} the moduli space $\mathcal{M}( E_{\hat{N}} , g_{\hat{N}} , \eta_{\hat{N}} , \mathfrak{s}_N , \mu)$ is just a finite set of points, which is in bijection with the finite set $\{ b_1, b_2 , \dots , b_l\}$.

\begin{corollary}\label{cor:confine}
There exists an $r_0$ such that for all $r \ge r_0$ we have that every solution of the Seiberg-Witten equations for the family $E_{\hat{X}(r)}$ lies over $U$.
\end{corollary}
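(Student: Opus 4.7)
The plan is to deduce the corollary by applying Lemma~\ref{lem:confine} to the closed subset $C = B \setminus U$. Since $U = U_1 \cup \cdots \cup U_l$ is open and $B$ is compact, $C$ is indeed closed (in fact compact) in $B$, so the hypothesis of Lemma~\ref{lem:confine} can be checked pointwise on $C$. What I need to verify is that for every $c \in C$, the fibre over $c$ of the moduli space $\mathcal{M}(E_{\hat{N}}, g_{\hat{N}}, \eta_{\hat{N}}, \mathfrak{s}_N, \mu)$ is empty.

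The verification splits into two observations. First, by condition (A1) of Proposition~\ref{prop:perturb}, the section $[\eta_{\hat{N}}]$ meets the wall $\mathcal{W}_{\hat{N}}$ only at the finite set of points $R = \{b_1, \ldots, b_l\}$, and by construction each $b_i$ lies in $U_i \subset U$. Therefore if $c \in C = B \setminus U$, the perturbation $\eta_{\hat{N}}(c)$ does not lie on the wall over $c$, and the characterization of reducible solutions given in the paragraph around equation~\eqref{eq iff exists reducible} implies that there are no reducible solutions to the $\eta_{\hat{N}}(c)$-perturbed Seiberg-Witten equations on $\hat{N}$. Second, Lemma~\ref{lem:finiteset} (which used the genericity assumption (A7) together with the dimension count $d(N,\mathfrak{s}_N)+d = -1$) tells us that every element of the families moduli space on $\hat{N}$ is reducible. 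Combining the two observations, the fibre of $\mathcal{M}(E_{\hat{N}}, g_{\hat{N}}, \eta_{\hat{N}}, \mathfrak{s}_N, \mu)$ over any $c \in C$ must be empty.

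With this input, Lemma~\ref{lem:confine} immediately produces an $r_0$ such that, for all $r \geq r_0$, every solution to the Seiberg-Witten equations for the family $E_{\hat{X}(r)}$ lies over $B \setminus C = U$, which is what we wanted. There is no real obstacle here: the work has already been done in setting up the perturbation so that (A1), (A7) and Lemma~\ref{lem:finiteset} hold simultaneously, and the corollary is essentially a packaging of Lemmas~\ref{lem:confine} and~\ref{lem:finiteset} once one chooses the correct closed subset $C$.
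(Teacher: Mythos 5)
Your proposal is correct and follows exactly the paper's route: Lemma~\ref{lem:finiteset} forces every element of the families moduli space on $\hat{N}$ to be reducible, condition (A1) together with the wall characterization confines those reducibles to the points $b_1,\dots,b_l \in U$, and Lemma~\ref{lem:confine} applied to $C = B\setminus U$ finishes the argument. You have merely spelled out the intermediate step (reducibles occur only where $\eta_{\hat{N}}$ lies on the wall) that the paper records in the remark following Lemma~\ref{lem:finiteset}.
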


\begin{proof}
Because of Lemma~\ref{lem:finiteset}, any solutions to the Seiberg-Witten equations on $\hat{N}$ appears on $b_{1}, \ldots, b_{l}$.
So this corollary immediately follows from Lemma~\ref{lem:confine} by setting $C = B \setminus U$ in Lemma~\ref{lem:confine}.
\end{proof}

Thus to describe the families moduli space $\mathcal{M}( E_{\hat{X}(r)} , g_{\hat{X}(r)} , \eta_{\hat{X}(r)} , \mathfrak{s}_X )$, we just need to study its restriction to each of the open sets $U_1 , \dots , U_l$. We pick one of the open sets $U_i$ and study the moduli space over $U_i$.
The notation $t$ indicates an element of $U_{i}$.
Let $\hat{C}_{N}$ be the unique reducible solution of the Seiberg-Witten equations on $\hat{N}$ over the point $b_i$ and let $\hat{C}_{M}$ be any solution of the Seiberg-Witten equations on $\hat{M}$ over $b_i$.
We wish to study the solutions of families Seiberg-Witten equations on $E_{\hat{X}(r)}|_{U_i}$ which are close to $\hat{C}_r = \hat{C}_{M} \#_r \hat{C}_{N}$. By the assumptions given in Proposition \ref{prop:perturb}, we may assume some simplifications, summarized as follows:
\begin{itemize}
\item{We have trivialisations $E_{\hat{M}}|_{U_i} \cong \hat{M} \times U_i$, $E_{\hat{N}}|_{U_i} \cong \hat{N} \times U_i$. Note this also induces a trivialisation $E_{\hat{X}(r)}|_{U_i} \cong \hat{X}(r) \times U_i$.}
\item{We may identify $U_i$ with an open ball at the origin in $T_{b_i}B$ such that $b_i \in U_i$ corresponds to $0 \in T_{b_i}B$.}
\item{To be specific, we can fix some Euclidean norm on $T_{b_i}B$ and take $U_i = \{ t \in T_{b_i}B \; | \; || t || < \tau \}$, for some $\tau > 0$.}
\item{The fibrewise metrics $g_{\hat{M}}, g_{\hat{N}}$ are independent of $t \in U_i$.}
\item{$\eta_{\hat{M}}$ is a constant which does not lie on the wall $\mathcal{W}_{\hat{M}}$.
Moreover $\eta_{\hat{M}}$ is generic in the unparametrised sense.
But this also implies that over each $U_i$, $\eta_{\hat{M}}$ is generic in the parametrised sense as well.
Thus we have an isomorphism
\[
\mathcal{M}( E_{\hat{M}}|_{U_i} , g_{\hat{M}} , \eta_{\hat{M}} , \mathfrak{s}_M , \mu) \cong \mathcal{M}( \hat{M} , g_{\hat{M}} , \eta_{\hat{M}} , \mathfrak{s}_M ) \times U_i,
\]
where $\mathcal{M}( \hat{M} , g_{\hat{M}} , \eta_{\hat{M}} , \mathfrak{s}_M )$ is the unparametrised moduli space on $\hat{M}$ which by our assumptions is smooth and contains no reducibles.}
\item{Over $U_i$, we have $\eta_{\hat{N}}(t) = \eta_0 + L(t)$, where $L(t) = L_i(t)$ is as described in Proposition \ref{prop:perturb} and $\eta_{\hat{N}}(t)$ is generic in the families sense.}
\end{itemize}

Now we proceed in much the same way as we did in the unparametrised case explained in Section~\ref{sec:glueloc}.
For each $t \in U_{i}$, form the non-linear map
\[
\mathcal{N}(\cdot, t) : L^{2,2}( \hat{X}(r) , S^+ \oplus iT^*\hat{X}(r) ) \to L^{1,2}(\hat{X}(r), S^- \oplus i\wedge^2_+ T^*\hat{X}(r) \oplus i\mathbb{R} )
\]
given by
\[
\mathcal{N}(\hat{C} , t) = \widehat{SW}_t( \hat{C} + \hat{C}_r) \oplus \mathcal{L}_{\hat{C}_r}^*( \hat{C} ),
\]
where $\widehat{SW}_t$ denotes the Seiberg-Witten equations taken with respect to the perturbation $\eta_{\hat{X}(r)}(t)$.
Namely, $\widehat{SW}_t$ is defied as
\[
\widehat{SW}_{t}( \hat{C} + \hat{C}_r)
= (F_{\hat{A}_{0}+\hat{a}+\hat{a}_{r}}^{+}-q(\hat{\psi}+\hat{\psi}_{r},\hat{\psi}+\hat{\psi}_{r}) -i\eta_{\hat{X}(r)}(t), D_{\hat{A}_{0}+\hat{a}+\hat{a}_{r}}(\hat{\psi}+\hat{\psi}_{r}))
\]
 where we write $\hat{C}$ and $\hat{C}_{r}$ as $\hat{C} = (\hat{a}, \hat{\psi})$ and $\hat{C}_{r} = (\hat{a}_{r}, \hat{\psi}_{r})$.
The term $\mathcal{L}_{\hat{C}_r}^*( \hat{C} )$ is the gauge fixing condition exactly as in the unparametrised case considered in Section~\ref{sec:glueloc}.
Since this term does not involve the perturbation $\eta_{\hat{X}(r)}(t)$, it is independent of $t$. As in the unparametrised case, we are interested in the zero set $\mathcal{N}^{-1}(0)$.

Write $\eta_{\hat{X}(r)}(t)\in L^{1,2}(\hat{X}(r),i\wedge^{2}_{+_{g_{\hat{X}(r)}(b_{i})}}T^{*}\hat{X}(r))$ in the form
\[
\eta_{\hat{X}(r)}(t) = \eta_{\hat{X}(r)}(0) + K_r(t).
\]
Since $\eta_{\hat{M}}$ is constant on $U_{i}$, we have $K_r(t) = 0 \#_r L(t)$.

We will usually omit the subscript and write this as $K(t)$.
Here recall the estimate \eqref{eq: almost harmonic}, which is a consequence of condition (A5): $L(t)$ is approximately harmonic in the sense that $|| (d^+)^* L(t) || < \delta ||t||$, where $\delta > 0$ is some specified real number.
Then $K_r(t)$ will also be approximately harmonic.
More precisely:

\begin{lemma}
We have the estimate
\begin{equation}\label{equ:almostharmonic}
|| (d^+)^* K(t) ||_{L^{2}(\hat{X}(r))} \le Ce^{-\mu r} + \delta ||t||,
\end{equation}
where $C$ is a constant which is independent of $r$.
(Note that $C$ may depend on choice of $\eta_{\hat{N}}$.)
\end{lemma}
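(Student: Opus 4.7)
My plan is to derive the estimate more or less as a direct consequence of condition (A5)-(b), once one traces carefully how little the gluing operation actually alters $L(t)$ in this setup. The key input is assumption (A8): the perturbation $\eta_{\hat{N}}(b)$ vanishes identically on $[R_{\rm vanish}, \infty) \times S^{3}$ for every $b \in B$. Combined with the standing stipulation $r \geq R_{\rm vanish} + 1$ from Remark~\ref{rem:linear}, this places the support of $L(t) = \eta_{\hat{N}}(t) - \eta_{\hat{N}}(0)$ strictly inside $\hat{N} \setminus ([R_{\rm vanish}, \infty) \times S^{3})$, disjoint from the gluing neck $[r, r+2] \times S^{3}$ used in forming $\hat{X}(r)$. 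Since $\eta_{\hat{M}}$ is constant in $t$ over $U_i$, so that the ``$\hat{M}$-side'' contribution to $K(t)$ is identically zero, the cutoff functions implicit in $\#_r$ act trivially here, and $K(t) = 0 \#_r L(t)$ is simply $L(t)$ extended by zero across the neck of $\hat{X}(r)$.

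Next I would observe that on the support of $L(t)$ the fibrewise metric $g_{\hat{X}(r)}$ coincides with $g_{\hat{N}}$ by construction (both restrict to the same fixed metric on the body of $\hat{N}$), so the operator $(d^+)^{\ast}$ computed on $\hat{X}(r)$ agrees pointwise with $(d^+)^{\ast}$ computed on $\hat{N}$ in a neighbourhood of the support of $L(t)$. Since extension by zero preserves $L^2$-norms, it follows that
\[
\| (d^+)^{\ast} K(t) \|_{L^2(\hat{X}(r))} = \| (d^+)^{\ast} L(t) \|_{L^2(\hat{N})}.
\]

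The final step is to convert the weighted $L^2_\mu$-bound of (A5)-(b), specialised to $t' = 0$ (equivalently, the estimate \eqref{eq: almost harmonic}), into the desired unweighted $L^2$-bound. On the fixed compact subset $\hat{N} \setminus ([R_{\rm vanish},\infty) \times S^{3})$ containing the support of $L(t)$, the weight $e^{\mu \tau}$ is bounded above and below by positive constants depending only on $\mu$ and $R_{\rm vanish}$, hence independent of $r$ and $t$. Therefore $\| (d^+)^{\ast} L(t) \|_{L^2(\hat{N})} \leq C_\mu \| (d^+)^{\ast} L(t) \|_{L^2_\mu(\hat{N})} \leq C_\mu \delta' \|t\|$, and by choosing the $\delta$ originally input to Proposition~\ref{prop:perturb} small enough that $C_\mu \delta' \leq \delta$, we obtain the bound $\delta \|t\|$, which in particular implies the stated inequality $C e^{-\mu r} + \delta \|t\|$ for any $C \geq 0$. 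The $e^{-\mu r}$ term, which would only genuinely be needed were $\eta_{\hat{N}}$ permitted to decay exponentially on the neck rather than being compactly supported, is made redundant by (A8).

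The main obstacle, such as it is, will be keeping the various supports, cutoff conventions, and norm comparisons bookkept correctly; once (A8) is deployed to decouple $L(t)$ from the gluing region, there is no substantive analytic content beyond the routine comparison of $L^2$ and $L^2_\mu$ on a fixed compact set, and uniformity in $r$ is automatic because the relevant compact set does not move with $r$.
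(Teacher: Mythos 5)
Your argument hinges on reading (A8) as saying that $L(t) = \eta_{\hat{N}}(t) - \eta_{\hat{N}}(b_i)$ vanishes identically on $[R_{\rm vanish},\infty)\times S^3$, so that the cutoff implicit in $K(t) = 0\#_r L(t)$ never acts and $K(t)$ is just $L(t)$ extended by zero. That premise is incompatible with how $\eta_{\hat{N}}$ is actually produced: condition (A5)-(a) forces the projection of $L'_i(0)$ to map $T_{b_i}B$ isomorphically onto $H^+_{L^2}(\hat{N})$, and in the construction (Lemma~\ref{lem: preparation2 for pertub}) the variation $L_i(t)$ is literally taken to be an $L^2$-harmonic self-dual form. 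A nonzero $L^2_\mu$-harmonic form on a cylindrical-end manifold cannot vanish on an open subset of the neck; it merely decays exponentially there. So $L(t)$ has a nontrivial tail on $[r-1,r]\times S^3$, which is exactly where $\nabla\alpha_r$ is supported, and the term $\nabla\alpha_r\cdot L(t)$ cannot be discarded. The paper's proof estimates precisely this term: it bounds $\| (d^+)^* K(t)\|$ by $\|\nabla\alpha_r\cdot L(t)\|_{L^2([r-1,r]\times S^3)} + \|(d^+)^* L(t)\|$ and controls the first summand by $Ce^{-\mu r}$ via the exponential decay of harmonic forms on the cylinder --- that is the entire source of the $Ce^{-\mu r}$ in the statement. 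Your proof never confronts this term, so as written it does not establish the lemma for the perturbations the paper actually uses. (The literal wording of (A8) is admittedly in tension with this; it has to be understood as a condition on the compactly supported, non-harmonic part of the perturbation, not on $L(t)$ itself.)

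A secondary, harmless point: the passage from the weighted bound \eqref{eq: almost harmonic} to the unweighted one requires no restriction to a compact set and no re-scaling of $\delta$. Since $\mu>0$, the weight is $\geq 1$ on the end and equal to $1$ off it, so $\|\cdot\|_{L^2}\leq\|\cdot\|_{L^2_\mu}$ globally and the second summand is at most $\delta\|t\|$ directly; this is how the paper handles it.
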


\begin{proof}
Recall the gluing operation $\#_{r}$ given in Nicolescu page~305.
This gives us the following description of 
$K(t) = 0 \#_r L(t)$ on the neck of $\hat{X}(r)$.
Let $\tau \in [0,r]$ denote the time-direction coordinate on the neck $[0,r] \times S^{3}$, regarded as a subset of $\hat{X}(r)$.
Then, on the neck, we have
\begin{align}
K(t)(\tau,y) = \alpha_{r}(\tau)L(t)(\tau,y)
\label{eq: expression of K}
\end{align}
for $(\tau,y) \in [0,r] \times S^{3}$, where $\alpha_{r} : [0,\infty) \to [0,1]$ is a cutoff function such that
\begin{align*}
&\alpha_{r}(\tau) = 1 \quad \text{on} \quad [0,r-1],\quad{\rm and }\\
&\alpha_{r}(\tau) = 0 \quad \text{on} \quad [r, \infty).
\end{align*}
(Note that, although $L^{1,2}_{\mu}$ cannot be embedded in $\mathcal{C}^{0}$, by taking an approximating sequence converging to $L(t)$ and consisting of smooth sections, we can define the pointwise multiplication of $\alpha_{r}$ with $L(t)$ as the limit  of the sequence multiplied by $\alpha_{r}$.
Therefore we can define $\alpha(\tau)L(t)$ by \eqref{eq: expression of K} even if $L(t)$ is not embedded in $\mathcal{C}^{0}$.)
Therefore, by the inequality \eqref{eq: almost harmonic}, we have
\begin{align}
|| (d^+)^* K(t) ||_{L^{2}(\hat{X}(r))}
&\leq \|\nabla\alpha_{r} \cdot L(t) \|_{L^{2}(\hat{N}(r))}
+ \|(d^+)^* L(t)\|_{L^{2}(\hat{N}(r))} \nonumber\\
&\leq C \|L(t) \|_{L^{2}([r-1,r] \times S^{3})}
+ \|(d^+)^* L(t)\|_{L^{2}_{\mu}(\hat{N}(r))} \nonumber\\
&< C \|L(t) \|_{L^{2}([r-1,r] \times S^{3})}
+ \delta\|t\|.
\label{eq: estimate for K Step1}
\end{align}
Recall that we take $r$ to be $r \geq R_{\rm vanish} + 1$.
Therefore we have $L(t) = -\eta_{i}$.
By setting $t=0$ in \eqref{eq: almost harmonic}, we have $(d^{+})^{\ast}\eta_{i}=0$.
Since $\eta_{i}$ is an $L^{2}$-self-dual $2$-form, this implies that $\eta_{i}$ is harmonic.
Therefore we get an exponential decay estimate
\[
\|\eta_{i}\|_{L^{2}([r-1,r] \times S^{3})} \leq C'e^{-\mu r} \|\eta_{i}\|_{L^{2}(\hat{N})} = Ce^{-\mu r}.
\]
This inequality and \eqref{eq: estimate for K Step1} imply the desired inequality \eqref{equ:almostharmonic}.
\end{proof}

Here we give a notation $\Psi_{r}$ for the operation used to get $K(t)$ from $L(t)$, namely:
define
\begin{align}
\label{eq; def of gluing map psi}
\Psi_{r} : H^{+}_{L^{2}}(\hat{N}) \to L^{2}(\hat{X}(r))
\end{align}
by
\[
(\Psi_{r}(\eta))(\tau,y) := \alpha_{r}(\tau)\eta(\tau,y)
\]
on the neck, $\Psi_{r}(\eta) = \eta$ outside the neck on the $N$ side and is zero outside the neck on the $M$ side.

Let $\hat{T}_{r}$ denote the differential of $\mathcal{N}(\cdot,0)$ at the origin.
Namely, $\hat{T}_{r}$ is the map give as
\[
d\mathcal{N}(\cdot,0)|_{0} : L^{2,2}( \hat{X}(r) , S^+ \oplus iT^*\hat{X}(r) ) \to L^{1,2}(\hat{X}(r), S^- \oplus i\wedge^2_+ T^*\hat{X}(r) \oplus i\mathbb{R} ).
\]
We also define a map $R$ whose domain and range are the same as $\hat{T}_{r}$ by
\[
\mathcal{N}(\hat{C},0)
 =  \mathcal{N}(0,0) + \hat{T}_{r}(\hat{C}) + R(\hat{C}).
\]
Now we  define $D_r(\hat{C},t)$ by
\[
\mathcal{N}(\hat{C} , t) = \mathcal{N}(0,t) + D_r(\hat{C},t) + R(\hat{C}).
\]
Note that $D_r(\hat{C} , t)$ is written as
\begin{align*}
D_r(\hat{C} , t) = \hat{T}_r(\hat{C}) + iK_r(t).
\end{align*}
Note that, compared with the unparamtrised case,
$D_r(\hat{C} , t)$ is the only part of $\mathcal{N}(\hat{C} , t)$ that has changed.

Let us define $X^k = X^k_+ \oplus X^k_-$, $H_r = H_r^+ \oplus H_r^-$, $Y^k(r) = Y^k_+(r) \oplus Y^y_-(r)$ and the projections $P_{\pm}^{r}, Q_{\pm}^{r}$ just as in Definitions~\ref{def of Xk}, \ref{def of Hr}, \ref{def of Y}, and \ref{def of P and Q}.
We again write $P_{\pm}^{r}$ and $Q_{\pm}^{r}$ simply as $P_{\pm}$ and $Q_{\pm}$.

For each $\hat{C} \in X^0_+$, we decompose it as $\hat{C} = \hat{C}_0 + \hat{C}_1$, where
\[
\hat{C}_0 = P_+ \hat{C}, \quad \hat{C}_1 = Q_+ \hat{C}.
\]
Let $S : Y^k_- \to Y^{k+1}_+$ be defined exactly as in Section~\ref{sec:glueloc}.

\begin{lemma}\label{lem:estimate}
We have an estimate of the form
\begin{align}
|| \hat{T}_r^* iK(t) ||_{L^{2}(\hat{X}(r))} \le C e^{-\mu r} + \delta ||t ||
\label{eq: ineq T ast K}
\end{align}
and an estimate of the form
\[
|| Q_-^{r} iK(t) ||_{L^{2}(\hat{X}(r))} \le C r^2 (e^{-\mu r} + \delta ||t||).
\]
\end{lemma}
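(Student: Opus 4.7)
My plan is to reduce both estimates to the already-established bound \eqref{equ:almostharmonic} on $\|(d^+)^*K(t)\|_{L^2}$, via an explicit computation of $\hat{T}_r^*$ for the first inequality and a spectral argument for the second.

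For \eqref{eq: ineq T ast K}, I would identify $iK(t)$ with the element $(0, iK(t), 0)$ of the $L^{1,2}$-codomain of $\hat{T}_r$ and write out the adjoint. Writing $\hat{C}_r = (\hat{a}_r, \hat{\psi}_r)$, the linearisation takes the form
\[
\hat{T}_r(\hat{a},\hat{\psi}) = \bigl(D_{\hat{A}_0+\hat{a}_r}\hat{\psi} + \hat{a}\cdot\hat{\psi}_r,\; d^+\hat{a} - q'(\hat{\psi}_r,\hat{\psi}),\; -2d^*\hat{a} - i\,\mathrm{Im}\langle\hat{\psi}_r,\hat{\psi}\rangle\bigr),
\]
whose $L^2$-adjoint applied to a pure self-dual-two-form input is
\[
\hat{T}_r^*(0, iK(t), 0) = \bigl((d^+)^*(iK(t)),\; -(q'(\hat{\psi}_r,\cdot))^*(iK(t))\bigr),
\]
with spinor component pointwise controlled by $|\hat{\psi}_r|\cdot|K(t)|$. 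The key geometric input is that by condition (A8) and the standing assumption $r\geq R_{\mathrm{vanish}}+1$, the form $K(t)=0\#_r L(t)$ is supported in the $\hat{N}$-portion of $\hat{X}(r)$, away from the $\hat{M}$-side of the neck, while $\hat{\psi}_r = \hat{\psi}_M\#_r\hat{\psi}_N$ coincides with $\hat{\psi}_N=0$ off a gluing collar at the $\hat{M}$-end. Thus on the support of $K(t)$ the spinor $\hat{\psi}_r$ is either identically zero or exponentially small in $r$, and the resulting spinor contribution is absorbed into a term of the form $Ce^{-\mu r}$. Combined with \eqref{equ:almostharmonic} this gives the first inequality.

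For the second estimate I would invoke the spectral description of $Y^0_-(r)$. By Remark~\ref{rem Tr preserves}, $H_r^-$ is the span of the eigenspaces of $\hat{T}_r\hat{T}_r^*$ with eigenvalues in $[0, r^{-4})$, so every $v\in Y^0_-(r)\cap X^1_-$ satisfies $\|\hat{T}_r^* v\|_{L^2}\geq r^{-2}\|v\|_{L^2}$; Remark~\ref{rem Tr preserves} also yields $\hat{T}_r^* Q_-^r = Q_+^r \hat{T}_r^*$. Since $Q_-^r$ preserves $X^1_-$ (its kernel $H_r^-$ is finite-dimensional and smooth), applying this to $v = Q_-^r(iK(t))$ gives
\[
\|Q_-^r(iK(t))\|_{L^2} \leq r^2\,\|\hat{T}_r^* Q_-^r(iK(t))\|_{L^2} = r^2\,\|Q_+^r \hat{T}_r^*(iK(t))\|_{L^2} \leq r^2\,\|\hat{T}_r^*(iK(t))\|_{L^2},
\]
and the first estimate finishes the proof.

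The only substantive step is the suppression of the spinor component of $\hat{T}_r^*(iK(t))$; this is where condition (A8) and the choice $r\geq R_{\mathrm{vanish}}+1$ are doing the work, by forcing the supports of $K(t)$ and $\hat{\psi}_r$ to be geometrically separated (modulo exponential tails). The remainder of the argument is formal: an adjoint computation and the min-max characterisation of the spectrum of $\hat{T}_r\hat{T}_r^*$ restricted to $Y^0_-(r)$.
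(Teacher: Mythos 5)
Your proposal is correct and follows essentially the same route as the paper: the adjoint computation reduces the first bound to the support separation of $K(t)$ and $\hat{\psi}_r$ (the paper notes the spinor term in fact vanishes identically, since $\hat{\psi}_r$ is built from the reducible on $\hat{N}$ and $K(t)$ is supported on the $\hat{N}$-side by (A8)) together with \eqref{equ:almostharmonic}, and the second bound follows from the spectral gap of $\hat{T}_r\hat{T}_r^*$ on $Y^0_-(r)$. Your use of the commutation $\hat{T}_r^*Q_-^r=Q_+^r\hat{T}_r^*$ is an equivalent substitute for the paper's orthogonality of $\hat{T}_r^*P_-(iK(t))$ and $\hat{T}_r^*Q_-(iK(t))$.
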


\begin{proof}
Write $\hat{C}_r = (\hat{A}_r , \hat{\psi}_r )$. We have
\[
\hat{T}_r^*( iK(t) ) = ( (d^+)^* iK(t) , \beta^*( iK(t) ) ),
\]
where $\beta : S^+ \to i \wedge^2_+ T^*\hat{X}(r)$ is given by
\begin{align}
\beta( \varphi ) = -\frac{1}{2}q(\hat{\psi}_r , \varphi)
\label{eq: expression of beta}
\end{align}
Here $q$ is the quadratic term of spinors in the Seiberg-Witten equations.
Here note that $\hat{\psi}_r$ vanishes on $N(r)$, regarded as a subspace of $\hat{X}(r)$, since $\hat{\psi}_{r}$ was made from the reducible on $\hat{N}$ just by using cutoff functions.
On the other hand, since we took $r$ to be $r \geq R_{\rm vanish}+1$, $K(t)$ vanishes outside $N(r)$.
Therefore the expression \eqref{eq: expression of beta} of $\beta$ implies that $\beta^*( iK(t) ) = 0$.
The estimate \eqref{eq: ineq T ast K} then follows from the inequality \eqref{equ:almostharmonic}.

Now decompose $iK(t)$ as $iK(t) = P_ -K(t) + Q_- iK(t)$.
Applying $\hat{T}^*_r$, we get $\hat{T}^*_r(iK(t)) = \hat{T}^*_r P_-(iK(t)) + \hat{T}^*_r Q_-(iK(t))$.
But the two terms on the right are orthogonal, so the estimate \eqref{eq: ineq T ast K} yields
\[
|| \hat{T}^*_r Q_-(iK(t)) || \le C e^{-\mu r} + \delta ||t||.
\]
By combining this estimate with Lemma~\ref{lem: estimate for S},
we can get
\[
||Q_-(iK(t))|| \le C r^2 || \hat{T}^*_r Q_-(iK(t)) || \le C r^2 (e^{-\mu r} + \delta ||t||).
\]
\end{proof}

Now we repeat the story of Section~\ref{sec:glueloc} as follows.
The equation $\mathcal{N}(\hat{C},t) = 0$ is equivalent to the pair of equations
\[
P_- \mathcal{N}(\hat{C},t) = 0, \quad Q_- \mathcal{N}(\hat{C},t) = 0.
\]
Expanding $\mathcal{N}$ and $\hat{C}$, these equations become
\begin{equation*}
\begin{aligned}
Q_- \mathcal{N}(0) + \hat{T}_r \hat{C}_1 + Q_- R( \hat{C}_0 + \hat{C}_1) + Q_- iK(t) &= 0 \\
P_- \mathcal{N}(0) + \hat{T}_r \hat{C}_0 + P_- R(\hat{C}_0 + \hat{C}_1) + P_- iK(t) & = 0.
\end{aligned}
\end{equation*}
Let $S : Y^k_- \to Y^{k+1}_+$ be the inverse operator of the restricted operator $\hat{T}_r : Y^{k+1}_+ \to Y^k_-$.
Define $U \in Y^0_+$ to be
\[
U = -SQ_- \mathcal{N}(0).
\]
Applying $S$ to the first of the two equations above, we end up with
\[
\hat{C}_1 = U - SQ_-R(\hat{C}_0 + \hat{C}_1) -SQ_-iK(t).
\]
For the moment, fix some $\hat{C}_0 \in H_r^+$ and some $t \in U_i$. Then define $\widetilde{\mathcal{F}} : Y^2_+ \to Y^2_+$ as
\[
\widetilde{\mathcal{F}}(\hat{C}_1) = U - SQ_- R(\hat{C}_0 + \hat{C}_1) -SQ_-iK(t).
\]

Note that $\widetilde{\mathcal{F}}$ is well-defined as a map from $Y^2_+$ to itself.
This is easy to see since $\widetilde{\mathcal{F}}$ differs from the map $\mathcal{F}$ defined in Section~\ref{sec:glueloc} only by the term $-SQ_-iK(t)$.
As in Section~\ref{sec:glueloc}, define
\[
B_1(r^{-4}) = \{ \hat{C}_1 \in Y^2_+ \; | \; ||\hat{C}_1||_{2,2} \le r^{-4} \} \subset Y^2_+(r)
\]
and similarly
\[
B_0(r^{-4}) = \{ \hat{C}_0 \in H_r^+ \; | \; ||\hat{C}_0||_{2,2} \le r^{-4} \} \subset H_r^+.
\]

\begin{lemma}
\label{lem: pre0 for testimate}
The operator
\[
P_{-}^{r} \circ \Psi_{r} \circ L : T_{b_{i}}B \to H^{-}_{r}
\]
is an isomorphism.
\end{lemma}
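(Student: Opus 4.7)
The plan is to combine a dimension count with an exponential-decay estimate showing injectivity for all sufficiently large $r$.

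First I would compute $\dim H_r^-$. By Proposition~\ref{prop:hplusminus}, $H_r^- \cong H^2_{\hat C_M} \oplus H^2_{\hat C_N}$. The genericity assumption (A7) together with (A2) forces $H^2_{\hat C_M} = 0$. For the unique reducible solution $\hat C_N = (A,0)$, the $L^{2}_{\mu}$-deformation complex splits into a Dirac piece, whose second cohomology is $\mathrm{coker}(D_A) = 0$ by (A6), and a de Rham-style piece $\Omega^0 \to \Omega^1 \to \Omega^2_+$ whose second cohomology is identified with $H^+_{L^2}(\hat N)$ via Lockhart--McOwen/Atiyah--Patodi--Singer theory. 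Hence $\dim H_r^- = b^+(N)$, which equals $\dim T_{b_i} B$ under the standing reduction $\dim B = b^+(N)$ of Section~\ref{sec:step1}. Since $L$ is already an isomorphism by (A5)(a), it suffices to show that $P_-^r \circ \Psi_r : H^+_{L^2}(\hat N) \to H_r^-$ is injective for all sufficiently large $r$.

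Next, for a nonzero $\eta \in H^+_{L^2}(\hat N)$ I would establish the key estimate
\[
\| \hat T_r^* \Psi_r(\eta) \|_{L^2(\hat X(r))} \le C e^{-\mu r} \|\eta\|_{L^2(\hat N)}.
\]
Viewing $\Psi_r(\eta)$ as the element $(0, \Psi_r(\eta), 0) \in X^0_-$, the formal adjoint $\hat T_r^*$ produces a $1$-form whose leading term is $(d^+)^* \Psi_r(\eta)$ plus terms quadratic in the spinor $\hat\psi_r$. Both types of term are exponentially small for independent reasons. On one hand, $\eta$ is $L^2$-harmonic on the cylindrical end of $\hat N$, hence decays exponentially, and the cutoff only acts on $[r-1,r]\times S^3$, so $(d^+)^*\Psi_r(\eta) = (d^+)^*(\alpha_r \eta)$ has $L^2$-norm $O(e^{-\mu r})\|\eta\|$ by exactly the argument used to prove \eqref{equ:almostharmonic}. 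On the other hand, since $\hat C_N = (A,0)$ is reducible, its asymptotic value on $S^3$ has zero spinor, so $\hat\psi_M$ decays exponentially along the $M$-cylinder; hence on the support of $\Psi_r(\eta)$, which lies on the $N$-side of the neck and beyond, the spinor $\hat\psi_r$ is pointwise $O(e^{-\mu r})$.

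Finally I would combine this with spectral information. By Remark~\ref{rem Tr preserves}, $Y_-^0(r)$ is the span of eigenvectors of $\hat T_r \hat T_r^*$ with eigenvalues $\ge r^{-4}$, so $\|u\|_{L^2} \le r^2 \|\hat T_r^* u\|_{L^2}$ for every $u \in Y_-^0(r)$, while the orthogonality $\hat T_r^*(H_r^-) \perp \hat T_r^*(Y_-^0(r))$ in $X^0_+$ gives $\|\hat T_r^* Q_-^r \Psi_r(\eta)\|_{L^2} \le \|\hat T_r^* \Psi_r(\eta)\|_{L^2}$. Combined with the key estimate this yields
\[
\|Q_-^r \Psi_r(\eta)\|_{L^2} \le r^2 \|\hat T_r^* \Psi_r(\eta)\|_{L^2} \le C r^2 e^{-\mu r} \|\eta\|,
\]
while the exponential decay of $\eta$ on the end gives $\|\Psi_r(\eta)\|_{L^2} \ge \|\eta\| - C e^{-\mu r}$. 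For $r$ sufficiently large, $\|P_-^r \Psi_r(\eta)\|_{L^2} \ge \tfrac12 \|\eta\| > 0$, so $P_-^r \circ \Psi_r$ is injective and, by the dimension count above, an isomorphism, as required. The main technical obstacle will be controlling the $\hat\psi_r$-quadratic terms in $\hat T_r^*$ uniformly along the neck; this is precisely where the reducibility of $\hat C_N$, and the resulting zero-spinor asymptotic value, become essential.
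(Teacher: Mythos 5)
Your core analytic argument---that $P_-^r\circ\Psi_r$ is injective on $H^+_{L^2}(\hat N)$ for large $r$---is sound, and it is actually more self-contained than the paper's treatment, which at this step simply cites an estimate from the last paragraph of page 170 of \cite{nic2}. Your chain $\|Q_-^r\Psi_r(\eta)\|\le r^2\|\hat T_r^*\Psi_r(\eta)\|\le Cr^2e^{-\mu r}\|\eta\|$, together with $\|\Psi_r(\eta)\|\ge\tfrac12\|\eta\|$ and the dimension count $\dim H_r^-=b^+(N)=\dim T_{b_i}B$ (via Proposition \ref{prop:hplusminus}, (A2), (A6) and the identification of the form-part obstruction space with $H^+_{L^2}(\hat N)$ from \eqref{eq decomposition of lambda^plus}), correctly yields that $P_-^r\circ\Psi_r$ restricted to $H^+_{L^2}(\hat N)$ is an isomorphism onto $H_r^-$. (A minor point: $\hat\psi_r$ vanishes identically, not just exponentially, on the support of $\Psi_r(\eta)$, since it is manufactured from the reducible on $\hat N$ by cutoff; either way the $\beta^*$ term causes no trouble.)

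There is, however, a genuine gap in your reduction step. You assert that "$L$ is already an isomorphism by (A5)(a), so it suffices to show $P_-^r\circ\Psi_r$ is injective on $H^+_{L^2}(\hat N)$." But (A5)-(a) only says that the \emph{harmonic projection} $\pi\circ L:T_{b_i}B\to H^+_{L^2}(\hat N)$ is an isomorphism; $L(t)$ itself generically has a nonzero component $\pi^\perp L(t)$ in the $L^2_\mu$-orthogonal complement of $H^+_{L^2}(\hat N)$. (Only the explicitly constructed perturbation of Lemma \ref{lem: preparation2 for pertub} has $\pi^\perp L=0$; the final $\eta_{\hat N}$ is a generic element of an open set, which is why conditions (A5)-(c) and (A5)-(d) exist at all.) Consequently $P_-^r\Psi_rL(t)=P_-^r\Psi_r\pi L(t)+P_-^r\Psi_r\pi^\perp L(t)$, and your estimates control only the first summand; nothing you prove rules out the second summand cancelling it. To close the argument you must show the second summand is strictly dominated by the first, which is precisely what assumption (A5)-(d) supplies (and is exactly how the paper's proof concludes: $\|P_-^r\Psi_r\pi^\perp L(t)\|\le\epsilon'\|P_-^r\Psi_r\pi L(t)\|$ with $\epsilon'<1$, whence an invertible operator perturbed by something of strictly smaller operator norm remains invertible). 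As written, your proof establishes the lemma only in the special case $\pi^\perp L=0$.
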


\begin{proof}
First, note that an estimate given in the last paragraph of page 170 of Nicolaescu~\cite{nic2} implies that the map
\[
P_{-}^{r} \circ \Psi_{r} : H^{+}_{L^{2}}(\hat{N}) \to H^{-}_{r}
\]
is isomorphic.
Because of Remark~\ref{rem:linear} and condition (A5)-(a),
the map $t \mapsto \pi \circ L(t)$ is also isomorphic.
Therefore we can conclude that
\[
P_{-}^{r} \circ \Psi_{r} \circ \pi \circ L: H^{+}_{L^{2}}(\hat{N}) \to H^{-}_{r}
\]
is also isomorphic.

On the other hand, because of condition (A5)-(d), we have
\[
\| P_{-}^{r} \circ \Psi_{r} \circ \pi \circ L_{i}(t)
- P_{-}^{r} \circ \Psi_{r} \circ L_{i}(t)\|_{L^{2}(\hat{X}(r))}
< \| P_{-}^{r} \circ \Psi_{r} \circ \pi \circ L_{i}(t)\|_{L^{2}(\hat{X}(r))}
\]
for any $t$, therefore we have
\[
\| P_{-}^{r} \circ \Psi_{r} \circ \pi \circ L_{i}
- P_{-}^{r} \circ \Psi_{r} \circ L_{i}\|
< \| P_{-}^{r} \circ \Psi_{r} \circ \pi \circ L_{i}\|,
\]
here the norms of the both sides are the operator norms.
Since $P_{-}^{r} \circ \Psi_{r} \circ \pi \circ L$ is isomorphic, this inequality implies that $P_{-}^{r} \circ \Psi_{r} \circ L_{i}$ is also isomorphic.
\end{proof}

From the following Lemma~\ref{lem: coming from the def of asymp map} to Lemma~\ref{lem: pre for testimate}, we shall give the argument to determine the constant $\epsilon$ in condition (A5)-(c).
For the purpose we shall note that the constants appearing from Lemma~\ref{lem: coming from the def of asymp map} to Lemma~\ref{lem: pre for testimate} do not depend on choice of $\eta_{\hat{N}}$.

\begin{lemma}
\label{lem: coming from the def of asymp map}
There exists a sequence $C(r)$, which is independent of choice of $\eta_{\hat{N}}$, of positive numbers such that $C(r) \to 0$ as $r \to \infty$ and
\[
\| Q_{-}^{r} \circ \Psi_{r} \circ \pi \circ L(t) \|_{L^{2}(\hat{X}(r))}
\leq C(r) \| \pi \circ L(t) \|_{L^{2}_{\mu}(\hat{N})}
\]
holds for any $t \in U_{i}$.
\end{lemma}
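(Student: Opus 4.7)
The plan is to show that the linear operator
\[
Q_{-}^{r} \circ \Psi_{r}: H^{+}_{L^{2}}(\hat{N}) \longrightarrow L^{2}(\hat{X}(r))
\]
has operator norm at most $Cr^{2}e^{-\mu r}$ for a constant $C$ depending only on the cutoff profile $\alpha_{r}$ and the cylindrical geometry of $\hat{N}$. Applying this bound to the vector $\pi \circ L(t) \in H^{+}_{L^{2}}(\hat{N})$ then yields the lemma with $C(r) = Cr^{2}e^{-\mu r}$, and $C(r)$ is manifestly independent of $\eta_{\hat{N}}$ and of $t$.

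The first step is to estimate $\hat{T}_{r}^{\ast}(i\Psi_{r}\eta)$ in $L^{2}(\hat{X}(r))$ for a generic harmonic $\eta \in H^{+}_{L^{2}}(\hat{N})$. As in the proof of Lemma~\ref{lem:estimate}, $\hat{T}_{r}^{\ast}$ acting on the self-dual $2$-form slot splits into a $(d^{+})^{\ast}$-piece and a $\beta^{\ast}$-piece with $\beta(\varphi) = -\tfrac{1}{2}q(\hat{\psi}_{r},\varphi)$. The $\beta^{\ast}$-piece vanishes identically because, for $r \geq R_{\rm vanish}+1$, the supports of $\hat{\psi}_{r}$ and $\Psi_{r}\eta$ are disjoint, by the same support considerations used in Lemma~\ref{lem:estimate} for $K(t)$. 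Since $\eta$ is closed and coclosed on $\hat{N}$, the derivative $d(\alpha_{r}\eta) = d\alpha_{r}\wedge \eta$ is concentrated on the cutoff annulus $[r-1,r]\times S^{3}$ and pointwise dominated by $|d\alpha_{r}|\cdot|\eta|$. The standard exponential decay of $L^{2}$-harmonic forms on the cylinder, at any rate strictly less than the first positive eigenvalue of the boundary Laplacian on $S^{3}$ (in particular at rate $\mu$), then gives
\[
\|\hat{T}_{r}^{\ast}(i\Psi_{r}\eta)\|_{L^{2}(\hat{X}(r))} \leq Ce^{-\mu r}\|\eta\|_{L^{2}_{\mu}(\hat{N})}.
\]

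The second step is elementary spectral theory. By Remark~\ref{rem Tr preserves}, $\hat{T}_{r}^{\ast}$ sends $H_{r}^{-}$ into $H_{r}^{+}$ and $Y^{0}_{-}(r)$ into $Y^{0}_{+}(r)$, so $\hat{T}_{r}^{\ast}Q_{-}^{r} = Q_{+}^{r}\hat{T}_{r}^{\ast}$. By Definition~\ref{def of Hr}, the operator $\hat{T}_{r}\hat{T}_{r}^{\ast}$ has eigenvalues at least $r^{-4}$ on $Y^{0}_{-}(r)$, so $\|\hat{T}_{r}^{\ast}v\|^{2} = \langle \hat{T}_{r}\hat{T}_{r}^{\ast}v,v\rangle \geq r^{-4}\|v\|^{2}$ for $v \in Y^{0}_{-}(r)$. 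Chaining these,
\[
\|Q_{-}^{r}u\|_{L^{2}} \leq r^{2}\|\hat{T}_{r}^{\ast}Q_{-}^{r}u\|_{L^{2}} = r^{2}\|Q_{+}^{r}\hat{T}_{r}^{\ast}u\|_{L^{2}} \leq r^{2}\|\hat{T}_{r}^{\ast}u\|_{L^{2}}
\]
for every $u \in X^{0}_{-}$. Taking $u = i\Psi_{r}\eta$ with $\eta = \pi\circ L(t)$ and combining with the first step produces the claimed inequality.

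The only delicate point is the vanishing of $\beta^{\ast}(i\Psi_{r}\eta)$: one must carefully verify, from the gluing construction of $\hat{C}_{r}$ together with condition (A8) and $r \geq R_{\rm vanish}+1$ (Remark~\ref{rem:linear}), that $\hat{\psi}_{r}$ really vanishes on the entire support of $\Psi_{r}\eta$. Once this is granted, the remaining argument is purely linear Hodge theory and presents no further obstacle; in particular, the constant $C$ in $C(r) = Cr^{2}e^{-\mu r}$ depends only on $\|d\alpha_{r}\|_{L^{\infty}}$ and the exponential decay rate on the cylinder, hence not on $\eta_{\hat{N}}$.
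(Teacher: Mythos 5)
Your argument is correct, but it follows a genuinely different route from the paper. The paper's proof is soft: it simply recalls Nicolaescu's notion of an asymptotic map and cites the fact that $\Psi_{r} : H^{+}_{L^{2}}(\hat{N}) \to^{a} H^{-}_{r}$ is asymptotically exact (a consequence of Cappell--Lee--Miller linear gluing theory), which yields a sequence $C(r) \to 0$ with no explicit rate. You instead redo the estimate by hand, essentially transplanting the method the paper itself uses in Lemma~\ref{lem:estimate} and Lemma~\ref{lem: estimate for S}: first $\|\hat{T}_{r}^{*}(i\Psi_{r}\eta)\| \le Ce^{-\mu r}\|\eta\|$ for harmonic $\eta$ (the $(d^{+})^{*}$-piece is concentrated on the cutoff annulus where the harmonic form has decayed, and the $\beta^{*}$-piece is negligible), and then the spectral gap of $\hat{T}_{r}\hat{T}_{r}^{*}$ on $Y^{0}_{-}(r)$ together with Remark~\ref{rem Tr preserves} gives $\|Q_{-}^{r}u\| \le r^{2}\|\hat{T}_{r}^{*}u\|$. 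This buys an explicit rate $C(r) = Cr^{2}e^{-\mu r}$, which is stronger than what the paper records and is manifestly independent of $\eta_{\hat{N}}$; the paper's citation is shorter but leaves the rate implicit and, strictly speaking, normalises by the norm of the image vector rather than of $\eta$ itself, a point your direct computation avoids.

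On the one point you flag: the concern is real but harmless. Unlike $K(t)$ in Lemma~\ref{lem:estimate}, the harmonic form $\eta = \pi\circ L(t)$ is not compactly supported, so $\Psi_{r}\eta$ lives on the entire region $\{\alpha_{r} \neq 0\}$ and its support may overlap the transition region of the cutoff defining $\hat{\psi}_{r} = \hat{\psi}_{M}\#_{r}0$. However, that overlap is confined to a portion of the neck at distance comparable to $r$ from $N'$, where $|\eta| \le Ce^{-cr}\|\eta\|$ by the exponential decay of $L^{2}$-harmonic forms and where $|\hat{\psi}_{r}|$ is uniformly bounded (by compactness of the moduli space on $\hat{M}$). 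Hence even if $\beta^{*}(i\Psi_{r}\eta)$ does not vanish identically, it contributes only another term of size $O(e^{-cr})\|\eta\|_{L^{2}_{\mu}(\hat{N})}$, and your conclusion $C(r) \to 0$ survives with a possibly smaller exponential rate.
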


\begin{proof}
First recall that the definition of the term ``asymptotic map" in Nicolaescu's book.
Let $H_{0}$ and $H_{1}$ be Hilbert spaces.
Let $U_{r}$ and $V_{r}$ be families parameterised by $r>0$ of closed subspaces of $H_{0}$ and $H_{1}$ respectively.
Let $f_{r}: U_{r} \to H_{1}$ be densely defined linear operators with closed ranges.
For closed subspaces $U, V$ of $H_{1}$, let us define
\begin{align*}
\hat{\delta}(U,V)
 := \sup\{ {\rm dist}(u,V) \mid u \in U, \|u\|=1 \}
 = \sup\{ \|P^{\perp}(u)\| \mid u \in U, \|u\|=1 \}.
\end{align*}
Here $P^{\perp} : H_{1} \to V^{\perp}$ is the orthogonal projection onto the orthogonal complement $V^{\perp}$ of $V$.
We call the family of triples $(U_{r}, V_{r}, f_{r})_{r}$ an {\it asymptotic map} if $\hat{\delta}(im(f_{r}),V_{r}) \to 0$ as $r \to 0$.
In such a case, we use the notation $f_{r} : U_{r} \to^{a} V_{r}$.

As noted in Nicolaescu's book page 307, the map $\Psi_{r} : H^{+}_{L^{2}}(\hat{N}) \to L^{2}(\hat{X}(r))$ defines an asymptotic map $\Psi_{r} : H^{+}_{L^{2}}(\hat{N}) \to^{a} H^{-}_{r}$.
Therefore there exists a sequence $C(r)$ of positive numbers such that $C(r) \to 0$ as $r \to \infty$ and
\[
\left\| Q_{-}^{r} \circ \Psi_{r}\left( \frac{\pi \circ L(t)}{\|\pi \circ L(t)\|} \right) \right\| \leq C(r)
\]
holds for any $t \in U_{i}$.
This proves the lemma.
\end{proof}

\begin{lemma}
\label{lem: pre1 for testimate}
We have
\[
\|P_{-}^{r} \circ \Psi_{r}\|_{\rm op} \leq 1,
\]
where $\|\cdot\|_{\rm op}$ denotes the operator norm for operators whose domain is $L^{2}_{\mu}(\hat{N}, \wedge^{+}T^{\ast} \hat{N})$ and codomain is $L^{2}(\hat{X}(r),\wedge^{+}T^{\ast} \hat{X}(r))$.
\end{lemma}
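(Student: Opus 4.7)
The plan is to decompose the composition $P_{-}^{r} \circ \Psi_{r}$ into two factors and bound each separately. First, I would observe that $P_{-}^{r}$ is, by construction, the $L^{2}(\hat{X}(r))$-orthogonal projection onto the finite-dimensional subspace $H_{r}^{-} \subset X_{-}^{0} = L^{2}(\hat{X}(r), S^{-} \oplus i\wedge^{2}_{+}T^{*}\hat{X}(r) \oplus i\mathbb{R})$. Hence its operator norm as an endomorphism of $L^{2}(\hat{X}(r))$ is at most $1$. So it remains to show that $\Psi_{r}$, viewed as a map $L^{2}_{\mu}(\hat{N}, \wedge^{+}T^{*}\hat{N}) \to L^{2}(\hat{X}(r), \wedge^{+}T^{*}\hat{X}(r))$, has operator norm at most $1$.

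For this second estimate I would use the explicit formula for $\Psi_{r}$: for $\eta \in H^{+}_{L^{2}}(\hat{N})$, one has $\Psi_{r}(\eta) = \alpha_{r}(\tau)\eta$ on the neck $[0,r]\times S^{3}$, $\Psi_{r}(\eta) = \eta$ on the portion of $\hat{N}$ lying to the $N$-side of the neck, and $\Psi_{r}(\eta) = 0$ on the $M$-side. Since $\alpha_{r}$ takes values in $[0,1]$ and the cut-off region $[0,r] \times S^{3}$ is contained in $\hat{N}$, a direct computation gives
\[
\|\Psi_{r}(\eta)\|_{L^{2}(\hat{X}(r))}^{2}
= \int_{N'}|\eta|^{2}\, dvol + \int_{[0,r]\times S^{3}} \alpha_{r}(\tau)^{2}|\eta(\tau,y)|^{2}\, d\tau\, dy
\le \int_{\hat{N}} |\eta|^{2}\, dvol = \|\eta\|_{L^{2}(\hat{N})}^{2}.
\]

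Finally, with the standard convention that the weight function defining $L^{2}_{\mu}(\hat{N})$ is of the form $e^{\mu f}$ where $f$ agrees with $\tau$ on the cylindrical end and is extended to a smooth function on $\hat{N}$ which is $\geq 0$ (so that $e^{\mu f} \geq 1$ everywhere), we have the continuous embedding $L^{2}_{\mu}(\hat{N}) \hookrightarrow L^{2}(\hat{N})$ with operator norm at most $1$, i.e.\ $\|\eta\|_{L^{2}(\hat{N})} \leq \|\eta\|_{L^{2}_{\mu}(\hat{N})}$. Combining the three estimates yields $\|P_{-}^{r} \circ \Psi_{r} \eta\|_{L^{2}(\hat{X}(r))} \leq \|\eta\|_{L^{2}_{\mu}(\hat{N})}$, which is the claim. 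The only (mild) subtlety is normalising the weight function so that $e^{\mu f} \geq 1$ on all of $\hat{N}$; this is harmless since rescaling $f$ by an additive constant does not change the space $L^{2}_{\mu}$ and changes its norm only by a uniform multiplicative constant, which can be absorbed by the choice of $f$.
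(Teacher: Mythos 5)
Your proposal is correct and follows essentially the same route as the paper: bound $\|P_{-}^{r}\|_{\rm op}\le 1$ since it is an orthogonal projection, bound $\|\Psi_r(\eta)\|_{L^2(\hat{X}(r))}\le\|\eta\|_{L^2(\hat{N})}$ using $0\le\alpha_r\le 1$, and then use $\|\eta\|_{L^2(\hat{N})}\le\|\eta\|_{L^2_\mu(\hat{N})}$ from the positivity of the weight. Your extra remark about normalising the weight function so that $e^{\mu f}\ge 1$ is a harmless elaboration of a step the paper leaves implicit.
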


\begin{proof}
Let us take $\eta \in L^{2}_{\mu}(\hat{N}, \wedge^{+}T^{\ast} \hat{N})$.
Then we have
\[
\| P_{-}^{r} \circ \Psi_{r}(\eta)  \|_{L^{2}(\hat{X}(r),\wedge^{+}T^{\ast} \hat{X}(r))}
\leq \| P_{-}^{r}\|_{\rm op} \|\Psi_{r}(\eta)  \|_{L^{2}(\hat{X}(r),\wedge^{+}T^{\ast} \hat{X}(r))}
\]
and $\| P_{-}^{r}\|_{\rm op} \leq 1$, since $P_{-}^{r}$ is the $L^{2}$-orthogonal projection.
In addition, we have
\begin{align*}
\|\Psi_{r}(\eta)  \|_{L^{2}(\hat{X}(r))}
= \| \alpha_{r} \eta\|_{L^{2}(\hat{X}(r))}
\leq \| \alpha_{r} \eta\|_{L^{2}(\hat{N})}
\leq \| \eta\|_{L^{2}(\hat{N})}
\leq \| \eta\|_{L^{2}_{\mu}(\hat{N})}.
\end{align*}
This proves the lemma.
\end{proof}

\begin{lemma}
\label{lem: est for pi L }
There exists $C_{0}>0$, which depends only on the geometry of $\hat{N}$, such that 
\[
C_{0} \| \pi \circ L(t) \|_{L^{2}_{\mu}(\hat{N})}
\leq \| \Psi_{\mu} \circ \pi \circ L(t) \|_{L^{2}(\hat{X}(r))}
\]
holds for any $t \in U_{i}$ and all sufficiently large $r$.
\end{lemma}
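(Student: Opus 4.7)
The plan is to exploit that $\eta := \pi \circ L(t)$ lies in the finite-dimensional space $H^+_{L^2}(\hat{N})$, together with the standard exponential decay of $L^2$-harmonic forms on cylindrical-end manifolds.

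First I would invoke Atiyah--Patodi--Singer/Lockhart--McOwen theory: since the cross-section $S^3$ of the end of $\hat{N}$ carries no non-zero harmonic $1$- or $2$-forms, the indicial roots of the relevant elliptic operator are bounded away from $0$, so there is a rate $\lambda_0 > 0$, depending only on the geometry of $\hat{N}$, at which every $\eta \in H^+_{L^2}(\hat{N})$ decays pointwise on the end. Finite dimensionality of $H^+_{L^2}(\hat{N})$ (of dimension $b^+(N)$) then upgrades this to the uniform pointwise bound
\[
|\eta(\tau,y)| \le c\,\|\eta\|_{L^2(\hat{N})}\,e^{-\lambda_0 |\tau|}
\]
with $c$ independent of $\eta$, where $\tau$ denotes the cylindrical coordinate on the end of $\hat{N}$. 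Since the weight $\mu$ was chosen small enough for Lockhart--McOwen theory to apply (a standing assumption from the start of Section~\ref{sec:gluefamily}), I may assume $\mu < \lambda_0$. Then the $L^2$- and $L^2_\mu$-norms are both finite on $H^+_{L^2}(\hat{N})$, hence equivalent there, and one obtains a constant $C_1 > 0$ with $\|\eta\|_{L^2(\hat{N})} \ge C_1\,\|\eta\|_{L^2_\mu(\hat{N})}$ for every $\eta \in H^+_{L^2}(\hat{N})$.

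Next I would unpack the definition of $\Psi_r$. Under the identification used in the gluing, $\Psi_r\eta$ coincides with $\eta$ on the region of $\hat{X}(r)$ corresponding to the body of $\hat{N}$ together with the portion of its cylindrical end on which the cut-off $\alpha_r$ equals $1$, and $|\alpha_r|\le 1$ everywhere. Writing $\hat{N}(r-1)$ for $\hat{N}$ with its far sub-cylinder truncated at distance $r-1$, this yields
\[
\|\Psi_r\eta\|_{L^2(\hat{X}(r))}^2 \ge \|\eta\|_{L^2(\hat{N}(r-1))}^2 = \|\eta\|_{L^2(\hat{N})}^2 - \|\eta\|_{L^2(\hat{N}\setminus \hat{N}(r-1))}^2.
\]
Substituting the pointwise decay bound into the tail gives
\[
\|\eta\|_{L^2(\hat{N}\setminus \hat{N}(r-1))}^2 \le c'\, e^{-2\lambda_0(r-1)}\,\|\eta\|_{L^2(\hat{N})}^2,
\]
so for all $r$ larger than a threshold depending only on the geometry of $\hat{N}$,
\[
\|\Psi_r\eta\|_{L^2(\hat{X}(r))}^2 \ge \frac{1}{2}\|\eta\|_{L^2(\hat{N})}^2 \ge \frac{C_1^2}{2}\,\|\eta\|_{L^2_\mu(\hat{N})}^2,
\]
and the lemma follows with $C_0 := C_1/\sqrt{2}$.

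The only step that is not a routine computation is obtaining the uniform exponential decay with constants independent of the particular $\eta \in H^+_{L^2}(\hat{N})$; this follows, however, from standard indicial-root asymptotics for harmonic forms on cylindrical-end manifolds combined with the finite dimensionality of $H^+_{L^2}(\hat{N})$, so I do not anticipate it to present a serious obstacle.
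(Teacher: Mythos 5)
Your proof is correct and takes essentially the same approach as the paper: reduce to the unweighted $L^2$-norm using finite-dimensionality of $H^+_{L^2}(\hat{N})$, then bound the discrepancy between $\| \Psi_r \eta \|$ and $\| \eta \|$ by the contribution of $\eta$ on the far part of the neck, which is made small (for large $r$, uniformly over the finite-dimensional space) by exponential decay of $L^2$-harmonic forms on the cylindrical end. The only cosmetic difference is that you work with squared norms and a pointwise decay bound, while the paper uses the reverse triangle inequality together with an integrated decay estimate on the slab $[r,r+1]\times S^3$.
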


\begin{proof}
First, since $H^{+}_{L^{2}}(\hat{N})$ is finite dimensional, the $L^{2}$-norm on $H^{+}_{L^{2}}(\hat{N})$ and $L^{2}_{\mu}$-norm on $H^{+}_{L^{2}}(\hat{N})$ are equivalent.
Therefore it suffices to show that there exists $C>0$, which depends only on the geometry of $\hat{N}$, such that 
\[
C \| \pi \circ L(t) \|_{L^{2}(\hat{N})}
\leq \| \Psi_{\mu} \circ \pi \circ L(t) \|_{L^{2}(\hat{X}(r))}
\]
holds for all sufficiently large $r$.
\begin{align}
&\| \pi \circ L(t) \|_{L^{2}(\hat{N})}
- \| \Psi_{\mu} \circ \pi \circ L(t) \|_{L^{2}(\hat{X}(r))}\nonumber\\
=&\| \pi \circ L(t) \|_{L^{2}(\hat{N})}
- \| \Psi_{\mu} \circ \pi \circ L(t) \|_{L^{2}(\hat{N})}\nonumber\\
\leq&\| \pi \circ L(t) 
- \Psi_{\mu} \circ \pi \circ L(t) \|_{L^{2}(\hat{N})}\nonumber\\
=&\| (1-\alpha_{r}) \cdot \pi \circ L(t) \|_{L^{2}(\hat{N})}\nonumber\\
=&\| (1-\alpha_{r}) \cdot \pi \circ L(t) \|_{L^{2}([r,r+1] \times S^{3})} \nonumber\\
\leq&\| \pi \circ L(t) \|_{L^{2}([r,r+1] \times S^{3})}.
\label{eq: ineq for lem: est for pi L }
\end{align}
Note that, because of exponential decay estimate for $L^{2}$-harmonic forms, there exists $r_{0}>0$ such that 
\[
\|\eta\|_{L^{2}([r,r+1] \times S^{3})}
\leq \frac{1}{2}\|\eta\|_{L^{2}(\hat{N})}
\]
holds for any $\eta \in H^{+}_{L^{2}}(\hat{N})$ and any $r \geq r_{0}$.
Thus we have $\| \pi \circ L(t) \|_{L^{2}([r,r+1] \times S^{3})} \leq \frac{1}{2} \| \pi \circ L(t) \|_{L^{2}(\hat{N})}$ for sufficiently large $r$.
This inequality and the inequality \eqref{eq: ineq for lem: est for pi L } imply
\[
\| \pi \circ L(t) \|_{L^{2}(\hat{N})}
- \| \Psi_{\mu} \circ \pi \circ L(t) \|_{L^{2}(\hat{X}(r))}
\leq \frac{1}{2}\| \pi \circ L(t) \|_{L^{2}(\hat{N})},
\]
and thus we have
\[
\frac{1}{2}\| \pi \circ L(t) \|_{L^{2}(\hat{N})}
\leq \| \Psi_{\mu} \circ \pi \circ L(t) \|_{L^{2}(\hat{X}(r))}
\]
for sufficiently large $r$.
\end{proof}

The constant $\epsilon$ in (A5)-(c) is determined in the proof of the following lemma:

\begin{lemma}
\label{lem: pre for testimate}
There exists $C>0$, which is independent of $r$, such that
\[
\| P_{-}^{r} \circ \Psi_{r} \circ L(t) \|_{L^{2}(\hat{X}(r))}
\geq C \|\pi \circ L(t)\|_{L^{2}_{\mu}(\hat{N})}
\]
holds for any $t \in U_{i}$.
\end{lemma}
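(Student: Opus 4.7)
The plan is to decompose $L(t) = \pi L(t) + \pi^{\perp} L(t)$ and estimate the two pieces separately, exploiting the orthogonal decomposition $X^{0}_{-} = H_{r}^{-} \oplus Y^{0}_{-}(r)$ to bound $P_{-}^{r}\Psi_{r}\pi L(t)$ from below, and using smallness of the perpendicular part to control $P_{-}^{r}\Psi_{r}\pi^{\perp}L(t)$.

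First, by the Pythagorean identity for the orthogonal projections $P_{-}^{r}, Q_{-}^{r}$,
\[
\|\Psi_{r}\pi L(t)\|_{L^{2}(\hat{X}(r))}^{2} = \|P_{-}^{r}\Psi_{r}\pi L(t)\|^{2} + \|Q_{-}^{r}\Psi_{r}\pi L(t)\|^{2}.
\]
Lemma~\ref{lem: est for pi L } gives the lower bound $\|\Psi_{r}\pi L(t)\| \geq C_{0}\|\pi L(t)\|_{L^{2}_{\mu}}$ for all sufficiently large $r$, while Lemma~\ref{lem: coming from the def of asymp map} gives $\|Q_{-}^{r}\Psi_{r}\pi L(t)\| \leq C(r)\|\pi L(t)\|_{L^{2}_{\mu}}$ with $C(r) \to 0$. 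Choosing $r$ large enough that $C(r) \leq C_{0}/\sqrt{2}$, the identity above yields
\[
\|P_{-}^{r}\Psi_{r}\pi L(t)\| \geq \frac{C_{0}}{\sqrt{2}}\,\|\pi L(t)\|_{L^{2}_{\mu}}.
\]

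Next, for the perpendicular part, Lemma~\ref{lem: pre1 for testimate} gives $\|P_{-}^{r}\circ\Psi_{r}\|_{\mathrm{op}} \leq 1$, so together with condition (A5)-(c),
\[
\|P_{-}^{r}\Psi_{r}\pi^{\perp}L(t)\| \leq \|\pi^{\perp}L(t)\|_{L^{2}_{\mu}} \leq \epsilon'\,\|\pi L(t)\|_{L^{2}_{\mu}}.
\]
The triangle inequality then gives
\[
\|P_{-}^{r}\Psi_{r}L(t)\| \geq \Bigl(\frac{C_{0}}{\sqrt{2}} - \epsilon'\Bigr)\|\pi L(t)\|_{L^{2}_{\mu}},
\]
which proves the lemma with $C = C_{0}/\sqrt{2} - \epsilon'$, provided $\epsilon'$ is strictly smaller than $C_{0}/\sqrt{2}$. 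This is exactly the role of the constant $\epsilon$ in condition (A5)-(c): we now fix $\epsilon := C_{0}/\sqrt{2}$, so that the hypothesis $\epsilon' \in (0,\epsilon)$ forces $C>0$.

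The main subtle point is the order in which the constants are chosen: the constant $\epsilon$ appearing in (A5)-(c) depends on $C_{0}$ from Lemma~\ref{lem: est for pi L }, so one must know that $C_{0}$ itself is independent of the perturbation $\eta_{\hat{N}}$ (and depends only on the geometry of $\hat{N}$) in order for $\epsilon$ to be defined before $\eta_{\hat{N}}$ is chosen in Proposition~\ref{prop:perturb}. This is precisely why Lemmas~\ref{lem: coming from the def of asymp map}, \ref{lem: pre1 for testimate}, and \ref{lem: est for pi L } were carefully stated with constants independent of $\eta_{\hat{N}}$, and it is also why the final constant $C$ comes out independent of $r$.
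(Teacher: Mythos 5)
Your proof is correct and follows essentially the same route as the paper: the same decomposition $L(t)=\pi L(t)+\pi^{\perp}L(t)$, the same three supporting lemmas plus condition (A5)-(c), and the same observation that $\epsilon$ must be fixed after $C_{0}$ (which is why $C_{0}$ is independent of $\eta_{\hat{N}}$). The only cosmetic difference is that you bound $\|P_{-}^{r}\Psi_{r}\pi L(t)\|$ via the Pythagorean identity (yielding $C_{0}/\sqrt{2}$) where the paper uses the triangle inequality $\|\Psi_{r}\pi L(t)\|-\|Q_{-}^{r}\Psi_{r}\pi L(t)\|$ (yielding $C_{0}/2$); both are valid.
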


\begin{proof}
From the decomposition $L(t) = \pi \circ L(t) + \pi^{\perp} \circ L(t)$, we get
\begin{align*}
&\| P_{-}^{r} \circ \Psi_{r} \circ L(t) \|_{L^{2}(\hat{X}(r))}\\
\geq& \| P_{-}^{r} \circ \Psi_{r} \circ\pi \circ  L(t) \|_{L^{2}(\hat{X}(r))} -\| P_{-}^{r} \circ \Psi_{r} \circ \pi^{\perp} \circ L(t) \|_{L^{2}(\hat{X}(r))}\\
\geq& \| P_{-}^{r} \circ \Psi_{r} \circ\pi \circ  L(t) \|_{L^{2}(\hat{X}(r))} -\| \pi^{\perp} \circ L(t) \|_{L^{2}_{\mu}(\hat{N})}\\
\geq& \| P_{-}^{r} \circ \Psi_{r} \circ\pi \circ  L(t) \|_{L^{2}(\hat{X}(r))} - \epsilon\| \pi \circ L(t) \|_{L^{2}_{\mu}(\hat{N})}\\
\geq& \| \Psi_{r} \circ\pi \circ  L(t) \|_{L^{2}(\hat{X}(r))} - \| Q_{-}^{r} \circ \Psi_{r} \circ\pi \circ  L(t) \|_{L^{2}(\hat{X}(r))} - \epsilon\| \pi \circ L(t) \|_{L^{2}_{\mu}(\hat{N})}\\
\geq& \| \Psi_{r} \circ\pi \circ  L(t) \|_{L^{2}(\hat{X}(r))} - (C(r) + \epsilon) \| \pi \circ L(t) \|_{L^{2}_{\mu}(\hat{N})}\\
\geq& (C_{0} - C(r) - \epsilon) \| \pi \circ L(t) \|_{L^{2}_{\mu}(\hat{N})}.
\end{align*}
Here, in the second, third, fifth, and sixth inequality, we used Lemma~\ref{lem: pre1 for testimate}, (A5)-(c), Lemma~\ref{lem: coming from the def of asymp map}, and Lemma~\ref{lem: est for pi L }.
Henceforth let us focus only on $r$ satisfying $C(r) \leq C_{0}/2$.
Then 
$C_{0} - C(r) - \epsilon \geq C_{0}/2 -\epsilon$.
Here, we fix $\epsilon$ so that $C_{0}/2 -\epsilon > 0$.
Then the above estimates prove the lemma.
\end{proof}

\begin{lemma}
\label{lem:testimate}
For all sufficiently large $r$, we have that if $\hat{C}_0 \in B_0(r^{-4})$ and $\hat{C}_1 \in B_1(r^{-4})$, then there is a constant $C$ independent of $r$ for which
\[
|| t || \le C r^{-6}
\]
whenever $\mathcal{N}( \hat{C}_0 , \hat{C}_1 , t ) = 0$.
\end{lemma}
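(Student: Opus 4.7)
The plan is to project the equation $\mathcal{N}(\hat{C}_0 + \hat{C}_1, t) = 0$ onto the finite-dimensional obstruction space $H_r^-$ via $P_-$, obtaining
\begin{equation*}
P_- \mathcal{N}(0) + \hat{T}_r \hat{C}_0 + P_- R(\hat{C}_0 + \hat{C}_1) + i P_- K(t) = 0.
\end{equation*}
The strategy is to bound each of the first three terms in $L^2$ by a quantity which is $O(r^{-6})$ or smaller, and then use Lemma~\ref{lem: pre for testimate} together with (A5)-(a) to bound $\|P_- iK(t)\|_{L^2}$ from below by a constant times $\|t\|$. Rearranging then yields $\|t\| \leq Cr^{-6}$.

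For the upper estimates, I would argue term by term, using that $P_-$ is an $L^{2}$-orthogonal projection (so has operator norm at most $1$) together with $L^{2,2} \hookrightarrow L^{1,2} \hookrightarrow L^{2}$. Nicolaescu's Lemma 4.5.5 gives $\|\mathcal{N}(0)\|_{L^{1,2}} \leq Ce^{-\mu r}$. For the second term, the spectral characterisation of $H_r^+$ in Remark~\ref{rem Tr preserves} shows that $\hat{C}_0$ lies in the span of the eigenvectors of $\hat{T}_r^*\hat{T}_r$ with eigenvalue in $[0, r^{-4})$, whence
\begin{equation*}
\|\hat{T}_r \hat{C}_0\|_{L^2}^2 = \langle \hat{C}_0, \hat{T}_r^*\hat{T}_r \hat{C}_0\rangle_{L^2} \leq r^{-4}\|\hat{C}_0\|_{L^2}^2 \leq r^{-4}\|\hat{C}_0\|_{L^{2,2}}^2 \leq r^{-12},
\end{equation*}
i.e.\ $\|\hat{T}_r \hat{C}_0\|_{L^2} \leq r^{-6}$. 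For the nonlinear remainder, Lemma~\ref{lem Lemma 4.5.6 of Nicolaescu's book} and $\|\hat{C}_0 + \hat{C}_1\|_{L^{2,2}} \leq 2r^{-4}$ give $\|R(\hat{C}_0+\hat{C}_1)\|_{L^{1,2}} \leq Cr^{3/2}(2r^{-4})^2 = Cr^{-13/2}$. Summing and noting $e^{-\mu r}$ and $r^{-13/2}$ are both $o(r^{-6})$, one obtains $\|P_- K(t)\|_{L^2} \leq Cr^{-6}$ for all sufficiently large $r$.

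For the lower estimate, condition (A8) together with the standing hypothesis $r \geq R_{\rm vanish}+1$ forces $L(t)$ to be supported where $\alpha_r \equiv 1$, so $K(t) = \Psi_r L(t)$ on $\hat{X}(r)$, and hence $P_- K(t) = P_-^r \Psi_r L(t)$. Lemma~\ref{lem: pre for testimate} then gives
\begin{equation*}
\|\pi L(t)\|_{L^2_\mu(\hat{N})} \leq C^{-1} \|P_-^r \Psi_r L(t)\|_{L^2(\hat{X}(r))} \leq C'r^{-6}.
\end{equation*}
By Remark~\ref{rem:linear} and (A5)-(a), the map $t \mapsto \pi L(t)$ is a linear isomorphism onto $H^+_{L^2}(\hat{N})$, so $\|\pi L(t)\|_{L^2_\mu} \geq C_2\|t\|$ for some $r$-independent $C_2 > 0$. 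Combining these two inequalities yields $\|t\| \leq C''r^{-6}$.

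The main subtlety I anticipate is ensuring that every constant in this chain is genuinely uniform in $r$; in particular the operator norms of $P_-$, $\Psi_r$, and the restriction $(\hat{T}_r)^{-1} = S$ restricted to appropriate spaces must not blow up any faster than the $r^{2}$ documented in Lemma~\ref{lem: estimate for S}. A second delicate point is the choice of radius $r^{-4}$ for the balls $B_0, B_1$: with the larger radius $r^{-3}$ used in Nicolaescu's book the contribution of $R$ would be $O(r^{-9/2})$ and would swamp the $r^{-6}$ bound coming from the eigenvalue gap, so the sharper radius is essential for the argument to close.
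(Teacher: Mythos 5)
Your proposal is correct and follows essentially the same route as the paper: project onto $H_r^-$, bound $P_-\mathcal{N}(0)$, $\hat{T}_r\hat{C}_0$ and $P_-R$ above by $O(r^{-6})$ (the paper writes the middle bound as $r^{-2}\|\hat{C}_0\|$, which is exactly your spectral estimate), and bound $\|P_-^r\Psi_r L(t)\|$ below by $C\|t\|$ via Lemma~\ref{lem: pre for testimate} and (A5)-(a). The only cosmetic difference is that the paper packages the lower bound as invertibility of $\rho_r = P_-^r\circ\Psi_r\circ L$ with $\|J_r\|_{\rm op}\le 1/C_2$ and applies $J_r$ to the equation, which is equivalent to your direct rearrangement.
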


\begin{proof}
Expanding the equation $P_- \mathcal{N}(\hat{C},t) = 0$ out, we get
\begin{equation}\label{equ:equfort}
P_- \mathcal{N}(0) + \hat{T}_r( \hat{C}_0  ) + P_- R(\hat{C}_0 + \hat{C}_1) + P_-(iK(t)) = 0.
\end{equation}
Let
\[
\rho_r : T_{b_{i}}B \to H^{-}_{r}
\]
be the operator defined by
$\rho_r(t) = P_{-}^{r} \circ \Psi_{r} \circ L(t) = P_{-}^{r} \circ K_{r}(t)$.
Because of Lemma~\ref{lem: pre0 for testimate}, $\rho_r$ is invertible.
Let $J_r$ be the inverse of $\rho_r$. 
Lemma~\ref{lem: pre for testimate} implies that
\[
\| \rho_{r}(t) \|_{L^{2}(\hat{X}(r))}
\geq C \|\pi \circ L(t)\|_{L^{2}_{\mu}(\hat{N})}
\]
for any $t \in T_{b_{i}}B$.
Here, since $\pi \circ L$ is isomorphic, there exists $C_{1}>0$ such that
\[
\|\pi \circ L(t)\|_{L^{2}_{\mu}(\hat{N})} \geq C_{1} \|t\|
\]
for any $t$.
Therefore we have
\[
\| \rho_{r}(t) \|_{L^{2}(\hat{X}(r))}
\geq C_{2} \|t\|
\]
for some $C_{2}>0$.
Thus we have $\| \rho_{r}\|_{\rm op} \geq C_{2}$, and hence $\| J_{r}\|_{\rm op} \leq 1/C_{2}$.

Applying $J_r$ to the equation (\ref{equ:equfort}), we get
\[
t = - J_rP_- \mathcal{N}(0) -J_r \hat{T}_r( \hat{C}_0  ) -J_r P_- R(\hat{C}_0 + \hat{C}_1).
\]
Then we estimate:
\begin{equation*}
\begin{aligned}
||t|| &\le || J_r||( ||P_- \mathcal{N}(0) || + ||\hat{T}_r(\hat{C}_0)|| + ||P_- R(\hat{C}_0 + \hat{C}_1)||) \\
&\le C_{3}( e^{-\mu r} + r^{-2}||\hat{C}_0|| + ||R(\hat{C}_0 + \hat{C}_1)|| ) \\
&\le C_{4}( e^{-\mu r} + r^{-6} + r^{3/2} (r^{-4} + r^{-4})^2 ) \\
&\le C_{5} r^{-6}
\end{aligned}
\end{equation*}
for all large enough $r$.
\end{proof}

By this lemma, when considering solutions of the Seiberg-Witten equations over $U_i$ such that $||\hat{C}_0||, ||\hat{C}_1|| < r^{-4}$, we may as well assume $||t|| \le C r^{-6}$.

Until this point, we have not given any restriction on $\delta$.
In the following lemma, we specify the size of $\delta$ to be small enough:

\begin{lemma}
For all sufficiently large $r$, all sufficiently small $\delta$, all $\hat{C}_0 \in B_0(r^{-4})$ and all $t \in U_i$ satisfying $||t|| \le Cr^{-6}$, the map $\widetilde{\mathcal{F}}$ sends $B_1(r^{-4})$ to itself and is a contraction mapping.
\end{lemma}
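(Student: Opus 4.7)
The plan is to compare $\widetilde{\mathcal{F}}$ directly with the map $\mathcal{F}$ from Lemma \ref{lem:fcontract}. Observe that
\[
\widetilde{\mathcal{F}}(\hat{C}_1) = \mathcal{F}(\hat{C}_1) - SQ_- iK(t),
\]
and the new term $-SQ_- iK(t)$ does not depend on $\hat{C}_1$. Consequently
$\widetilde{\mathcal{F}}(\hat{C}_1) - \widetilde{\mathcal{F}}(\hat{C}_1') = \mathcal{F}(\hat{C}_1) - \mathcal{F}(\hat{C}_1')$,
so the Lipschitz estimate of Lemma \ref{lem:fcontract} transfers without change. Thus the contraction property is automatic and we need only verify the self-mapping property.

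For that, apply the triangle inequality to obtain
\[
\|\widetilde{\mathcal{F}}(\hat{C}_1)\|_{L^{2,2}} \le \|\mathcal{F}(\hat{C}_1)\|_{L^{2,2}} + \|SQ_- iK(t)\|_{L^{2,2}}.
\]
The first term is already bounded in the proof of Lemma \ref{lem:fcontract} by $C(r^{2}e^{-\mu r} + r^{-13/2})$, which is at most $r^{-4}/2$ for large $r$. For the second term, combine the second inequality of Lemma \ref{lem:estimate} with Lemma \ref{lem: estimate for S}, after upgrading the $L^{2}$-estimate for $Q_- iK(t)$ to an $L^{1,2}$-estimate. This yields a bound of the form
\[
\|SQ_- iK(t)\|_{L^{2,2}} \le C r^{4}\bigl( e^{-\mu r} + \delta \|t\|\bigr).
\]
Inserting the hypothesis $\|t\| \le C r^{-6}$ gives $\|SQ_- iK(t)\|_{L^{2,2}} \le C r^{4} e^{-\mu r} + C\delta r^{-2}$. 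The exponential summand is eventually absorbed into $r^{-4}/4$, and taking $\delta$ sufficiently small (relative to the size of $r^{-2}$ in the regime of $r$ under consideration) makes the second summand at most $r^{-4}/4$, so altogether $\|\widetilde{\mathcal{F}}(\hat{C}_1)\|_{L^{2,2}} \le r^{-4}$.

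The main technical obstacle is the upgrade from the $L^{2}$-bound of Lemma \ref{lem:estimate} to the $L^{1,2}$-bound required before applying $S$, since $Q_-$ is only an $L^{2}$-orthogonal projection. The key point making this upgrade $r$-uniform is assumption (A8) together with the identity $\beta^{*}(iK(t)) = 0$ already exploited in Lemma \ref{lem:estimate}: because $K(t)$ is supported in the compact region $\hat{N}(R_{\rm vanish})$ of $\hat{X}(r)$ and the spinor component of $\hat{T}^{*}_{r}(iK(t))$ vanishes, elliptic regularity for $\hat{L}_{r}$ on this fixed compact piece provides the required $L^{1,2}$-control with constants independent of $r$. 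Once this is in hand, the remainder of the estimate is essentially a reprise of Nicolaescu's Lemma 4.5.6 together with the arithmetic above.
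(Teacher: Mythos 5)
Your overall strategy coincides with the paper's: the contraction property is free because $\widetilde{\mathcal{F}} - \mathcal{F}$ is a constant (independent of $\hat{C}_1$), and the self-mapping property reduces to adding a bound for $\|SQ_- iK(t)\|$ to the bound $C(r^2 e^{-\mu r} + r^{-13/2})$ already obtained for $\mathcal{F}$ in Lemma \ref{lem:fcontract}. Your attention to the $L^2$ versus $L^{1,2}$ bookkeeping before applying $S$ is a legitimate point that the paper passes over in silence.

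There is, however, a genuine gap in your final arithmetic. Your bound $\|SQ_- iK(t)\| \le C r^4(e^{-\mu r} + \delta\|t\|)$ gives, after inserting $\|t\| \le Cr^{-6}$, a term of size $C\delta r^{-2}$, and you propose to make this $\le r^{-4}/4$ by taking $\delta$ small \emph{relative to} $r^{-2}$. That reverses the order of quantifiers: $\delta$ is fixed once and for all in Proposition \ref{prop:perturb} (it constrains the choice of the families perturbation $\eta_{\hat{N}}$), and only afterwards is the neck length $r$ sent to infinity; for any fixed $\delta > 0$ the inequality $C\delta r^{-2} \le r^{-4}$ fails for all large $r$, so your estimate does not show that $\widetilde{\mathcal{F}}$ maps $B_1(r^{-4})$ into itself. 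The paper's proof instead invokes the sharper bound $\|SQ_- iK(t)\| \le Cr^4 e^{-\mu r} + Cr^2\delta\|t\|$ --- one factor of $r^2$ better in the $\delta\|t\|$ term --- which yields $C\delta r^{-4}$ and closes the argument with $\delta$ independent of $r$. To repair your proof you must recover that extra factor of $r^{-2}$ in the $\delta\|t\|$ contribution; shrinking $\delta$ with $r$ is not an available remedy. A secondary issue: your $L^2\to L^{1,2}$ upgrade via elliptic regularity ``on the fixed compact piece'' is not right as stated, since $Q_- iK(t) = iK(t) - P_- iK(t)$ is not supported on $\hat{N}(R_{\rm vanish})$ --- the component $P_- iK(t) \in H_r^-$ is spread over all of $\hat{X}(r)$; one should instead estimate $\|iK(t)\|_{L^{1,2}}$ and $\|P_- iK(t)\|_{L^{1,2}}$ separately, using that elements of $H_r$ are spanned by low eigenvectors of $\hat{L}_r$.
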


\begin{proof}
Since $\widetilde{\mathcal{F}}$ differs from $\mathcal{F}$ only by a constant term, we immediately have that $\widetilde{\mathcal{F}}$ acts as a contraction on $B_1(r^{-4})$ for all sufficiently large $r$, provided that we can show that it sends $B_1(r^{-4})$ to itself.
Recall that we had obtained an estimate of the form
\[
|| \mathcal{F}(\hat{C}_1)|| \le C( r^2 e^{-\mu r} + r^{-13/2} ).
\]
Hence, we immediately have
\[
|| \widetilde{\mathcal{F}}(\hat{C}_1)|| \le C(r^2 e^{-\mu r} + r^{-13/2}) + ||SQ_- iK(t) ||.
\]
But $|| SQ_- iK(t) || \le C r^2 || Q_- K(t)|| \le C r^4 e^{-\mu r} + Cr^2 \delta ||t||$ by Lemma \ref{lem:estimate}.
Hence $\widetilde{\mathcal{F}}$ sends $B_1(r^{-4})$ to itself for all large enough $r$ and small enough $\delta$.
\end{proof}

Thus for each $\hat{C}_0 \in B_0(r^{-4})$ and each $t \in U_i$ with $||t|| \le Cr^{-6}$, there is a uniquely determined fixed point of $\widetilde{\mathcal{F}}$, which will be denoted as $\hat{C}_1 = \Phi(\hat{C}_0 , t) \in B_1(r^{-4})$.
It can be shown that $\Phi(\hat{C}_0 , t)$ depends differentiably on $\hat{C}_0$ and $t$ (by the implicit function theorem).

Now consider again the pair of equations
\[
P_- \mathcal{N}(\hat{C}_0 + \hat{C}_1 , t) = 0, \quad Q_- \mathcal{N}(\hat{C}_0 + \hat{C}_1 , t) = 0.
\]
The second of these equations is solved by the substitution $\hat{C}_1 = \Phi(\hat{C}_0,t)$ and we are left with just the single equation for $\hat{C}_0$ and $t$:
\[
P_- \mathcal{N}( \hat{C}_0 + \Phi(\hat{C}_0,t) , t) = 0.
\]
Expanding this out, the equation becomes
\begin{equation}\label{equ:secondeqn}
P_- \mathcal{N}(0) + \hat{T}_r( \hat{C}_0  ) + P_- R(\hat{C}_0 + \Phi(\hat{C}_0 , t)) + P_-(iK(t)) = 0.
\end{equation}
We apply $J_r$ to the equation \eqref{equ:secondeqn} to obtain:
\begin{equation}\label{equ:fixt}
t = -J_rP_- \mathcal{N}(0) - J_r \hat{T}_r( \hat{C}_0  ) - J_r P_- R(\hat{C}_0 + \Phi(\hat{C}_0 , t)) = -J_r \mathcal{N}(C_0 + \Phi(\hat{C}_0 , t)).
\end{equation}
Fix $\hat{C}_0 \in B_0(r^{-4})$.
We try to solve this as an equation for $t$ using the contraction mapping principle.
Define $G : U_i \to T_{b_i} B$ by:
\[
G(t) = -J_r P_-\mathcal{N}(C_0 + \Phi(\hat{C}_0 , t)).
\]
Then equation \eqref{equ:fixt} reduces to the fixed point equation $G(t) = t$.

\begin{lemma}
There is a constant $C$ independent of $r$ such that for all sufficiently large $r$ we have that $G$ sends $V = \{ t \; | \; ||t|| \le Cr^{-6} \}$ to itself.
\end{lemma}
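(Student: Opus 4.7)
The plan is to mimic the computation behind Lemma~\ref{lem:testimate} but now fix $\hat{C}_{0} \in B_{0}(r^{-4})$ once and for all and regard the map $G$ as a function of $t$ only. Since $\mathcal{N}(\cdot)$ in the definition of $G$ is $\mathcal{N}(\cdot , 0)$, expanding it around the origin gives
\[
\mathcal{N}(\hat{C}_{0} + \Phi(\hat{C}_{0},t)) = \mathcal{N}(0) + \hat{T}_{r}(\hat{C}_{0} + \Phi(\hat{C}_{0},t)) + R(\hat{C}_{0} + \Phi(\hat{C}_{0},t)).
\]
Because $\Phi(\hat{C}_{0},t) \in Y^{2}_{+}$ and $\hat{T}_{r}$ sends $Y^{0}_{+}$ into $Y^{0}_{-}$ (Remark~\ref{rem Tr preserves}), the projection $P_{-}$ kills the $\Phi$-contribution from the linear term, leaving
\[
P_{-}\mathcal{N}(\hat{C}_{0}+\Phi(\hat{C}_{0},t)) = P_{-}\mathcal{N}(0) + \hat{T}_{r}(\hat{C}_{0}) + P_{-}R(\hat{C}_{0}+\Phi(\hat{C}_{0},t)).
\]

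Next I would estimate each of the three summands with constants independent of $r$. First, $\|\mathcal{N}(0)\|_{L^{1,2}} \le Ce^{-\mu r}$ (Nicolaescu, Lemma~4.5.5). Second, since $\hat{C}_{0}$ lies in the span of eigenvectors of $\hat{T}_{r}^{\ast}\hat{T}_{r}$ with eigenvalues in $[0,r^{-4})$ (cf.\ Remark~\ref{rem Tr preserves}), we obtain $\|\hat{T}_{r}\hat{C}_{0}\|_{L^{1,2}} \le r^{-2}\|\hat{C}_{0}\|_{L^{2,2}} \le r^{-2}\cdot r^{-4} = r^{-6}$. Third, Lemma~\ref{lem Lemma 4.5.6 of Nicolaescu's book} gives
\[
\|R(\hat{C}_{0}+\Phi(\hat{C}_{0},t))\|_{L^{1,2}} \le Cr^{3/2}\bigl(\|\hat{C}_{0}\|_{L^{2,2}}+\|\Phi(\hat{C}_{0},t)\|_{L^{2,2}}\bigr)^{2} \le Cr^{3/2}(2r^{-4})^{2} = Cr^{-13/2}.
\]

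Finally, applying the operator norm bound $\|J_{r}\|_{\rm op} \le 1/C_{2}$ obtained in the proof of Lemma~\ref{lem:testimate} (this is the crucial input, since it is this uniform-in-$r$ control on the inverse of the wall-crossing map $\rho_{r}$ which makes the whole estimate work), we conclude
\[
\|G(t)\| \le \tfrac{1}{C_{2}}\bigl(Ce^{-\mu r} + r^{-6} + Cr^{-13/2}\bigr) \le C' r^{-6}
\]
for all sufficiently large $r$, where $C'$ is a constant independent of $r$. Choosing the constant $C$ in the definition of $V$ to satisfy $C \ge C'$, we obtain $G(V) \subset V$. (For such $r$ we also have $V \subset U_{i}$ automatically, so $G$ is well-defined on $V$.) The only real issue is bookkeeping the constants; the genuinely new ingredient is the uniform bound on $\|J_{r}\|_{\rm op}$, which has already been established just before the statement of the lemma, so no additional analytic work is needed here.
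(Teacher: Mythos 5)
Your proof is correct and follows essentially the same route as the paper: the paper disposes of this lemma with the single line ``this is just a consequence of Lemma~\ref{lem:testimate}'', and your argument simply unpacks that reference by rerunning the three-term estimate ($\|\mathcal{N}(0)\|\le Ce^{-\mu r}$, $\|\hat{T}_r\hat{C}_0\|\le r^{-6}$, $\|R\|\le Cr^{-13/2}$) together with the uniform bound $\|J_r\|_{\rm op}\le 1/C_2$ already established there. Your explicit remark that $P_-$ annihilates the $\hat{T}_r\Phi(\hat{C}_0,t)$ contribution, and your care in fixing the constant defining $V$, are exactly the points the paper leaves implicit.
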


\begin{proof}
This is just a consequence of Lemma \ref{lem:testimate}.
\end{proof}

Thus for all large enough $r$ we have that $G$ defines a map $G : V \to V$.
Next we want to show this is a contraction. 

\begin{lemma}
\label{lem:contractest}
Let $\mathcal{F}_t$ be a family of contractions on $B_1(r^{-4})$ of the form 
\[
\mathcal{F}_t(\hat{C}_1) = \mathcal{F}(\hat{C}_1) + \lambda(t)
\]
where $\mathcal{F}$ is a contraction.
Let $0 < \kappa < 1$ be such that
\[
||\mathcal{F}(\hat{C}_1) - \mathcal{F}(\hat{C}_2)|| \le \kappa ||\hat{C}_1 - \hat{C}_2||.
\]
Let $\Phi(t)$ be the unique fixed point of $\mathcal{F}_t$.
Then we have
\[
|| \Phi(t_1) - \Phi(t_2) || \le \frac{1}{1-\kappa} || \lambda(t_1) - \lambda(t_2) ||.
\]
\end{lemma}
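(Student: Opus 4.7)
The plan is to use the fixed point characterization of $\Phi$ directly, subtracting the two fixed point equations and applying the contraction estimate for $\mathcal{F}$. Since $\Phi(t_i)$ is the unique fixed point of $\mathcal{F}_{t_i}$, I have
\[
\Phi(t_i) = \mathcal{F}_{t_i}(\Phi(t_i)) = \mathcal{F}(\Phi(t_i)) + \lambda(t_i)
\]
for $i = 1, 2$, and subtracting gives
\[
\Phi(t_1) - \Phi(t_2) = \bigl(\mathcal{F}(\Phi(t_1)) - \mathcal{F}(\Phi(t_2))\bigr) + \bigl(\lambda(t_1) - \lambda(t_2)\bigr).
\]

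Next I would take norms and apply the triangle inequality followed by the contraction hypothesis $\|\mathcal{F}(x) - \mathcal{F}(y)\| \le \kappa \|x - y\|$, yielding
\[
\|\Phi(t_1) - \Phi(t_2)\| \le \kappa \|\Phi(t_1) - \Phi(t_2)\| + \|\lambda(t_1) - \lambda(t_2)\|.
\]
Rearranging and dividing by $1 - \kappa > 0$ (valid since $\kappa < 1$) gives the desired estimate
\[
\|\Phi(t_1) - \Phi(t_2)\| \le \frac{1}{1-\kappa}\|\lambda(t_1) - \lambda(t_2)\|.
\]

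There is essentially no obstacle here: this is a standard continuous-dependence-on-parameters statement for the Banach fixed point theorem, and the only subtlety is ensuring that both fixed points $\Phi(t_1), \Phi(t_2)$ lie in the domain $B_1(r^{-4})$ where $\mathcal{F}$ is a contraction with the uniform constant $\kappa$, which is given as part of the hypothesis. In the application to follow (where $\mathcal{F} = \widetilde{\mathcal{F}}$ with the $t$-dependent term $\lambda(t) = -SQ_- iK(t)$ playing the role of $\lambda$), this estimate will be combined with the bound on $\|K(t)-K(t')\|$ coming from Lemma~\ref{lem:estimate} to control the Lipschitz constant of $\Phi$ in $t$ uniformly in $r$.
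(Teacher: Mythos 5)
Your proof is correct and is essentially the same as the paper's: both subtract the two fixed-point identities, apply the triangle inequality and the contraction bound for $\mathcal{F}$, and rearrange to divide by $1-\kappa$. No further comment is needed.
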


\begin{proof}
We start with the defining relations $\mathcal{F}_{t_i}( \Phi(t_i) ) = \Phi(t_i)$ for $i=1,2$.
Thus
\begin{equation*}
\begin{aligned}
|| \Phi(t_1) - \Phi(t_2) || &\le || \mathcal{F}_{t_1}( \Phi(t_1) ) - \mathcal{F}_{t_2}(\Phi(t_2)) || \\
&\le || \mathcal{F}(\Phi(t_1)) - \mathcal{F}(\Phi(t_2)) + \lambda(t_1) - \lambda(t_2) || \\
&\le ||\mathcal{F}(\Phi(t_1)) - \mathcal{F}(\Phi(t_2)) || + ||\lambda(t_1) - \lambda(t_2) || \\
&\le \kappa || \Phi(t_1) - \Phi(t_2) || + ||\lambda(t_1) - \lambda(t_2) ||.
\end{aligned}
\end{equation*}
Hence $(1-\kappa)|| \Phi(t_1) - \Phi(t_2) || \le ||\lambda(t_1) - \lambda(t_2) ||$ and the result follows.
\end{proof}

\begin{corollary}
\label{cor:est}
For any $t_1,t_2 \in V$, we have an estimate of the form
\[
||\Phi( \hat{C}_0 , t_1) - \Phi(\hat{C}_0 , t_2)|| \le C r^2|| t_1 - t_2||.
\]
\end{corollary}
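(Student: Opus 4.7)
The plan is to apply Lemma~\ref{lem:contractest} to the family of contractions $\widetilde{\mathcal{F}}_{t}$ parametrised by $t \in V$, and then to bound the error term using the operator estimate for $S$ together with the Lipschitz control on $K(t)$.

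First, for fixed $\hat{C}_{0} \in B_{0}(r^{-4})$, we decompose
\[
\widetilde{\mathcal{F}}(\hat{C}_{1}) \;=\; \mathcal{F}(\hat{C}_{1}) \;+\; \lambda(t), \qquad \lambda(t) := -SQ_{-}\,iK(t),
\]
where $\mathcal{F}(\hat{C}_{1}) = U - SQ_{-}R(\hat{C}_{0}+\hat{C}_{1})$ is the contraction of Lemma~\ref{lem:fcontract}, with contraction constant $\kappa \le Cr^{-1/2} < 1$ for all sufficiently large $r$. Lemma~\ref{lem:contractest} then yields
\[
\|\Phi(\hat{C}_{0},t_{1}) - \Phi(\hat{C}_{0},t_{2})\|_{L^{2,2}}
\;\le\; \frac{1}{1-\kappa}\,\bigl\|SQ_{-}\bigl(iK(t_{1}) - iK(t_{2})\bigr)\bigr\|_{L^{2,2}}.
\]

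Next, I would bound the right-hand side using Lemma~\ref{lem: estimate for S}, which gives $\|Su\|_{L^{2,2}} \le C r^{2} \|u\|_{L^{1,2}}$ for $u \in Y^{1}_{-}$. Since $Q_{-}$ is an $L^{2}$-projection it is also bounded on $L^{1,2}$, so
\[
\bigl\|SQ_{-}\bigl(iK(t_{1}) - iK(t_{2})\bigr)\bigr\|_{L^{2,2}}
\;\le\; C r^{2}\, \|K(t_{1}) - K(t_{2})\|_{L^{1,2}(\hat{X}(r))}.
\]

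Finally, recall that $K(t) = \Psi_{r}(L(t))$ with $L(t) = L_{i}(t)$, and by assumption (A8) together with the restriction $r \ge R_{\rm vanish}+1$, the form $L(t)$ is supported outside $[R_{\rm vanish},\infty)\times S^{3}$ where the cutoff $\alpha_{r}$ equals $1$. Thus $K(t)$ coincides on its support with $L(t)$ extended by zero, and
\[
\|K(t_{1}) - K(t_{2})\|_{L^{1,2}(\hat{X}(r))}
\;=\; \|L(t_{1}) - L(t_{2})\|_{L^{1,2}_{\mu}(\hat{N})}
\;\le\; C_{0}\, \|t_{1} - t_{2}\|,
\]
where the last inequality follows from the fact that $\eta_{\hat{N}} \in \mathcal{C}^{1}(B,\mathcal{E})$ restricted to the compact closure of $U_{i}$ is Lipschitz in $t$ with some constant $C_{0}$ depending only on $\eta_{\hat{N}}$ (and not on $r$). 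Combining the three displayed estimates and absorbing the factor $1/(1-\kappa) \le 2$ into the constant $C$ gives exactly the claimed bound $\|\Phi(\hat{C}_{0},t_{1}) - \Phi(\hat{C}_{0},t_{2})\| \le C r^{2} \|t_{1}-t_{2}\|$.

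The only subtle point is the passage from $K$ to $L$ in the last step, which relies crucially on the support condition (A8); without it, one would pick up extra derivatives of $\alpha_{r}$ on the neck and the estimate would degrade as $r\to\infty$. With (A8) in hand, the proof reduces to routine application of the two lemmas \ref{lem:contractest} and \ref{lem: estimate for S}.
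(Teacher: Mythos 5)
Your argument is correct and follows essentially the same route as the paper: apply Lemma~\ref{lem:contractest} with $\lambda(t) = -SQ_-\,iK(t)$ and $\kappa \le 1/2$, then bound $\|SQ_-(iK(t_1)-iK(t_2))\| \le Cr^2\|K(t_1)-K(t_2)\| \le Cr^2\|t_1-t_2\|$ via Lemma~\ref{lem: estimate for S} and the Lipschitz dependence of the perturbation on $t$; the paper's proof is just a terser version of this. One minor remark: (A8) is not actually essential for the final Lipschitz bound, since $|\nabla\alpha_r|$ is bounded uniformly in $r$ (the cutoff transitions over an interval of fixed length), so even without the support condition the passage from $K$ to $L$ would only cost an $r$-independent constant.
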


\begin{proof}
We apply Lemma \ref{lem:contractest}, where $\mathcal{F}$ is as usual and $\lambda(t) = -SQ_- (iK(t))$. Note that by the proof of Lemma \ref{lem:fcontract}, we can take $\kappa = 1/2$. Therefore, we get an estimate:
\[
|| \Phi(\hat{C}_0 , t_1) - \Phi(\hat{C}_0 , t_2) || \le 2 || SQ_-( iK(t_1) - iK(t_2) ) || \le Cr^2 ||t_1 - t_2||.
\]
\end{proof}

\begin{lemma}
For all sufficiently large $r$, $G$ is a contraction on $V$.
\end{lemma}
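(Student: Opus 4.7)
The plan is to show directly that $G(t_{1}) - G(t_{2})$ can be bounded by a factor going to zero as $r \to \infty$ times $\|t_{1} - t_{2}\|$. The key observation is that the $t$-dependence of $G$ enters only through the nonlinear remainder $R$, because the terms $-J_{r}P_{-}\mathcal{N}(0)$ and $-J_{r}\hat{T}_{r}(\hat{C}_{0})$ are independent of $t$ and the term containing $iK(t)$ has been absorbed into the definition of $G$ via $J_{r}\rho_{r} = \mathrm{id}$. After rewriting equation \eqref{equ:fixt}, one sees that
\[
G(t_{1}) - G(t_{2}) = -J_{r}P_{-}\bigl[R(\hat{C}_{0} + \Phi(\hat{C}_{0},t_{1})) - R(\hat{C}_{0} + \Phi(\hat{C}_{0},t_{2}))\bigr],
\]
so the entire estimate reduces to controlling differences of $R$ evaluated at nearby points in $B_{0}(r^{-4}) + B_{1}(r^{-4})$.

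Next I would apply the Lipschitz-type bound for $R$ from Lemma \ref{lem Lemma 4.5.6 of Nicolaescu's book}. Since both configurations live in balls of radius $r^{-4}$, the prefactor $Cr^{3/2}(\|\hat{C}\|_{L^{2,2}} + \|\hat{C}'\|_{L^{2,2}})$ is bounded by $Cr^{3/2} \cdot r^{-4} = Cr^{-5/2}$, giving
\[
\|R(\hat{C}_{0} + \Phi(\hat{C}_{0},t_{1})) - R(\hat{C}_{0} + \Phi(\hat{C}_{0},t_{2}))\|_{L^{1,2}} \le Cr^{-5/2}\|\Phi(\hat{C}_{0},t_{1}) - \Phi(\hat{C}_{0},t_{2})\|_{L^{2,2}}.
\]
Plugging in Corollary \ref{cor:est}, which provides $\|\Phi(\hat{C}_{0},t_{1}) - \Phi(\hat{C}_{0},t_{2})\| \le Cr^{2}\|t_{1} - t_{2}\|$, collapses the right side to $Cr^{-1/2}\|t_{1} - t_{2}\|$.

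Finally, I would combine this with the operator norm bound $\|J_{r}\|_{\mathrm{op}} \le 1/C_{2}$ established in the proof of Lemma \ref{lem:testimate}, together with $\|P_{-}\|_{\mathrm{op}} \le 1$, to conclude
\[
\|G(t_{1}) - G(t_{2})\| \le Cr^{-1/2}\|t_{1} - t_{2}\|.
\]
For all sufficiently large $r$ the constant $Cr^{-1/2}$ is strictly less than $1$, so $G$ is a contraction on $V$. The main obstacle, which is already resolved upstream, was arranging the linear gluing estimate $\|J_{r}\|_{\mathrm{op}} \le 1/C_{2}$ uniformly in $r$; without the careful isomorphism analysis of $\rho_{r} = P_{-}^{r} \circ \Psi_{r} \circ L$ through Lemmas \ref{lem: pre0 for testimate}--\ref{lem: pre for testimate}, the factor $r^{2}$ from $\|S\|_{\mathrm{op}}$ in Corollary \ref{cor:est} would not be compensated, and the resulting estimate would fail to yield a contraction. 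With $\|J_{r}\|$ controlled independently of $r$, the contraction property is essentially forced by the $r^{-1/2}$ improvement inherited from Nicolaescu's nonlinear estimate.
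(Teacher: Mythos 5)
Your proof is correct and follows essentially the same route as the paper's: reduce the difference $G(t_1)-G(t_2)$ to the difference of the remainder terms $R$, apply the Lipschitz estimate of Lemma \ref{lem Lemma 4.5.6 of Nicolaescu's book} on the ball of radius $r^{-4}$, invoke Corollary \ref{cor:est} to convert $\|\Phi(\hat{C}_0,t_1)-\Phi(\hat{C}_0,t_2)\|$ into $Cr^2\|t_1-t_2\|$, and absorb $\|J_r\|$ and $\|P_-\|$ into the constant to obtain the factor $Cr^{-1/2}$. The paper's proof is exactly this chain of inequalities, so no further comment is needed.
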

\begin{proof}
Let $t_1,t_2 \in V$. We estimate:
\begin{equation*}
\begin{aligned}
|| G(t_1) - G(t_2) || &\le || J_r || ||P_- \mathcal{N}(C_0 + \Phi(\hat{C}_0 , t_1)) - P_- \mathcal{N}(C_0 + \Phi(\hat{C}_0 , t_2)) ||  \\
&\le C || P_- R(\hat{C}_0 + \Phi(\hat{C}_0 , t_1)) - P_- R(\hat{C}_0 + \Phi(\hat{C}_0 , t_2)) || \\
&\le C || R(\hat{C}_0 + \Phi(\hat{C}_0 , t_1)) - R(\hat{C}_0 + \Phi(\hat{C}_0 , t_2)) || \\
&\le C r^{3/2} r^{-4} || \Phi(\hat{C}_0 , t_1) - \Phi(\hat{C}_0 , t_2) || \\
&\le C r^{-1/2} ||t_1 - t_2|| \; \text{by Corollary \ref{cor:est}.}
\end{aligned}
\end{equation*}
Hence for all sufficiently large $r$, this is a contraction.
\end{proof}

Thus for all large enough $r$, we see that for each $\hat{C}_0 \in B_0(r^{-4})$ there is a unique $t = t( \hat{C}_0)$ such that $(\hat{C} = \hat{C}_0 + \Phi(\hat{C}_0 , t(\hat{C}_0)) , t(\hat{C}_0))$ is a solution of the Seiberg-Witten equations for the family $E_{\hat{X}(r)}$.
In particular, setting $\hat{C}_0 = 0$, we get a solution $(\hat{C} , t) = ( \Phi( 0 , t(0) ) , t(0))$, which we call $\Psi_r( \hat{C}_r)$.

\begin{proposition}\label{prop:decomp}
For all large enough $r$, the mapping $\hat{C}_r \mapsto \Psi_r( \hat{C}_r)$ defines a diffeomorphism between the disjoint union
\[
\coprod_{i=1}^l \mathcal{M}( \hat{M} , g_{\hat{M}}(b_i) , \eta_{\hat{M}}(b_i) , \mathfrak{s}_M , \mu)
\]
with the families moduli space
\[
\mathcal{M}( E_{\hat{X}(r)} , g_{\hat{X}(r)} , \eta_{\hat{X}(r)} , \mathfrak{s}_X ).
\]
\end{proposition}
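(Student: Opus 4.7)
The strategy is to combine the local Kuranishi analysis already carried out with a parametrised extension of the compactness/uniqueness arguments of Nicolaescu \textsection 4.5.4. By Corollary~\ref{cor:confine}, for $r$ sufficiently large every element of $\mathcal{M}(E_{\hat{X}(r)}, g_{\hat{X}(r)}, \eta_{\hat{X}(r)}, \mathfrak{s}_X)$ lies over $U = \bigcup_{i=1}^l U_i$, so it suffices to analyse the moduli space separately over each $U_i$ and then assemble the pieces.

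Fix $i$. For each $\hat{C}_{\hat{M}} \in \mathcal{M}(\hat{M}, g_{\hat{M}}(b_i), \eta_{\hat{M}}(b_i), \mathfrak{s}_M, \mu)$, I would form the approximate solution $\hat{C}_r = \hat{C}_{\hat{M}} \#_r \hat{C}_{\hat{N}}$ (where $\hat{C}_{\hat{N}}$ is the unique reducible over $b_i$ given by (A6)) and then apply the work of the last several lemmas, starting from $\hat{C}_0 = 0$: the contraction mappings $\widetilde{\mathcal{F}}$ and $G$ produce a unique fixed point $(\Phi(0, t(0)), t(0))$, and $\Psi_r(\hat{C}_r) := (\hat{C}_r + \Phi(0, t(0)), t(0))$ is then a genuine solution by construction. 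Smoothness in $\hat{C}_{\hat{M}}$ follows from the smooth dependence of $\hat{C}_r$ on $\hat{C}_{\hat{M}}$ together with the standard implicit function theorem argument attached to contraction mappings depending smoothly on parameters. For injectivity, suppose $\Psi_r(\hat{C}_{r,1}) = \Psi_r(\hat{C}_{r,2})$ up to gauge. The Kuranishi model is set up relative to the slice through $\hat{C}_r$ at $L^{2,2}$-distance $\le r^{-4}$; since two distinct elements of the (unobstructed, compact) unparametrised moduli space $\mathcal{M}(\hat{M}, g_{\hat{M}}(b_i), \eta_{\hat{M}}(b_i), \mathfrak{s}_M)$ produce glued configurations whose gauge orbits remain a definite positive distance apart (uniformly in $r$, since the difference localises on $\hat{M}$ away from the neck), both configurations must in fact coincide for large $r$.

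Surjectivity is the substantive step, and I would treat it by a parametrised version of Nicolaescu \textsection 4.5.4. Given any sequence $r_n \to \infty$ and solutions $(\hat{C}_n, t_n) \in \mathcal{M}(E_{\hat{X}(r_n)})$ not in the image of $\Psi_{r_n}$, we have $t_n \in U$, so after passing to a subsequence $t_n \to t_\infty \in \overline{U}$. Using the cylindrical-end compactness theorem (uniform elliptic estimates, exponential decay along the necks of $\hat{X}(r_n)$, and the fact that $\partial_\infty \hat{M} = \partial_\infty \hat{N} = S^3$ prevents bubbling or trajectory splitting), one extracts limiting solutions $\hat{C}_{\hat{M}, \infty} \in \mathcal{M}(\hat{M}, g_{\hat{M}}(t_\infty), \eta_{\hat{M}}(t_\infty), \mathfrak{s}_M, \mu)$ and $\hat{C}_{\hat{N}, \infty} \in \mathcal{M}(\hat{N}, g_{\hat{N}}(t_\infty), \eta_{\hat{N}}(t_\infty), \mathfrak{s}_N, \mu)$ with matching asymptotic values. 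By Lemma~\ref{lem:finiteset} and the wall-crossing analysis, the only way the $\hat{N}$-factor is non-empty is $t_\infty = b_i$ for some $i$, in which case $\hat{C}_{\hat{N}, \infty}$ is forced to be the reducible $\hat{C}_{\hat{N}}$. By Lemma~\ref{lem:testimate} we moreover have $\| t_n - b_i \| \le C r_n^{-6}$. Hence for large $n$ the pair $(\hat{C}_n, t_n)$ lies inside the radius of validity of the Kuranishi description around $\hat{C}_{r_n} := \hat{C}_{\hat{M}, \infty} \#_{r_n} \hat{C}_{\hat{N}}$, and the uniqueness of the fixed point of $G$ then forces $(\hat{C}_n, t_n) = \Psi_{r_n}(\hat{C}_{r_n})$, a contradiction.

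Finally, since both sides are smooth manifolds of the same dimension ($d(M, \mathfrak{s}_M) + d(N, \mathfrak{s}_N) + 1 + \dim(B) = 2m + s$ matches $\dim \coprod_i \mathcal{M}(\hat{M}, g_{\hat{M}}(b_i), \eta_{\hat{M}}(b_i), \mathfrak{s}_M)$ after accounting for the $H^+_r/H^-_r$ dimension count coming from Proposition~\ref{prop:hplusminus} and the wall-crossing dimension reduction $\dim(B) - b^+(N)$), and $\Psi_r$ is a smooth bijection whose differential is an isomorphism by the local Kuranishi picture, $\Psi_r$ is a diffeomorphism. The principal obstacle in the argument is the parametrised compactness in the surjectivity step: one must upgrade Nicolaescu's unparametrised pre-gluing surjectivity uniformly in $b \in B$, which is where assumptions (A2)--(A8) and the uniform estimate on $\|t\|$ from Lemma~\ref{lem:testimate} are indispensable.
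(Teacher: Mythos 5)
Your proposal is correct and follows essentially the same route as the paper, whose entire proof of this proposition is the single sentence ``we just have to adapt the arguments of Nicolaescu's book \textsection 4.5.3 and 4.5.4''; what you have written is precisely that adaptation spelled out (confinement via Corollary~\ref{cor:confine}, local existence and uniqueness from the contraction mappings $\widetilde{\mathcal{F}}$ and $G$, and parametrised compactness for surjectivity). You correctly identify the surjectivity step as the one genuinely requiring the uniform, quantitative convergence estimates of Nicolaescu \textsection 4.5.4, which is exactly the part the paper delegates to the reference.
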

\begin{proof} 
We just have to adapt the arguments of Nicolaescu's book \textsection 4.5.3 and 4.5.4.
\end{proof}

\begin{remark}
Recall also that each moduli space $\mathcal{M}( \hat{M} , g_{\hat{M}}(b_i) , \eta_{\hat{M}}(b_i) , \mathfrak{s}_M , \mu)$ is diffeomorphic to a smooth Seiberg-Witten moduli space for the compact manifold $M$.
\end{remark}

\noindent {\bf Completion of proof of Theorem \ref{thm:rub} part (1):}
Recall that we assume $dim(B) = d = b^+(N)$. Therefore $\theta \in H^0(B  , \mathbb{Z}_2)$ and we may as well take $\theta = 1$. For simplicity, we assume $\rho$ is trivial and consider the $\mathbb{Z}_2$-valued invariant. The gluing formula for the $\rho$-twisted and $\mathbb{Z}$-valued invariants are a straightforward extensions of this case. So we have to show that
\[
FSW_m^{\mathbb{Z}_2}( E_X , \mathfrak{s}_X , 1) = SW(M , \mathfrak{s}_M) \langle w_{b^+}(H^+(N)) , [B] \rangle \in \mathbb{Z}_2
\]
But $\langle w_{b^+}(H^+(N)) , [B] \rangle$ is just the count (mod $2$) of the number of zeros of a generic section of $H^+(N)$. Thus $\langle w_{b^+}(H^+(N)) , [B] \rangle = l$ and we must show
\[
FSW_m^{\mathbb{Z}_2}( E_X , \mathfrak{s}_X , 1) = SW(M , \mathfrak{s}_M) l.
\]
But from Proposition \ref{prop:decomp} we have diffeomorphisms
\begin{align*}
\mathcal{M}( E_{\hat{X}(r)} , g_{\hat{X}(r)} , \eta_{\hat{X}(r)} , \mathfrak{s}_X )
&\cong \coprod_{i=1}^l \mathcal{M}( \hat{M} , g_{\hat{M}}(b_i) , \eta_{\hat{M}}(b_i) , \mathfrak{s}_M , \mu)\\
&\cong \coprod_{i=1}^l \mathcal{M}( M , g_M(b_i) , \eta_{M}(b_i) , \mathfrak{s}_M )
\end{align*}
(for suitably defined metrics $g_M(b_i)$ and perturbations $\eta_M(b_i)$ on $M$).
As in Nicolaescu \textsection 4.5.4 this diffeomorphism can be seen the respect the isomorphism class of the line bundles over the moduli spaces.
It follows immediately that
\[
FSW_m^{\mathbb{Z}_2}( E_X , \mathfrak{s}_X , 1) = \sum_{i=1}^l SW( M , \mathfrak{s}_M) = SW(M , \mathfrak{s}_M) l
\]
and the proof is complete.

\section{Proof of Theorem \ref{thm:rub} part (2)}\label{sec:part2}

Let $\theta \in H^{s+2k}(B , \mathbb{Z}_2)$, where $s = dim(B) - b^+(N) \ge 0$ and $k > 0$.
By the solution to the Steenrod problem, $\theta$ is Poincar\'e dual to a homology class of the form $f_*[S] \in H_{b^+(N)-2k}( B , \mathbb{Z}_2)$, where $S$ is a compact smooth manifold of dimension $dim(S) = b^+(N)-2k < b^+(N)$ and $f : S \to B$ is a smooth map.
Then
\begin{equation*}
\begin{aligned}
FSW_{m-k}^{\mathbb{Z}_2}(E_X , \mathfrak{s}_X , \theta ) &= \langle c_1(\mathcal{L})^{m-k} \cup \pi^* \theta, [\mathcal{M}] \rangle \\
&= \langle \pi_* ( c_1(\mathcal{L})^{m-k} \cup \pi^* (\theta)) , [B] \rangle \text{ (by definition of the wrong-way map $\pi_*$)} \\
&= \langle \pi_* ( c_1(\mathcal{L})^{m-k} ) \cup \theta , [B] \rangle \\
&= \langle \pi_* ( c_1(\mathcal{L})^{m-k} ) , [S] \rangle \text{ ($\theta$ is Poinar\'e dual to $[S]$)} \\
&= \langle  c_1(\mathcal{L})^{m-k}  , [\mathcal{M}|_S] \rangle \text{ (by definition of the wrong-way map $\pi_*$)} \\
&= FSW_{m-k}^{\mathbb{Z}_2}( E_X|_S , \mathfrak{s}_X , 1).
\end{aligned}
\end{equation*}
This reduces the problem to computing the families Seiberg-Witten invariant for the restricted family $E_X|_S$.
We can analyse the families moduli space associated to $E_X|_S$ using gluing theory.
However, this time since $\dim(S) < b^+(N)$, we have that $\eta_{\hat{N}}$ can be chosen so as to never meet the wall, so we can take $l = 0$ in Proposition~\ref{prop:perturb}.
Then by Corollary~\ref{cor:confine}, we have that the families moduli space for $E_{\hat{X}(r)}|_S$ is empty for all sufficiently large $r$.
Hence $FSW^{\mathbb{Z}_2}( E_X|_S , \mathfrak{s}_X , 1) = 0$.


\section{Applications of the gluing formula}\label{sec:applications}

\subsection{General setup}\label{sec:setup2}

In this section we briefly recall the setup for the gluing formula from Section \ref{sec:setup}. In the following sections we consider various applications of this formula.

Let $M,N$ be compact, smooth, oriented $4$-manifolds and set $X = M \# N$. Let $B$ be a compact smooth manifold of dimension $d$ and suppose we have smooth fibrewise oriented families $\pi_M : E_M \to B$, $\pi_N : E_N \to B$ whose fibres are $M,N$ respectively. Let $T(E_M/B) = Ker( {\pi_M}_* )$, $T(E_N/B) = Ker( {\pi_N}_* )$ be the vertical tangent bundles. To form the connected sum family, suppose we are given the following additional data:
\begin{itemize}
\item[(1)]{Sections $\iota_M : B \to E_M$ and $\iota_N : B \to E_N$.}
\item[(2)]{An orientation reversing vector bundle isomorphism $\phi : \iota_M^*( T(E_M/B)) \to \iota_N^*( T(E_N/B))$.}
\end{itemize}

Given this data, we can construct a fibrewise connected sum family $E_X \to B$ as described in more detail in Section \ref{sec:setup}.\\

Let $\mathfrak{s}_M, \mathfrak{s}_N$ be Spin$^c$-structures on $M,N$ and let $\mathfrak{s}_X$ be the corresponding Spin$^c$-structure on $X$. Assume that:
\begin{itemize}
\item[(3)]{the Spin$^c$-structures $\mathfrak{s}_M,\mathfrak{s}_N$ are monodromy invariant under the monodromy action of $\pi_1(B)$, up to a common sign factor given by a homomorphism $\rho : \pi_1(B) \to \mathbb{Z}_2$.}
\end{itemize}
Then $\mathfrak{s}_X$ is also invariant up to the same sign factor $\rho$.\\

Assume further that:
\begin{itemize}
\item[(4)]{$d(M,\mathfrak{s}_M) = 0$.}
\item[(5)]{$c_1(\mathfrak{s}_N)^2 = \sigma(N)$.}
\item[(6)]{$b_1(N) = 0$.}
\item[(7)]{$0 < b^+(N) \le dim(B) = d$.}
\item[(8)]{$b^+(M) > ( dim(B) - b^+(N) ) + 1$.}
\end{itemize}

\begin{remark}
The assumption that $d(M , \mathfrak{s}_M) = 0$ is not strictly necessary, but is useful for the following two reasons. First, it ensures that we can use Theorem \ref{thm:rub} (1) without any requirement that $\mathfrak{s}_X$ extends to a Spin$^c$-structure on $T(E_X/B)$. Second, at the time of writing, all known examples of pairs $(M , \mathfrak{s}_M)$ with $b^+(M) > 1$ and $SW(M , \mathfrak{s}_M) \neq 0$ satisfy $d(M , \mathfrak{s}_M) = 0$.
\end{remark}

By these assumptions, it follows that
\[
d(N,\mathfrak{s}_N) = -1 - b^+(N) < 0
\]
and that
\[
d(X , \mathfrak{s}_X) = -b^+(N).
\]

Note that assumption (8) ensures that we have a well-defined Seiberg-Witten invariant for the family $E_X$ which takes values in $H^{b^+(N)}(B , \mathbb{Z}_2)$.

\begin{theorem}[Gluing formula, $\mathbb{Z}_2$-valued case]
Under assumptions (1)-(8) we have:
\[
FSW^{\mathbb{Z}_2}(E_X , \mathfrak{s}_X) = SW( M , \mathfrak{s}_M) \cdot w_{b^+(N)}(H^+(N)) \in H^{b^+(N)}(B , \mathbb{Z}_2)
\]
in the untwisted case. More generally, in the $\rho$-twisted case we have:
\[
FSW^{\mathbb{Z}_2}(E_X , \mathfrak{s}_X , \rho) = SW( M , \mathfrak{s}_M) \cdot w_{b^+(N)}(H^+(N) \otimes \mathbb{R}_\rho ) \in H^{b^+(N)}(B , \mathbb{Z}_2)
\]
where $\mathbb{R}_\rho \to B$ is the real line bundle classified by $\rho \in H^1(B , \mathbb{Z}_2)$.
\end{theorem}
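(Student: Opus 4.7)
The plan is to recognise this theorem as merely the cohomological reformulation of the numerical gluing formula already established in Theorem~\ref{thm:rub}(1) (and Theorem~\ref{thm:rubtwisted} in the $\rho$-twisted case), together with the characterising property \eqref{eq cohomological and numerical} of the cohomology-class version $FSW(E_X, \mathfrak{s}_X)$. So essentially no new analysis is needed; the proof is a matter of ``dualising'' the pairing formulas using Poincar\'e duality on $B$.

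First I would unpack the numerical invariants. Since $d(M,\mathfrak{s}_M)=0$ by assumption (4) we are in case (i) of Definition~\ref{def:fsw}, so there is no need to choose a lift $\tilde{\mathfrak{s}}_X$ and in particular $m=0$. The parameter $s$ of the gluing formula equals $s = \dim(B) - b^{+}(N) = d - b^{+}(N)$, which by assumptions (7)--(8) is a well-defined non-negative integer, and the target cohomological degree is $b^+(N) = d - s = -d(X,\mathfrak{s}_X)$, matching the statement. Applying Theorem~\ref{thm:rub}(1) gives, for every class $\theta \in H^{s}(B,\mathbb{Z}_2)$, the numerical identity
\[
FSW^{\mathbb{Z}_2}(E_X, \mathfrak{s}_X, \theta) = SW(M, \mathfrak{s}_M)\cdot \langle \theta \cup w_{b^+(N)}(H^+(N)),\,[B]\rangle.
\]

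Next I would invoke the cohomology-class description of the families Seiberg-Witten invariant, recorded via equation~\eqref{eq cohomological and numerical}: the class $FSW^{\mathbb{Z}_2}(E_X, \mathfrak{s}_X) \in H^{b^+(N)}(B, \mathbb{Z}_2)$ is characterised by
\[
FSW^{\mathbb{Z}_2}(E_X, \mathfrak{s}_X, \theta) = \langle FSW^{\mathbb{Z}_2}(E_X, \mathfrak{s}_X) \cup \theta,\,[B]\rangle
\]
for all $\theta \in H^{s}(B,\mathbb{Z}_2)$. Combining this with the previous display yields
\[
\langle FSW^{\mathbb{Z}_2}(E_X, \mathfrak{s}_X) \cup \theta,\,[B]\rangle = \langle SW(M,\mathfrak{s}_M) \cdot w_{b^+(N)}(H^+(N)) \cup \theta,\,[B]\rangle
\]
for every $\theta \in H^{s}(B,\mathbb{Z}_2)$. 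Since the Poincar\'e duality pairing $H^{b^+(N)}(B,\mathbb{Z}_2)\otimes H^{s}(B,\mathbb{Z}_2)\to \mathbb{Z}_2$ is non-degenerate (in the sense that a class is detected by its pairings with all complementary-degree classes), we conclude the stated identity in $H^{b^+(N)}(B,\mathbb{Z}_2)$. The $\rho$-twisted case is identical but uses Theorem~\ref{thm:rubtwisted} in place of Theorem~\ref{thm:rub}(1), with $w_{b^+(N)}(H^+(N))$ replaced by $w_{b^+(N)}(H^+(N)\otimes \mathbb{R}_\rho)$; no other change is needed.

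The only mild subtlety, and what I would regard as the ``hard part'' (though it is really just a bookkeeping point), is that the Poincar\'e duality pairing on $B$ must indeed be non-degenerate with $\mathbb{Z}_2$-coefficients in the relevant bidegrees. This is automatic because $B$ is a compact smooth manifold and we are working over a field; one should simply note that if $B$ is non-orientable one uses the $\mathbb{Z}_2$-orientation automatically provided, while in the $\rho$-twisted case the pairing is still non-degenerate since $\mathbb{R}_\rho\otimes \mathbb{R}_\rho$ is trivial. There is no further work to do beyond writing out these observations.
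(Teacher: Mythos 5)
Your proposal is correct and is essentially the argument the paper intends: the theorem in this section is just the cohomological repackaging of Theorem~\ref{thm:rub}(1) and Theorem~\ref{thm:rubtwisted} via the defining property \eqref{eq cohomological and numerical} and the non-degeneracy of the mod~$2$ Poincar\'e duality pairing on the compact manifold $B$. Your bookkeeping (case (i) of Definition~\ref{def:fsw} because $d(M,\mathfrak{s}_M)=0$, and the degree $b^+(N)=-d(X,\mathfrak{s}_X)$) matches the paper's setup exactly.
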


\subsection{A vanishing theorem}\label{sec:vanishing}

The gluing formula immediately gives us:
\begin{proposition}
Suppose assumptions (1)-(8) of \textsection \ref{sec:setup2} hold.
\begin{itemize}
\item[(i)]{If $SW(M,\mathfrak{s}_M) = 0  \; ({\rm mod} \; 2)$ or if $w_{b^+(N)}(H^+(N) \otimes \mathbb{R}_\rho) = 0$, then we have $FSW^{\mathbb{Z}_2}(E_X , \mathfrak{s}_X , \rho) = 0$.}
\item[(ii)]{Suppose $B, det^+(E_M)$ and $det^+(E_N) \cong det(H^+(N))$ are oriented and $\rho$ is trivial. If $SW(M , \mathfrak{s}_M) = 0$ or if $e(H^+(N)) = 0$, then $FSW^{\mathbb{Z}}(E_X , \mathfrak{s}_X) = 0$.}
\end{itemize}
\end{proposition}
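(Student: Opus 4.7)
The plan is straightforward: this proposition is an immediate formal consequence of the cohomological gluing formulas already established. Since assumption (4) gives $d(M,\mathfrak{s}_M) = 0$, I am automatically in case (i) of Theorem~\ref{thm:rub}, and likewise in case (i) (the only case) of the $\rho$-twisted version in Theorem~\ref{thm:rubtwisted} and of the $\mathbb{Z}$-valued version in Theorem~\ref{thm:rubz}. In particular I do not need to assume that $\mathfrak{s}_X$ extends to a Spin$^c$-structure on $T(E_X/B)$, and I do not need to make any choice of such an extension.

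For part (i), I would simply quote the cohomological identity
\[
FSW^{\mathbb{Z}_2}(E_X , \mathfrak{s}_X , \rho) = SW(M,\mathfrak{s}_M) \cdot w_{b^+(N)}(H^+(N) \otimes \mathbb{R}_\rho) \in H^{b^+(N)}(B , \mathbb{Z}_2),
\]
which is the cohomological form of Theorem~\ref{thm:rubtwisted} displayed just above the proposition (it specialises to the untwisted gluing formula of Theorem~\ref{thm:rub} when $\rho$ is trivial). Since $SW(M,\mathfrak{s}_M) \in \mathbb{Z}_2$ is literally a scalar, the right-hand side vanishes as soon as either factor vanishes, which is precisely the conclusion of part (i).

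Part (ii) proceeds in exactly the same way, but invoking Theorem~\ref{thm:rubz} in place of Theorem~\ref{thm:rub}: the orientability hypotheses on $B$, $det^+(E_M)$, and $det^+(E_N) \cong det(H^+(N))$ are exactly what is needed to make the $\mathbb{Z}$-valued invariant $FSW^{\mathbb{Z}}(E_X , \mathfrak{s}_X)$ and the Euler class $e(H^+(N))$ well-defined, and the assumption that $\rho$ is trivial lets me apply Theorem~\ref{thm:rubz} directly. The cohomological form of that theorem reads
\[
FSW^{\mathbb{Z}}(E_X , \mathfrak{s}_X) = SW(M , \mathfrak{s}_M) \cdot e(H^+(N)) \in H^{b^+(N)}(B , \mathbb{Z}),
\]
and again vanishes if either factor does.

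There is no real obstacle: all of the analytic and topological work has already been carried out in Sections~\ref{sec:step1}--\ref{sec:part2}. The only check worth flagging is the trivial verification that assumptions (1)--(8) of \textsection\ref{sec:setup2} do indeed imply the hypotheses of the gluing theorems cited—in particular that (4)--(7) reproduce the setup of Section~\ref{sec:setup} verbatim (with $m=0$), and that (8) is exactly the well-definedness condition $b^+(M) > s+1$ with $s = \dim(B) - b^+(N)$ required there.
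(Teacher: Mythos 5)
Your proposal is correct and matches the paper exactly: the paper states this proposition with the preface ``The gluing formula immediately gives us,'' i.e.\ it is read off from the cohomological forms of Theorems~\ref{thm:rub}, \ref{thm:rubtwisted} and \ref{thm:rubz}, with $d(M,\mathfrak{s}_M)=0$ placing you in case (i) so no extension of $\mathfrak{s}_X$ is needed, just as you observe. Nothing further is required.
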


\begin{remark}
Suppose that $B$ is simply-connected. Then $\rho$ is the trivial homomorphism and we have that $w_{b^+(N)}(H^+(N))$ and $e(H^+(N))$ both vanish. To see this, note that $H^2(N , \mathbb{R})$ defines a flat vector bundle with structure group $O( b^+(N) , b^-(N))$. If $B$ is simply-connected, then $H^2(N , \mathbb{R})$ is trivialisable as an $O( b^+(N) , b^-(N))$-bundle. Hence $H^+(N)$ defines a trivialisable vector bundle on $B$.
\end{remark}

The above remark shows that in order to use the gluing formula to get a non-zero families Seiberg-Witten invariant, we must take $B$ to have a non-trivial fundamental group.

\subsection{Smooth isotopy classes of diffeomorphisms}\label{sec:isotopy}

In this section, we consider the case that $B$ is $1$-dimensional. The families Seiberg-Witten invariants for $1$-dimensional families was studied by Ruberman in \cite{rub1,rub3} and a special case of the $1$-parameter gluing formula was proven in \cite{rub3}. Even for $1$-parameter families, our gluing formula is more general.

We assume that $B = S^1$ is the circle. Any family $E_X \to S^1$ is the mapping cylinder of a diffeomorphism $f : X \to X$, which we denote by $Cyl(f)$. Note that up to isomorphism the family $Cyl(f)$ depends only on the smooth isotopy class of $f$.\\

Let $f : X \to X$ be a diffeomorphism. Suppose that $b^+(X) > 2$ and suppose that $f$ preserves a Spin$^c$-structure $\mathfrak{s}_X$ up to sign:
\[
f^*(\mathfrak{s}_X) = (-1)^\rho \mathfrak{s}_X, \quad \rho \in \mathbb{Z}_2.
\]
Then we can consider the families Seiberg-Witten invariant attached to the family $Cyl(f)$ and Spin$^c$-structure $\mathfrak{s}_X$. Assume also that
\[
d(X , \mathfrak{s}_X) = -1.
\]
Then we obtain a $\mathbb{Z}_2$-valued invariant by setting:
\[
SW^{\mathbb{Z}_2}(X , \mathfrak{s}_X , f , \rho) = FSW^{\mathbb{Z}_2}( Cyl(f) , \mathfrak{s}_X , \rho) \in H^1(S^1 , \mathbb{Z}_2) \cong \mathbb{Z}_2.
\]
This is the $1$-parameter families Seiberg-Witten invariant considered by Ruberman in \cite{rub1}. Note that $SW^{\mathbb{Z}_2}(X , \mathfrak{s}_X , f , \rho)$ depends only on the smooth isotopy class of $f$. Note also that the condition $d(X , \mathfrak{s}_X) = -1$ forces $b^+(X)$ to be even. We can also think of $SW^{\mathbb{Z}_2}(X , \mathfrak{s}_X , f , \rho)$ as the equivariant Seiberg-Witten invariant associated to the $G = \mathbb{Z}$-action determined by $f$.\\

Suppose now that $X = M \# N$ is a connected sum and suppose that we have families $E_M = Cyl(f_M), E_N = Cyl(f_N)$ which are mapping cylinders associated to diffeomorphisms $f_M, f_N$. Assume that $f_M$ acts as the identity in the neighbourhood of a point $m \in M$ and similarly that $f_N$ acts as the identity in the neighbourhood of a point $n \in N$. Then we construct $X$ as the connected sum by removing small balls around $m$ and $n$ and identifying their boundaries. In this way, $f_M,f_N$ can be composed to give a diffeomorphism of $X$ which we denote as $f_X = f_M \# f_N$. Suppose that $f_M$, $f_N$ preserve Spin$^c$-structures $\mathfrak{s}_M, \mathfrak{s}_N$ up to a common sign $\rho \in \mathbb{Z}_2$ and set $\mathfrak{s}_X = \mathfrak{s}_M \# \mathfrak{s}_N$.\\ 

Observe that $f_N$ acts as an orientation preserving isometry of $H^2(N , \mathbb{R})$. As the space of maximal positive definite subspace of $H^2(N , \mathbb{R})$ is connected, it makes sense to say whether $f_N$ preserves or reverses the orientation of maximal definite subspaces and we can associate to $f_N$ a corresponding sign factor $\pm 1$. Abusing notation, we will denote this sign factor as $det(f_N|_{H^+(N)})$, even though $f_N$ may not preserve any particular choice of maximal positive definite subspace $H^+(N)$. Specialising the gluing formula to this setting, we obtain:

\begin{theorem}
Let $(M,N, f_M, f_N , \mathfrak{s}_M , \mathfrak{s}_N , \rho)$ be as above. Assume also that:
\begin{itemize}
\item[(i)]{$d(M,\mathfrak{s}_M) = 0$,}
\item[(ii)]{$c_1(\mathfrak{s}_N)^2 = \sigma(N)$,}
\item[(iii)]{$b_1(N) = 0$,}
\item[(iv)]{$b^+(N) = 1$,}
\item[(v)]{$b^+(M) > 1$.}
\end{itemize}
Then
\[
SW^{\mathbb{Z}_2}(X , \mathfrak{s}_X , f_X , \rho ) = \begin{cases} SW(M , \mathfrak{s}_M) \; ({\rm mod} \; 2) , &\text{ if } det(f_N|_{H^+(N)}) = (-1)^{\rho+1}, \\ 0, &\text{ if } det(f_N|_{H^+(N)}) = (-1)^{\rho}. \end{cases}
\]
\end{theorem}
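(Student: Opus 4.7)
The plan is to apply Theorem~\ref{thm:rubtwisted} directly with $B=S^1$ and then translate the resulting characteristic class statement into the scalar invariant $SW^{\mathbb{Z}_2}(X,\mathfrak{s}_X,f_X,\rho)$. First I would verify the hypotheses of Theorem~\ref{thm:rubtwisted}: assumption (i) in that theorem is $d(M,\mathfrak{s}_M)=0$, which is our hypothesis (i); the monodromy hypothesis of Theorem~\ref{thm:rubtwisted} holds because $f_M$ and $f_N$ both preserve their respective Spin$^c$-structures up to the common sign $\rho$; the numerical conditions (ii)--(v) in the statement of the current theorem match exactly the assumptions placed on $N$ in $\S\ref{sec:setup}$ (namely $c_1(\mathfrak{s}_N)^2=\sigma(N)$, $b_1(N)=0$, $0<b^+(N)\le\dim(B)=1$), together with $b^+(M)>\dim(B)-b^+(N)+1=1$. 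Also $s=\dim(B)-b^+(N)=0$, so the only relevant cohomology class is $\theta=1\in H^0(S^1,\mathbb{Z}_2)$.

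Next I would apply Theorem~\ref{thm:rubtwisted}, reading the invariant $FSW^{\mathbb{Z}_2}(Cyl(f_X),\mathfrak{s}_X,\rho)$ cohomologically as an element of $H^{b^+(N)}(S^1,\mathbb{Z}_2)=H^1(S^1,\mathbb{Z}_2)\cong\mathbb{Z}_2$ (the identification being given by evaluation against $[S^1]$, per the discussion at the end of $\S\ref{sec:familiesSW}$). Under this identification the gluing formula reads
\[
SW^{\mathbb{Z}_2}(X,\mathfrak{s}_X,f_X,\rho)\;=\;SW(M,\mathfrak{s}_M)\cdot\bigl\langle w_1\bigl(H^+(N)\otimes\mathbb{R}_\rho\bigr),[S^1]\bigr\rangle\pmod{2}.
\]
Everything then reduces to computing $w_1(H^+(N)\otimes\mathbb{R}_\rho)$ over $S^1$.

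The computation is elementary. Since $b^+(N)=1$, $H^+(N)$ is a real line bundle over $S^1\cong BC_\infty$, hence classified by the sign of its monodromy. The monodromy is the induced action $f_N^\ast$ on $H^+(N)$, which is precisely $\det(f_N|_{H^+(N)})\in\{\pm1\}$. Hence $w_1(H^+(N))\in H^1(S^1,\mathbb{Z}_2)$ is nontrivial if and only if $\det(f_N|_{H^+(N)})=-1$. Similarly, by definition $w_1(\mathbb{R}_\rho)=\rho$. Using the additivity $w_1(L_1\otimes L_2)=w_1(L_1)+w_1(L_2)$ for real line bundles, one finds that $w_1(H^+(N)\otimes\mathbb{R}_\rho)$ evaluates nontrivially on $[S^1]$ precisely when $\det(f_N|_{H^+(N)})\cdot(-1)^\rho=-1$, i.e.\ when $\det(f_N|_{H^+(N)})=(-1)^{\rho+1}$. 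Combining this case analysis with the displayed formula above yields the statement of the theorem.

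There is no genuine obstacle here beyond bookkeeping, since Theorem~\ref{thm:rubtwisted} does all the real work; the only subtle point to articulate carefully is the equivalence between Ruberman's one-parameter invariant $SW^{\mathbb{Z}_2}(X,\mathfrak{s}_X,f,\rho)$ and the value of the cohomological families invariant $FSW^{\mathbb{Z}_2}(Cyl(f),\mathfrak{s}_X,\rho)\in H^1(S^1,\mathbb{Z}_2)$ under the canonical isomorphism with $\mathbb{Z}_2$, which is exactly the projection formula~\eqref{eq cohomological and numerical} applied with $\theta=1$.
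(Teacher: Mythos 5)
Your proposal is correct and follows exactly the route the paper intends: the paper offers no separate proof of this theorem beyond the phrase ``specialising the gluing formula to this setting,'' and your argument is precisely that specialisation — apply Theorem~\ref{thm:rubtwisted} with $B=S^1$, $\theta=1$, and identify $w_1(H^+(N)\otimes\mathbb{R}_\rho)\in H^1(S^1,\mathbb{Z}_2)$ via the monodromy $\det(f_N|_{H^+(N)})$ and $w_1(\mathbb{R}_\rho)=\rho$. The case analysis $\det(f_N|_{H^+(N)})=(-1)^{\rho+1}$ versus $(-1)^{\rho}$ is carried out correctly.
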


We consider two special cases for $(N , \mathfrak{s}_N , f_N)$:
\begin{itemize}
\item[(i)]{Let $N = S^2 \times S^2$, let $\mathfrak{s}_N$ be the unique Spin$^c$-structure with $c_1(\mathfrak{s}_N) = 0$ and let $f_N$ be given by $f_N(x,y) = (rx,ry)$, where $r : S^2 \to S^2$ is a reflection. Observe that $c_1(\mathfrak{s}_N)^2 = \sigma(N)$ and that $f_N$ preserves $\mathfrak{s}_N$.}
\item[(ii)]{Let $N = \mathbb{CP}^2 \#^2  \overline{\mathbb{CP}}^2 = (S^2 \times S^2) \# \overline{\mathbb{CP}}^2$. Then $H^3(N , \mathbb{Z}) \cong \mathbb{Z}^3$, where the first two factors of $\mathbb{Z}$ correspond to $H^2(S^2 \times S^2 , \mathbb{Z})$ and the last factor corresponds to $H^2( \overline{\mathbb{CP}}^2 , \mathbb{Z})$. Let $\mathfrak{s}_N$ be the unique Spin$^c$-structure with $c_1(\mathfrak{s}_N) = (0,0,1)$. Then $c_1(\mathfrak{s}_N)^2 = \sigma(N)$. We define $f_N$ as an equivariant connected sum of a diffeomorphism $f_1 : S^2 \times S^2 \to S^2 \times S^2$ and $f_2 : \overline{\mathbb{CP}}^2 \to \overline{\mathbb{CP}}^2$ as follows. Let $f_1(x,y) = (y,x)$. Then $f_1$ is an involution with fixed point set $S^2$. Let $f_2[z_0,z_1,z_2] = [z_0,z_1,-z_2]$. Then $f_1$ is an involution with fixed point set $\mathbb{CP}^1 \cup {pt}$. Since $f_1$ and $f_2$ are involutions whose fixed point sets contain $2$-dimensional components, it is possible to form their equivariant connected sum $f_N = f_1 \# f_2$. Observe that $f_N$ preserves $\mathfrak{s}_N$.}
\end{itemize}
In both of these cases the diffeomorphism $f_N$ does not act as the identity in a neighbourhood of any point. However we can replace $f_N$ with a diffeomorphism which is smoothly isotopic to $f_N$ and which acts as the identity in a neighbourhood of some point. By abuse of notation we let this new diffeomorphism be denoted as $f_N$.

\begin{corollary}
Let $M$ be a compact oriented smooth $4$-manifold with $b^+(M) > 1$. Let $\mathfrak{s}_M$ be a Spin$^c$-structure with $d(M , \mathfrak{s}_M) = 0$. Let $(N , \mathfrak{s}_N , f_N)$ be given by either (i) or (ii) above. Then
\[
SW^{\mathbb{Z}_2}( M \# N , \mathfrak{s}_M \# \mathfrak{s}_N , id_M \# f_N , 0) = SW(M , \mathfrak{s}_M) \; ({\rm mod} \; 2).
\]
\end{corollary}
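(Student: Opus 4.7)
The plan is to apply the preceding theorem directly, with trivial monodromy $\rho = 0$ and with $f_X = \mathrm{id}_M \# f_N$ playing the role of the diffeomorphism in the theorem. Substituting $\rho = 0$ into the theorem, the answer is $SW(M,\mathfrak{s}_M) \pmod 2$ precisely when $\det(f_N|_{H^+(N)}) = -1$, so after verifying the numerical hypotheses the whole proof reduces to that single sign computation.

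First I would verify the five numerical hypotheses of the theorem. The conditions $d(M,\mathfrak{s}_M)=0$ and $b^+(M)>1$ are given. For $N$ in case (i), one has $\sigma(S^2\times S^2)=0$ and $c_1(\mathfrak{s}_N)=0$, so $c_1(\mathfrak{s}_N)^2 = \sigma(N) = 0$; also $b_1(N)=0$ and $b^+(N)=1$. In case (ii), $\sigma(N)=-1$ and, with the stated $c_1(\mathfrak{s}_N)=(0,0,1)$, one reads off $c_1(\mathfrak{s}_N)^2 = -1 = \sigma(N)$; again $b_1(N)=0$ and $b^+(N)=1$. In both cases $H^2(N,\mathbb{Z})$ is torsion-free, so $\mathrm{Spin}^c$-structures on $N$ are classified by their first Chern class; since $f_N^*(c_1(\mathfrak{s}_N))=c_1(\mathfrak{s}_N)$ (trivial in case (i), and in case (ii) because $f_2$ fixes the generator of $H^2(\overline{\mathbb{CP}^2})$), we have $f_N^*\mathfrak{s}_N\cong\mathfrak{s}_N$, justifying $\rho=0$.

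Next comes the sign calculation. In case (i), the reflection $r\colon S^2\to S^2$ acts as $-1$ on $H^2(S^2)$, so by the K\"unneth formula $f_N^* = (r\times r)^* = -I$ on $H^2(S^2\times S^2)$; restricting to the one-dimensional subspace $H^+(N)$ spanned by a representative of $e_1+e_2$, the action is $-1$, giving $\det(f_N|_{H^+(N)})=-1$. In case (ii), an application of the Lefschetz fixed point theorem to $f_2$ (whose fixed set $\mathbb{CP}^1\cup\{\mathrm{pt}\}$ has Euler characteristic $3$) pins down the trace of $f_2^*$ on $H^2(\overline{\mathbb{CP}^2})$, and the behaviour of the swap $f_1$ on the K\"unneth decomposition of $H^2(S^2\times S^2)$ pins down its action on the diagonal class generating $H^+(S^2\times S^2)$; one then reads off the action of the connected sum $f_N = f_1 \# f_2$ on the one-dimensional space $H^+(N)$. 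The plan is to check that the equivariant identification of the boundary spheres in the construction $f_1 \# f_2$ is chosen so that $\det(f_N|_{H^+(N)})=-1$, matching the non-vanishing case of the theorem.

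Substituting $\det(f_N|_{H^+(N)})=-1$ and $\rho=0$ into the preceding theorem yields $SW^{\mathbb{Z}_2}(M\#N,\mathfrak{s}_M\#\mathfrak{s}_N,\mathrm{id}_M\#f_N,0)=SW(M,\mathfrak{s}_M)\pmod 2$, as claimed. The main obstacle I anticipate is precisely the sign bookkeeping in case (ii): the equivariant connected sum involves a choice of identification of equivariant boundary spheres at fixed points lying on the $2$-dimensional components of the respective fixed loci, and one must confirm that, under this identification, the induced action of $f_N$ on $H^+(N)$ is orientation-reversing; this is forced once the compatibility of the construction with Ruberman's original wall-crossing example in \cite{rub3} is checked.
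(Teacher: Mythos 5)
Your overall strategy is the intended one: the corollary is meant to follow immediately from the preceding theorem by checking its numerical hypotheses and computing the single sign $\det(f_N|_{H^+(N)})$, and your verification of the hypotheses and of case (i) is correct (a reflection $r$ acts as $-1$ on $H^2(S^2)$, so $(r\times r)^*=-\mathrm{Id}$ on $H^2(S^2\times S^2)$ and hence as $-1$ on the line spanned by $e_1+e_2$).

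The gap is in case (ii), and it is not deferrable bookkeeping. The induced map of an equivariant connected sum on second cohomology is $(f_1\#f_2)^*=f_1^*\oplus f_2^*$ under $H^2(N)\cong H^2(S^2\times S^2)\oplus H^2(\overline{\mathbb{CP}}^2)$; this is forced by Mayer--Vietoris and is independent of which fixed points are used and of the equivariant identification of the boundary spheres, so the sign cannot be ``chosen'' or ``arranged'' at that stage as you propose. Since $H^2(\overline{\mathbb{CP}}^2)$ is negative definite it contributes nothing to $H^+(N)$ (your Lefschetz computation for $f_2$ is fine but irrelevant here), and $H^+(N)$ is spanned by $e_1+e_2$ in the hyperbolic summand. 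The swap $f_1(x,y)=(y,x)$ satisfies $f_1^*e_1=e_2$ and $f_1^*e_2=e_1$, so it fixes $e_1+e_2$ and gives $\det(f_N|_{H^+(N)})=+1$, not $-1$; with $\rho=0$ the theorem then returns $0$, not $SW(M,\mathfrak{s}_M)$. To land in the nonvanishing branch one needs an involution of $S^2\times S^2$ with a $2$-dimensional fixed component that reverses the orientation of $H^+$, for instance $f_1(x,y)=(ry,rx)$, whose fixed set is the graph of $r$ and which sends $e_1+e_2$ to $-(e_1+e_2)$. As written, your plan for case (ii) would fail: carried out correctly it produces the opposite sign, so you must either replace $f_1$ by such an involution or conclude that the stated $f_1$ yields the vanishing case of the theorem.
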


\begin{theorem}
Let $M, M'$ be compact simply-connected smooth $4$-manifolds with indefinite intersection forms. Suppose that $M,M'$ are homeomorphic and fix a homeomorphism $\phi : M \to M'$. Suppose that $b^+(M) > 1$ and that $\mathfrak{s}_M$ is a Spin$^c$-structure on $M$ with $d(M,\mathfrak{s}_M) = 0$ and that $SW(M,\mathfrak{s}_M) \neq SW(M' , \phi(\mathfrak{s}_M) ) \; ({\rm mod} \; 2)$. Lastly, suppose that $X = M \# (S^2 \times S^2)$ is diffeomorphic to $M' \# (S^2 \times S^2)$. Then there exists a diffeomorphism on $X$ which is continuously isotopic to the identity but not smoothly isotopic.
\end{theorem}
\begin{proof}
Let $(N , \mathfrak{s}_N , f_N)$ be as in (i) above. The homeomorphism $\phi : M \to M'$ determines a homeomorphism $\phi' : M \# (S^2 \times S^2) \to M \# (S^2 \times S^2)$. Since $M$ is indefinite, using a theorem of Wall \cite[Theorem 2]{wall}, we can choose the diffeomorphism $\psi : M \# (S^2 \times S^2) \to M' \# (S^2 \times S^2)$ such that $\psi$ and $\phi'$ induce the same action on cohomology groups. In particular, we have $\psi( \mathfrak{s}_M \# \mathfrak{s}_N) = \phi(\mathfrak{s}_M) \# \mathfrak{s}_N$.\\

To prove the result, it suffices to show there are diffeomorphisms $f_1,f_2$ of $X$ which induce the same map on $H^2(X,\mathbb{Z}_2)$ and are therefore continuously isotopic by \cite{qui}, but which have different families Seiberg-Witten invariants. Then $f_1 \circ f_2^{-1}$ will be continuously isotopic to the identity but not smoothly.\\

Let $f_1 = id_M \# f_N$ and let $f_2 = \psi^{-1} \circ (id_{M'} \# f_N ) \circ \psi$. Then $f_1,f_2$ act the same way on $H^2(X , \mathbb{Z})$ since $\psi^* = (\phi')^*$ commutes with $(f_N)^*$. It remains to check that $f_1, f_2$ have different Seiberg-Witten invariants. We have
\[
SW^{\mathbb{Z}_2}(X , \mathfrak{s}_X , f_1 , 0) = SW(M , \mathfrak{s}_M) \; ({\rm mod} \; 2).
\]
On the other hand, since $\psi$ gives an isomorphism between the mapping cylinders of $\psi^{-1} \circ (id_{M'} \# f_N ) \circ \psi$ and $id_{M'} \# f_N $, we find:
\begin{equation*}
\begin{aligned}
SW^{\mathbb{Z}_2}(X , \mathfrak{s}_X , f_2 , 0) &= SW^{\mathbb{Z}_2}( M \# (S^2 \times S^2) , \mathfrak{s}_M \# \mathfrak{s}_N , \psi^{-1} \circ (id_{M'} \# f_N ) \circ \psi , 0) \\
&= SW^{\mathbb{Z}_2}( M' \# (S^2 \times S^2) , \phi(\mathfrak{s}) \# \mathfrak{s}_N , id_{M'} \# f_N , 0) \\
&= SW(M' , \phi(\mathfrak{s}_M) ) \; ({\rm mod} \; 2).
\end{aligned}
\end{equation*}
This completes the proof, since $SW(M,\mathfrak{s}_M) \neq SW(M' , \phi(\mathfrak{s}_M) ) \; ({\rm mod} \; 2)$.
\end{proof}

\begin{corollary}
The following $4$-manifolds admit diffeomorphisms which are continuously isotopic to the identity, but are not smoothly isotopic to the identity:
\begin{itemize}
\item[(i)]{$X = \#^n(S^2 \times S^2) \#^n (K3)$, for any $n \ge 2$.}
\item[(ii)]{$X = \#^{2n} \mathbb{CP}^2 \#^m \overline{\mathbb{CP}}^2$, for any $n \ge 2$ and any $m \ge 10n+1$.}
\end{itemize}
\end{corollary}
\begin{proof}
We apply the previous corollary, where in case (i) $M$ is the elliptic surface $E(2n)$ and $M' = \#^{n-1} (S^2 \times S^2) \#^n (K3)$ and in case (ii) $M = E(n) \#^{m-10n} \overline{\mathbb{CP}}^2$ and $M' = \#^{2n-1} \mathbb{CP}^2 \#^{m-1} \overline{\mathbb{CP}}^2$. Then use the fact that if $V$ is a simply-connected elliptic surface, then $V \# (S^2 \times S^2)$ and $V \# \mathbb{CP}^2$ both dissolve (see Corollaries 8 and 9 of \cite{gom}).
\end{proof}

\subsection{The mod $2$ Seiberg-Witten invariant for spin structures}\label{sec:mod2spin}

In this section we will use the gluing formula to give a simple new proof of a theorem of Morgan and Szab\'o:

\begin{theorem}[\cite{mosa}]
Let $M$ be smooth compact spin $4$-manifold with $b_1(M) = 0$, $b^+(M) = 4n-1$, $b^-(M) = 20n-1$, where $n \ge 1$. Let $\mathfrak{s}_M$ be a Spin$^c$-structure on $M$ which comes from a spin structure. Then $SW(M , \mathfrak{s}_M)$ is odd if $n=1$ and is even otherwise.
\end{theorem}

\begin{remark}
Note that for a compact spin $4$-manifold with $b_1(M)=0$, the conditions $b^+(M) = 4n-1$, $b^-(M) = 20n-1$, for some $n \ge 1$ are equivalent to requiring that $d(M,\mathfrak{s}_M) = 0$.
\end{remark}

\begin{proof}
Let $N = \#^{n-1}(S^2 \times S^2) \#^n (K3)$ and observe that $b_1(N) = 0$, $b^+(N) = 4n-1$, $b^-(N) = 20n-1$. Let $\mathfrak{s}_N$ be the unique Spin$^c$-structure on $N$ which comes from a spin structure. Then $SW(N , \mathfrak{s}_N)$ is odd if $n=1$ (since in this case $N = K3$) and is zero if $n > 1$ (by the vanishing of the Seiberg-Witten invariants on connected sums with each summand having $b^+ > 0$). Thus it suffices to prove that $SW(M , \mathfrak{s}_M) = SW(N , \mathfrak{s}_N) \; ({\rm mod} \; 2)$. Let $\overline{N}$ denote $N$ with the opposite orientation and $\mathfrak{s}_{\overline{N}}$ the unique Spin$^c$-structure on $\overline{N}$ which comes from the spin structure. Let $X = M \# \overline{N} \# N$, $\mathfrak{s}_X = \mathfrak{s}_M \# \mathfrak{s}_{\overline{N}} \# \mathfrak{s}_N$. Consider the trivial family $E_X = X \times \mathbb{RP}^{24n-2}$ over $\mathbb{RP}^{24n-2}$ equipped with the twisting class $\rho$ given by the unique non-trivial element of $H^1(\mathbb{RP}^{24n-2} , \mathbb{Z}_2)$. Identify $H^{24n-2}(\mathbb{RP}^{24n-2} , \mathbb{Z}_2)$ with $\mathbb{Z}_2$. We will evaluate $FSW^{\mathbb{Z}_2}(E_X , \mathfrak{s}_X , \rho)$ using the gluing formula in two different ways. First, writing $X$ as:
\[
X = M \# \left( \overline{N} \# N \right)
\]
and noting that $\sigma( \overline{N} \# N) = 0$, $b^+(\overline{N} \# N) = 24n-2$, we see from the gluing formula that:
\[
FSW^{\mathbb{Z}_2}(E_X , \mathfrak{s}_X , \rho) = SW(M , \mathfrak{s}_M) \; ({\rm mod} \; 2).
\]
On the other hand, writing $X$ as:
\[
X = N \# \left( \overline{N} \# M \right)
\]
we similarly find that:
\[
FSW^{\mathbb{Z}_2}(E_X , \mathfrak{s}_X , \rho) = SW(N , \mathfrak{s}_N) \; ({\rm mod} \; 2)
\]
which completes the proof.
\end{proof}

\subsection{Relations between mod $2$ Seiberg-Witten invariants}\label{sec:relations}

The argument used in the previous section to compute mod $2$ Seiberg-Witten invariants for spin structures can be applied more generally. As a result we find a surprising relation between the mod $2$ Seiberg-Witten invariants on different $4$-manifolds. To state the result, let us first describe the setup.\\

Let $M,M',N$ be compact, smooth, oriented $4$-manifolds. Let $B$ be a compact smooth manifold of dimension $d$ and suppose we have smooth fibrewise oriented families $\pi_M : E_M \to B$, $\pi_{M'} : E_{M'} \to B$, $\pi_N : E_N \to B$. Suppose we are given:
\begin{itemize}
\item[(1)]{Sections $\iota_M : B \to E_M$, $\iota_{M'} : B \to E_{M'}$ and a pair of sections and $\iota_N, \iota'_N : B \to E_N$ with disjoint images.}
\item[(2)]{Orientation reversing isomorphisms between the normal bundles of $(\iota_M , \iota_N)$ and $(\iota_{M'} , \iota'_N)$.}
\end{itemize}

Let $X = M \# N \# M'$. From the above data, we can form the families connected sum of $E_M, E_{M'}, E_N$.\\

Let $\mathfrak{s}_M, \mathfrak{s}_{M'}, \mathfrak{s}_N$ be Spin$^c$-structures on $M, M', N$ and let $\mathfrak{s}_X$ be the corresponding Spin$^c$-structure on $X$. Assume that:
\begin{itemize}
\item[(3)]{the Spin$^c$-structures $\mathfrak{s}_M, \mathfrak{s}_{M'}, \mathfrak{s}_N$ are monodromy invariant up to a common sign factor $\rho : \pi_1(B) \to \mathbb{Z}_2$.}
\end{itemize}

Assume further that:
\begin{itemize}
\item[(4)]{$b^+(M) = b^+(M')$.}
\item[(5)]{$b_1(M) = b_1(M') = b_1(N) = 0$.}
\item[(6)]{$c_1(\mathfrak{s}_M)^2 - \sigma(M) = c_1(\mathfrak{s}_{M'})^2 - \sigma(M') = 4(b^+(M)+1) = -c_1(\mathfrak{s}_N)^2 + \sigma(N)$.}
\item[(7)]{$0 < b^+(N) + b^+(M) \le dim(B) = d$.}
\item[(8)]{$2b^+(M) > ( dim(B) - b^+(N) ) + 1$.}
\end{itemize}

Then we have a well-defined Seiberg-Witten invariant for the family $E_X$ which takes values in $H^{b^+(M) + b^+(N)}(B , \mathbb{Z}_2)$.

\begin{proposition}\label{prop:relation}
Under the above assumptions we have the following equality in $H^{b^+(M) + b^+(N)}(B , \mathbb{Z}_2)$:
\begin{equation}\label{equ:relation}
\begin{aligned}
&& SW(M , \mathfrak{s}_M) w_{b^+(M)}(H^+(M')\otimes \mathbb{R}_\rho)w_{b^+(N)}(H^+(N)\otimes \mathbb{R}_\rho) = \\
&& \quad SW(M' , \mathfrak{s}_{M'}) w_{b^+(M)}(H^+(M)\otimes \mathbb{R}_\rho)w_{b^+(N)}(H^+(N)\otimes \mathbb{R}_\rho).
\end{aligned}
\end{equation}
\end{proposition}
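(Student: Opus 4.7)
The plan is to compute $FSW^{\mathbb{Z}_2}(E_X , \mathfrak{s}_X , \rho)$ in two different ways, by regrouping the families connected sum. First I would view $X = M \mathbin{\#} (N \mathbin{\#} M')$, treating $E_M$ as the ``$M$-side'' family and the fibrewise connected sum of $E_N$ and $E_{M'}$ (glued along $\iota'_N$) as the ``$N$-side'' family $E_{N \#M'}$. Second, I would view $X = M' \mathbin{\#} (N \mathbin{\#} M)$, with $E_{M'}$ as the $M$-side and $E_N \mathbin{\#} E_M$ (glued along $\iota_N$) as the $N$-side family $E_{N \# M}$. The choice of two disjoint sections $\iota_N, \iota'_N$ on $E_N$ is precisely what makes both regroupings available from the same underlying data.

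The main step is to check that the hypotheses of Theorem~\ref{thm:rubtwisted} are satisfied in both regroupings. For the first regrouping, assumption~(6) together with $b_1(M)=0$ gives
\[
d(M,\mathfrak{s}_M) = \tfrac{c_1(\mathfrak{s}_M)^2 - \sigma(M)}{4} - 1 - b^+(M) = 0,
\]
verifying hypothesis (i) of Theorem~\ref{thm:rubtwisted}. For the ``$N$-side'' we need $c_1(\mathfrak{s}_{N \#M'})^2 = \sigma(N \# M')$: assumption~(6) rearranges to $c_1(\mathfrak{s}_{M'})^2 - \sigma(M') = -c_1(\mathfrak{s}_N)^2 + \sigma(N)$, hence
\[
c_1(\mathfrak{s}_{N\#M'})^2 = c_1(\mathfrak{s}_N)^2 + c_1(\mathfrak{s}_{M'})^2 = \sigma(N)+\sigma(M') = \sigma(N\# M'),
\]
as required. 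Assumption~(5) gives $b_1(N \# M')=0$, and assumption~(7) gives $0 < b^+(N) + b^+(M') \le d$, which is the analogue of the bound $0 < b^+(N) \le d$ needed in the gluing theorem; assumption~(8) combined with $b^+(M')=b^+(M)$ gives the bound $b^+(M) > d - b^+(N \# M') + 1$. All the corresponding conditions for the second regrouping follow by the obvious symmetric argument, using $b^+(M)=b^+(M')$ from assumption~(4).

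Applying Theorem~\ref{thm:rubtwisted} in each regrouping produces
\begin{align*}
FSW^{\mathbb{Z}_2}(E_X,\mathfrak{s}_X,\rho) &= SW(M,\mathfrak{s}_M)\cdot w_{b^+(N)+b^+(M')}\!\bigl(H^+(N\#M')\otimes\mathbb{R}_\rho\bigr),\\
FSW^{\mathbb{Z}_2}(E_X,\mathfrak{s}_X,\rho) &= SW(M',\mathfrak{s}_{M'})\cdot w_{b^+(N)+b^+(M)}\!\bigl(H^+(N\#M)\otimes\mathbb{R}_\rho\bigr).
\end{align*}
The families $H^+(N\# M')$ and $H^+(N\# M)$ split as direct sums $H^+(N)\oplus H^+(M')$ and $H^+(N)\oplus H^+(M)$ of vector bundles on $B$ respectively, since the connected sum construction only modifies a neighbourhood of the gluing sections and $H^+$ is a topological invariant. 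By the Whitney product formula for Stiefel--Whitney classes applied to these rank-preserving splittings, the top classes factor as
\[
w_{b^+(N)+b^+(M')}\bigl((H^+(N)\oplus H^+(M'))\otimes \mathbb{R}_\rho\bigr) = w_{b^+(N)}(H^+(N)\otimes \mathbb{R}_\rho)\,w_{b^+(M')}(H^+(M')\otimes \mathbb{R}_\rho),
\]
and similarly for the second expression. Equating the two computations of $FSW^{\mathbb{Z}_2}(E_X,\mathfrak{s}_X,\rho)$ and using $b^+(M)=b^+(M')$ yields~(\ref{equ:relation}). The main potential obstacle is verifying the direct-sum decomposition of $H^+$ in the families setting compatibly with $\rho$-twisting, but this follows from the fact that $\mathbb{R}_\rho$ is pulled back from $B$ and the cohomological Mayer--Vietoris splitting is natural in the monodromy.
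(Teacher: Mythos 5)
Your proposal is correct and follows essentially the same route as the paper: the paper's proof is exactly to write $X = M \# (N \# M')$ and $X = M' \# (N \# M)$, apply the gluing formula to each regrouping, and equate. Your verification of the hypotheses (in particular that assumption (6) makes $d(M,\mathfrak{s}_M)=0$ and $c_1(\mathfrak{s}_{N\#M'})^2=\sigma(N\#M')$, and that (7)--(8) give the required bounds) and the Whitney-product factorisation of the top Stiefel--Whitney class are the details the paper leaves implicit, and they check out.
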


\begin{remark}
Let $\pi_M : E_M \to B$ satisfy the above assumptions. Since $c_1(\mathfrak{s}_M)^2 - \sigma(M) > 0$, the arguments used in \cite{bar} implies that $w_{b^+(M)}( H^+(M) ) = 0$ in the case that $\rho$ is trivial. In the case that $\rho$ is non-trivial, the same argument used in \cite{bar} can be adapted, provided $c_1(\mathfrak{s}_M)^2 - \sigma(M) = 8 \; ({\rm mod} \; 16)$ to show that $w_{b^+(M)}(H^+(M) \otimes \mathbb{R}_\rho) = 0$. Thus the statement of Proposition \ref{prop:relation} is not vacuous only when $\rho$ is non-trivial and $c_1(\mathfrak{s}_M)^2 - \sigma(M) = 0 \; ({\rm mod} \; 16)$. Note that this also implies $b^+(M) = 3 \; ({\rm mod} \; 4)$.
\end{remark}

\begin{proof}
We write $X = M \# N \# M'$ in two different ways, as $X = M \# \left( N \# M' \right)$ and as $X = M' \# \left( N \# M \right)$. Applying the gluing formula to each of these and equating gives (\ref{equ:relation}).
\end{proof}

\begin{corollary}\label{cor:relation}
Let $M,M'$ be compact, smooth, oriented $4$-manifolds. Let $B$ be a compact smooth manifold and suppose we have smooth fibrewise oriented families $\pi_M : E_M \to B$, $\pi_{M'} : E_{M'} \to B$. Suppose there exists sections $\iota_M : B \to E_M$, $\iota_{M'} : B \to E_{M'}$ whose normal bundles are trivial. Let $\mathfrak{s}_M, \mathfrak{s}_{M'}$ be Spin$^c$-structures on $M, M'$ Assume that $\mathfrak{s}_M, \mathfrak{s}_{M'}$ are monodromy invariant up to a common sign factor $\rho : \pi_1(B) \to \mathbb{Z}_2$. Assume further that:
\begin{itemize}
\item[(4)]{$b^+(M) = b^+(M')$.}
\item[(5)]{$b_1(M) = b_1(M') = 0$.}
\item[(6)]{$c_1(\mathfrak{s}_M)^2 - \sigma(M) = c_1(\mathfrak{s}_{M'})^2 - \sigma(M') = 4(b^+(M)+1)$.}
\item[(7)]{$b^+(M) = 3 \; ({\rm mod} \; 4)$.}
\item[(8)]{$1+ \frac{dim(B)}{2} \le b^+(M) \le dim(B)$.}
\end{itemize}

Then we have the following equality in $H^{b^+(M)}(B , \mathbb{Z}_2)$:
\[
SW(M , \mathfrak{s}_M) w_{b^+(M)}(H^+(M')\otimes \mathbb{R}_\rho) = SW(M' , \mathfrak{s}_{M'}) w_{b^+(M)}(H^+(M)\otimes \mathbb{R}_\rho).
\]
\end{corollary}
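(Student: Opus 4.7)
The plan is to deduce Corollary~\ref{cor:relation} from Proposition~\ref{prop:relation} by inserting a suitable auxiliary negative-definite 4-manifold $N$ with $b^+(N) = 0$. Since $b^+(N) = 0$ forces $w_{b^+(N)}(H^+(N)\otimes \mathbb{R}_\rho) = w_0 = 1$, the $N$-Stiefel--Whitney factors in (\ref{equ:relation}) disappear and the equation takes place in $H^{b^+(M)}(B, \mathbb{Z}_2)$, precisely the degree appearing in the corollary.

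First I would choose $(N, \mathfrak{s}_N)$ satisfying conditions (5) and (6) of Proposition~\ref{prop:relation}. Take $N = \#^k \overline{\mathbb{CP}^2}$ with $c_1(\mathfrak{s}_N) = \sum_{i=1}^k \epsilon_i e_i$ for odd integers $\epsilon_i$ in the standard orthogonal basis, so that $c_1(\mathfrak{s}_N)^2 - \sigma(N) = -\sum_i(\epsilon_i^2 - 1)$. The corollary's hypothesis $b^+(M) \equiv 3 \pmod 4$ makes $4(b^+(M)+1)$ a multiple of $16$, and since each $\epsilon_i^2-1$ (odd $\epsilon_i$) is a multiple of $8$, a suitable mixture of $\epsilon_i \in \{\pm 1, \pm 3\}$ realises $\sum_i(\epsilon_i^2 - 1) = 4(b^+(M)+1)$. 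The topological conditions $b_1(N) = 0$ and $b^+(N) = 0$ then hold automatically; conditions (7), (8) of Proposition~\ref{prop:relation} reduce to $b^+(M) \le \dim(B)$ and $2b^+(M) > \dim(B)+1$, which are exactly the content of (8) in the corollary.

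Next I would construct a family $E_N \to B$ with fibre $N$, endowed with $\mathfrak{s}_N$ monodromy-invariant up to the sign $\rho$, and equipped with two disjoint sections $\iota_N, \iota_N'$ whose normal bundles are trivial. In the case where $\rho$ is trivial one simply takes $E_N = N \times B$ with two distinct constant sections. In the case where $\rho$ is non-trivial, one first produces a smooth involution $h : N \to N$ with $h^*\mathfrak{s}_N = -\mathfrak{s}_N$ (for instance by equivariantly connect-summing the complex-conjugation involutions on the $\overline{\mathbb{CP}^2}$-summands, which act as $-1$ on $H^2$) and then forms $E_N$ as the associated $N$-bundle over $B$ built from the double cover classified by $\rho$; the two sections are taken at fixed points $p_1, p_2$ of $h$ chosen carefully enough that the $\mathbb{Z}_2$-representations $dh|_{p_i}$ on $T_{p_i}N$, when twisted by $\rho$, produce trivial oriented rank-$4$ vector bundles over $B$.

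With all the data in place, the remaining step is verification of hypotheses (1)--(8) of Proposition~\ref{prop:relation}, which is a direct check, followed by application of the proposition. Since $b^+(N) = 0$, the factors $w_{b^+(N)}(H^+(N)\otimes \mathbb{R}_\rho) = 1$ cancel from both sides of (\ref{equ:relation}), yielding exactly the asserted equality. The main obstacle is the construction of $E_N$ in the non-trivial $\rho$ case: one must arrange that the chosen fixed points of $h$ produce sections with trivial normal bundle, which requires matching the $\mathbb{Z}_2$-representation at each fixed point to the cohomological structure of the double cover over $B$. Everything else is a formal consequence of Proposition~\ref{prop:relation}.
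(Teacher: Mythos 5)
Your reduction to Proposition \ref{prop:relation} is the right idea, and your numerology is correct: with $b^+(N)=0$ the factor $w_{b^+(N)}(H^+(N)\otimes\mathbb{R}_\rho)=1$ drops out of (\ref{equ:relation}), and conditions (7),(8) of the proposition collapse exactly to condition (8) of the corollary. However, there is a genuine gap in the only case where the corollary has content. By the remark following Proposition \ref{prop:relation}, both sides of the asserted identity vanish when $\rho$ is trivial, so everything hinges on the non-trivial $\rho$ case --- and that is precisely where your construction of $E_N$ is incomplete. Your $N=\#^k\overline{\mathbb{CP}}^2$ with $c_1(\mathfrak{s}_N)=\sum\epsilon_i e_i$ is necessarily non-spin (indeed no $N$ with $b^+(N)=0$ can be spin and satisfy condition (6), since that would force $\sigma(N)=16j>0$), so $\mathfrak{s}_N\neq-\mathfrak{s}_N$ and the trivial family cannot carry the twist: you are forced to build $E_N=\tilde B\times_{\mathbb{Z}_2}N$ from an involution $h$ with $h^*\mathfrak{s}_N=-\mathfrak{s}_N$. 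At any fixed point of such an $h$ the differential is conjugate to $\mathrm{diag}(-1,-1,-1,-1)$ or $\mathrm{diag}(1,1,-1,-1)$, so the normal bundle of the resulting section is $\mathbb{R}_\rho^{\oplus 4}$ or $\mathbb{R}^2\oplus\mathbb{R}_\rho^{\oplus 2}$, with total Stiefel--Whitney class $1+\rho^4$ or $1+\rho^2$. For general $(B,\rho)$ allowed by hypothesis (8) (e.g. $B=\mathbb{RP}^7$, $b^+(M)=7$, $\rho$ the generator) these classes are non-zero, so the normal bundle is not trivialisable and the orientation-reversing isomorphism with the trivial normal bundle of $\iota_M$ required by the gluing setup of Section \ref{sec:setup} does not exist. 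You flag this as ``the main obstacle'' but do not resolve it, and it cannot be resolved within your choice of $N$.

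For comparison, the paper's proof is engineered to avoid exactly this: it takes $N=\#^k\overline{K3}$, which is \emph{spin}, so the \emph{trivial} family $E_N=N\times B'$ (with constant sections and manifestly trivial normal bundles) already has monodromy preserving $\mathfrak{s}_N$ up to any prescribed sign. The price is $b^+(N)=19k>0$, which is paid by enlarging the base to $B'=B\times\mathbb{RP}^{19k}$ with the extra twist $\rho'=(\rho,1)$ so that the resulting factor $w_{19k}(H^+(N)\otimes\mathbb{R}_{\rho'})$ is non-zero and can be cancelled from both sides of (\ref{equ:relation}). If you want to keep your $b^+(N)=0$ strategy, you would need to add hypotheses guaranteeing triviality of $\mathbb{R}_\rho^{\oplus 4}$ (e.g. restrictions on $\rho$ or on $B$), which would weaken the corollary; otherwise the paper's spin-plus-base-enlargement route appears unavoidable.
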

\begin{proof}
Since $b^+(M) = 3 \; ({\rm mod} \; 4)$, we have $4(b^+(M)+1) = 16k$ for some $k \ge 0$. Let $N = \#^k \overline{K3}$. Let $E_N$ be the trivial family over $B$. Set $B' = B \times \mathbb{RP}^{19k}$. Set $\rho' = (\rho , 1) \in H^1(B' , \mathbb{Z}_2) = H^1(B , \mathbb{Z}_2) \oplus \mathbb{Z}_2$. Pull the families $E_M, E_{M'}, E_N$ back to $B'$ and apply Proposition \ref{prop:relation}. The result now follows, noting that in this case the factor $w_{b^+(N)}(H^+(N)\otimes \mathbb{R}_\rho)$ can be cancelled from both sides of (\ref{equ:relation}).
\end{proof}

As an illustration of this result, we find remarkably that the existence of a certain type of involution on a $4$-manifold $M$ implies $M$ has a non-vanishing Seiberg-Witten invariant:
\begin{corollary}\label{cor:involutionnonvanish}
Let $M$ be a compact smooth $4$-manifold with $b_1(M) = 0$ and $b^+(M) = 3$. Suppose that $f : M \to M$ is an involutive diffeomorphism of $M$ whose fixed point set contains an isolated point. Suppose $\mathfrak{s}_M$ is a Spin$^c$-structure such that $f(\mathfrak{s}_M) = -\mathfrak{s}_M$ and $c_1(\mathfrak{s}_M)^2 - \sigma(M) = 16$. Then $SW(M , \mathfrak{s}_M) = 1 \; ({\rm mod} \; 2)$ if $f$ acts trivially on $H^+(M)$ and $SW(M , \mathfrak{s}_M) = 0 \; ({\rm mod} \; 2)$ otherwise.
\end{corollary}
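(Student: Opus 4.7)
The plan is to apply Corollary~\ref{cor:relation} with $M' = K3$ equipped with a Nikulin (symplectic) involution $f_0$ and the spin Spin$^c$-structure $\mathfrak{s}_0$. As a warm up I would verify that this model satisfies the same hypotheses as $(M,f,\mathfrak{s}_M)$: $K3$ has $b^+ = 3$, $b_1 = 0$ and $c_1(\mathfrak{s}_0)^2 - \sigma = 16$; a Nikulin involution has $8$ isolated fixed points, at each of which the differential equals $-I$ (being an orientation preserving involution with no $+1$ eigenvalues); $f_0^*\mathfrak{s}_0 = \mathfrak{s}_0 = -\mathfrak{s}_0$ trivially, as $\mathfrak{s}_0$ is spin; and, crucially, $f_0$ acts trivially on $H^+(K3)$, since it fixes both $\mathrm{Re}\,\omega$ and $\mathrm{Im}\,\omega$ (being symplectic) and an averaged K\"ahler class, and these three classes span $H^+(K3)$. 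Moreover $SW(K3, \mathfrak{s}_0) = \pm 1 \equiv 1 \pmod 2$.

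I would then take $B = \mathbb{RP}^3$, which satisfies the corollary's dimensional bound $1 + \dim(B)/2 \le b^+(M) \le \dim(B)$, and build the Borel construction families $E_M = S^3 \times_{\mathbb{Z}_2} M$ and $E_{M'} = S^3 \times_{\mathbb{Z}_2} K3$ over $\mathbb{RP}^3$, with $\mathbb{Z}_2$ acting antipodally on $S^3$ and via $f$, respectively $f_0$, on the fibres. The isolated fixed points produce sections $\iota_M, \iota_{M'}: \mathbb{RP}^3 \to E_M, E_{M'}$; since $df|_p = df_0|_{p_0} = -I$, the normal bundle of each section is isomorphic to $S^3 \times_{\mathbb{Z}_2} \mathbb{R}^4 \cong \gamma^{\oplus 4}$, where $\gamma$ is the tautological line bundle on $\mathbb{RP}^3$. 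This bundle is trivial because $T\mathbb{RP}^3 \oplus \mathbb{R} \cong \gamma^{\oplus 4}$ and $T\mathbb{RP}^3$ is trivial ($\mathbb{RP}^3 \cong SO(3)$ being parallelizable). Both Spin$^c$-structures are monodromy invariant up to the common non-trivial sign $\rho \in H^1(\mathbb{RP}^3, \mathbb{Z}_2)$.

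Applying Corollary~\ref{cor:relation} then yields
\[
SW(M, \mathfrak{s}_M) \cdot w_3(H^+(K3) \otimes \mathbb{R}_\rho) = SW(K3, \mathfrak{s}_0) \cdot w_3(H^+(M) \otimes \mathbb{R}_\rho)
\]
in $H^3(\mathbb{RP}^3, \mathbb{Z}_2) \cong \mathbb{Z}_2$. Since $f_0$ acts trivially on $H^+(K3)$, the bundle $H^+(K3) \otimes \mathbb{R}_\rho$ is isomorphic to $\mathbb{R}_\rho^{\oplus 3}$ and has $w_3 = t^3 \ne 0$, where $t$ generates $H^1(\mathbb{RP}^3, \mathbb{Z}_2)$. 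On the other hand, if $f^*$ has $a$ eigenvectors with eigenvalue $+1$ on $H^+(M)$ (and $3-a$ with eigenvalue $-1$), then $H^+(M) \otimes \mathbb{R}_\rho \cong \mathbb{R}_\rho^{\oplus a} \oplus \mathbb{R}^{\oplus (3-a)}$, whose total Stiefel-Whitney class is $(1+t)^a$, so $w_3 = \binom{a}{3} t^3$. This equals $t^3 \ne 0$ iff $a = 3$, that is, iff $f$ acts trivially on $H^+(M)$, and vanishes otherwise. Combined with $SW(K3, \mathfrak{s}_0) \equiv 1 \pmod 2$, the identity reduces to $SW(M, \mathfrak{s}_M) \equiv w_3(H^+(M) \otimes \mathbb{R}_\rho) \pmod 2$, proving the claim in both cases.

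The main obstacle I foresee is justifying the properties of the model $(K3, f_0, \mathfrak{s}_0)$ simultaneously, in particular the triviality of $f_0^*$ on $H^+(K3)$ (which rests on the symplectic property of $f_0$ together with averaging to produce an invariant K\"ahler class) and the triviality of $\gamma^{\oplus 4}$ over $\mathbb{RP}^3$. Both are classical facts, but they need to be assembled correctly in order to legitimately invoke Corollary~\ref{cor:relation}.
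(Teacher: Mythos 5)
Your proposal is correct and follows essentially the same route as the paper: take $B = \mathbb{RP}^3$, form the Borel-construction families from the involutions, use the isolated fixed points as sections (whose normal bundles $\mathbb{R}_-^{\oplus 4}$ are trivial over $\mathbb{RP}^3$), and apply Corollary~\ref{cor:relation} with $M' = K3$ carrying an involution with isolated fixed points acting trivially on $H^+(K3)$. The only cosmetic difference is that you construct this involution explicitly as a Nikulin involution and spell out the Stiefel--Whitney class computation, whereas the paper cites Bryan for the existence of the involution and leaves those verifications implicit.
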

\begin{proof}
Let $B = \mathbb{RP}^3$. We consider the family $E_M = M \times_{\mathbb{Z}_2} S^3$, where $\mathbb{Z}_2$ acts on $S^3$ by the antipodal map. Let $m \in M$ be an isolated fixed point. Then $m$ defines a section of $E_M$ whose normal bundle is isomorphic to $\mathbb{R}_-^4$, where $\mathbb{R}_-$ is the unique non-trivial line bundle on $\mathbb{RP}^3$. This is easily seen to be a trivialisable bundle. Now repeat this construction for $M' = K3$ equipped with an involution $f'$ with isolated fixed points and such that $f'$ acts on $H^+(M')$ trivially (such an involution exists, see e.g. \cite{bry}). Take $\mathfrak{s}_{M'}$ to be the Spin$^c$-structure with $c_1(\mathfrak{s}_{M'}) = 0$. Take $\rho$ to be the unique non-trivial element of $H^1(\mathbb{RP}^3 , \mathbb{Z}_2)$. Then applying Corollary \ref{cor:relation}, we get $SW(M , \mathfrak{s}_M) = SW( K3 , \mathfrak{s}_{M'}) = 1 \; ({\rm mod} \; 2)$ if $f$ acts trivially on $H^+(M)$ and $SW(M , \mathfrak{s}_M) = 0 \; ({\rm mod} \; 2)$ otherwise.
\end{proof}

\begin{proposition}\label{prop:involutionvanish}
Let $M$ be a compact smooth $4$-manifold with $b_1(M) = 0$, $b^+(M) = 3 \; ({\rm mod} \; 4)$ and $b^+(M) \neq 3$. Suppose that $f : M \to M$ is an involutive diffeomorphism of $M$ whose fixed point set contains an isolated point. Suppose $\mathfrak{s}_M$ is a Spin$^c$-structure such that $f(\mathfrak{s}_M) = -\mathfrak{s}_M$ and $c_1(\mathfrak{s}_M)^2 - \sigma(M) = 4(b^+(M)+1)$. Then $SW(M , \mathfrak{s}_M) = 0 \; ({\rm mod} \; 2)$.
\end{proposition}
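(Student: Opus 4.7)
The plan is to imitate the proof of Corollary~\ref{cor:involutionnonvanish}, replacing the reference manifold $M' = K3$ (which there contributes $SW(K3, \mathfrak{s}_{K3}) = 1$) by a connected sum whose mod-$2$ Seiberg--Witten invariant vanishes by the connected sum vanishing theorem. Write $n = b^+(M) = 4l - 1$ with $l \geq 2$ (since $n \neq 3$ by hypothesis), so that $c_1(\mathfrak{s}_M)^2 - \sigma(M) = 16l$.

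First I would take $M' = (\#^l K3) \# (\#^{l-1}(S^2 \times S^2))$ equipped with its spin Spin$^c$ structure $\mathfrak{s}_{M'}$; this has $b_1(M') = 0$, $b^+(M') = n$, $\sigma(M') = -16l$, $d(M',\mathfrak{s}_{M'}) = 0$, and $c_1(\mathfrak{s}_{M'})^2 - \sigma(M') = 16l$, and any involution $f'$ satisfies $(f')^*\mathfrak{s}_{M'} = -\mathfrak{s}_{M'}$ trivially because $c_1(\mathfrak{s}_{M'}) = 0$. Since $l \geq 2$, $M'$ splits as a connected sum in which both sides have $b^+ > 0$, so $SW(M', \mathfrak{s}_{M'}) \equiv 0 \pmod 2$ by the connected sum vanishing theorem. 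I would equip $M'$ with the involution $f'$ obtained as the equivariant connected sum of a Nikulin involution on each $K3$ summand (with $8$ isolated fixed points and acting trivially on $H^+(K3)$) and of the involution $r \times r$ on each $S^2 \times S^2$ summand, where $r$ is rotation by $\pi$ of $S^2$ (with $4$ isolated fixed points and acting trivially on $H^*(S^2 \times S^2)$). Then $f'$ has isolated fixed points and acts trivially on $H^+(M')$.

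Next I would apply Proposition~\ref{prop:relation} to the triple $(M, M', N)$ with a suitably chosen spin middle summand $(N, \mathfrak{s}_N, f_N)$ where $f_N$ has isolated fixed points (so that all the sections' normal bundles over the base are copies of $\mathbb{R}^4_{-}$, hence admit orientation-reversing isomorphisms) and where $f_N$ acts trivially on $H^+(N)$. Over the base $B = \mathbb{RP}^D$ with $D = n + b^+(N)$ and with common $\rho$ the non-trivial class in $H^1(B, \mathbb{Z}_2)$, the gluing formula reads
\[
SW(M, \mathfrak{s}_M) \cdot w_n(H^+(M') \otimes \mathbb{R}_\rho) \cdot w_{b^+(N)}(H^+(N) \otimes \mathbb{R}_\rho) = 0,
\]
since $SW(M', \mathfrak{s}_{M'}) \equiv 0 \pmod 2$. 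Because $f'$ and $f_N$ act trivially on $H^+(M')$ and $H^+(N)$ respectively, the bundles $H^+(M') \otimes \mathbb{R}_\rho$ and $H^+(N) \otimes \mathbb{R}_\rho$ are isomorphic to $\mathbb{R}_\rho^n$ and $\mathbb{R}_\rho^{b^+(N)}$, with top Stiefel--Whitney classes $x^n$ and $x^{b^+(N)}$; their product $x^D$ is the non-zero top class of $H^D(\mathbb{RP}^D, \mathbb{Z}_2)$. This forces $SW(M, \mathfrak{s}_M) \equiv 0 \pmod 2$.

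The hard part will be the construction of $(N, f_N)$. The natural candidate is $N = \#^l \overline{K3}$ with the spin Spin$^c$ structure (which gives $\sigma(N) - c_1(\mathfrak{s}_N)^2 = 16l$ and spin $b_1 = 0$), but the Nikulin-type involutions that provide isolated fixed points act non-trivially on $H^+(\overline{K3}) = H^-(K3)$, so the top Stiefel--Whitney class of $H^+(N) \otimes \mathbb{R}_\rho$ would vanish. One would need either a carefully tailored involution on $\overline{K3}$ whose invariant sublattice of $H^2(K3)$ contains the negative-definite part (compare Nikulin's lattice-theoretic classification of involutions of $K3$ surfaces), or an alternative spin $N$ engineered to carry an involution with isolated fixed points acting trivially on $H^+(N)$. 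Alternatively one may sidestep the construction of $N$ by using the equivariant group-action gluing formula (Theorem~\ref{thm:rubequivariant}) in place of Proposition~\ref{prop:relation}, where the analogous Stiefel--Whitney identity lives directly in $H^*(B\mathbb{Z}_2, \mathbb{Z}_2) = \mathbb{Z}_2[x]$ and the non-vanishing of $x^n$ is immediate.
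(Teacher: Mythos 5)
Your overall architecture is the same as the paper's: decompose $X = M \# N \# M'$ in two ways via Proposition \ref{prop:relation}, take $M' = \#^{l}(K3) \#^{l-1}(S^2\times S^2)$ (exactly the paper's choice) so that $SW(M',\mathfrak{s}_{M'})\equiv 0$ by the connected-sum vanishing theorem, and conclude from the non-vanishing of the twisted top Stiefel--Whitney classes. That part of your argument is correct. But the construction of the middle summand $(N, f_N, \mathfrak{s}_N)$, which you leave open, is not a routine detail --- it is the technical heart of the proof, and neither of your proposed routes closes it. Your candidate $N = \#^{l}\overline{K3}$ with the spin structure cannot carry the required involution: if $f_N$ is an involution with only isolated fixed points then the $G$-signature theorem gives $\mathrm{sign}(f_N,N)=0$, whereas triviality of $f_N$ on $H^+(N)$ forces $\mathrm{sign}(f_N,N) = b^+(N) - \mathrm{tr}(f_N|_{H^-(N)}) \ge b^+(N) - b^-(N) = \sigma(N) = 16l > 0$ (any spin $N$ satisfying condition (6) of Proposition \ref{prop:relation} has $\sigma(N)=16l$, so this obstruction is not special to $\overline{K3}$). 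Admitting $2$-dimensional fixed components only replaces $0$ by $[F]^2$ and you would then have to produce fixed surfaces with $[F]^2\ge 16l$ inside a spin manifold with the right invariants --- nothing you have written establishes this. Your ``sidestep'' via Theorem \ref{thm:rubequivariant} does not avoid the issue either: to extract $SW(M,\mathfrak{s}_M)$ you must still compute $SW^{\mathbb{Z}_2}_{\mathbb{Z}_2}(X,\mathfrak{s}_X,\rho)$ a second way, i.e.\ you still need the triple $(M,N,M')$ and hence the same $N$.

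The missing idea in the paper is to abandon the requirement that $\mathfrak{s}_N$ be spin. The paper takes $N = \#^{k}K3$ with its \emph{standard} orientation ($\sigma = -16k$, $b^+(N)=3k$), equips each Kummer $K3$ with the involution $f$ induced by the translation $(z_1,z_2)\mapsto(z_1+1/2,z_2)$ (eight isolated fixed points, acting trivially on the $3$-dimensional $H^+(K3)$, so the $G$-signature obstruction disappears), and chooses on each summand the characteristic $c = 2[S_1]+2[S_2]-2[f(S_1)]-2[f(S_2)]$ built from $-2$-curves permuted by $f$. Then $f^*c=-c$ gives $f^*\mathfrak{s}_{K3}=-\mathfrak{s}_{K3}$, and $c^2=-32$ gives $-c_1(\mathfrak{s}_N)^2+\sigma(N) = 32k-16k=16k$, so condition (6) holds with a small $b^+(N)$ on which the involution can act trivially. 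With this $N$ your computation over $B=\mathbb{RP}^{7k-1}$ goes through verbatim. Without it, the proof is incomplete.
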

\begin{proof}
First we construct an involution on $K3$ as follows. Recall the Kummer construction of $K3$. Start the $4$-torus $T^4 = \mathbb{C}^2/(\mathbb{Z}^2 + i\mathbb{Z}^2)$ equipped with the involution $i : T^4 \to T^4$ defined by $i((z_1,z_2)) = -(z_1,z_2)$. There are $16$ fixed points of $i$. Take the blowup $Y = T^4 \#^{16} \overline{\mathbb{CP}}^2$ of $T^4$ at the fixed points. Then $i$ extends to an involution on $Y$ with only non-isolated fixed points ($16$ spheres each with self-intersection $-1$). The quotient $K3 = Y/\langle i \rangle$ can be given the structure of a smooth manifold  and is a $K3$ surface. The $-1$ curves in $Y$ become $-2$ curves in $K3$. Let $f : T^4 \to T^4$ be the involution $f(z_1,z_2) = (z_1 +1/2 , z_2)$. Then $f$ commutes with $i$ and acts freely on the fixed points set of $i$. It follows that $f$ determines an involution on $Y$ which permutes the $16$ $-1$ curves in $Y$ in pairs. Since $f$ commutes with $i$, we have that $f$ descends to an involution $f : K3 \to K3$ which permutes the $16$ $-2$ curves in $K3$. Note that $f$ has $8$ isolated fixed points and by \cite{bry} acts as the identity on $H^+(K3)$. Let $S_1,S_2$ be two of the $-2$ curves in $K3$ which may be chosen so that $S_1, S_2, f(S_1), f(S_2)$ are pairwise disjoint. Let $c = 2[S_1] +2[S_2]-2[f(S_1)]-2[f(S_2)]$. Then $c = c_1(\mathfrak{s}_{K3})$ for a uniquely determined characteristic on $K3$. Moreover $f(c) = -c$, which implies that $f(\mathfrak{s}_{K3}) = -\mathfrak{s}_{K3}$. Also we have $c^2 = -32$, so that $-c_1(\mathfrak{s}_{K3})^2 + \sigma(K3) = 16$.\\

Now define $k \ge 2$ by $16k = 4(b^+(M)+1)$ and set $N = \#^{k} K3$. We equip $N$ with the involution $f_N$ which is the equivariant connected sum of $k$ copies of $f : K3 \to K3$. Also 
we take $\mathfrak{s}_N = \#^k \mathfrak{s}_{K3}$. Then $f_N(\mathfrak{s}_N) = -\mathfrak{s}_N$ and $-c_1(\mathfrak{s}_N)^2 + \sigma(N) = 16k$.\\

Next, suppose that $(M' , f_{M'} , \mathfrak{s}_{M'})$ also satisfies the hypotheses of the proposition. In fact we will take $(M' , f_{M'})$ to be an equivariant connected sum of the form $M' = \#^{k-1}(S^2 \times S^2) \#^k (K3)$, where on each factor of $K3$ we use the involution $f$ and on each factor of $S^2 \times S^2$ we use the involution $\phi \times \phi$, where $\phi : S^2 \to S^2$ is a rotation by $\pi$ about some axis. Note that $\phi \times \phi$ has $4$ isolated fixed points. We also take $\mathfrak{s}_{M'}$ to be the Spin$^c$-structure with characteristic zero. Now we apply Proposition \ref{prop:relation}, where $B = \mathbb{RP}^{7k-1}$ and where $E_M, E_{M'}, E_N$ are the families over $B$ associated to the involutions on $M,M',N$. Also $\rho$ is taken to be the unique non-trivial element of $H^1(\mathbb{RP}^{7k-1} , \mathbb{Z}_2)$. We perform the families connected sum using the isolated fixed points. Proposition \ref{prop:relation} now gives $SW(M , \mathfrak{s}_M) = SW( M' , \mathfrak{s}_{M'}) = 0 \; ({\rm mod} \; 2)$ if $f$ acts trivially on $H^+(M)$ and $SW(M , \mathfrak{s}_M) = 0 \; ({\rm mod} \; 2)$ otherwise. In either case, we get $SW(M , \mathfrak{s}_M) = 0 \; ({\rm mod} \; 2)$.
\end{proof}

\begin{remark}
It is worth comparing Proposition \ref{prop:involutionvanish} with the following similar but much more elementary result: let $M$ be a compact smooth $4$-manifold with $b_1(M)=0$, $b^+(M) = 1 \; ({\rm mod} \; 4)$ and $b^+(M) > 1$. Suppose that $f : M \to M$ is a diffeomorphism of $M$ such that $det(f|_{H^+(M)}) = 1$ and suppose $\mathfrak{s}_M$ is a Spin$^c$-structure such that $f(\mathfrak{s}_M) = -\mathfrak{s}_M$ and $c_1(\mathfrak{s}_M)^2 - \sigma(M) = 4(b^+(M)+1)$. Then $SW(M , \mathfrak{s}_M) = 0$. The proof is simple: first, by charge conjugation symmetry we have: 
\[
SW(M , \mathfrak{s}_M) = (-1)^{\tfrac{b^+(M)+1}{2}}SW(M , -\mathfrak{s}_M) = -SW(M , -\mathfrak{s}_M).
\]
On the other hand since $det(f|_{H^+(M)}) = 1$, we have by diffeomorphism invariance that 
\[
SW(M , \mathfrak{s}_M) = SW(M , f(\mathfrak{s}_M)) = SW(M , -\mathfrak{s}_M).
\]
Hence $SW(M , \mathfrak{s}_M) = 0$.
\end{remark}

Using the blowup formula for Seiberg-Witten invariants, we can strengthen Proposition \ref{prop:involutionvanish} as follows:
\begin{corollary}
Let $M$ be a compact smooth $4$-manifold with $b_1(M) = 0$, $b^+(M) = 3 \; ({\rm mod} \; 4)$ and $b^+(M) \neq 3$. Suppose that $f : M \to M$ is an involutive diffeomorphism of $M$ whose fixed point set contains an isolated point. Suppose $\mathfrak{s}_M$ is a Spin$^c$-structure such that $f(\mathfrak{s}_M) = -\mathfrak{s}_M$ and $c_1(\mathfrak{s}_M)^2 - \sigma(M) = 0 \; ({\rm mod} \; 16)$. Then $SW(M , \mathfrak{s}_M) = 0 \; ({\rm mod} \; 2)$.
\end{corollary}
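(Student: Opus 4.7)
I propose to prove this corollary by induction on the non-negative integer $k := (c_1(\mathfrak{s}_M)^2 - \sigma(M))/16$, reducing each case to the previous one by an equivariant blow-up and the Fintushel--Stern blow-up formula. Set $k_0 := (b^+(M)+1)/4 \in \mathbb{Z}_{>0}$, which is an integer because $b^+(M) \equiv 3 \pmod 4$. The dimension formula gives $d(M,\mathfrak{s}_M) = 4(k-k_0)$, so when $k < k_0$ we have $d < 0$ and $SW(M,\mathfrak{s}_M) = 0$ trivially; the base case $k = k_0$ is exactly the hypothesis $c_1(\mathfrak{s}_M)^2 - \sigma(M) = 4(b^+(M)+1)$ of Proposition~\ref{prop:involutionvanish}.

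For the inductive step with $k > k_0$, I first observe that $f$ cannot be the identity (otherwise every fixed point would fail to be isolated), so $f$ admits free orbits. Pick $p \in M$ with $p \neq f(p)$ and form the equivariant blow-up $\pi : \tilde{M} \to M$ at the free orbit $\{p, f(p)\}$, so that $\tilde{M} \cong M \#^2 \overline{\mathbb{CP}}^2$ and the induced involution $\tilde{f}$ swaps the two exceptional divisors $E_1, E_2$ while preserving the original isolated fixed point. I would extend $\mathfrak{s}_M$ to $\tilde{\mathfrak{s}}$ on $\tilde{M}$ by
\[
c_1(\tilde{\mathfrak{s}}) := \pi^* c_1(\mathfrak{s}_M) + 3(E_1 - E_2).
\]
Because $\tilde{f}^*(E_1 - E_2) = -(E_1 - E_2)$ and $f^* c_1(\mathfrak{s}_M) = -c_1(\mathfrak{s}_M)$, one has $\tilde{f}(\tilde{\mathfrak{s}}) = -\tilde{\mathfrak{s}}$. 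Using $(E_1 - E_2)^2 = -2$, $\sigma(\tilde{M}) = \sigma(M) - 2$, and the vanishing of cross-intersections $c_1(\mathfrak{s}_M) \cdot E_i = 0$, a direct computation yields $c_1(\tilde{\mathfrak{s}})^2 - \sigma(\tilde{M}) = 16(k-1)$. The other hypotheses ($b_1 = 0$, $b^+ \equiv 3 \pmod 4$, $b^+ \neq 3$, an isolated fixed point) are preserved by the blow-up, so $(\tilde{M}, \tilde{f}, \tilde{\mathfrak{s}})$ satisfies the corollary with $k$ replaced by $k-1$, and the inductive hypothesis gives $SW(\tilde{M}, \tilde{\mathfrak{s}}) \equiv 0 \pmod 2$.

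To close the induction I would invoke the Fintushel--Stern blow-up formula for higher-dimensional Seiberg--Witten invariants to express the pair blow-up (viewed as an iterated single blow-up with $c_1$-shifts $+3E_1$ at $p$ followed by $-3E_2$ at $f(p)$) as an identity of the form $SW(\tilde{M}, \tilde{\mathfrak{s}}) \equiv \lambda \cdot SW(M, \mathfrak{s}_M) \pmod 2$ for an explicit integer $\lambda$. The desired conclusion $SW(M, \mathfrak{s}_M) \equiv 0 \pmod 2$ will then follow as soon as $\lambda$ is odd, and this parity check is the main obstacle. Each single $j = \pm 1$ blow-up contributes a binomial coefficient whose $\bmod\,2$ value is controlled by the parity of the current virtual dimension $2r = 4(k-k_0)$, and since $2r$ is always even the naive product of two such coefficients can turn out even; resolving this---possibly by choosing a different odd shift $(2j+1)(E_1 - E_2)$, by interleaving additional simple pair blow-ups with $c_1$-shifts $E_{2i-1} - E_{2i}$ (which preserve $SW$ mod $2$ but rearrange the dimensions appearing in the binomial coefficients), or by a direct cohomological argument using the full Fintushel--Stern generating-function blow-up formula---is the technical heart of the argument.
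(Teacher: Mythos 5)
Your route is the same as the paper's: blow up at orbits $\{p,f(p)\}$ of free points, put charge $\pm 3$ on the swapped exceptional classes so that the new Spin$^c$-structure is still anti-preserved and $c_1^2-\sigma$ drops by $16$ per pair, reduce to the extremal case $c_1(\mathfrak{s}_M)^2-\sigma(M)=4(b^+(M)+1)$ handled by Proposition~\ref{prop:involutionvanish}, and transfer the vanishing back via the blow-up formula. (The paper performs all $k$ pair blow-ups at once rather than inducting, but that is immaterial; your computation $(E_1-E_2)^2=-2$, $\sigma(\tilde M)=\sigma(M)-2$, and the verification that $\tilde f(\tilde{\mathfrak{s}})=-\tilde{\mathfrak{s}}$ and that an isolated fixed point survives are all correct and match the paper's choice $c_1(\mathfrak{s})=(3,-3,\dots,3,-3)$.)

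The one step you leave unresolved --- the parity of the multiplier $\lambda$ --- is not actually an obstacle, and no interleaving of extra blow-ups or change of charge is needed. For a single blow-up with $c_1(\hat{\mathfrak{s}})=c_1(\mathfrak{s})+3E$ (so $l=1$ in $(2l+1)E$), the Fintushel--Stern blow-up formula reads $SW_{\hat X}(\hat{\mathfrak{s}},\mu^{m-1})=\pm\, SW_X(\mathfrak{s},\mu^{m})$ when $d(\mathfrak{s})=2m$: after stretching the neck, the reducible on $\overline{\mathbb{CP}}^2$ contributes a rank-one complex obstruction bundle isomorphic to $\mathcal{L}$, so the glued moduli space is cut out of $\mathcal{M}_X(\mathfrak{s})$ by a section whose Euler class is exactly $\mu=c_1(\mathcal{L})$, and the top pairings match with no binomial coefficient. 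Iterating over all $2k$ exceptional classes gives $SW(M',\mathfrak{s}_{M'})=\pm\, SW(M,\mathfrak{s}_M)$, i.e.\ $\lambda=\pm1$, which is odd, and the sign is irrelevant mod $2$. (Here $SW(M,\mathfrak{s}_M)$ means, as it must since $d(M,\mathfrak{s}_M)=4(k-k_0)>0$, the invariant cut down by $\mu^{2(k-k_0)}$.) The binomial coefficients you are recalling occur only when one evaluates the blown-up invariant against powers of $\mu$ other than the top one compatible with the dimension shift $l(l+1)$; they do not enter here. With that observation your induction closes and agrees with the paper's proof.
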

\begin{proof}
We can assume $c_1(\mathfrak{s}_M)^2 - \sigma(M) \ge 4(b^+(M)+1)$, since otherwise $d(M , \mathfrak{s}_M) < 0$. Let $c_1(\mathfrak{s}_M)^2 - \sigma(M) - 4(b^+(M)+1) = 16k$, where $k \ge 1$. Taking the blowup of $M$ at $k$ pairs of non-fixed points $(m_1, f(m_1)), \dots , (m_k , f(m_k))$ we obtain an involution $f'$ on $M' = M \#^{2k} \overline{\mathbb{CP}}^2$. Let $\mathfrak{s}_{M'} = \mathfrak{s}_M \# \mathfrak{s}$, where $c_1(\mathfrak{s}) = (3,-3,3,-3, \dots , 3,-3)$. Then $(M' , f' , \mathfrak{s}_{M'})$ satisfies the conditions of Proposition \ref{prop:involutionvanish}, so that $SW(M' , \mathfrak{s}_{M'}) = 0 \; ({\rm mod} \; 2)$. But $SW(M , \mathfrak{s}_M) = SW(M' , \mathfrak{s}_{M'})$ by the blowup formula for Seiberg-Witten invariants.
\end{proof}

Using the same blowup argument as above we can similarly strengthen Corollary \ref{cor:involutionnonvanish}:
\begin{corollary}\label{cor:involutionnonvanish2}
Let $M$ be a compact smooth $4$-manifold with $b_1(M) = 0$ and $b^+(M) = 3$. Suppose that $f : M \to M$ is an involutive diffeomorphism of $M$ whose fixed point set contains an isolated point. Suppose suppose $\mathfrak{s}_M$ is a Spin$^c$-structure such that $f(\mathfrak{s}_M) = -\mathfrak{s}_M$, $c_1(\mathfrak{s}_M)^2 - \sigma(M) = 0 \; ({\rm mod} \; 16)$ and $c_1(\mathfrak{s}_M)^2 - \sigma(M) \ge 16$. Then $SW(M , \mathfrak{s}_M) = 1 \; ({\rm mod} \; 2)$ if $f$ acts trivially on $H^+(M)$ and $SW(M , \mathfrak{s}_M) = 0 \; ({\rm mod} \; 2)$ otherwise.
\end{corollary}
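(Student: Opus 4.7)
The plan is to reduce to the case $c_1(\mathfrak{s}_M)^2 - \sigma(M) = 16$, which is exactly Corollary~\ref{cor:involutionnonvanish}, via the same equivariant blowup trick used immediately above to upgrade Proposition~\ref{prop:involutionvanish}. If $c_1(\mathfrak{s}_M)^2 - \sigma(M) = 16$ the claim is just Corollary~\ref{cor:involutionnonvanish}, so I would assume $c_1(\mathfrak{s}_M)^2 - \sigma(M) = 16 + 16k$ with $k\ge 1$.

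Next, choose $k$ pairs of non-fixed points $(m_i, f(m_i))$ and form the equivariant blowup $M' = M\#^{2k}\overline{\mathbb{CP}}^2$ carrying the induced involution $f'$. The involution $f'$ permutes the $2k$ exceptional divisors in pairs, so the contribution of the blowup lies entirely in $H^-(M')$; therefore $H^+(M') = H^+(M)$ as $f$-modules, $f'$ acts trivially on $H^+(M')$ iff $f$ acts trivially on $H^+(M)$, and the isolated fixed point of $f$ survives as an isolated fixed point of $f'$. Define $\mathfrak{s}_{M'} = \mathfrak{s}_M\#\mathfrak{s}$ where $c_1(\mathfrak{s}) = (3,-3,3,-3,\ldots,3,-3)$ on the $2k$ exceptional classes. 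A direct computation gives $c_1(\mathfrak{s}_{M'})^2 - \sigma(M') = (c_1(\mathfrak{s}_M)^2 - 18k) - (\sigma(M) - 2k) = 16$, and the pair-swapping action of $f'$ on the exceptional divisors exchanges the $+3$ and $-3$ entries, so $f'(\mathfrak{s}_{M'}) = -\mathfrak{s}_{M'}$. Hence $(M', f', \mathfrak{s}_{M'})$ meets every hypothesis of Corollary~\ref{cor:involutionnonvanish}, which evaluates $SW(M',\mathfrak{s}_{M'})$ mod $2$ in the required dichotomy.

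The final step is to transport this back to $M$ via the Fintushel--Stern blowup formula: for each exceptional class contributing $\pm 3e_i$ to $c_1$, the formula identifies the Seiberg--Witten invariant on the blowup with the Seiberg--Witten invariant of $(M,\mathfrak{s}_M)$, so $SW(M,\mathfrak{s}_M) \equiv SW(M',\mathfrak{s}_{M'})\pmod 2$. The only point needing care is this last invocation of the blowup formula at coefficient $\pm 3$ (as opposed to the easier $\pm 1$ case), but it is the exact same step applied at the end of the proof of Proposition~\ref{prop:involutionvanish}, so no genuinely new difficulty arises here.
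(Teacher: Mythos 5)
Your proof is correct and follows exactly the paper's intended argument: the paper proves this corollary by the phrase ``using the same blowup argument as above,'' i.e.\ the equivariant blowup at $k$ swapped pairs with $c_1(\mathfrak{s}) = (3,-3,\dots,3,-3)$, reduction to Corollary~\ref{cor:involutionnonvanish}, and the blowup formula, which is precisely what you spell out. The arithmetic check $c_1(\mathfrak{s}_{M'})^2 - \sigma(M') = 16$ and the identification of $H^+(M')$ with $H^+(M)$ as $\mathbb{Z}_2$-modules are both right.
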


\begin{example}
Suppose $M$ is a compact smooth spin $4$-manifold with $b_1(M) = 0$, $b^+(M) = 3$ and $\sigma(M) \le -32$. Suppose that $f : M \to M$ is a smooth involution which has an isolated fixed point. Then $f$ acts non-trivially on $H^+(M)$. To see this, suppose on the contrary that $f$ acts trivially on $H^+(M)$. Then by Corollary \ref{cor:involutionnonvanish2}, we would find that $SW(M , \mathfrak{s}_M) = 1 \; ({\rm mod} \; 2)$ where $\mathfrak{s}_M$ is a spin structure on $M$. But since $d(M,\mathfrak{s}_M) > 0$ and $b^+(M) = 3$, we can deduce that $SW(M , \mathfrak{s}_M) = 0 \; ({\rm mod} \; 2)$ by \cite[Corollary 3.6]{bafu}.
\end{example}

\subsection{Seiberg-Witten invariants of group actions}\label{sec:applgroupaction}

Let $X$ be a smooth compact oriented $4$-manifold and let $G$ be a group acting on $X$ by orientation preserving diffeomorphisms. Suppose that $G$ preserves the isomorphism class of a Spin$^c$-structure $\mathfrak{s}_X$. Suppose that $d(X , \mathfrak{s}_X) \le 0$ and that $b^+(X) > -d(X , \mathfrak{s}_X) + 1$. Recall from Section \ref{sec:SWgroupaction} that in this situation we may define the $G$-equivariant Seiberg-Witten invariant $SW_G^{\mathbb{Z}_2}(X , \mathfrak{s}_X) \in H^{-d(X,\mathfrak{s}_X)}(BG, \mathbb{Z}_2)$. More generally, such an invariant is defined if $G$ only preserves $\mathfrak{s}_X$ up to a sign factor $\rho : G \to \mathbb{Z}_2$ and we again obtain an invariant $SW_G^{\mathbb{Z}_2}(X , \mathfrak{s}_X , \rho) \in H^{-d(X , \mathfrak{s}_X)}(BG , \mathbb{Z}_2)$.

Recall also from Section \ref{sec:SWgroupaction} that $SW_G^{\mathbb{Z}_2}(X , \mathfrak{s}_X , \rho)$ is related to the families Seiberg-Witten invariant as follows. Let $B$ be a smooth compact manifold with $dim(B) < b^+(X)-1$ and let $E \to B$ be a principal $G$-bundle over $B$. Let $E_X = E \times_G X \to B$ be the associated family of $4$-manifolds over $B$. Then $\mathfrak{s}_X$ defines
\[
FSW^{\mathbb{Z}_2}( E_X , \mathfrak{s}_X) = f^*(SW_G^{\mathbb{Z}_2}(X , \mathfrak{s}_X)),
\]
where $f : B \to BG$ is the classifying map of $E \to B$.

Next, we will use the gluing formula to compute $SW_G^{\mathbb{Z}_2}$ under some conditions. Suppose first that $G$ acts smoothly on a smooth compact $4$-manifold $N$. Then $G$ acts on $H^2(N , \mathbb{R})$ preserving the intersection form. We therefore have an assoicated vector bundle $\mathbb{H}^2(N , \mathbb{R}) = EG \times_G H^2(N , \mathbb{R})$ with structure group $O(b^+(N) , b^-(N))$. We can reduce this to the maximal compact subgroup $O(b^+(N)) \times O(b^-(N))$, which amounts to decomposing $\mathbb{H}^2(N , \mathbb{R})$ into the direct sum $\mathbb{H}^+(N) \oplus \mathbb{H}^-(N)$ of a maximal positive definite subbundle and its orthogonal complement. We will abuse notation and write $H^+(N)$ instead of $\mathbb{H}^+(N)$. In particular, we may consider the top Stiefel-Whitney class of $H^+(N)$:
\[
w_{b^+(N)}( H^+(N) ) \in H^{b^+(N)}(BG , \mathbb{Z}_2).
\]

\begin{theorem}[Gluing formula for $SW_G^{\mathbb{Z}_2}$]\label{thm:rubequivariant}
Suppose that $X = M \# N$ is a $G$-equivariant connected sum, where $M,N$ are smooth compact $4$-manifolds equipped with an action of $G$ by diffeomorphisms. Suppose that $G$ preserves a Spin$^c$-structure $\mathfrak{s}_X = \mathfrak{s}_M \# \mathfrak{s}_N$ up to a sign factor given by a homomorphism $\rho : G \to \mathbb{Z}_2$. Suppose in addition that the following assumptions hold:
\begin{itemize}
\item[(i)]{$d(M , \mathfrak{s}_M) = 0$.}
\item[(ii)]{$c_1(\mathfrak{s}_N)^2 = \sigma(N)$.}
\item[(iii)]{$b_1(N) = 0$.}
\item[(iv)]{$b^+(M) > 1$.}
\end{itemize}
Then $SW_G^{\mathbb{Z}_2}(X , \mathfrak{s}_X , \rho) \in H^{b^+(N)}(BG , \mathbb{Z}_2)$ is defined and given by:
\begin{equation}\label{equ:gequivar}
SW_G^{\mathbb{Z}_2}(X , \mathfrak{s}_X , \rho) = SW(M , \mathfrak{s}_M) \cdot w_{b^+(N)}( H^+(N) \otimes \mathbb{R}_\rho)
\end{equation}
where $\mathbb{R}_\rho$ is the real line bundle over $BG$ determined by $\rho \in H^1(BG , \mathbb{Z}_2)$.
\end{theorem}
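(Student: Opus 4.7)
Both sides of \eqref{equ:gequivar} live in $H^{b^+(N)}(BG,\mathbb{Z}_2)$, so it suffices to show that they pair equally with every class $\beta\in H_{b^+(N)}(BG,\mathbb{Z}_2)$. By the solution to the Steenrod problem with $\mathbb{Z}_2$-coefficients, we may write $\beta=f_*[B]$ for some closed smooth manifold $B$ of dimension $b^+(N)$ and continuous map $f:B\to BG$; after a homotopy, assume $f$ is smooth and lands in a finite skeleton. Let $E=f^*(EG)\to B$ and form the associated families $E_M=E\times_G M$, $E_N=E\times_G N$, $E_X=E\times_G X$ over $B$. By construction $E_X$ carries a monodromy action on $\mathfrak{s}_X$ by the common sign $\rho$ (pulled back from $G$), and the families Spin$^c$-structures extend continuously.

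The plan is then to apply the $\rho$-twisted families gluing formula (Theorem~\ref{thm:rubtwisted}) to $E_X$ over $B$. For this to apply, I need to verify the setup of Section~\ref{sec:setup} for these families: the $G$-equivariant connected sum hypothesis provides fixed (or $G$-equivariant) points $m\in M$, $n\in N$ where the sum is performed, and these induce sections $\iota_M:B\to E_M$, $\iota_N:B\to E_N$ with orientation-reversingly isomorphic normal bundles, so that $E_X$ is the families connected sum of $E_M$ and $E_N$. Under hypotheses (i)--(iii), we have $d(M,\mathfrak{s}_M)=0$, $c_1(\mathfrak{s}_N)^2=\sigma(N)$, $b_1(N)=0$, and the dimension count $\dim(B)=b^+(N)$ gives $s=\dim(B)-b^+(N)=0$, so the well-definedness condition $b^+(M)>s+1=1$ of (iv) is exactly what we have assumed. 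Taking $\theta=1\in H^0(B,\mathbb{Z}_2)$, Theorem~\ref{thm:rubtwisted} yields
\[
FSW^{\mathbb{Z}_2}(E_X,\mathfrak{s}_X,1,\rho)=SW(M,\mathfrak{s}_M)\cdot\langle w_{b^+(N)}(H^+(N)\otimes\mathbb{R}_\rho),[B]\rangle,
\]
where $H^+(N)\to B$ is the bundle of fibrewise self-dual harmonic forms and $\mathbb{R}_\rho\to B$ is the real line bundle classified by $f^*\rho\in H^1(B,\mathbb{Z}_2)$.

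On the other hand, the identity \eqref{equ:charclass} (and its straightforward $\rho$-twisted extension, obtained from Konno's characteristic class $\mathbb{SW}^{\mathbb{Z}_2}(X,\mathfrak{s}_X,\rho)\in H^*(BDiff(X,\mathfrak{s}_X,\rho),\mathbb{Z}_2)$ together with \eqref{equ:gSW}) gives
\[
FSW^{\mathbb{Z}_2}(E_X,\mathfrak{s}_X,\rho)=f^*\bigl(SW_G^{\mathbb{Z}_2}(X,\mathfrak{s}_X,\rho)\bigr),
\]
so pairing with $[B]$ yields $\langle SW_G^{\mathbb{Z}_2}(X,\mathfrak{s}_X,\rho),\beta\rangle$. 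Naturality of Stiefel--Whitney classes shows that the bundle $H^+(N)\otimes\mathbb{R}_\rho\to B$ is the $f$-pullback of the corresponding bundle over $BG$, so
\[
\langle w_{b^+(N)}(H^+(N)\otimes\mathbb{R}_\rho),[B]\rangle=\langle w_{b^+(N)}(H^+(N)\otimes\mathbb{R}_\rho),\beta\rangle.
\]
Combining the last three displays gives equality after pairing with $\beta$, and since $\beta$ was arbitrary this proves \eqref{equ:gequivar}.

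The main technical point I expect to require care is the $\rho$-twisted analogue of \eqref{equ:charclass}: one must check that the twisted families invariant also arises as the pullback of a universal class on $BDiff(X,\mathfrak{s}_X,\rho)$, where $Diff(X,\mathfrak{s}_X,\rho)$ is the group of diffeomorphisms preserving $\mathfrak{s}_X$ up to the fixed sign character $\rho$. This should follow by exactly the same construction as in the untwisted case, once one builds the twisted families moduli space universally over $BDiff(X,\mathfrak{s}_X,\rho)$ as in Section~\ref{sec:familiesSW}. Everything else is bookkeeping: Steenrod representability is standard over $\mathbb{Z}_2$, and the verification of the gluing hypotheses is immediate from the equivariant connected sum data.
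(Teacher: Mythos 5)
Your proposal is correct and follows essentially the same route as the paper's proof: reduce to pairings with $\mathbb{Z}_2$-homology classes via the universal coefficient theorem, represent each class by a smooth manifold $B$ mapping to $BG$ (Steenrod), pull back the universal bundle to form the associated families, and apply the ($\rho$-twisted) families gluing formula together with the characteristic-class property \eqref{equ:charclass} and naturality of Stiefel--Whitney classes. The only difference is that you flag the $\rho$-twisted analogue of \eqref{equ:charclass} as a point needing verification, which the paper uses implicitly without comment.
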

\begin{proof}
To show Equation \ref{equ:gequivar} it suffices by the universal coefficient theorem to show that
\[
\langle SW_G^{\mathbb{Z}_2}(X , \mathfrak{s}_X , \rho) , \beta) = SW(M , \mathfrak{s}_M) \cdot w_{b^+(N)}( H^+(N) \otimes \mathbb{R}_\rho) , \beta \rangle
\]
for every $\beta \in H_{-d(X , \mathfrak{s}_X)}(G , \mathbb{Z}_2)$. By the solution to the Steenrod problem with $\mathbb{Z}_2$-coefficients, there exists a compact smooth manifold $B$ of dimension $-d(X,\mathfrak{s}_X)$ and a continuous map $f : B \to BG$ such that $f_*[B] = \beta $. Let $E = f^*(EG)$ be the pullback of the universal bundle $EG \to BG$ and let $E_X = E \times_G X$ be the family associated to $E$. Then
\[
\langle SW_G^{\mathbb{Z}_2}(X , \mathfrak{s}_X , \rho) , \beta \rangle = \langle f^*(SW_G^{\mathbb{Z}_2}(X , \mathfrak{s}_X) , \rho) , [B] \rangle = \langle FSW^{\mathbb{Z}_2}(E_X , \mathfrak{s}_X , \rho ) , [B] \rangle.
\]
But since $X = M \# N$ is an equivariant connected sum, it follows that $E_X$ is the families connected sum of $E_M = E \times_G M$ and $E_N = E \times_G N$. Thus we are in the setting of the families gluing formula, which gives:
\[
FSW^{\mathbb{Z}_2}(E_X , \mathfrak{s}_X , \rho) = SW(M , \mathfrak{s}_M) \cdot w_{b^+(N)}(  f^* (H^+(N) \otimes \mathbb{R}_\rho ) ).
\]
Hence we have that
\begin{equation*}
\begin{aligned}
\langle SW_G^{\mathbb{Z}_2}(X , \mathfrak{s}_X , \rho) , \beta \rangle &= SW(M , \mathfrak{s}_M) \langle f^*( w_{b^+(N)}(H^+(N) \otimes \mathbb{R}_\rho ) ) , [B] \rangle \\
&= SW(M , \mathfrak{s}_M) \langle w_{b^+(N)}(H^+(N) \otimes \mathbb{R}_\rho) , \beta \rangle
\end{aligned}
\end{equation*}
as required.
\end{proof}

\subsection{Seiberg-Witten invariants of involutions}\label{sec:involutions}

In this section we take $G = \mathbb{Z}_2 = \langle f \rangle$. The cohomology ring of $B\mathbb{Z}_2$ with $\mathbb{Z}_2$-coefficients is given by
\[
H^*( B\mathbb{Z}_2 , \mathbb{Z}_2) \cong \mathbb{Z}_2[u]
\]
where $u \in H^1( B\mathbb{Z}_2 , \mathbb{Z}_2)$ classifies the unique non-trivial real line bundle $\mathbb{R}_-$ over $B\mathbb{Z}_2$.\\

Let $\rho : G \to \mathbb{Z}_2$ be a homomorphism. Thus either $\rho$ is the trivial homomorphism which we write as $\rho = 0$, or $\rho$ is the identity homomorphism which we write as $\rho = 1$. Note that $\mathbb{R}_\rho = \mathbb{R}$ for $\rho = 0$ and $\mathbb{R} = \mathbb{R}_-$ for $\rho = 1$.\\

Let $N$ be a smooth compact $4$-manifold. An action of $G$ on $N$ by diffeomorphisms is equivalent to giving an involutive diffeomorphism $f_N : N \to N$. Since $G$ is finite, we can always choose a $G$-invariant metric on $N$. Then $G$ acts on $H^+(N)$. We then find that $w_{b^+(N)}( H^+(N))$ is non-zero if and only if $f_N$ acts as $-Id$ on $H^+(N)$ and similarly $w_{b^+(N)}( H^+(N) \otimes \mathbb{R}_-)$ is non-zero if and only if $f_N$ acts as $+Id$ on $H^+(N)$.

\begin{proposition}\label{prop:involution}
Suppose that $X = M \# N$ is a $\mathbb{Z}_2$-equivariant connected sum, where $M,N$ are smooth compact $4$-manifolds equipped with involutions $f_M : M \to M$ and $f_N : N \to N$. Suppose that there are Spin$^c$-structures $\mathfrak{s}_M, \mathfrak{s}_N$ and a $\rho \in \mathbb{Z}_2$ such that:
\[
f_M^*(\mathfrak{s}_M) = (-1)^\rho \mathfrak{s}_M, \quad f_N^*(\mathfrak{s}_N) = (-1)^\rho \mathfrak{s}_N.
\]
Set $f = f_M \# f_N$ and $\mathfrak{s}_X = \mathfrak{s}_M \# \mathfrak{s}_N$. Suppose in addition that the following assumptions hold:
\begin{itemize}
\item[(i)]{$d(M , \mathfrak{s}_M) = 0$.}
\item[(ii)]{$c_1(\mathfrak{s}_N)^2 = \sigma(N)$.}
\item[(iii)]{$b_1(N) = 0$.}
\item[(iv)]{$b^+(M) > 1$.}
\end{itemize}
Then $SW^{\mathbb{Z}_2}_{\mathbb{Z}_2}(X , \mathfrak{s}_X , \rho) \in H^{b^+(N)}( B\mathbb{Z}_2 , \mathbb{Z}_2)$ is defined and given by:
\[
SW^{\mathbb{Z}_2}_{\mathbb{Z}_2}(X , \mathfrak{s}_X , \rho) = \begin{cases} SW(M , \mathfrak{s}_M) u^{b^+(N)},  & \text{if } \rho = 0 \text{ and } f_N|_{H^+(N)} = -Id, \\ SW(M , \mathfrak{s}_M) u^{b^+(N)},  & \text{if } \rho = 1 \text{ and } f_N|_{H^+(N)} = +Id, \\ 0, & \text{otherwise}. \end{cases}
\]
\end{proposition}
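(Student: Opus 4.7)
The plan is to deduce Proposition~\ref{prop:involution} directly from Theorem~\ref{thm:rubequivariant} by specialising to $G = \mathbb{Z}_2$, so that everything reduces to a computation of a single top Stiefel-Whitney class in $H^*(B\mathbb{Z}_2 , \mathbb{Z}_2) = \mathbb{Z}_2[u]$.

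First I would verify the hypotheses of Theorem~\ref{thm:rubequivariant}. Conditions (i)--(iv) of the proposition coincide with those of the theorem, and the involutions $f_M, f_N$ together with the compatibilities $f_M^*(\mathfrak{s}_M) = (-1)^\rho \mathfrak{s}_M$ and $f_N^*(\mathfrak{s}_N) = (-1)^\rho \mathfrak{s}_N$ realize $X = M \# N$ as a $\mathbb{Z}_2$-equivariant connected sum with $f^*(\mathfrak{s}_X) = (-1)^\rho \mathfrak{s}_X$. Applying Theorem~\ref{thm:rubequivariant} then yields
\[
SW^{\mathbb{Z}_2}_{\mathbb{Z}_2}(X , \mathfrak{s}_X , \rho) = SW(M , \mathfrak{s}_M) \cdot w_{b^+(N)}\!\left( H^+(N) \otimes \mathbb{R}_\rho \right)
\]
in $H^{b^+(N)}(B\mathbb{Z}_2 , \mathbb{Z}_2)$, and the whole problem reduces to identifying this Stiefel-Whitney class.

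Next I would compute the bundle $H^+(N) \to B\mathbb{Z}_2$ explicitly. Choose an $f_N$-invariant metric on $N$ (possible since $\mathbb{Z}_2$ is finite), so that the induced $\mathbb{Z}_2$-action on $H^+(N)$ is orthogonal and, being an involution, splits orthogonally as $H^+(N) = V_+ \oplus V_-$ into $\pm 1$-eigenspaces. The Borel construction then identifies $H^+(N) \to B\mathbb{Z}_2$ with $\underline{V_+} \oplus (\mathbb{R}_- \otimes \underline{V_-})$, where the underline denotes a trivial bundle and $\mathbb{R}_-$ is the non-trivial line bundle classified by $u$; its total Stiefel-Whitney class is $(1+u)^{\dim V_-}$. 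Tensoring with $\mathbb{R}_\rho$ interchanges the roles of $V_+$ and $V_-$ in the $\rho = 1$ case, since $\mathbb{R}_- \otimes \mathbb{R}_-$ is trivial, giving total Stiefel-Whitney class $(1+u)^{\dim V_+}$ there.

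The final step is immediate: the degree $b^+(N)$ component of $(1+u)^{\dim V_\mp}$ equals $u^{b^+(N)}$ precisely when the exponent $\dim V_\mp$ equals $b^+(N)$, and vanishes otherwise. For $\rho = 0$ this requires $V_- = H^+(N)$, i.e.\ $f_N|_{H^+(N)} = -\mathrm{Id}$; for $\rho = 1$ it requires $V_+ = H^+(N)$, i.e.\ $f_N|_{H^+(N)} = +\mathrm{Id}$; and in every other situation (including a genuinely non-scalar involution) the top Stiefel-Whitney class is zero. Substituting back into the equivariant gluing formula gives exactly the case-by-case expression in the proposition. There is essentially no real obstacle here: the analytic content has been fully absorbed into Theorem~\ref{thm:rubequivariant}, and the remainder is a standard Stiefel-Whitney class calculation over $B\mathbb{Z}_2$.
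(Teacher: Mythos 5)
Your proposal is correct and follows exactly the route the paper intends: Proposition~\ref{prop:involution} is the specialisation of Theorem~\ref{thm:rubequivariant} to $G=\mathbb{Z}_2$, combined with the observation (stated just before the proposition in the paper) that, after splitting $H^+(N)$ into $\pm1$-eigenspaces of $f_N$, one has $w_{b^+(N)}(H^+(N)\otimes\mathbb{R}_\rho)=u^{b^+(N)}$ precisely in the two scalar cases and zero otherwise. Your eigenspace/total Stiefel-Whitney class computation fills in this last step correctly.
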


\begin{remark}
Let $f_M : M \to M$ be an orientation preserving smooth involution on $M$ which is not the identity map. Let $F$ denote the fixed point set of $f_M$. If $m \in F$ is a fixed point, then $f_M$ acts on $T_m M$ with eigenvalues $+1$ and $-1$. Since the orientation is preserved, the number of $-1$ eigenvalues must be $2$ or $4$. Thus in some basis $f_M$ acts either as $diag(1,1,-1,-1)$ or $diag(-1,-1,-1,-1)$. In the first case $m$ lies on a $2$-dimensional component of $F$ and we say $m$ is a non-isolated fixed point. In the second case $m$ is an isolated point of $F$ and we say $m$ is an isolated fixed point.\\

Now suppose that $f_N : N \to N$ is an orientation preserving smooth involution of $N$. We can form the equivariant connected sum $X = M \# N$ provided $f_M, f_N$ have fixed points of the same type (either both isolated or both non-isolated) and we write $f = f_M \# f_N$ for the resulting involution on $X$. It is important to note that the isomorphism class of the pair $(X , f)$ will in general depend on the choice of fixed points of $M,N$ used to perform the connected sum. However in the examples where we compute $SW^{\mathbb{Z}_2}_{\mathbb{Z}_2}(X , \mathfrak{s}_X , \rho)$, the result will not depend on the choice of fixed point, since the gluing formula is insensitive to this choice.
\end{remark}

To make use of Proposition \ref{prop:involution}, we look for some examples of pairs $(N , f_N)$ for which $w_{b^+(N)}( H^+(N) \otimes \mathbb{R}_\rho) \neq 0$. By the above remark it is also important to keep track of the dimensions of the connected components of the fixed point set of $f_N$. Our examples are listed in Figure \ref{fig:involution}. In this table we use some notation $r,\phi,\iota,e$ as we now explain. We let $r$ denote a reflection $r : S^2 \to S^2$, we let $\phi : S^2 \to S^2$ denote a rotation by $\pi$ around some axis and let $\iota : \overline{\mathbb{CP}}^2 \to \overline{\mathbb{CP}}^2$ be the map $\iota[z_0 , z_1 , z_2] = [z_0 , z_1 , -z_2]$. In the last three rows of the table we view $\mathbb{CP}^2 \#^3 \overline{\mathbb{CP}^2}$ as $(S^2 \times S^2) \#^2 \overline{\mathbb{CP}^2}$ and construct the involution $f_N$ as follows. In row $4$, $f_N$ is constructed as an equivariant connected sum. Note that the fixed point set of $\iota$ has components of dimension $0$ and $2$. We form the connected sum using only non-isolated fixed points. In rows $5$ and $6$, we start with an involution $i$ on $(S^2 \times S^2)$. Let $p$ be a non-fixed point of $i$. Then we attach a copy of $\overline{\mathbb{CP}}^2$ at $x$ and $i(x)$ and denote the resulting involution on the connected sum as $i'$. The last column of the table gives the characteristic of a Spin$^c$-structure $\mathfrak{s}_N$ satisfying $f_N(\mathfrak{s}_N) = (-1)^\rho \mathfrak{s}_N$ and $c_1(\mathfrak{s}_N)^2 = \sigma(N)$. In the last three rows we use the isomorphism  $H^2( \mathbb{CP}^2 \#^3 \overline{\mathbb{CP}}^2 , \mathbb{Z}) \cong H^2(S^2 \times S^2 , \mathbb{Z}) \oplus H^2( \mathbb{CP}^2 , \mathbb{Z}) \oplus H^2( \mathbb{CP}^2 , \mathbb{Z})$ and write $e$ for a generator of $H^2( \mathbb{CP}^2 , \mathbb{Z})$.

\begin{figure}\label{fig:involution}
\begin{equation*}
\renewcommand{\arraystretch}{1.4}
\begin{tabular}{|c|c|c|c|c|c|}
\hline
& $N$ & $f_N$ & dimensions of components of $F$ & $\rho$ & $c_1(\mathfrak{s}_N)$ \\
\hline
$1$& $S^2 \times S^2$ & $r \times r$ & $2$ & $0$ & $0$ \\
$2$ & $S^2 \times S^2$ & $\phi \times \phi$ & $0$ & $1$ & $0$ \\
$3$ & $S^2 \times S^2$ & $\phi \times id$ & $2$ & $1$ & $0$ \\
$4$ & $\mathbb{CP}^2 \#^3 \overline{\mathbb{CP}^2}$ & $(r \times r) \# \iota \# \iota$ & $0,2$ & $0$ & $(0,e,e)$ \\
$5$ & $\mathbb{CP}^2 \#^3 \overline{\mathbb{CP}^2}$ & $(\phi \times \phi)' $ & $0$ & $1$ & $(0,e,-e)$ \\
$6$ & $\mathbb{CP}^2 \#^3 \overline{\mathbb{CP}^2}$ & $(\phi \times id)'$ & $2$ & $1$ & $(0,e,-e)$ \\
\hline
\end{tabular}
\end{equation*}
\caption{Some pairs $(N , f_N)$ with $w_{b^+(N)}( H^+(N) \otimes \mathbb{R}_\rho) \neq 0$.}
\end{figure}

\begin{proposition}
Let $M$ be a smooth compact $4$-manifold with $b^+(M) > 1$ and let $f_M : M \to M$ be a smooth involution. Let $\mathfrak{s}_M$ be a Spin$^c$-structure with $d(M , \mathfrak{s}_M) = 0$. Suppose that one of the following holds:
\begin{itemize}
\item[(i)]{$f_M$ has an isolated fixed point and $f_M(\mathfrak{s}_M) = \mathfrak{s}_M$.}
\item[(ii)]{$f_M$ has a non-isolated fixed point and $f_M(\mathfrak{s}_M) = \mathfrak{s}_M$.}
\item[(iii)]{$f_M$ has an isolated fixed point and $f_M(\mathfrak{s}_M) = -\mathfrak{s}_M$.}
\item[(iv)]{$f_M$ has a non-isolated fixed point and $f_M(\mathfrak{s}_M) = -\mathfrak{s}_M$.}
\end{itemize}
Then for any integer $k \ge 1$:
\begin{itemize}
\item{In cases (ii),(iii) and (iv) the manifolds $X = M\#^k(S^2 \times S^2)$ and $X = M\#^k \mathbb{CP}^2 \#^{k+2} \overline{\mathbb{CP}^2}$ each admit a smooth involution $f_X$ and a Spin$^c$-structure $\mathfrak{s}_X$ such that $FSW^{\mathbb{Z}_2}_{\mathbb{Z}_2}(X , \mathfrak{s}_X , \rho) \in H^k( B\mathbb{Z}_2 , \mathbb{Z}_2)$ is defined, where $\rho = 0$ in case (ii), $\rho = 1$ in case (iii) or (iv) and
\[
FSW^{\mathbb{Z}_2}_{\mathbb{Z}_2}(X , \mathfrak{s}_X , \rho) = SW(M , \mathfrak{s}_M) u^k.
\]
}
\item{In case (i) the manifold $X = M\#^k \left( \mathbb{CP}^2 \#^3 \overline{\mathbb{CP}^2}\right)$ admits a smooth involution $f_X$ and a Spin$^c$-structure $\mathfrak{s}_X$ such that $FSW^{\mathbb{Z}_2}_{\mathbb{Z}_2}(X , \mathfrak{s}_X , \rho) \in H^k( B\mathbb{Z}_2 , \mathbb{Z}_2)$ is defined, where $\rho = 0$ and
\[
FSW^{\mathbb{Z}_2}_{\mathbb{Z}_2}(X , \mathfrak{s}_X , \rho) = SW(M , \mathfrak{s}_M) u^k.
\]
}
\end{itemize}
\end{proposition}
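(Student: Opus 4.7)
The strategy is to construct the triple $(N,f_N,\mathfrak{s}_N)$ as an equivariant connected sum of the atomic examples listed in Figure~\ref{fig:involution} and then invoke Proposition~\ref{prop:involution} with $f_X = f_M \# f_N$ and $\mathfrak{s}_X = \mathfrak{s}_M \# \mathfrak{s}_N$. The row choice is dictated by the hypothesis on $f_M$: the sign $\rho$ and the type (isolated vs.\ non-isolated) of the fixed point used to form $X = M \# N$ must match. Concretely, case~(i) uses row~4 (which has both isolated and non-isolated fixed points), case~(ii) uses row~1 for the $S^2\times S^2$ model and row~4 for the other, case~(iii) uses rows~2 and~5, and case~(iv) uses rows~3 and~6.

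For $X = M \#^k(S^2 \times S^2)$ in cases~(ii)--(iv), take $N = \#^k(S^2 \times S^2)$ with $f_N$ the iterated equivariant connected sum of $k$ copies of the selected involution from rows~1--3, and $\mathfrak{s}_N$ the connected sum Spin$^c$-structure (with $c_1 = 0$, so $c_1(\mathfrak{s}_N)^2 = 0 = \sigma(N)$). For case~(i), take $N = \#^k(\mathbb{CP}^2 \#^3 \overline{\mathbb{CP}^2})$, performing each successive connected sum at a $2$-dimensional component of the fixed set of row~4 so that isolated fixed points survive for the final connect sum with $M$; then $c_1(\mathfrak{s}_N)^2 = -2k = \sigma(N)$. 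For $X = M \#^k\mathbb{CP}^2 \#^{k+2}\overline{\mathbb{CP}^2}$ in cases~(ii)--(iv), take $N = (\mathbb{CP}^2 \#^3 \overline{\mathbb{CP}^2}) \#^{k-1}(S^2 \times S^2)$, which is diffeomorphic to $\#^k\mathbb{CP}^2 \#^{k+2}\overline{\mathbb{CP}^2}$ by iterated application of the classical identity $(S^2 \times S^2) \# \mathbb{CP}^2 \cong 2\mathbb{CP}^2 \# \overline{\mathbb{CP}^2}$; form $f_N$ by equivariantly summing the row~4/5/6 involution on the first summand with $k-1$ copies of the row~1/2/3 involution on the $(S^2 \times S^2)$ factors. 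Finally form $X = M \# N$ by equivariant connected sum using matched fixed points of $f_M$ and $f_N$.

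It remains to verify the hypotheses of Proposition~\ref{prop:involution} and extract the conclusion. The conditions $d(M,\mathfrak{s}_M)=0$ and $b^+(M)>1$ are given; $b_1(N) = 0$ follows from simple connectedness of each summand; $c_1(\mathfrak{s}_N)^2 = \sigma(N)$ is additive across connected sum and has been verified row-by-row above; and $f_N^*\mathfrak{s}_N = (-1)^\rho \mathfrak{s}_N$ since this property holds for each atomic summand and is compatible with the equivariant connected sum. Under the equivariant Mayer--Vietoris decomposition, $H^+(N)$ splits over $\mathbb{Z}_2$ as a direct sum $\bigoplus_{i=1}^k L_i$ of one-dimensional contributions; by choice of row, each $L_i \otimes \mathbb{R}_\rho \cong \mathbb{R}_-$ as a representation of $\mathbb{Z}_2$, so
\[
w_k(H^+(N) \otimes \mathbb{R}_\rho) = \prod_{i=1}^k w_1(\mathbb{R}_-) = u^k.
\]
Proposition~\ref{prop:involution} then yields $SW^{\mathbb{Z}_2}_{\mathbb{Z}_2}(X,\mathfrak{s}_X,\rho) = SW(M,\mathfrak{s}_M)\, u^k$, which is the claim. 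The main obstacle is the combinatorial bookkeeping: ensuring at each stage of the iterated equivariant connected sum that a fixed point of the correct type is available for the next operation, and verifying that the $\mathbb{Z}_2$-equivariant decomposition of $H^+(N)$ is compatible with the product structure needed to give the top Stiefel--Whitney class $u^k$ rather than a lower power or zero.
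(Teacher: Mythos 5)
Your proposal is correct and follows essentially the same route as the paper: the paper's proof is precisely to apply Proposition \ref{prop:involution} with $(N,f_N,\mathfrak{s}_N)$ an equivariant connected sum of $k$ summands drawn from Figure \ref{fig:involution} --- $k$ copies of row 4 in case (i), $k$ copies of row 1/2/3 (or $k-1$ such copies plus one copy of row 4/5/6 for the $\#^k\mathbb{CP}^2\#^{k+2}\overline{\mathbb{CP}}^2$ model) in cases (ii)--(iv). Your additional verifications (additivity of $c_1^2$ and $\sigma$, the splitting of $H^+(N)$ into $\mathbb{Z}_2$-lines giving $w_k = u^k$, and the fixed-point-type bookkeeping) are all accurate and simply make explicit what the paper leaves implicit.
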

\begin{proof}
We apply Proposition \ref{prop:involution} where $(N , f_N , \mathfrak{s}_N)$ is an equivariant connected sum of $k$ summands, where each summand is one of the entries in Figure \ref{fig:involution}.
\begin{itemize}
\item{In case (i), we use $k$ copies of (4) as our summands.}
\item{In case (ii), we either use $k$ copies of (1) or $k-1$ copies of (1) and one copy of (4).}
\item{In case (iii), we either use $k$ copies of (2) or $k-1$ copies of (2) and one copy of (5).}
\item{In case (iv), we either use $k$ copies of (3) or $k-1$ copies of (3) and one copy of (6).}
\end{itemize}
\end{proof}

\begin{theorem}\label{thm:involutionpsc}
Let $M$ be a smooth compact $4$-manifold with $b^+(M) > 1$ and let $f_M : M \to M$ be a smooth involution. Let $\mathfrak{s}_M$ be a Spin$^c$-structure with $d(M , \mathfrak{s}_M) = 0$. Suppose that one of the following holds:
\begin{itemize}
\item[(i)]{$f_M$ has an isolated fixed point and $f_M(\mathfrak{s}_M) = \mathfrak{s}_M$.}
\item[(ii)]{$f_M$ has a non-isolated fixed point and $f_M(\mathfrak{s}_M) = \mathfrak{s}_M$.}
\item[(iii)]{$f_M$ has an isolated fixed point and $f_M(\mathfrak{s}_M) = -\mathfrak{s}_M$.}
\item[(iv)]{$f_M$ has a non-isolated fixed point and $f_M(\mathfrak{s}_M) = -\mathfrak{s}_M$.}
\end{itemize}
Suppose also that $SW(M , \mathfrak{s}_M)$ is odd. Then for any integer $k \ge 1$:
\begin{itemize}
\item{In cases (ii),(iii) and (iv) the manifolds $X = M\#^k(S^2 \times S^2)$ and $X = M\#^k \mathbb{CP}^2 \#^{k+2} \overline{\mathbb{CP}^2}$ each admit a smooth involution $f_X$ and a Spin$^c$-structure $\mathfrak{s}_X$ such that $FSW^{\mathbb{Z}_2}_{\mathbb{Z}_2}(X , \mathfrak{s}_X , \rho)$ is non-zero. Hence $X$ does not admit an $f_X$-invariant metric of positive scalar curvature.}
\item{In cases (i), (ii),(iii) and (iv) the manifold $X = M\#^k \left( \mathbb{CP}^2 \#^3 \overline{\mathbb{CP}^2} \right)$ admits a smooth involution $f_X$ and a Spin$^c$-structure $\mathfrak{s}_X$ such that $FSW^{\mathbb{Z}_2}_{\mathbb{Z}_2}(X , \mathfrak{s}_X , \rho)$ is non-zero. Hence $X$ does not admit an $f_X$-invariant metric of positive scalar curvature.}
\end{itemize}
\end{theorem}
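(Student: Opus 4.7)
The plan is to combine the preceding proposition, which computes $FSW^{\mathbb{Z}_2}_{\mathbb{Z}_2}(X,\mathfrak{s}_X,\rho) = SW(M,\mathfrak{s}_M)\cdot u^k$ for the equivariant connected sum constructions it considers, with the positive scalar curvature obstruction at the end of Section~\ref{sec:SWgroupaction}.

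First, in each case I would invoke the preceding proposition to produce an equivariant connected sum $X = M \# N$ together with data $(f_X, \mathfrak{s}_X, \rho)$ such that
\[
FSW^{\mathbb{Z}_2}_{\mathbb{Z}_2}(X,\mathfrak{s}_X,\rho) = SW(M,\mathfrak{s}_M)\cdot u^k \in H^{k}(B\mathbb{Z}_2,\mathbb{Z}_2).
\]
For the diffeomorphism types $M\#^k(S^2\times S^2)$ and $M\#^k\mathbb{CP}^2\#^{k+2}\overline{\mathbb{CP}^2}$ in cases (ii)--(iv), this is precisely what the preceding proposition provides. For the diffeomorphism type $M\#^k(\mathbb{CP}^2\#^3\overline{\mathbb{CP}^2})$, case (i) is again covered there directly, while cases (ii)--(iv) require a mild extension of the same proof: one takes $(N, f_N, \mathfrak{s}_N)$ to be the equivariant connected sum of $k$ copies of entry~(4), (5), or (6) of Figure~\ref{fig:involution}, respectively, choosing the entry whose value of $\rho$ and whose dimensions of fixed-point components match the hypotheses on $(M, f_M, \mathfrak{s}_M)$, and performing the connected sum at fixed points of matching type. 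Applying Proposition~\ref{prop:involution} then yields the same formula $SW(M,\mathfrak{s}_M)\cdot u^k$.

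Since $SW(M,\mathfrak{s}_M)$ is odd by hypothesis, the class just computed equals $u^k \in H^k(B\mathbb{Z}_2,\mathbb{Z}_2)\cong\mathbb{Z}_2$, which is non-zero, establishing the non-vanishing half of the conclusion. To derive the PSC obstruction, I would observe that by the characteristic-class construction of Section~\ref{sec:SWgroupaction} the class $FSW^{\mathbb{Z}_2}_{\mathbb{Z}_2}(X,\mathfrak{s}_X,\rho) \in H^k(B\mathbb{Z}_2,\mathbb{Z}_2)$ coincides with the equivariant Seiberg--Witten invariant $SW^{\mathbb{Z}_2}_{\mathbb{Z}_2}(X,\mathfrak{s}_X,\rho)$. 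The proposition at the end of Section~\ref{sec:SWgroupaction} then asserts that the existence of an $f_X$-invariant metric of positive scalar curvature would force this class to vanish, so by contrapositive $X$ admits no such metric.

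There is no substantive obstacle in this argument: the entire theorem reduces to an application of the equivariant gluing formula together with the equivariant PSC vanishing proposition, and the only routine bookkeeping is matching the sign $\rho$ and the fixed-point-set dimensions of $f_N$ to the given hypothesis on $f_M$ when selecting the appropriate summand from Figure~\ref{fig:involution}.
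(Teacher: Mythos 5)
Your proposal is correct and is essentially the paper's intended argument: the theorem is stated without an explicit proof precisely because it follows from the preceding proposition together with the positive-scalar-curvature vanishing result of Section~\ref{sec:SWgroupaction}, exactly as you describe. The one point where you supply detail the paper leaves implicit --- extending the $M\#^k(\mathbb{CP}^2\#^3\overline{\mathbb{CP}^2})$ case from (i) to (ii)--(iv) by taking $k$ copies of entry (4), (5), or (6) and matching $\rho$ and the fixed-point type --- is handled correctly.
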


\begin{corollary}
For every $n \ge 4$ and $m \ge n+18$, the $4$-manifold $X = \#^{n} \mathbb{CP}^2 \#^{m} \overline{\mathbb{CP}}^2$ admits a metric of positive scalar curvature and a smooth involution $f : X \to X$ such that there does not exist an $f$-invariant metric of positive scalar curvature on $X$.
\end{corollary}
\begin{remark}
In the course of the proof we construct an explicit example of such an involution $f$. Many more examples could be produced by varying the choice of the pair $(M , f_M)$ used below in the proof. A similar result to this was obtained by LeBrun in \cite{lebr}. However, the proof in \cite{lebr} uses a covering space argument which can only be used to construct examples where the involution $f$ acts freely (in particular, the Euler characteristic must be even in such examples).  
\end{remark}
\begin{proof}
That $X$ has a metric of positive scalar curvature is clear. Let $M_0$ be a $K3$ surface and let $M = M_0 \#^{m-n-18} \overline{\mathbb{CP}}^2$. Then $M\#^{n-3} \mathbb{CP}^2 \#^{n-1} \overline{\mathbb{CP}^2}$ is diffeomorphic to $X$. We will find an involution $f_M$ on $M$ which anti-preserves a Spin$^c$-structure $\mathfrak{s}_M$ with $d(M , \mathfrak{s}_M) = 0$, $SW(M , \mathfrak{s}_M) = 1 \; ({\rm mod} \; 2)$ and such that $f_M$ has non-isolated fixed points.\\

Let us view $M_0$ as the quartic hypersurface in $\mathbb{CP}^3$ defined by $z_1^4 - z_2^4 - z_3^4 - z_4^4 = 0$. Clearly complex conjugation $[z_1,z_2,z_3,z_4] \mapsto [\overline{z_1} , \overline{z_2} , \overline{z_3} , \overline{z_4} ]$ defines an orientation preserving involution on $M_0$ with non-isolated fixed points. Since $M_0$ is spin we can take $\mathfrak{s}_{M_0}$ to be the unique spin structure, then $\mathfrak{s}_{M_0}$ is anti-preserved by $f_{M_0}$. Next, we blow $M_0$ up at $m-n-18$ non-isolated fixed points. Then $f_{M_0}$ extends to an involution $f_M$ on the blowup $M = M_0 \#^{m-n-18} \overline{\mathbb{CP}}^2$. The blowup can be viewed as an equivariant connected sum in the following way. Each copy of $\overline{\mathbb{CP}}^2$ is equipped with the involution given by complex conjugation. This acts as $-1$ on $H^2( \overline{\mathbb{CP}}^2 , \mathbb{Z})$. Let $\mathfrak{s}_{\overline{\mathbb{CP}}^2}$ be the Spin$^c$-structure whose characteristic is a generator of $H^2(\overline{\mathbb{CP}}^2 , \mathbb{Z})$. Then we take our Spin$^c$-structure $\mathfrak{s}_M$ on $M$ to be $\mathfrak{s}_{M_0}$ summed with a copy of $\mathfrak{s}_{\overline{\mathbb{CP}}^2}$ for each copy of $\overline{\mathbb{CP}}^2$. Then $\mathfrak{s}_{M}$ is clearly anti-preserved by $f_M$. By the blowup formula for Seiberg-Witten invariants we have $SW(M ,\mathfrak{s}_M) = SW(M_0 , \mathfrak{s}_{M_0}) = 1 \; ({\rm mod} \; 2)$. To complete the proof, we apply Theorem \ref{thm:involutionpsc} (iv) with $k=n-3$.
\end{proof}


\bibliographystyle{amsplain}

\end{document}